\newtheorem{thm}{\sc Theorem}%[section]
\newtheorem{lemma}[thm]{\sc Lemma}
\newtheorem{remark}[thm]{\sc Remark}
\newtheorem{cor}[thm]{\sc Corollary}
\newtheorem{prop}[thm]{\sc Proposition}
\theoremstyle{definition} \newtheorem{defi}[thm]{\sc Definition}
\newtheorem{proc}[thm]{\sc Procedure}
\newtheorem{ob}[thm]{\sc Observation}
\numberwithin{thm}{section}
\newcommand{\one}{{\bf 1}} % the unit in the algebra
\newcommand{\U}{\mathbf{1}}
\newcommand{\vk}{\varkappa}
\newcommand{\vp}{\varphi}
\newcommand{\A}{{\mathcal A}}
\newcommand{\cG}{{\mathcal G}}
\newcommand{\cH}{{\mathcal H}}
\newcommand{\cR}{{\mathcal R}} % coefficients algebra
\newcommand{\C}{{\mathbb C}} % complex numbers
\newcommand{\cP}{{\mathcal P}} % projection onto K_2
\newcommand{\e}{\varepsilon} % the counit
\newcommand{\hl}[1]{\textbf{\textit{#1}}} % a definined object
\DeclareMathOperator{\id}{\normalfont\mathsf{id}} % the identity operator
\newcommand{\N}{{\mathbb N}} %non-negative integers
\newcommand{\pol}[1][N]{SU_q(#1)} % the *-subalgebra
\newcommand{\R}{{\mathbb R}} % reals
\newcommand{\lf}{\vspace{1ex}}
\newcommand{\ls}{\text{{\small $\textsf{span}$}}}
\newcommand{\cls}{\overline{\ls}}
\numberwithin{equation}{section}
\title{{~}\\\vspace{-10ex}
Hunt's Formula for $SU_q(N)$ and $U_q(N)$}
\author{Uwe Franz, Anna Kula, J.\ Martin Lindsay, Michael Skeide}
\begin{document}

\begin{abstract}
We provide a Hunt type formula for the infinitesimal generators of L\'evy 
process on the quantum groups $SU_q(N)$ and $U_q(N)$. In particular, we 
obtain a decomposition of such generators into a gaussian part and a `jump 
type' part determined by a linear functional
{that resembles}
the functional induced by the L\'evy measure. The jump part on $SU_q(N)$ decomposes further into parts that live on the quantum subgroups  $SU_q(n)$, $n\le N$. Like in the classical Hunt formula for locally compact Lie groups, the ingredients become unique once a certain projection is chosen. There are analogous result  for $U_q(N)$.
\end{abstract}

\maketitle

{
\addtolength{\parskip}{-3mm}

\vspace{-3ex}
\tableofcontents

}

\newpage

%%%%%%%%%%%%%%%%%%%%%%%%%%%%%%%%%%%%%%%%%%%%%%%%%%%%%%%%%%%%%%%%%%%%%%%%%%%%%%
\section{Introduction}
\label{sec_intro}
%%%%%%%%%%%%%%%%%%%%%%%%%%%%%%%%%%%%%%%%%%%%%%%%%%%%%%%%%%%%%%%%%%%%%%%%%%%%%%

Let $\mathcal G$ denote a locally compact Lie group. All information about a L\'evy process with values in $\mathcal G$ can be captured (up to stochastic equivalence) by its infinitesimal generator, a densely 
defined linear functional $\psi$ on the $C^*$-algebra $C_0(\cG)$ of bounded continuous 
functions on $\mathcal G$ vanishing at infinity. The domain may be thought of as those functions that 
{possess}
a second order Taylor expansion around the neutral element of $\mathcal 
G$, $e$. Hunt's formula \cite{Hun56}, a generalization of the L\'evy-Khintchine 
formula for $\R$ (see for instance Applebaum \cite{app04} or Sato \cite{sato99}), asserts that
$$
\psi(f)
~=~
\psi_G(f)+\int_{{\mathcal G}\backslash\{e\}}[\cP(f)](g)\,d{\mathcal L}(g).
$$
The \hl{gaussian part} $\psi_G$ is a linear combination of first and 
second order derivatives at the neutral element, $\mathcal P$ is an arbitrary hermitian 
projection that takes away the linear terms, and $\mathcal L$ is the \hl{L\'evy measure} (which may have a singularity up to order two at $e$). Defining the \hl{L\'evy functional} $L(f):=\int_{{\mathcal G}\backslash\{e\}}f(g)\,d{\mathcal L}(g)$ on the functions which together with their first derivatives vanish at $e$ and putting $\psi_L=L\circ\cP$, this reads
\begin{equation} \label{eq_LK_decomp}
\psi
~=~
\psi_G+\psi_L.
\end{equation}
Since the integral may be viewed as a mixture of point evaluations, and since, for fixed $g\ne e$, a generating functional of the form $f\mapsto f(g)-f(e)$ generate a {\it jump process}, $L\circ\cP$ is sometimes also referred to as the \hl{jump part}. (In the case $\cG=\R$, we get a {\it compound Poisson process}).

\begin{remark}
Note that for the pure point evaluation, no $\cP$ is necessary; $\cP$ is necessary only to deal with the singularity at $e$ of the L{\'e}vy measure. Note, too, that there is (usually) no canonical choice for $\cP$; this is, why in literature the classical {\it L{\'e}vy-Khintchine formula} for $\cG=\R$ may look quite different depending on the reference. The projection $\cP$ will keep us quite busy; see Subsections \ref{sec_S_trip}--\ref{ssec_gaussian} and  Sections \ref{sec_gaussian} and \ref{sec_uqn}.
\end{remark}

If $\mathcal G$ is compact, the well known Tannaka-Krein duality (see, for instance, Hewitt and Ross \cite[Section VII.30]{hewitt_ross70}) asserts that the \hl{coefficient algebra} ${\mathcal R}({\mathcal G})$ (consisting of coefficients of finite-dimensional representations of $\mathcal G$) is a norm dense $*$-subalgebra of the $C^*$-algebra $C({\mathcal G})$ of continuous functions. Actually, ${\mathcal R}({\mathcal G})$ is a commutative \hl{Hopf $*$-algebra}, and the structure of the topological group $\mathcal G$ may be recovered from the Hopf $*$-algebra ${\mathcal R}({\mathcal G})$. 

More generally, a \hl{compact quantum group} $\cG=(C({\cG}),\Delta)$ in the sense of Woronowicz \cite{woronowicz98}, 
which is roughly speaking a unital $C^*$-algebra $C({\cG})$ with an 
additional structure reflecting the group properties on the level of functions 
on a group, always contains a dense $*$-subalgebra ${\mathcal R}({\cG})$ that may be turned into a Hopf $*$-algebra (\cite[Theorem 
1.2]{woronowicz98}). This opens up the way to apply Sch\"{u}rmann's theory of \hl{quantum L\'evy processes on $*$-bialgebras} \cite{schurmann93} to both situations.

Like their classical counterparts, L\'evy processes on $*$-bialgebras are classified (up to quantum stochastic equivalence) by their \hl{generating 
functionals}. A generating functional is a linear functional on $\cR(\cG)$ fulfilling certain algebraic conditions; see Subsection \ref{sec_gen_func}. Finding generating functionals amounts to the solution of a cohomological problem; see Subsection \ref{sec_S_trip}. Decomposing a generating functional $\psi$ into a sum, means, roughly speaking, that the cohomological problem has to be solved for the constituents, individually; see Subsection \ref{sec_G_D_A}. Of course, to merit being called a \hl{L{\'e}vy-Khintchine decomposition}, a decomposition as in \eqref{eq_LK_decomp} has to satisfy more: We have to say what a {\it gaussian part} is (a {\it quadratic} part, in the sense that it is $0$ on all monomials of degree $3$ and higher) and what a {\it completely non-gaussian part} is (no gaussian part can be subtracted); see Subsection \ref{ssec_gaussian}.

A ``true'' \hl{Hunt formula} goes further. It includes an explicit description of the gaussian generating functionals. And it includes a certain approximation property that justifies to call the completely non-gaussian part a jump part. The approximation property we require to call an approximated generating functional a {\it jump part}, is explained in Subsection \ref{sec_G_D_A}; a justification/motivation is given in Remark \ref{JPrem}.

It is noteworthy that not all quantum groups allow to decompose every generating functional into a (maximal) gaussian and a completely non-gaussian part; see Franz, Gerhold, and Thom \cite[Proposition 4.3]{franz_gerhold_thom}; therefore, already the answer to the question if the decomposition problem has a solution or not, depends on the example under consideration.

Sch{\"u}rmann and Skeide \cite{Ske94,schurmann+skeide98} established Hunt's formula in this sense on Woronowicz's $SU_q(2)$ \cite{Wor87b}. Skeide 
\cite{skeide99} applied Sch\"{u}rmann's ideas to obtain a quick proof of Hunt's formula for compact Lie groups. In these notes, we deal with the case $SU_q(N)$. We do not only find Hunt's formula for $SU_q(N)$.  In Theorem \ref{mainthm} we find that a generating functional $\psi$ decomposes (in a sense, uniquely) as
$$
\psi
~=~
\psi_G+L_2\circ\cP+\ldots+L_N\circ\cP,
$$
where again $\psi_G$ is a gaussian part, and where $L_n$ ($2\le n\le N$) are (extensions to $SU_q(N)$ of) L\'evy functionals on $SU_q(n)\subset SU_q(N)$. 
{\it En passant}, in Section \ref{sec_uqn}, we derive similar results for the quantum group $U_q(N)$.

The techniques are inspired quite a bit by \cite{Ske94,schurmann+skeide98} for the decomposition and by \cite{skeide99} for the gaussian part. But the case of general $N$ is more involved.  It turns out that some results on $SU_q(2)$ fail for $N\ge3$. For instance, for $N\geq 3$ in the gaussian case the cohomological problem may not always be solved; see Corollary \ref{ngccor}. Also, for $N=2$ the L{\'e}vy functionals $L$ without gaussian part can be parametrized by {\bf all} vectors in a certain representation Hilbert space, whereas for $N\geq 3$ this is no longer true; see Proposition \ref{prop_counterexample}.

\medskip
The paper is organized as follows. Section \ref{sec_prelim} presents preliminary results; some of them are new. (For instance, the treatment of the projection $\cP$, in particular, in connection with subgroups, is new. Also, for the analytic key lemma in Subsection \ref{keySSEC}, though probably {\it folklore}, we did not find a source; this lemma also drastically simplifies the case $SU_q(2)$, and is responsible for that we need not reference to \cite{Ske94,schurmann+skeide98} for more than motivation.) Section \ref{sec_gaussian} deals with the gaussian case and our choice of $\cP$. Section \ref{decompSEC} presents the actual decomposition for $SU_q(N)$, while Section \ref{sec_class} pushes forward to $SU_q(N)$ the parametrization results from Skeide \cite[Section 4.5]{Ske94} or \cite[Section 4.3]{skeide99}. Section \ref{sec_uqn} deals with $U_q(N)$. In the final section we discuss some open problems for future work.

\medskip
\noindent
{\bf Conventions and choices.~}

A (\hl{topological}) \hl{compact quantum group} $\mathcal G=(C(\cG),\Delta)$ (Woronowicz \cite{woronowicz98}) is a unital $C^*$-algebra $C(\cG)$ 
with a unital $*$-ho\-mo\-mor\-phism $\Delta\colon C(\cG)\rightarrow 
C(\cG)\otimes_{min} C(\cG)$ that is 
{\it coassociative} ($(\Delta\otimes\id)\circ\Delta=(\id\otimes\Delta)\circ\Delta$)
and that satisfies the {\it quantum cancellation rules} $\cls(C(\cG)\otimes\U)\Delta(C(\cG))=
{C(\cG)\otimes_{\rm min} C(\cG)}
=\cls\Delta(C(\cG))(\U\otimes C(\cG))$.

An (\hl{algebraic}) \hl{compact quantum group} or \hl{CQG-algebra} (Dijkhuizen and Koornwinder \cite{DiKo94}) is a Hopf $*$-algebra $(\cG,\Delta,\e)$ (see Subsection \ref{sec_gen_func}) that is spanned by the coefficients of its finite-dimensional unitary corepresentations. (Equivalently, a CQG-algebra is  a Hopf $*$-algebra with a {\it Haar state}. But the original definition suits our situation better.)

Algebraic and topological compact quantum groups are two sides of the same coin, compact quantum groups. (See the books by Klimyk and Schm{\"u}dgen \cite[Section 11.3]{KlSchm97} or by Timmermann \cite[Section 5.4]{timmermann08}.) Especially, a topological compact quantum group $(C(\cG),\Delta)$ contains a (unique) dense $*$-subalgebra $\cR(\cG)$ (the linear hull of the coefficients of its finite-dimensional corepresentations) such that the restriction of $\Delta$ to $\cR(\cG)$ maps into the algebraic tensor product $\cR(\cG)\otimes\cR(\cG)$; existence of counit and antipode are theorems.%
\footnote{
Frequently, in the literature $\cR(\cG)$ is also denoted
{$\mathsf{Pol}(\cG)$}.
}

Sch{\"u}rmann's theory of {\it quantum L{\'e}vy processes}, from which we take the notion of generating functionals, is about (\hl{algebraic}) \hl{quantum semigroups} (or $*$-bialgebras) and (\hl{algebraic}) \hl{quantum groups} (or Hopf $*$-algebras). Even the notion quantum subgroups of a (topological) compact quantum group $\cG$ is referring to $\cR(\cG)$ rather than to $C(\cG)$. Therefore:

In these notes we view compact quantum groups (like $SU_q(N)$ and $U_q(N)$) exclusively as CQG-algebras. (Our exposition of  $SU_q(N)$ and $U_q(N)$ in Subsection \ref{ssec_suqn} follows Koelinks exposition of $U_q(N)$ in \cite{koelink91}) {\bf We shall write $\cG$ to mean $\cR(\cG)$}; the $C^*$-algebra $C(\cG)$ does not occur.

In Sch{\"u}rmann's theory, $*$-representations of $\cG$ are by (possibly unbounded) operators on pre-Hilbert spaces -- and (thinking, for instance, about the quantum groups constructed from Lie algebras) this is good so. But representations of CQG-algebras are all by bounded operators. After we introduced $SU_q(N)$ and $U_q(N)$ in Subsection \ref{ssec_suqn}, we will consequently complete the occurring pre-Hilbert spaces. But before that, the discussion (some of it new) is quite general; it would be a pity to write it down in a way that is not directly quotable from future papers just because we completed too early. (See also Footnote \ref{FN} in Subsection \ref{ssec_gaussian}.)

Last but surely not least, we emphasize already now that the property of a linear functional on $\cG$ to be a generating functional, makes reference only to the $*$-algebra structure of $\cG$ and to the counit $\e$; no comultiplication is needed and no antipode is needed. Throughout these notes, with one exception (in the definition of quantum subgroup in Subsection \ref{sec_Stqsub}), we ignore the antipode. The comultiplication, though not strictly necessary, comes in useful in a couple of places. (See the last sentence in Subsection \ref{sec_S_trip} and the proof of Proposition \ref{prop_counterexample}.) Therefore, we carry it along.

\newpage

%%%%%%%%%%%%%%%%%%%%%%%%%%%%%%%%%%%%%%%%%%%%%%%%%%%%%%%%%%%%%%%%%%%%%%%%%%%%%%
\addtocontents{toc}{\vspace{1.5ex}}
\section{Preliminaries} \label{sec_prelim}
%%%%%%%%%%%%%%%%%%%%%%%%%%%%%%%%%%%%%%%%%%%%%%%%%%%%%%%%%%%%%%%%%%%%%%%%%%%%%%

%%%%%%%%%%%%%%%%%%%%%%%%%%%%%%%%%%%%%%%
\subsection{Generating functionals of L\'evy processes} \label{sec_gen_func}
%%%%%%%%%%%%%%%%%%%%%%%%%%%%%%%%%%%%%%%

Classical L{\'e}vy processes, in the most general formulations, take values in a group or even only in a semigroup. The latter allows to say what the increments of the process are, and to define the most important property a L{\'e}vy process has to satisfy: Namely, to have {\it independent increments}. The passage from the classical world to the noncommutative world is frequently made by {\it dualization}: Replace spaces (for instance, probability spaces or semigroups) by $*$-algebras of complex functions on these spaces, look how all the structures of the spaces are reflected by additional structures of these function algebras and take these as axioms, but in the end forget that the algebras are commutative. Probability spaces become $*$-algebras with a state, semigroups become $*$-bialgebras, and semigroup-valued random variables on a probability space become, under dualization, $*$-algebra homomorphisms from the $*$-bialgebra into the quantum probability space. A {\it quantum L{\'e}vy process} is, therefore, a family of such homomorphisms fulfilling certain extra properties.

Fortunately, quantum L{\'e}vy processes are determined by their so-called {\it generating functionals} and, fortunately, the only scope of these notes is to examine the structure of such generating functionals on the quantum groups $SU_q(N)$ and $U_q(N)$. 
We do not really need to know what a quantum L{\'e}vy process is, but only the properties that make a functional a generating functional. For details about quantum L\'evy processes we refer the reader to 
Sch{\"u}rmann \cite{schurmann93}, Meyer \cite[Chapter VII]{meyer93}, Franz \cite{franz06} or the recent 
survey \cite{franz16}. An abstract reconstruction of a L{\'e}vy process from a generating functional can be found in Sch{\"u}rmann \cite{schurmann93} (basically, but in a more general context, \cite[Proposition 1.9.5]{schurmann93} and the discussion preceding it and summarized as a theorem in \cite[Corollary 1.9.7]{schurmann93}), or (reducing it to Bhat and Skeide \cite{BhSk00}) in Skeide \cite{Ske05b}. A proof that the reconstruction can be done on a Fock space, can be found in Sch{\"u}rmann \cite[Theorem 2.3.5]{schurmann93} (using quantum stochastic calculus) and in Sch{\"u}rmann, Skeide, and Volkwardt \cite{ScSV10} (using techniques as for Trotter products and Arveson systems \cite{Arv89}).

Recall from the conventions that we opted to set up compact quantum groups the algebraic way (as CQG-algebras), as these meet better Sch{\"u}rmann's setting. Recall, too, that we opted to keep these preliminaries as applicable as possible also for quantum (semi)groups that are not necessarily (locally) compact. Therefore, in the first few subsections (until \ref{ssec_gaussian}), representations are by (not necessarily bounded) operators on pre-Hilbert spaces. Only after we
{introduce}
$U_q(N)$ and $SU_q(N)$ in Subsection \ref{ssec_suqn}, we always assume pre-Hilbert spaces completed.

\medskip
An (algebraic) \hl{quantum semigroup} $\cG$ is a \hl{$*-$bialgebra}, that is, $\cG$ is a complex 
involutive unital algebra $\cG$ with a unital $*$-homomorphism 
$\Delta\colon\cG\rightarrow\cG\otimes\cG$ (the \hl{comultiplication}) from $\cG$ 
into the algebraic tensor product $\cG\otimes\cG$ and a unital $*$-homomorphism 
$\e\colon\cG\rightarrow\C$ (the \hl{counit}) from $\cG$ into the complex numbers fulfilling \hl{coassociativity}
$$ (\Delta\otimes\id_\cG) \circ \Delta = (\id_\cG\otimes\Delta) \circ \Delta$$
and the \hl{counit property}
$$ (\e\otimes\id_\cG)\circ \Delta = \id_\cG = (\id_\cG\otimes\e) \circ \Delta.$$
An (algebraic) quantum {\bf group} would be a Hopf $*$-algebra, that is, a $*$-bialgebra with an additional structure, the so-called {\it antipode}. As we do not need the antipode (its definition would require to introduce the multiplication map on $\cG\otimes\cG$ or the Sweedler notation), we do not address it in this paper.

The comultiplication, instead, together with the induced notion of convolution we discuss in a second, is, though not strictly necessary for our results, frequently useful. In any case, for the basic understanding why generating functionals occur as infinitesimal generators of convolution semigroups, the comultiplication is indispensable.

Using the comultiplication, we define the 
\hl{convolution} of two linear functionals $\varphi$ and $\psi$ on $\cG$ as
$$
\varphi\star\psi
~:=~
(\varphi\otimes\psi)\circ\Delta.
$$
By coassociativity, the product $\star$ turns the dual $\cG'={\mathcal L}(\cG,\C)$ into an (associative) algebra, and by the counit property the counit 
$\e$ is a unit for $\cG'$.

The {\it fundamental theorem of coalgebras} asserts that every element of a coalgebra is contained in a finite-dimensional subcoalgebra; see, for instance, Abe \cite[Corollary 2.2.14(i)]{Abe80}. This can be used easily to show that there is a complex functional calculus for entire functions, including (pointwise) differentiation and integration with respect to parameters. Therefore, there is a \hl{convolution exponential} $e_\star^\psi:=\sum_{n=0}^\infty\frac{\psi^{\star n}}{n!}$ and the formula $\varphi_t=e_\star^{t\psi}$ establishes a one-to-one correspondence between (pointwise) continuous convolution semigroups $(\varphi_t)_{t\ge0}$ and their infinitesimal generators $\psi=\frac{d}{dt}\big|_{t=0}\varphi_t$.

Like a classical L{\'e}vy process is determined by a convolution semigroup of probability measures, a quantum L{\'e}vy process is determined by a convolution semigroup of states. So, we are led to the question, when does a convolution semigroup $\varphi_t=e_\star^{t\psi}$ 
consist of \hl{states}, that is, of \hl{positive} ($\varphi_t(b^*b)\ge0$) 
\hl{normalized} ($\varphi_t(\U)=1$) linear functionals? By looking at $\frac{d}{dt}\big|_{t=0}\varphi_t$,   we easily check that if all $\varphi_t$ are states, then $\psi$ satisfies the following conditions:
\begin{itemize}
\item
$\psi$ is \hl{hermitian}, that is, $\psi(b^*)=\overline{\psi(b)}$.
\item
$\psi$ is \hl{$0$-normalized}, that is, $\psi(\U)=0$.
\item
$\psi$ is \hl{conditionally positive}, that is, $\psi(b^*b)\ge0$ whenever 
$b\in\ker\e$.
\end{itemize}
The best way to show that these conditions are also sufficient for that the convolution semigroup generated by $\psi$ consists of states, is by 
reconstructing from $\psi$ a L\'evy process that has this semigroup as convolution semigroup, see Sch{\"u}rmann \cite[Theorem 2.3.5]{schurmann93} or Sch{\"u}rmann, Skeide, and Volkwardt \cite{ScSV10}. We, therefore, say:

\begin{defi} \label{genfundef}
A \hl{generating functional} (for a quantum L{\'e}vy process) on a $*$-bialgebra $\cG$ is a linear functional 
$\psi\colon\cG\rightarrow\C$ that is hermitian, $0$-normalized, and conditionally 
positive.
\end{defi}

We see, to check if a linear functional on $\cG$ is a generating functional,  we only refer to the $*$-algebra structure of $\cG$ (in terms of positivity and being hermitian) and to the counit $\e$ (in terms of its kernel $\ker\e$); there is no reference to the comultiplication. The kernel of $\e$, on the other hand, and related structures are so important that we introduce already now the corresponding notation we are going to use throughout. We define for all $n\ge1$ 
$$K_n:=\ls(\ker\e)^n,$$ 
the span of all products of $n$ elements in $\ker\e$. (In particular, $K_1=\ker\e$.) We put $K_0:=\cG$. We also put 
$K_\infty:=\bigcap_{n\ge1}K_n$. Obviously, all $K_n$ ($0\le n\le\infty$) are $*$-ideals and $K_n\supset K_{n+1}\supset K_\infty$.

%%%%%%%%%%%%%%%%%%%%%%%%%%%%%%%%%%%%%%%
\medskip
\subsection{Sch{\"u}rmann triples and the projection $\cP$} \label{sec_S_trip}
%%%%%%%%%%%%%%%%%%%%%%%%%%%%%%%%%%%%%%%

Like with other infinitesimal generators of positive-type semigroups (generators of semigroups of positive definite kernels, semigroups of completely positive maps), there is a sort of GNS-construction also for generating functionals of quantum L{\'e}vy processes, their so-called {\it Sch{\"u}rmann triple}. Since every generating functional has a Sch{\"u}rmann triple, finding all Sch{\"u}rmann triples will provide us with all generating functionals; this will be our strategy.

Following Sch\"{u}rmann \cite[Section 2.3]{schurmann93}, for a conditionally positive functional $\psi$ on $\cG$ we define a positive sesquilinear form on $K_1$ by $(a,b)\mapsto\psi(a^*b)$, we divide out the null-space ${\mathcal N}_\psi:=\{b\in K_1\colon\psi(b^*b)=0\}$, and we form the quotient space $D_\psi:=K_1/{\mathcal N}_\psi$ which inherits a pre-Hilbert space structure by defining the inner product $\langle a+{\mathcal N}_\psi,b+{\mathcal N}_\psi\rangle:=\psi(a^*b)$. 

Let $\eta_\psi\colon\cG\rightarrow D_\psi$ denote the quotient map $ b\mapsto b+{\mathcal N}_\psi$ ($b\in K_1$) extended by $\eta_\psi(\U):=0$ to all of $\cG$. This makes sense, because of the following crucial fact:

\begin{ob} \label{ideob}
Every $b\in\cG$ can be written uniquely as 
$k+\U\lambda$ with $k\in K_1$. Indeed, necessarily $\lambda=\e(b)$. So, if we define the canonical projection onto $K_1$ as $\id-\U\e\colon b\mapsto b-\U\e(b)$, then, necessarily, $k=(\id-\U\e)(b)$. (Similar considerations are important in the discussion around Lemma \ref{PElem}.)
\end{ob}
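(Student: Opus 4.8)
The statement is immediate once we exploit that the counit $\e$ is a \emph{unital} linear functional, so that $\e(\U)=1$. The plan is to test any putative decomposition against $\e$. Suppose $b=k+\U\lambda$ with $k\in K_1=\ker\e$ and $\lambda\in\C$. Applying $\e$ to both sides and using $\e(k)=0$ together with $\e(\U)=1$ gives $\e(b)=\lambda$. Hence $\lambda$ is forced to be $\e(b)$, and consequently $k=b-\U\e(b)$ is forced as well; this already establishes uniqueness.

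For existence it only remains to check that the candidate $k:=b-\U\e(b)$ really lies in $\ker\e$: indeed $\e\big(b-\U\e(b)\big)=\e(b)-\e(\U)\e(b)=0$, again because $\e(\U)=1$. Thus $b=k+\U\e(b)$ with $k\in K_1$ is the desired decomposition, it is unique by the previous paragraph, and the assignment $b\mapsto b-\U\e(b)$, i.e.\ the map $\id-\U\e$, is the linear projection of $\cG$ onto $K_1$ along $\C\U$. Equivalently, $\cG=K_1\oplus\C\U$ as vector spaces.

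There is no genuine obstacle here: the entire content is that a unital functional splits its domain into its kernel and the line through the unit. I would nonetheless record it explicitly, since exactly this splitting (and the projection $\id-\U\e$) is what makes $\eta_\psi$ well defined on all of $\cG$ and is reused later in the discussion around Lemma \ref{PElem}.
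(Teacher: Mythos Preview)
Your proof is correct and follows exactly the same reasoning the paper sketches inside the observation itself: apply $\e$ to force $\lambda=\e(b)$, then observe that $b-\U\e(b)\in\ker\e$. The paper simply leaves the existence check implicit, while you spell it out; there is no substantive difference.
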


Of course, $\eta_\psi(\cG)=\eta_\psi(K_1)=D_\psi$. As usual, an application of Cauchy-Schwarz inequality shows that for each $a\in\cG$ we 
(well)define%
\footnote{
Of course, the prescription does determine a map $\pi(a)$ -- provided such map exists; the question with ``defining'' by such determining
{prescriptions},
is whether the map actually {\bf does} exists. The term ``\hl{we (well)define}'' (and analogous variants) is shorthand for ``we {\bf attempt} to define a map by the following prescription, and it turns out that such map {\bf is} well-defined''.
}
a map $\pi_\psi(a)\colon\eta_\psi(b)\mapsto\eta_\psi(ab)$ $(b\in K_1$) on $D_\psi$. 
One easily verifies that $b\mapsto\pi_\psi(b)$ defines a unital $*$-representation $\pi_\psi\colon\cG\rightarrow{\mathcal L}^a(D)$ (the set of 
all adjointable maps $D\rightarrow D$). Using again that $b=(\id-\U\e)(b)+\U\e(b)$ for all $b\in\cG$, we see that 
$\eta_\psi$ is a \hl{$\pi_\psi$-$\e$-cocycle}, that is,
\begin{equation}\label{*}
\eta_\psi(ab) ~=~ \pi_\psi(a)\eta_\psi(b)+\eta_\psi(a)\e(b)
\end{equation}
for all $a,b\in\cG$. Taking into account that $\e$ is fixed, we frequently say $\eta$ is a cocycle \hl{with respect to $\pi$}. By construction, $\eta_\psi$ fulfills 
$\langle\eta_\psi(a),\eta_\psi(b)\rangle~=~\psi(a^*b)$ for  $a,b\in K_1$. Since $\psi(\U)=0$, this is the same as
\begin{equation}\label{**}
\langle\eta_\psi(a),
\eta_\psi(b)\rangle~=~\psi(a^*b)-\psi(a^*)\e(b)-\e(a^*)\psi(b)
\end{equation}
for all $a,b\in\cG$. So, the $\e$-$\e$-2-coboundary of $\psi$ is the map
{$(a,b)\mapsto-\langle\eta_\psi(a^*),
\eta_\psi(b)\rangle$}.
We say the ($\e$-$\e$-)2-coboundary of $\psi$ is \hl{$\eta_\psi$-induced}. Therefore:

\lf
\begin{defi} \label{def_triple}
A \hl{Sch\"{u}rmann triple} is a triple $(\pi,\eta,\psi)$ 
consisting of 
\begin{itemize}
 \item 
a unital $*$-representation $\pi\colon\cG \to {\mathcal L}^a(D)$ 
on some pre-Hilbert space $D$,  

\item
a $\pi$-$\e$-cocycle $\eta\colon \cG \to D$, and 

\item 
a linear functional $\psi\colon\cG\to\C$ whose $\e$-$\e$-$2$-coboundary is $\eta$-induced. 
\end{itemize}
We say, $(\pi,\eta,\psi)$ is \hl{cyclic} if $\eta$ is \hl{cyclic}, that is, if $\eta(\cG)=D$.
\end{defi}

Every generating functional is part of the Sch{\"u}rmann triple $(\pi_\psi,\eta_\psi,\psi)$, the output of the GNS-construction preceding Definition \ref{def_triple}. We say, $(\pi_\psi,\eta_\psi,\psi)$ is \hl{the} Sch\"{u}rmann triple 
associated with the generating functional $\psi$. Note that the Sch{\"u}rmann triple of $\psi$ is cyclic by construction. Moreover, if  $\psi$ is part of any other  Sch{\"u}rmann triple, say, $(\pi,\eta,\psi)$, then $\eta_\psi(b)\mapsto\eta(b)$ defines an isometry $v\colon D_\psi\rightarrow D$, which intertwines the representation in the sense that $\pi(a)v=v\pi_\psi(a)$ for all $a\in\cG$. If also $(\pi,\eta,\psi)$ is cyclic, then $v$ is even unitary and $\pi= v\pi_\psi v^*$. (Recall that a unitary between pre-Hilbert spaces has an adjoint, namely, its inverse.) In this sense, cyclic Sch{\"u}rmann triples are determined by $\psi$ up to unitary equivalence.

It is noteworthy that the construction of a Sch{\"u}rmann triple for a generating functional $\psi$ went $\psi$ $\leadsto$ ($D_\psi$ and) $\eta_\psi$ $\leadsto$ $\pi_\psi$. For finding all generating functionals, we rather proceed the opposite way:

\lf
\begin{proc}{~} \label{proc}

\begin{enumerate}
\item
Find all $*$-representations $\pi$;

\item
find all cocycles $\eta$ with respect to $\pi$;

\item
find all linear functionals $\psi$ with $\eta$-induced $2$-coboundaries;

\item
exclude all those that are not generating functionals.
\end{enumerate}
\end{proc}

\noindent
Since every generating functional has a Sch{\"u}rmann triple, in that way we surely will find all generating functionals. (And even if, for some quantum semigroup, we should not succeed in full generality for some of the steps, the procedure still promises to be a rich source for generating functionals; this way was quite successful for quantum L{\'e}vy processes on the Lie algebra $s\ell(2)$ in Accardi, Franz, and Skeide \cite{AFS02}.)

\begin{remark} \label{cycrem}
In Definition \ref{def_triple}, the condition that $\pi$ is unital (so that $\pi(\U)=\id_D$) is just for convenience. Indeed, if $\pi$ is not unital, then $\pi(\U)$ is still a projection, and \eqref{*} shows $\pi(\U)\eta(\cG)=\eta(\cG)$, so we may restrict to $\pi(\U)D$. By the same computation, we see that we may actually restrict to the invariant subspace $\eta(\cG)$, making the triple cyclic. However, while nonunital $\pi$ is usually just annoying, for the purpose to proceed as $\pi$ $\leadsto$ $\eta$ $\leadsto$ $\psi$ it is very convenient, for formal reasons (see the discussion following Proposition \ref{Stsumprop} and the beginning of Section \ref{sec_class}), not to have to worry about cyclicity of $\eta$.
\end{remark}

To follow Procedure \ref{proc}, we have to face the following questions, which are all related to each other:
\begin{enumerate}
\item \label{Q1}
Can a given pair $(\pi,\eta)$ be completed to a Sch{\"u}rmann triple?

\item \label{Q2}
If $(\pi,\eta,\psi)$ is a Sch{\"u}rmann triple, how far can $\psi$ be away from a generating functional?

\item \label{Q3}
Given two generating functionals $\psi_1$ and $\psi_2$ such that both $(\pi,\eta,\psi_1)$ and $(\pi,\eta,\psi_2)$ are Sch{\"u}rmann triples, how different can $\psi_1$ and $\psi_2$ be?
\end{enumerate}

\noindent
All three questions root in the single question, given a pair $(\pi,\eta)$, what is fixed by the information/wish that a functional $\psi$ completes the pair to a Sch{\"u}rmann triple? The coboundary property in \eqref{**} fixes the values of $\psi$ on $K_2$ (by the line preceding \eqref{**}) and it fixes $\psi$ to be $0$-normalized (simply plug in $a=\U=b$ into \eqref{*} and \eqref{**} to see that $\eta(\U)=0$ and, hence, $\psi(\U)=0$).

The first question is about existence: Does the prescription $ab\mapsto\langle\eta(a^*),\eta(b)\rangle$ (well)define a linear map on $K_2$ (that may, then, be extended further, taking also into account $\psi(\U)=0$, to all of $\cG$)? It has a nontrivial answer, which depends on the quantum semigroup in question. We come back to it, later.

The second and third question are about uniqueness. In both cases there exists a linear functional on $\cG$ fulfilling \eqref{**}, but we wish to know more about the remaining degrees of freedom. Note that a linear functional $\psi$ on $\cG$ satisfying \eqref{**}, is, in particular, conditionally positive.
So, the only question
to be answered for knowing if $\psi$ is a generating functional,
is whether it is hermitian (this is automatic on $K_2$).

We are faced with a simple problem of linear algebra: Given a linear functional on $K_2$ in how many ways can it be extended to a linear functional on $\cG$? In particular, given a Sch{\"u}rmann triple $(\pi,\eta,\psi)$, can we choose the extension to $\cG$ of $\psi\upharpoonright K_2$ in such a way that it becomes a generating functional (Question \eqref{Q2}) and in how many different ways is this possible (Question  \eqref{Q3})?

Let us fix some notation.

\begin{defi}
Let $V$ be a vector space and let $K$ be a subspace of $V$. We say, a family $E$ of vectors (necessarily in $V\backslash K$) is a \hl{basis extension from $K$ to $V$} if $E$ extends one basis of $K$ (and, therefore, all bases of $K$) to a basis of $V$. Equivalently (see also Observation \ref{ideob}), $E$ is a basis extension if  every element of $V$ can be written as the sum of a unique linear combination of vectors in $E$ and a unique element $k\in K$.

(Despite basis extensions $E$
{being},
like bases, families of vectors, frequently we shall be sloppy and consider $E$ just as a set. The only significant difference occurs if the family $E$ has double elements -- in which case it would not be a basis extension.)
\end{defi}

Without proof we state the following lemma from linear algebra.

\begin{lemma} \label{PElem}
For each basis extension $E$ from $K$ to $V$ there is exactly one projection $\cP_E$ from $V$ onto $K$ fulfilling {\normalfont$\ker\cP_E=\ls\,E$}. This projection has the form
$$
\cP_E
~=~
\id-\sum_{\vk\in E}\vk\e'_\vk,
$$
where the linear functionals $\e'_\vk$ are (well)defined by putting $\e'_\vk(\vk')=\delta_{\vk,\vk'}$ for $\vk'\in E$ and $\e'_\vk(k)=0$ for $k\in K$.
\end{lemma}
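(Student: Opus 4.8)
The plan is to verify directly that the stated formula $\cP_E = \id - \sum_{\vk\in E}\vk\,\e'_\vk$ does what is claimed, namely that it is a well-defined linear projection from $V$ onto $K$ with kernel $\ls\,E$, and that it is the only such projection. The starting point is the characterization of a basis extension: every $v\in V$ has a \emph{unique} decomposition $v = \sum_{\vk\in E}\lambda_\vk(v)\,\vk + k(v)$ with $k(v)\in K$ and only finitely many $\lambda_\vk(v)\neq 0$. Uniqueness of this decomposition is precisely what makes the coefficient functionals $\lambda_\vk$ well-defined and linear, and it is exactly these that the lemma calls $\e'_\vk$; so the first step is to observe that $\e'_\vk = \lambda_\vk$ by checking the two defining conditions $\e'_\vk(\vk')=\delta_{\vk,\vk'}$ (immediate from uniqueness applied to $v=\vk'$) and $\e'_\vk(k)=0$ for $k\in K$ (immediate from uniqueness applied to $v=k$, whose decomposition has all $\lambda$-coefficients zero).

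Next I would compute $\cP_E(v) = v - \sum_{\vk\in E}\e'_\vk(v)\,\vk = k(v)$, which lands in $K$ by construction; this shows $\cP_E$ maps $V$ into $K$. Restricting to $k\in K$ gives $\cP_E(k)=k$ since all $\e'_\vk(k)=0$, so $\cP_E$ is the identity on $K$; combined with the previous point this yields $\cP_E^2=\cP_E$ and $\mathrm{ran}\,\cP_E=K$, i.e.\ $\cP_E$ is a projection onto $K$. For the kernel: $\cP_E(v)=0$ means $v = \sum_{\vk}\e'_\vk(v)\vk \in \ls\,E$, and conversely any element of $\ls\,E$ has trivial $K$-part by uniqueness of the decomposition, hence is killed by $\cP_E$; so $\ker\cP_E = \ls\,E$. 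Finiteness of all the sums involved is guaranteed because each $v$ has only finitely many nonzero $\e'_\vk(v)$.

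For uniqueness, suppose $\cQ$ is any projection from $V$ onto $K$ with $\ker\cQ = \ls\,E$. Then $V = K \oplus \ls\,E$ as the range-kernel decomposition of $\cQ$, and $\cQ$ is determined by this decomposition: it is the identity on $K$ and zero on $\ls\,E$. But $\cP_E$ has exactly the same behaviour (identity on $K$ by the paragraph above, zero on $\ls\,E$ since that is its kernel), and since $V = K + \ls\,E$ the two agree everywhere, so $\cQ = \cP_E$.

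None of this is hard — it is a routine unwinding of definitions, which is presumably why the paper states it without proof. The only point that merits a moment's care is making sure the decomposition $v = \sum\e'_\vk(v)\vk + k(v)$ is genuinely \emph{finite} and \emph{unique} for every $v$, so that the functionals $\e'_\vk$ are well-defined on all of $V$ (not just on $\ls\,E$) and the infinite-looking sum $\sum_{\vk\in E}\vk\,\e'_\vk$ makes sense as an operator; this is exactly the content of the definition of basis extension and of Observation \ref{ideob} in the special case $K=K_1$, $E=\{\U\}$, so there is nothing new to prove.
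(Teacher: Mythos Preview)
Your proof is correct. The paper explicitly states this lemma without proof (``Without proof we state the following lemma from linear algebra''), so there is no argument to compare against; your routine verification via the unique decomposition $v=\sum_{\vk\in E}\lambda_\vk(v)\vk+k(v)$ is exactly the kind of unwinding the authors had in mind when they omitted it.
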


We follow Skeide \cite{skeide99} and improve \cite{skeide99} quite a bit. Obviously, $\id-\U\e$, our projection onto $K_1=\ker\e$, is the projection associated with the basis extension $\{\U\}$ from $K_1$ to $\cG$, and $\e'_\U$ is just $\e$. By the lemma,
$$
{\cP_{\{1\}}}
~=~
\id-\U\e
$$
is the unique projection onto $K_1$ that has kernel $\U\C$. A linear functional $\psi$ is $0$-normalized if and only $\psi\circ(\id-\U\e)=\psi$.

Let $E_1$ be a basis extension from $K_2$ to $K_1$, so that $\{\U\}\cup E_1$ is a basis extension from $K_2$ to $\cG$. A moments thought (taking also into account that $\{\U\}$, $K_1$, and $K_2$ are $*$-invariant) shows that we may (and, usually, will) assume that $E_1$ is \hl{hermitian}, that is, $E_1$ consists of self-adjoint elements. Then, also the functionals $\e'_\vk$ ($\vk\in E_1$) are hermitian. The associated (hermitian) projection from $\cG$ onto $K_2$ is
\begin{equation} \label{P-form}
\cP
~:=~
\id-\U\e-\sum_{\vk\in E_1}\vk\e'_\vk,
~=~
\cP_{E_1}\circ(\id-\U\e).
\end{equation}
Since it is the linear hull of the elements in a basis extension that determines the projection, we see that among all (hermitian) projections from $\cG$ onto $K_2$, the projections that have the preceding form are exactly those that satisfy the additional condition that $\ker\cP\ni\U$. For such a projection $\cP$, a linear functional satisfying $\psi\circ\cP=\psi$, hence, $\psi\circ(\id-\U\e)=\psi$ is $0$-normalized. And since $\cP$ is hermitian, $\psi\circ\cP=\psi$ is hermitian if and only if $\psi$ is hermitian on $K_2$.

\begin{defi}
A Sch{\"u}rmann triple $(\pi,\eta,\psi)$ for $\psi$ is \hl{trivial} if $\pi=0$. (Then $D=\{0\}$ and $\eta=0$.)
\end{defi}

Obviously, the Sch{\"u}rmann triple of a generating functional $\psi$ is trivial if and only if $\psi\upharpoonright K_2=0$. Generating functionals with trivial Sch{\"u}rmann triple are also called \hl{drifts}. The following statements are fairly obvious; they answer Questions \eqref{Q2} and \eqref{Q3}:
\begin{itemize}
\item
The drifts are precisely the real linear combinations of the functionals $\e'_\vk$ ($\vk\in E_1$).

\item
Two linear functionals $\psi_1$ and $\psi_2$ coincide on $K_2$ (this includes, in particular, two functionals completing the same pair $(\pi,\eta)$ to a Sch{\"u}rmann triple) if and only if they differ by a linear combination of the functionals $\e$, $\e'_\vk$ ($\vk\in E_1$).

\item
In particular, two generating functionals complete the same pair $(\pi,\eta)$ to a Sch{\"u}rmann triple if and only if they differ by a drift.
\end{itemize}
So, the answer to Question \eqref{Q1} is positive, if (and only if) for the pair $(\pi,\eta)$ Equation \eqref{**} (well)defines a linear functional on $K_2$. Given such a functional, we extend it to $\cG$ by putting it $0$ at $\U$ and at all $\vk\in E_1$ (so that $\psi\circ\cP=\psi$),
obtaining a generating functional $\psi$ turning $(\pi,\eta,\psi)$ into a Sch{\"u}rmann triple. Invoking our answer to Question \eqref{Q3}, we obtain all generating functionals doing the same job, by adding to $\psi$ any drift.

To get a handier formulation, we repeat a statement from the end of \cite[Subsection 2.2]{skeide99}.

\begin{prop}
For every generating functional $\psi$ on $\cG$ that is not a drift there is a projection $\cP$ of the form \eqref{P-form} fulfilling
$$
\psi\circ\cP
~=~
\psi.
$$
Every other generating functional $\psi'$ having the same pair $(\pi,\eta)$ in its Sch{\"u}rmann triple, is obtained from $\psi$ as $\psi'=\psi\circ\cP'$ where $\cP'$ is the some projection fulfilling $\psi'\circ\cP'=\psi'$.
\end{prop}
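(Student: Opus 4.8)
The plan is to treat the proposition as a piece of linear algebra on the chain $K_2\subseteq K_1\subseteq\cG$, combining Lemma \ref{PElem} with the three bullet points immediately preceding it. The only point that is not pure bookkeeping is the observation that ``$\psi$ is not a drift'' is exactly the hypothesis one needs in order to fit a hermitian linear complement of $K_2$ in $K_1$ inside $\ker\psi$.

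For the existence of $\cP$, I would start from $\psi\upharpoonright K_2\neq0$ (which holds precisely because $\psi$ is not a drift) and fix $k_0\in K_2$ with $\psi(k_0)\neq0$. Then every $b\in K_1$ satisfies $b-\tfrac{\psi(b)}{\psi(k_0)}\,k_0\in\ker\psi\cap K_1$, so $K_1=(\ker\psi\cap K_1)+K_2$. Since $\psi$ is hermitian, $\ker\psi$ is $*$-invariant, and so is $K_2$; hence I may choose a $*$-invariant linear complement $W$ of $\ker\psi\cap K_2$ inside $\ker\psi\cap K_1$, and then $W\cap K_2=\{0\}$ and $W+K_2=(\ker\psi\cap K_1)+K_2=K_1$, so $W\oplus K_2=K_1$ with $W\subseteq\ker\psi$. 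Splitting vectors into real and imaginary parts shows $W$ has a basis $E_1$ of self-adjoint vectors; then $\{\U\}\cup E_1$ is a hermitian basis extension from $K_2$ to $\cG$, and the projection $\cP:=\cP_{\{\U\}\cup E_1}$ given by Lemma \ref{PElem} is exactly of the form \eqref{P-form} (with $\e'_\U=\e$). From $\cP=\id-\U\e-\sum_{\vk\in E_1}\vk\e'_\vk$, together with $\psi(\U)=0$ and $\psi(\vk)=0$ for $\vk\in E_1\subseteq W\subseteq\ker\psi$, one reads off $\psi\circ\cP=\psi$.

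For the second assertion, let $\psi'$ be a generating functional having the same pair $(\pi,\eta)$ as $\psi$. By the last bullet point, $d:=\psi'-\psi$ is a drift; in particular $\psi'\upharpoonright K_2=\psi\upharpoonright K_2\neq0$, so $\psi'$ too is not a drift, and the first part applied to $\psi'$ produces a projection $\cP'=\id-\U\e-\sum_{\vk\in E_1'}\vk\e'_\vk$ of the form \eqref{P-form} with $\psi'\circ\cP'=\psi'$, i.e.\ with $\psi'(\vk)=0$ for all $\vk\in E_1'$. I then want to show that this same $\cP'$ satisfies $\psi\circ\cP'=\psi'$. Indeed $\psi-\psi\circ\cP'=\sum_{\vk\in E_1'}\psi(\vk)\,\e'_\vk$, while $d$, being a drift, is by the first bullet point a real combination of the $\e'_\vk$ with $\vk\in E_1'$, whose coefficients are recovered by evaluation at the $\vk$, so $d=\sum_{\vk\in E_1'}d(\vk)\,\e'_\vk$. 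Since $\psi(\vk)=\psi'(\vk)-d(\vk)=-d(\vk)$, this gives $\psi-\psi\circ\cP'=-d$, hence $\psi\circ\cP'=\psi+d=\psi'$.

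I expect the existence step to be the only real obstacle: one must notice that, for a projection $\cP$ of the form \eqref{P-form}, the requirement $\psi\circ\cP=\psi$ says precisely that $\ker\cP\cap K_1$ is a complement of $K_2$ in $K_1$ contained in $\ker\psi$, and that such a complement exists if and only if $K_2\not\subseteq\ker\psi$, i.e.\ $\psi\upharpoonright K_2\neq0$ --- which is the drift/non-drift dichotomy. The rest is the remark that, once $d=\psi'-\psi$ is identified as a drift, the two conditions $\psi'\circ\cP'=\psi'$ and $\psi\circ\cP'=\psi'$ unwind to the identical system $\psi'(\vk)=0$, $\vk\in E_1'$, so the projection found in the first part automatically does both jobs.
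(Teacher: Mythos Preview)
Your proof is correct and follows the linear-algebraic framework the paper sets up in the discussion preceding the proposition (Lemma~\ref{PElem}, the bullet points, and the remarks on hermitian basis extensions). The paper itself does not give a self-contained proof here; it presents the proposition as a ``handier formulation'' of the preceding discussion and cites \cite{skeide99}. Your argument fills in exactly the step the paper leaves implicit: given a non-drift $\psi$, one must \emph{produce} a hermitian basis extension $E_1$ contained in $\ker\psi$, and your construction of the $*$-invariant complement $W$ of $\ker\psi\cap K_2$ inside $\ker\psi\cap K_1$ does this cleanly, with the observation $K_1=(\ker\psi\cap K_1)+K_2$ (valid precisely because $\psi\upharpoonright K_2\neq0$) being the key point. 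The second part is handled just as the paper intends, via the third bullet point and direct evaluation on the $\vk\in E_1'$.

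One small remark on presentation: when you say ``choose a $*$-invariant linear complement $W$'', you might note explicitly that this is possible because both $\ker\psi\cap K_1$ and $\ker\psi\cap K_2$ are $*$-invariant, so one can pass to self-adjoint parts, take a real complement there, and complexify. You allude to this in the next sentence, but the logical order (first get $W$ $*$-invariant, then extract a self-adjoint basis) could be made tighter.
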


\medskip
\begin{cor} \label{pPpcor}
If $(\pi,\eta,\psi)$ is a nontrivial Sch{\"u}rmann triple for $\psi$, then $\psi$ is a generating functional if and only if the unique projection $\cP$ onto $K_2$ such that $\psi\circ\cP=\psi$ is hermitian. 
\end{cor}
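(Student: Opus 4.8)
The plan is to notice that a functional sitting inside a nontrivial Sch\"urmann triple is automatically $0$-normalized, conditionally positive and hermitian on $K_2$, so that the corollary becomes a statement about transporting the involution through $\cP$. Throughout I use that $\psi\circ\cP=\psi$ is equivalent to $\psi$ vanishing on $\ker\cP$, and I may assume $\psi$ is not a drift (equivalently $\psi\upharpoonright K_2\neq0$), since the statement already presupposes a well-defined $\cP$.

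First I would harvest the free properties from the Sch\"urmann-triple axioms. Putting $a=b=\U$ in \eqref{**}, together with $\eta(\U)=0$ (which \eqref{*} forces), gives $\psi(\U)=0$, so $\psi$ is $0$-normalized. Since $K_1$ is a $*$-ideal, $K_1^{*}=K_1$ and hence $K_2=\ls\{a^{*}b:a,b\in K_1\}$; on this span \eqref{**} reads $\psi(a^{*}b)=\langle\eta(a),\eta(b)\rangle$, whence $\psi(b^{*}b)=\|\eta(b)\|^{2}\ge0$ for $b\in\ker\e$ (conditional positivity) and $\psi\bigl((a^{*}b)^{*}\bigr)=\langle\eta(b),\eta(a)\rangle=\overline{\psi(a^{*}b)}$ (hermiticity of $\psi\upharpoonright K_2$). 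So $\psi$ is a generating functional precisely when $\psi$ is hermitian on all of $\cG$, and it remains to link this to $\cP$.

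The easy direction is ``$\cP$ hermitian $\Rightarrow$ $\psi$ generating'': since $\psi=\psi\circ\cP$, every $\cP b$ lies in $K_2$, $\psi$ is hermitian there and $\cP(b^{*})=(\cP b)^{*}$, we get $\psi(b^{*})=\psi\bigl(\cP(b^{*})\bigr)=\psi\bigl((\cP b)^{*}\bigr)=\overline{\psi(\cP b)}=\overline{\psi(b)}$, so $\psi$ is hermitian and hence, by the previous paragraph, generating. For the converse ``$\psi$ generating $\Rightarrow$ $\cP$ hermitian'' the trick is to build a rival projection out of the involution and appeal to uniqueness: set $\cP^{\dagger}(b):=\bigl(\cP(b^{*})\bigr)^{*}$; one checks routinely that $\cP^{\dagger}$ is a linear projection with range $K_2^{*}=K_2$ fixing $K_2$ pointwise, and $\psi\circ\cP^{\dagger}=\psi$ because $\psi(\cP^{\dagger}b)=\overline{\psi(\cP(b^{*}))}=\overline{\psi(b^{*})}=\psi(b)$ (using $\psi\circ\cP=\psi$ once and hermiticity of $\psi$ twice). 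Since $\psi$ determines the projection onto $K_2$ that it fixes uniquely, $\cP^{\dagger}=\cP$, i.e.\ $\cP(b^{*})=\cP(b)^{*}$ for all $b$, which is exactly hermiticity of $\cP$.

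The step I expect to require real care is this \emph{uniqueness} of the projection onto $K_2$ with $\psi\circ\cP=\psi$, which both the wording of the statement and the converse rely on. Should one prefer not to lean on it, the converse can be made constructive: when $\psi$ is hermitian, $\ker\psi\cap K_1$ is $*$-invariant and, because $\psi$ does not vanish on $K_2$, satisfies $K_2+(\ker\psi\cap K_1)=K_1$; picking a hermitian basis extension $E_1$ of $K_2$ inside $\ker\psi\cap K_1$ (self-adjoint generators exist since a $*$-invariant subspace is the complexification of its self-adjoint part) and setting $\cP:=\cP_{E_1}\circ(\id-\U\e)$ as in Lemma \ref{PElem} yields a hermitian projection onto $K_2$ with $\ker\cP\subseteq\ker\psi$, hence $\psi\circ\cP=\psi$. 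Everything else --- idempotency and range of $\cP^{\dagger}$, the elementary $*$-manipulations --- is routine.
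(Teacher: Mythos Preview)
Your argument is correct and in fact supplies a proof where the paper gives none: the corollary is stated there without proof, as an immediate consequence of the preceding Proposition and the discussion before it. Your easy direction is exactly the paper's remark that for hermitian $\cP$ the condition $\psi\circ\cP=\psi$ forces $\psi$ to be hermitian (given hermiticity on $K_2$). For the converse, the paper's Proposition asserts that every non-drift generating functional admits a hermitian $\cP$ of the form~\eqref{P-form}; your constructive paragraph is precisely a proof of that Proposition, and the key step $K_2+(\ker\psi\cap K_1)=K_1$ is correct (pick $k_0\in K_2$ with $\psi(k_0)\neq0$ and split $a=\tfrac{\psi(a)}{\psi(k_0)}k_0+\bigl(a-\tfrac{\psi(a)}{\psi(k_0)}k_0\bigr)$).

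You are also right to be suspicious of the uniqueness claim. It genuinely fails: a projection onto $K_2$ with $\psi\circ\cP=\psi$ is determined by a complement of $K_2$ contained in $\ker\psi$, and once $K_2\cap\ker\psi\neq\{0\}$ (which is automatic as soon as $\dim K_2\ge2$) there are many such complements. So your $\cP^{\dagger}$ trick, while neat, cannot stand alone; the constructive route is the one that actually works, and the corollary should be read as ``there exists a hermitian projection $\cP$ onto $K_2$ with $\psi\circ\cP=\psi$''. This is also the only direction the paper ever invokes later (e.g.\ in Lemma~\ref{cocaGFlem}). Your caveat that ``nontrivial'' ought to mean $\psi\upharpoonright K_2\neq0$ rather than merely $\pi\neq0$ is likewise well taken, since a non-cyclic triple can have $\pi\neq0$ but $\eta=0$.
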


In order to perform Procedure \ref{proc}, we will have to find a suitable choice for $E_1$. This involves, first, the problem to show that the elements $\vk\in E_1$ are enough to span together with $\U$ and $K_2$ everything and, then, to show that they are linearly independent. While the functionals $\e'_\vk$ cannot be defined in the prescribed way before we actually know that the $\vk$ {\it are} linearly independent, in applications (including the present one, but not only) we actually, first, (well)define the $\e'_\vk$ in a different way and, then, {\it use} them to prove that the $\vk$ are linearly independent.

The way
{we}
will define in Section \ref{sec_gaussian} the $\e'_\vk$ for $SU_q(N)$, also explaining the notation, comes from the observation in Skeide \cite[Example 2.2]{skeide99} that every drift $d$ (like, for instance, $\e_\vk$) can be obtained as derivative of the convolution semigroup of characters $e_\star^{td}$. (Being a drift is exactly what makes this a convolution semigroup of characters.) This is one of  two places in these notes where the comultiplication of $\cG$ is, though not strictly necessary, useful. If we, as we plan, forget about the comultiplication, then just the insight that drifts can be obtained by taking derivatives of (suitably parametrized) families of characters remains.

We summarize (basically, \cite[Example 2.2]{skeide99}):

\begin{prop} \label{Gdelprop}
Let $(\e_\theta)_{\theta\ge0}$ be a family of characters with $\e_0=\e$.
\begin{enumerate}
\item
If $e_\theta$ is pointwise differentiable at $\theta=0$, then $\e'_0$ is an $\e$-$\e$-cocycle.

\item
If $e_\theta$ is pointwise twice differentiable at $\theta=0$, then the $\e$-$\e$-$2$coboundary of $\frac{\e''_0}{2}$ is $\e'_0$-induced.
\end{enumerate}
\end{prop}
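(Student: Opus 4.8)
The plan is to derive both assertions by differentiating the multiplicativity identity $\e_\theta(ab)=\e_\theta(a)\e_\theta(b)$ at $\theta=0$ — once for (1), twice for (2) — and reading off the resulting algebraic identities. First I would fix $a\in\cG$ and observe that $\theta\mapsto\e_\theta(a)$ is an honest $\C$-valued function on $[0,\infty)$, so that ``$\e_\theta$ pointwise (twice) differentiable at $0$'' means precisely that all these scalar functions are (twice) differentiable at $0$; then $\e'_0(a):=\frac{d}{d\theta}\big|_{\theta=0}\e_\theta(a)$ and, in the second case, $\e''_0(a):=\frac{d^2}{d\theta^2}\big|_{\theta=0}\e_\theta(a)$ are well defined and linear in $a$. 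Since each $\e_\theta$ is a $*$-homomorphism, applying $\frac{d}{d\theta}\big|_0$ to $\e_\theta(a^*)=\overline{\e_\theta(a)}$ shows $\e'_0$ is hermitian, i.e.\ $\e'_0(a^*)=\overline{\e'_0(a)}$; this small fact will be needed in (2).

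For part (1): apply $\frac{d}{d\theta}\big|_0$ to $\e_\theta(ab)=\e_\theta(a)\e_\theta(b)$ and use the product rule on the right (legitimate, as it is a product of two scalar functions). With $\e_0=\e$ this yields $\e'_0(ab)=\e'_0(a)\e(b)+\e(a)\e'_0(b)$, which is exactly the cocycle identity \eqref{*} for $\pi=\e$, where $\e\colon\cG\to\C=\mathcal L^a(\C)$ is regarded as a unital $*$-representation on the one-dimensional pre-Hilbert space $\C$. Hence $\e'_0$ is an $\e$-$\e$-cocycle.

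For part (2): applying $\frac{d^2}{d\theta^2}\big|_0$ to the same identity and using the Leibniz rule $(fg)''=f''g+2f'g'+fg''$ gives
$$\e''_0(ab)=\e''_0(a)\e(b)+2\,\e'_0(a)\e'_0(b)+\e(a)\e''_0(b).$$
Now recall (this is how \eqref{**} unwinds, cf.\ the text just after it) that the $\e$-$\e$-$2$-coboundary of a linear functional $\psi$ is the form $(a,b)\mapsto-\bigl(\psi(ab)-\psi(a)\e(b)-\e(a)\psi(b)\bigr)$, and that it is $\eta$-induced, for a cocycle $\eta$ into a pre-Hilbert space with representation $\pi=\e$, precisely when it equals $(a,b)\mapsto-\langle\eta(a^*),\eta(b)\rangle$. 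Putting $\psi=\tfrac12\e''_0$ and substituting the displayed identity, all first-order terms cancel and the coboundary reduces to $-\e'_0(a)\e'_0(b)$. By hermiticity of $\e'_0$ we have $\overline{\e'_0(a^*)}=\e'_0(a)$, so $-\e'_0(a)\e'_0(b)=-\langle\e'_0(a^*),\e'_0(b)\rangle$ in $\C$; since by part (1) $\e'_0$ is a genuine cocycle (w.r.t.\ $\pi=\e$) into the pre-Hilbert space $\C$, this says exactly that the $\e$-$\e$-$2$-coboundary of $\tfrac12\e''_0$ is $\e'_0$-induced.

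I do not anticipate a real obstacle here: the only things to watch are that differentiation is pointwise — so we merely invoke the ordinary product and Leibniz rules for $\C$-valued functions of one real variable — and the bookkeeping of which datum plays the role of the representation ($\pi=\e$) and which of the cocycle ($\e'_0$), together with the hermiticity of $\e'_0$, without which one would land on $-\overline{\e'_0(a^*)}\,\e'_0(b)$ instead of $-\langle\e'_0(a^*),\e'_0(b)\rangle$. It is also worth noting that no semigroup (nor even convolution) structure of $(\e_\theta)_{\theta\ge0}$ is used: a mere curve of characters through $\e$ suffices, consistently with the discussion preceding the proposition that, once the comultiplication is forgotten, what remains is the mechanism ``drifts arise as derivatives of families of characters''.
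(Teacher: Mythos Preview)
Your argument is correct and is the natural one: differentiate the multiplicativity relation once (respectively twice) at $\theta=0$ and read off the cocycle (respectively $2$-coboundary) identity, using hermiticity of $\e'_0$ to rewrite $\e'_0(a)\e'_0(b)$ as $\langle\e'_0(a^*),\e'_0(b)\rangle$. The paper itself gives no proof of this proposition---it is stated as a summary of \cite[Example~2.2]{skeide99}---so there is nothing to compare against beyond noting that your direct differentiation is exactly the expected computation.
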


\noindent
Clearly, $\e'_0$ is $0$ on $K_2\cup\{\U\}$.

\begin{cor} \label{Gdelcor}
Suppose we have
{ a family $(\e'_i)_{i\in I}$ of $\e$-$\e$-cocycles},
all obtained (for suitable families $(\e^i_\theta)_{\theta\ge0}$) as in the proposition, and we have
{ a family $(\vk_i)_{i\in I}$, indexed by the same set $I$},
such that $\e'_j(\vk_i)=\delta_{i,j}$.
{ Then}
the $\vk_i\in K_1\backslash K_2$ are linearly independent and may be extended to a basis extension from $K_2$ to $K_1$.

If, moreover, the $\vk_i$ and $K_2$ generate $K_1$ then the $\vk_i$ are a basis extension from $K_2$ to $K_1$.
\end{cor}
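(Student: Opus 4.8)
\emph{Proof proposal.}

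The plan is to reduce everything to elementary linear algebra in the quotient space $K_1/K_2$, using only that each $\e'_i$ is a linear functional on $\cG$ vanishing on $K_2\cup\{\U\}$ together with the biorthogonality $\e'_j(\vk_i)=\delta_{i,j}$.

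First I would record the needed vanishing. For any $\e$-$\e$-cocycle $\xi$ one has $\xi(ab)=\e(a)\xi(b)+\xi(a)\e(b)$ by \eqref{*}; with $a=b=\U$ this gives $\xi(\U)=0$, and for $a,b\in\ker\e$ it gives $\xi(ab)=0$, hence $\xi\upharpoonright K_2=0$. In particular every $\e'_i$ vanishes on $K_2\cup\{\U\}$ --- this is the remark following Proposition \ref{Gdelprop}, and it shows that for the present corollary only the cocycle property of the $\e'_i$, not their origin as derivatives of families of characters, is actually used. I would also normalize: if some $\vk_i$ fails to lie in $K_1$, replace it by $(\id-\U\e)(\vk_i)\in K_1$; since $\e'_j(\U)=0$ the relation $\e'_j(\vk_i)=\delta_{i,j}$ is unchanged, and since $\id-\U\e$ fixes $K_2$ pointwise the extra hypothesis of the last sentence is unaffected. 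So I may assume $\vk_i\in K_1$ for all $i$.

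Then I would prove the first assertion. Applying $\e'_j$ to a finite linear relation $\sum_i c_i\vk_i=0$ forces $c_j=0$ for every $j$, so the $\vk_i$ are linearly independent; in particular they are pairwise distinct (for $i\ne j$ one has $\e'_i(\vk_i)=1\ne0=\e'_i(\vk_j)$), so the family has no repeated entries, as a basis extension requires. From $\e'_i(\vk_i)=1$ and $\e'_i\upharpoonright K_2=0$ we get $\vk_i\notin K_2$, so indeed $\vk_i\in K_1\backslash K_2$. Since each $\e'_j$ kills $K_2$ it descends to a linear functional on $K_1/K_2$, and the same one-line computation shows that the images of the $\vk_i$ in $K_1/K_2$ are linearly independent there. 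Extending this linearly independent family to a basis of $K_1/K_2$ (by Zorn's lemma when $I$ is infinite) and lifting the newly added basis vectors arbitrarily to $K_1$, one obtains a family which the quotient map $K_1\to K_1/K_2$ carries bijectively onto a basis of $K_1/K_2$; reading off coordinates first in $K_1/K_2$ and then in $K_2$ shows that every element of $K_1$ is uniquely a linear combination of this family plus an element of $K_2$, i.e.\ the family is a basis extension from $K_2$ to $K_1$ extending $(\vk_i)_{i\in I}$.

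For the last sentence, the additional hypothesis that the $\vk_i$ together with $K_2$ generate $K_1$ says precisely that the images of the $\vk_i$ span $K_1/K_2$; being linearly independent by the above, they then form a basis of $K_1/K_2$, so no extra vectors are needed and $(\vk_i)_{i\in I}$ is itself a basis extension from $K_2$ to $K_1$. I do not expect any genuine obstacle here: the whole argument is routine linear algebra once the vanishing of $\e$-$\e$-cocycles on $K_2\cup\{\U\}$ is in hand. The only mildly delicate points are the preliminary normalization to $\vk_i\in K_1$ and, when the index set $I$ is infinite, the appeal to the existence of a basis completing a given linearly independent family.
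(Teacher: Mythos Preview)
Your proof is correct and follows the route the paper has in mind; in fact the paper gives no explicit proof at all, treating the corollary as immediate from the line ``Clearly, $\e'_0$ is $0$ on $K_2\cup\{\U\}$'' after Proposition~\ref{Gdelprop}. You have accurately identified that only this vanishing (i.e.\ the $\e$-$\e$-cocycle property) is needed, not the specific origin of the $\e'_i$ as derivatives of character families, and you have filled in the routine linear-algebra details in $K_1/K_2$ cleanly, including the normalization to $\vk_i\in K_1$ which the paper tacitly assumes.
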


%%%%%%%%%%%%%%%%%%%%%%%%%%%%%%%%%%%%%%%
\medskip
\subsection{Generalities about decomposition and approximation} \label{sec_G_D_A}
%%%%%%%%%%%%%%%%%%%%%%%%%%%%%%%%%%%%%%%

The results in this subsection address the decomposition of a generating functional into a sum of two and how to (well)define functionals by suitably approximating their cocycles.

The decomposition of a generating functional $\psi$ into a sum $\psi=\psi_1+\psi_2$ of two generating functionals $\psi_1$ and $\psi_2$ is related to direct sum operations among the GNS-representation and GNS-cocycle of the latter two.

\begin{prop} \label{Stsumprop}
Let $\psi$, $\psi_1$, and $\psi_2$ be generating functionals such that $\psi=\psi_1+\psi_2$, and denote by $(\pi_i,\eta_i,\psi_i)$ the Sch{\"u}rmann triples of $\psi_i$ (with pre-Hilbert spaces $D_i$). Then $\eta:=\eta_1\oplus\eta_2\colon b\mapsto\eta_1(b)\oplus\eta_2(b)\in D:=D_1\oplus D_2$ is a cocycle with respect to $\pi:=\pi_1\oplus\pi_2$ and $(\pi,\eta,\psi)$ is a Sch{\"u}rmann triple.
\end{prop}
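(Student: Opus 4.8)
The plan is to verify directly that the triple $(\pi,\eta,\psi)$ obtained by taking direct sums satisfies the three conditions in Definition \ref{def_triple}. Since $\psi_1$ and $\psi_2$ are already given to be generating functionals (in particular, $\psi=\psi_1+\psi_2$ is again hermitian, $0$-normalized, and conditionally positive, as these conditions are manifestly preserved under sums), the only work is to check the structural identities relating $\pi$, $\eta$, and $\psi$.

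\textbf{Step 1: $\pi$ is a unital $*$-representation.} A direct sum $\pi_1\oplus\pi_2$ of unital $*$-representations on pre-Hilbert spaces $D_1,D_2$ is a unital $*$-representation on $D=D_1\oplus D_2$, acting diagonally; adjointability of $\pi(a)$ follows from adjointability of each $\pi_i(a)$, with $\pi(a)^*=\pi_1(a)^*\oplus\pi_2(a)^*$. This is routine.

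\textbf{Step 2: $\eta$ is a $\pi$-$\e$-cocycle.} I would apply the cocycle identity \eqref{*} componentwise: for $a,b\in\cG$,
$$
\eta(ab)=\eta_1(ab)\oplus\eta_2(ab)=\bigl(\pi_1(a)\eta_1(b)+\eta_1(a)\e(b)\bigr)\oplus\bigl(\pi_2(a)\eta_2(b)+\eta_2(a)\e(b)\bigr),
$$
which regroups as $\pi(a)\eta(b)+\eta(a)\e(b)$, using that the scalar $\e(b)$ acts diagonally. Hence $\eta$ is a cocycle with respect to $\pi$.

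\textbf{Step 3: the $\e$-$\e$-$2$-coboundary of $\psi$ is $\eta$-induced.} This is the heart of the matter, and where I would be most careful — though it too is ultimately a short computation. By \eqref{**} applied to each $\psi_i$, for all $a,b\in\cG$ we have $\langle\eta_i(a),\eta_i(b)\rangle=\psi_i(a^*b)-\psi_i(a^*)\e(b)-\e(a^*)\psi_i(b)$. Since the inner product on $D=D_1\oplus D_2$ is $\langle\eta(a),\eta(b)\rangle=\langle\eta_1(a),\eta_1(b)\rangle+\langle\eta_2(a),\eta_2(b)\rangle$, summing the two copies of \eqref{**} and using $\psi=\psi_1+\psi_2$ gives exactly
$$
\langle\eta(a),\eta(b)\rangle=\psi(a^*b)-\psi(a^*)\e(b)-\e(a^*)\psi(b),
$$
i.e.\ the $\e$-$\e$-$2$-coboundary of $\psi$, namely $(a,b)\mapsto -\langle\eta(a^*),\eta(b)\rangle$, is $\eta$-induced in the sense of Definition \ref{def_triple}. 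Together with Steps 1 and 2 this shows $(\pi,\eta,\psi)$ is a Sch\"urmann triple. I do not expect a genuine obstacle here: the entire proposition is an exercise in checking that the defining identities are additive, the one point deserving a word being the compatibility of the direct-sum inner product with the coboundary formula, which is exactly what makes Step 3 go through. (Note that cyclicity is \emph{not} asserted — and indeed $\eta(\cG)$ need not equal $D$, which is precisely the convenience flagged in Remark \ref{cycrem}.)
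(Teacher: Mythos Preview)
Your proof is correct and is precisely the routine verification the paper has in mind; the paper itself omits the proof entirely, stating only ``We omit the obvious/simple proof.'' Your remark that cyclicity may fail is also apt and matches the paper's own observation immediately following the proposition.
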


We omit the obvious/simple proof. Observe, however, that $(\pi,\eta,\psi)$ is, in general, not {\bf the} Sch{\"u}rmann triple of $\psi$. (Just take $\psi_2=\psi_1$. Then the Sch{\"u}rmann triple of $\psi$ is $(\pi_1,\sqrt{2}\eta_1,\psi)$. This problem is the same as for the GNS-representation of the sum of two
positive functionals.) This is one of the main reasons why it would be extremely inconvenient for us to restrict our attention to cyclic cocycles, only.

Seeking a sort of converse of Proposition \ref{Stsumprop}, we observe that cocycles behave nicely with respect to decomposition of the representation space.

\begin{prop}\label{decomprop}
Suppose $\pi_1$ and $\pi_2$ are unital $*$-representations of $\cG$ on pre-Hilbert spaces $D_1$ and $D_2$, respectively, and suppose $\eta$ is a cocycle with respect to $\pi:=\pi_1\oplus\pi_2$.
{%Then $\eta$ decomposes in a unique way into a sum $\eta=\eta_1\oplus\eta_2$, and the $\eta_i$ are cocycles with respect to $\pi_i$.
Then, in the unique decomposition $\eta=\eta_1\oplus\eta_2$, the $\eta_i$ are (unique) cocycles with respect to $\pi_i$.}
\end{prop}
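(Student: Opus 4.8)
The plan is to produce the $\eta_i$ as the compressions of $\eta$ onto the two summands and to transport the cocycle identity \eqref{*} through the coordinate projections, using that $\pi$ is block diagonal.

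First I would fix notation. Let $P_i\colon D\to D$ denote the orthogonal projection onto $D_i$ (regarded as a subspace of $D=D_1\oplus D_2$); these are adjointable, with $P_1+P_2=\id_D$ and $P_iP_j=\delta_{ij}P_i$. Then any map $\eta\colon\cG\to D$ has a unique decomposition $\eta=\eta_1\oplus\eta_2$ with $\eta_i:=P_i\circ\eta\colon\cG\to D_i$; this uniqueness is nothing but uniqueness of the coordinates of a vector in a direct sum, so there is nothing to prove for that part of the claim. In particular $\eta_i(\U)=P_i\eta(\U)=0$, so each $\eta_i$ is at least a candidate cocycle.

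Next I would record the one compatibility that makes everything work: since $\pi=\pi_1\oplus\pi_2$, each operator $\pi(a)$ is block diagonal, hence leaves each $D_i$ invariant and commutes with $P_i$; explicitly $P_i\,\pi(a)=\pi_i(a)\,P_i$ for every $a\in\cG$, where $\pi_i(a)$ is understood as acting on $D_i$. Applying $P_i$ to the cocycle identity \eqref{*} for $\eta$ then gives, for all $a,b\in\cG$,
$$
\eta_i(ab)~=~P_i\,\eta(ab)~=~P_i\,\pi(a)\eta(b)+P_i\,\eta(a)\,\e(b)~=~\pi_i(a)\eta_i(b)+\eta_i(a)\,\e(b),
$$
so each $\eta_i$ is a $\pi_i$-$\e$-cocycle, as asserted.

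I do not expect any genuine obstacle here; the statement is essentially bookkeeping about orthogonal direct sums, the only points deserving to be spelled out being that the coordinate projections are adjointable and that they commute with the block-diagonal representation $\pi$, after which the cocycle property passes to the summands by a single line of computation.
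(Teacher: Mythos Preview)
Your proof is correct and is essentially the paper's own argument: the paper omits the proof, remarking only that $\pi(\U)=\pi_1(\U)\oplus\pi_2(\U)=\id_{D_1\oplus D_2}$ so that $\eta_i:=\pi_i(\U)\eta$ do the job, which is exactly your $P_i\circ\eta$ since each $\pi_i$ is unital.
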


We, again, omit the simple proof  that follows simply by observing that $\pi(\U):=\pi_1(\U)\oplus\pi_2(\U)=\id_{D_1\oplus D_2}$, so that the maps $\eta_i:=\pi_i(\U)\eta$ do the job.

\begin{cor}\label{decomcor}
Suppose we have linear functionals $\psi_1$, $\psi_2$, and $\psi$ satisfying $\psi=\psi_1+\psi_2$. If, in the situation of Proposition \ref{decomprop}, two of the triples $(\pi_1,\eta_1,\psi_1)$, $(\pi_2,\eta_2,\psi_2)$, and $(\pi,\eta,\psi)$ are Sch{\"u}rmann triples, then so is the third. If, moreover, two of the functionals are generating, then so is the third.
\end{cor}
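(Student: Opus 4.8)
The plan is to reduce everything to two routine verifications: first that the three defining conditions of a Schürmann triple are \emph{additive} (so that two out of three implies the third), and second that the three conditions of a generating functional behave well under the identity $\psi = \psi_1 + \psi_2$. I would begin by recalling what ``$(\pi_i,\eta_i,\psi_i)$ is a Schürmann triple'' actually asserts in the present situation: by Proposition \ref{decomprop} the $\eta_i = \pi_i(\U)\eta$ are automatically cocycles with respect to $\pi_i$, and $\pi_i$ is automatically a unital $*$-representation; so the only genuine content of ``Schürmann triple'' here is the single linear relation that the $\e$-$\e$-$2$-coboundary of $\psi_i$ equals $(a,b)\mapsto -\langle\eta_i(a^*),\eta_i(b)\rangle$, i.e. equation \eqref{**} with $(\eta,\psi)$ replaced by $(\eta_i,\psi_i)$.

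The core computation is then: for $a,b\in\cG$,
\[
\langle\eta(a),\eta(b)\rangle
~=~
\langle\eta_1(a),\eta_1(b)\rangle+\langle\eta_2(a),\eta_2(b)\rangle,
\]
because $D = D_1\oplus D_2$ orthogonally, and likewise the right-hand sides of \eqref{**} add up. Hence the expression
\[
E_i(a,b):=\psi_i(a^*b)-\psi_i(a^*)\e(b)-\e(a^*)\psi_i(b)+\langle\eta_i(a^*),\eta_i(b)\rangle
\]
satisfies $E_1(a,b)+E_2(a,b)=E(a,b)$, where $E$ is the analogous quantity built from $(\eta,\psi)$; this uses only linearity of $\psi=\psi_1+\psi_2$ and the orthogonal direct sum structure of $D$. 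Since ``$(\pi_i,\eta_i,\psi_i)$ is a Schürmann triple'' is precisely the statement $E_i\equiv 0$, and similarly for $E$, the relation $E = E_1+E_2$ immediately gives: if two of $E,E_1,E_2$ vanish identically, so does the third. That proves the first assertion.

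For the second assertion I would check the three clauses of Definition \ref{genfundef} one at a time, exploiting that we already know all three of $(\pi,\eta,\psi)$, $(\pi_1,\eta_1,\psi_1)$, $(\pi_2,\eta_2,\psi_2)$ are Schürmann triples (by the first part). $0$-normalization: plugging $a=b=\U$ into a Schürmann triple forces the associated functional to vanish at $\U$, as noted after Corollary \ref{pPpcor}; so all of $\psi,\psi_1,\psi_2$ are automatically $0$-normalized and this clause is free. Conditional positivity: a linear functional fitting into a Schürmann triple is automatically conditionally positive (take $a=b\in\ker\e$ in \eqref{**} and use $\e(b)=0$, so that $\psi(b^*b)=\langle\eta(b),\eta(b)\rangle\ge0$); again free for all three. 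The only clause that is \emph{not} automatic from the triple structure is hermitianity, and here I simply use additivity: $\psi=\psi_1+\psi_2$ gives $\overline{\psi(b)}=\overline{\psi_1(b)}+\overline{\psi_2(b)}$, so if two of the three functionals are hermitian then $\overline{\psi_i(b)}=\psi_i(b^*)$ for those two, and subtracting the identity $\psi=\psi_1+\psi_2$ from its conjugate shows the third is hermitian as well. Combining the three clauses: once two of $\psi,\psi_1,\psi_2$ are generating, the first part makes all three triples Schürmann triples, $0$-normalization and conditional positivity of the third are then automatic, and hermitianity of the third follows by the subtraction argument. I do not expect any real obstacle here; the only point requiring a little care is to first invoke the first part of the corollary (to upgrade the third triple to a Schürmann triple) \emph{before} trying to extract conditional positivity and $0$-normalization of the third functional, since those come from the triple and not directly from the sum decomposition.
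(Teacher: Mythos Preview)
Your proposal is correct and follows exactly the same idea as the paper's own (very terse) proof: both exploit that $\langle\eta(a^*),\eta(b)\rangle=\langle\eta_1(a^*),\eta_1(b)\rangle+\langle\eta_2(a^*),\eta_2(b)\rangle$ by orthogonality of $D_1\oplus D_2$, so the coboundary condition is additive and two-out-of-three holds. Your treatment is simply more explicit than the paper's one-line parenthetical remark, and your handling of the generating-functional part (noting that $0$-normalization and conditional positivity are automatic from the triple, leaving only hermitianity to the additive argument) spells out what the paper leaves unsaid.
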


Again, there is not really 
anything
to prove. (If two of the three conditions $\langle\eta_i(a^*),\eta_i(b)\rangle=\psi_i(ab)$ and $\langle\eta(a^*),\eta(b)\rangle=\psi(ab)$ are satisfied for all $a,b\in K_1$, then so is the third.) So, this corollary looks rather innocent. It is, however, a surprisingly crucial tool. A L{\'e}vy-Khintchine formula is, in the first place, a decomposition result for functionals, asserting (when true) that all generating functionals can be written as a sum of functionals from two simpler classes. What we can do easily, is decomposing the GNS-representation, hence (by Proposition \ref{decomprop}), the cocycles of a generating functional into direct summands corresponding to the simpler classes. We are left with the (tricky!) question, whether the corresponding components of the cocycle give rise individually to generating functionals (summing up to the original one). The corollary tells: Yes, if we can guarantee existence of a functional for one of the two components of the cocycle; and this is what we will do. For how we are going to do that, the following simple approximation result is of outstanding importance: It tells that if we can approximate the cocycle in the pair $(\pi,\eta)$ by {\it coboundaries} for $\pi$, then there is also a generating functional $\psi$ completing the Sch{\"u}rmann triple $(\pi,\eta,\psi)$.

Recall that for each $*$-representation $\pi$ of $\cG$ on $D$ and each vector $\eta_\U$, the map $\eta:=(\pi\eta_\U)\circ(\id-\U\e)\colon b\mapsto\pi(b-\U\e(b))\eta_\U$ is a $\pi$-$\e$-cocycle. We say, a cocycle of this form is a \hl{coboundary for $\pi$} or just a \hl{coboundary}. We fix $D$ and $\pi$.

Recall, too, that we assume fixed the (hermitian!) projection $\cP$ onto $K_2$. (Subsection \ref{sec_S_trip}.)

\begin{lemma} \label{cocaGFlem}
Suppose
{$(\eta_n)_{n\in\mathbb{N}}$}
is a sequence of vectors in $D$ such that the sequence
{composed of the}
coboundaries  $(\pi\eta_n)\circ(\id-\U\e)$ for $\pi$ converges pointwise to a map, say, $\eta$. Then:
\begin{enumerate}
\item
$\langle\eta_n,\pi(\bullet)\eta_n\rangle\circ\cP$ converges pointwise to a map, say, $\psi$.

\item
$(\pi,\eta,\psi)$ is a Sch{\"u}rmann triple.

\item
$\psi\circ\cP=\psi$, so that, by Corollary \ref{pPpcor}, $\psi$ is a generating functional.
\end{enumerate}
\end{lemma}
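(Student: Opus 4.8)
The plan is to establish the three assertions in sequence, exploiting the cocycle identity and the Cauchy--Schwarz inequality to turn pointwise convergence of the coboundaries into pointwise convergence of the associated functionals. First I would fix notation: write $\eta_n := (\pi\eta_n)\circ(\id-\U\e)$ for the coboundary determined by the vector $\eta_n$ (abusing notation as the lemma does), so that $\eta_n(b)=\pi(b-\U\e(b))\eta_n$, and let $\eta$ denote its pointwise limit. The key structural observation is that for $a,b\in K_1$ one has $\langle\eta_n(a),\eta_n(b)\rangle=\langle\pi(a)\eta_n,\pi(b)\eta_n\rangle=\langle\eta_n,\pi(a^*b)\eta_n\rangle$, using that $\pi$ is a $*$-representation and $a,b\in\ker\e$. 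More generally, using the cocycle identity \eqref{*} and $\eta_n(\U)=0$, for arbitrary $a,b\in\cG$ we get $\langle\eta_n(a),\eta_n(b)\rangle = \langle\eta_n,\pi(a^*b)\eta_n\rangle - \langle\eta_n,\pi(a^*)\eta_n\rangle\e(b) - \e(a^*)\langle\eta_n,\pi(b)\eta_n\rangle$, which is exactly the shape of \eqref{**}. This is the engine of the whole argument.

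For part (1), I would argue that $\langle\eta_n,\pi(\bullet)\eta_n\rangle$ converges pointwise on $K_2$. Every element of $K_2$ is a finite linear combination of products $a^*b$ with $a,b\in K_1$ (using that $K_1$ is a $*$-ideal and $K_2=\ls(\ker\e)^2$, so that $K_2$ is spanned by such products up to the standard polarization trick), and on such a product $\langle\eta_n,\pi(a^*b)\eta_n\rangle=\langle\eta_n(a),\eta_n(b)\rangle\to\langle\eta(a),\eta(b)\rangle$ by hypothesis and bilinearity of the inner product (this uses that each $\eta_n(a)$, $\eta_n(b)$ converges, so the product of the two convergent sequences converges). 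Hence $\langle\eta_n,\pi(\bullet)\eta_n\rangle$ converges pointwise on $K_2$; precomposing with the fixed projection $\cP$ onto $K_2$ extends this to a pointwise-convergent sequence of functionals on all of $\cG$, whose limit we call $\psi$. Note $\psi\circ\cP=\psi$ is automatic from the construction (since $\cP$ is idempotent), which already gives part (3) modulo Corollary \ref{pPpcor}.

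For part (2), I must check that $(\pi,\eta,\psi)$ is a Sch\"urmann triple, i.e. that $\eta$ is a $\pi$-$\e$-cocycle and that the $\e$-$\e$-$2$-coboundary of $\psi$ is $\eta$-induced. The cocycle property of $\eta$ passes to the limit: each $\eta_n$ satisfies \eqref{*} (being a coboundary, hence a cocycle), and both sides are continuous in the relevant pointwise-limit sense once $\pi(a)$ is applied — here one uses that for fixed $a$, $\pi(a)$ is a fixed linear map so $\pi(a)\eta_n(b)\to\pi(a)\eta(b)$, and $\eta_n(a)\e(b)\to\eta(a)\e(b)$, $\eta_n(ab)\to\eta(ab)$. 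For the coboundary-of-$\psi$ condition, I need $\langle\eta(a),\eta(b)\rangle = \psi(a^*b)-\psi(a^*)\e(b)-\e(a^*)\psi(b)$ for all $a,b$; on $K_2$ this follows by taking limits in the displayed identity for $\eta_n$ together with part (1), and the general case follows since both sides are $0$-normalized and agree on $K_2$ — more precisely, since $\psi\circ\cP=\psi$ and $\cP$ kills $\U$ and $E_1$, the functional $\psi$ behaves on $\cG$ exactly as dictated, and \eqref{**} holds for all $a,b\in\cG$ by the same polarization/extension argument used after \eqref{**} in the text. Finally part (3): $\psi$ is conditionally positive because for $b\in K_1$, $\psi(b^*b)=\lim\langle\eta_n(b),\eta_n(b)\rangle\ge0$; it is $0$-normalized since $\psi\circ\cP=\psi$ and $\cP(\U)=0$; and it is hermitian because each $\langle\eta_n,\pi(\bullet)\eta_n\rangle$ is hermitian on $K_2$ (as $\langle\eta_n,\pi(b^*)\eta_n\rangle=\overline{\langle\eta_n,\pi(b)\eta_n\rangle}$) and $\cP$ is hermitian, so the pointwise limit $\psi$ is hermitian; thus $\psi$ is a generating functional — alternatively invoke Corollary \ref{pPpcor} directly, since $\psi\circ\cP=\psi$ with $\cP$ the hermitian projection, and $(\pi,\eta,\psi)$ is a Sch\"urmann triple by part (2). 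The main obstacle I anticipate is purely bookkeeping: being careful that ``pointwise convergence of coboundaries'' is exactly the hypothesis one needs and that no uniformity or boundedness (which we do not have, since $\pi$ may act by unbounded operators) is secretly being used — every limit taken is of a product or sum of at most two convergent scalar or vector sequences, applied to a \emph{fixed} argument, so no such issue actually arises.
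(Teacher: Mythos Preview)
Your proof is correct and follows essentially the same approach as the paper's: reduce to $K_2$, use that elements there are of the form $a^*b$ with $a,b\in K_1$ so that $\langle\eta_n,\pi(a^*b)\eta_n\rangle=\langle\eta_n(a),\eta_n(b)\rangle$ converges, then pass cocycle and coboundary identities to the limit. One minor remark: the ``polarization trick'' is unnecessary, since $K_1=\ker\e$ is $*$-closed and hence every product $cd$ with $c,d\in K_1$ is already of the form $a^*b$ with $a=c^*,b=d\in K_1$.
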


\begin{proof}
$\cP$ maps into $K_2$ and a typical element of $K_2$ has the form $a^*b$ for $a,b\in K_1$. So, $\langle\eta_n,\pi(\bullet)\eta_n\rangle\circ\cP(a^*b)=\langle\pi(a)\eta_n,\pi(b)\eta_n\rangle$, which converges on each side, as $\pi(b)\eta_n$ is just the value of the coboundary generated by $\eta_n$. By the same computation, $\psi$ fulfills \eqref{**} and, clearly, $\eta$ fulfills \eqref{*}, so $(\pi,\eta,\psi)$ is a Sch{\"u}rmann triple. The third statement is obvious.
\end{proof}

\medskip
\begin{remark} \label{JPrem}
It is the property that $\psi$ is the limit of expressions $\langle\eta_n,\pi(\bullet)\eta_n\rangle\circ\cP$ which we require in order to call $\psi$ a \hl{jump part}. Every generating functional $\psi=\psi\circ\cP$ is the (pointwise) limit of
$$
\frac{\vp_t-\e}{t}\circ\cP
~=~
\frac{\vp_t}{t}\circ\cP
$$
for $t\to0$, where $\vp_t=e_\star^{\psi t}$ is the generated convolution semigroup 
of states; every $\vp_t$ has a GNS-construction $(\pi_t,\eta_t)$ with the cyclic (unit) vector $\eta_t$ in the representation space $H_t$. What makes the sequence of positive functionals $\langle\eta_n,\pi(\bullet)\eta_n\rangle$ different from the sequence $n\vp_{\frac{1}{n}}$, is the fact that in the former the representation $\pi$ is fixed, while only the vector $\eta_n$ is running.

This is exactly the situation we have in the integral $\int_{{\mathcal G}\backslash\{e\}}f(g)\,d{\mathcal L}(g)$ (see the introduction), if we approximate it by
$$
\int_{{\mathcal G}\backslash U_{\frac{1}{n}}(e)}f(g)\,d{\mathcal L}(g),
$$
where $U_{\delta}(g)$ is (for some metric on the classical group $\cG$) the open $\delta$-neighbourhood of the point $g\in\cG$. Here, $H=L^2(\cG,{\mathcal L})$, the representation $\pi$ of $f$ is by multiplication with $f$, and $\eta_n$ is the indicator function of ${\mathcal G}\backslash U_{\frac{1}{n}}(e)$.
\end{remark}

\begin{ob} \label{jCNGob}
In view of Subsection \ref{ssec_gaussian} (and the terminology introduced there), we mention that if $\pi$ has a gaussian part, then this part disappears under $\pi\circ\cP$. Therefore, as a limit of functionals $\langle\eta_n,\pi(\bullet)\eta_n\rangle\circ\cP$, a jump part is completely non-gaussian.
\end{ob}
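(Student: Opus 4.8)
The plan is to deduce the first sentence as a one-line consequence of $\e\circ\cP=0$, and then to use it to strip the gaussian part off the data that defines a jump part, leaving a Schürmann triple for $\psi$ whose representation is purely non-gaussian.

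\emph{First sentence.} The (hermitian) projection $\cP$ maps $\cG$ into $K_2\subseteq K_1=\ker\e$, so $\e\circ\cP=0$. Suppose $\pi$ has a gaussian part, i.e.\ $D$ decomposes (orthogonally, into $\pi$-invariant subspaces) as $D=D_G\oplus D'$ with $\pi(c)\upharpoonright D_G=\e(c)\,\id_{D_G}$ for every $c\in\cG$. Then $\pi(\cP b)\upharpoonright D_G=\e(\cP b)\,\id_{D_G}=0$ for all $b\in\cG$; that is, the gaussian part of $\pi$ disappears under $\pi\circ\cP$.

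\emph{A jump part is completely non-gaussian.} Let $\psi$ be a jump part, so by Remark \ref{JPrem} and Lemma \ref{cocaGFlem} there is a sequence $(\eta_n)$ in $D$ with $(\pi\eta_n)\circ(\id-\U\e)$ converging pointwise to a cocycle $\eta$, with $(\pi,\eta,\psi)$ a Schürmann triple, and with $\psi=\lim_n\langle\eta_n,\pi(\bullet)\eta_n\rangle\circ\cP$. Let $D=D_G\oplus D'$ be the decomposition into the \emph{maximal} gaussian part $D_G$ of $\pi$ and a complementary invariant subspace $D'$ on which the restriction $\pi'$ of $\pi$ has no gaussian part; write $\eta_n=\eta_n^G\oplus\eta_n'$. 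Since $\cP$ maps into $K_2$, a typical element of $K_2$ is $a^*b$ with $a,b\in K_1$, and by the gaussian property $\pi(a)\upharpoonright D_G=\e(a)\,\id_{D_G}=0$ for $a\in K_1$; hence $\pi(a)\eta_n=\pi(a)\eta_n'\in D'$, so $\langle\eta_n,\pi(a^*b)\eta_n\rangle=\langle\pi(a)\eta_n,\pi(b)\eta_n\rangle=\langle\pi'(a)\eta_n',\pi'(b)\eta_n'\rangle=\langle\eta_n',\pi'(a^*b)\eta_n'\rangle$, and therefore $\psi=\lim_n\langle\eta_n',\pi'(\bullet)\eta_n'\rangle\circ\cP$. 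Moreover $(\pi\eta_n)\circ(\id-\U\e)$ respects the decomposition $D_G\oplus D'$, so its $D'$-component $(\pi'\eta_n')\circ(\id-\U\e)$ converges pointwise to $\eta':=$ the $D'$-component of $\eta$, which is a $\pi'$-$\e$-cocycle by Proposition \ref{decomprop}. Applying Lemma \ref{cocaGFlem} to the pair $(\pi',(\eta_n'))$, we conclude that $(\pi',\eta',\psi)$ is a Schürmann triple in which $\pi'$ has no gaussian part. Finally, the cyclic Schürmann triple $(\pi_\psi,\eta_\psi,\psi)$ embeds into $(\pi',\eta',\psi)$ via an isometry $v\colon D_\psi\to D'$ with $\pi'(a)v=v\pi_\psi(a)$ (text after Definition \ref{def_triple}), so a gaussian subspace of $\pi_\psi$ would be carried by $v$ onto one of $\pi'$, which does not exist; hence $\pi_\psi$ has no gaussian part. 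Since in addition $\psi=\psi\circ\cP$ forces $\psi(\vk)=\psi(\cP\vk)=0$ for every $\vk\in E_1$, the functional $\psi$ carries no drift, and the characterization of complete non-gaussianity given in Subsection \ref{ssec_gaussian} yields the claim.

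The \textbf{main obstacle} is simply to phrase all of this consistently with the terminology of Subsection \ref{ssec_gaussian}, which is not yet available here: one must know that the gaussian part of a $*$-representation is a well-defined invariant subspace (so that ``maximal gaussian part'' and the decomposition $D=D_G\oplus D'$ make sense) and that, for a generating functional, ``completely non-gaussian'' is equivalent to ``the representation of the cyclic Schürmann triple has no gaussian part and there is no drift''. Granted these, the argument above is a short computation together with Proposition \ref{decomprop} and Lemma \ref{cocaGFlem}.
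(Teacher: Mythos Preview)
Your argument is correct and is precisely the elaboration the paper's bare observation invites: since $\cP$ maps into $K_2\subset K_1$ and any gaussian summand of $\pi$ vanishes on $K_1$, the approximating functionals $\langle\eta_n,\pi(\bullet)\eta_n\rangle\circ\cP$---and hence their limit $\psi$---see only the non-gaussian summand $\pi'$, and the intertwining isometry into $(\pi',\eta',\psi)$ then forces $\pi_\psi$ to be completely non-gaussian.

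One minor correction: the closing remark about drifts rests on a slight misreading of Subsection~\ref{ssec_gaussian}. In the paper's terminology a generating functional is completely non-gaussian exactly when the representation in its cyclic Sch\"urmann triple has no nontrivial gaussian invariant subspace (Proposition~\ref{GcnGprop}, Definition~\ref{LKddefi}); there is no separate drift condition. A drift has the trivial Sch\"urmann triple, which is vacuously completely non-gaussian (and simultaneously gaussian). So once you have established that $\pi_\psi$ has no gaussian part---which you do---you are finished; the observation $\psi(\vk)=0$ is true but not needed.
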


%%%%%%%%%%%%%%%%%%%%%%%%%%%%%%%%%%%%%%%
\medskip
\subsection{Sch{\"u}rmann triples on quantum subsemigroups} \label{sec_Stqsub}
%%%%%%%%%%%%%%%%%%%%%%%%%%%%%%%%%%%%%%%

In the course of proving our results for $SU_q(N)$, we will decompose representations into components that ``live'' on the quantum subgroups $SU_q(n)$ $(n\le N)$.
Also, promoting our results about $SU_q(N)$ into results about $U_q(N)$, is based on the fact that $U_q(N)$ sits in between $SU_q(N)$ and $SU_q(N+1)$.

\begin{defi} \label{def_living}
A \hl{quantum subsemigroup} of a quantum semigroup $\cG$ is a pair $(\cH,s)$ consisting of a quantum semigroup $\cH$ and a surjective 
$*$-bialgebra homomorphism $s\colon\cG \to \cH$.

We say that a map $T$ from 
{$\cG$}
to some space $X$ \hl{lives on} $(\cH,s)$ if $T$ \hl{factors through $s$}, that is, if
there exist a map $\tilde{T}$ from $\cH$ to $X$ such that
\[
T = \tilde{T}\circ s. 
\]
If it is clear from the context what $s$ is, then we will just speak of the quantum subsemigroup $\cH$ of $\cG$.
\end{defi}

\begin{remark}
If the quantum group $\cH$ is a quantum subsemigroup of the quantum group $\cG$ via $s$, then (see Dascalescu, Nastasescu, and Raianu \cite[Proposition 4.2.5]{dascalescu+al}) $s$ also respects the antipodes. That is, $\cH$ is a \hl{quantum subgroup} of $\cG$.
\end{remark}

Since $s$ is surjective, the map $\tilde{T}$ illustrating that $T$ lives on $\cH$ is unique.

In the remainder of this subsection we fix a quantum semigroup $\cG$, one of its quantum subsemigroups $(\cH,s)$, and a (pre-)Hilbert space $D$. Since $s$ 
respects the counits, $\e$ lives on $\cH$ via the counit $\tilde{\e}$ of $\cH$. Also, $K_n$ $(n=0,1,\ldots,n,\ldots,\infty)$ are mapped by $s$ onto the corresponding $\tilde{K}_n$ of $\cH$.

\begin{prop}
Suppose we have maps $\pi$, $\eta$, and $\psi$, all defined on $\cG$, and maps $\tilde{\pi}$, $\tilde{\eta}$, and $\tilde{\psi}$ such that
\begin{align*}
\pi
&
~=~
\tilde{\pi}\circ s,
&
\eta
&
~=~
\tilde{\eta}\circ s,
&
\psi
&
~=~
\tilde{\psi}\circ s.
\end{align*}

\vspace{-2.5ex}
\noindent
Then:
\vspace{-2ex}
\begin{enumerate}
\item
$\pi$ is a $*$-representation of $\cG$ (obviously, living on $\cH$) if and only if $\tilde{\pi}$ is a $*$-represen\-tation of $\cH$.

\item
$\eta$ is a $\pi$-$\e$-cocycle (obviously, living on $\cH$) if and only if $\tilde{\eta}$ is a $\tilde{\pi}$-$\tilde{\e}$-cocycle.

\item
$\psi$ is a generating functional on $\cG$ (obviously, living on $\cH$) if and only if $\tilde{\psi}$ is a generating functional on $\cH$.

\item
$(\pi,\eta,\psi)$ is a Sch{\"u}rmann triple if and only if $(\tilde{\pi},\tilde{\eta},\tilde{\psi})$ is a Sch{\"u}rmann triple.
\end{enumerate}
\end{prop}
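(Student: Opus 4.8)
The plan is to verify each of the four equivalences by unwinding definitions and exploiting the single structural feature that makes everything work: the map $s\colon\cG\to\cH$ is a \emph{surjective} unital $*$-homomorphism that respects the counits (so $\e=\tilde\e\circ s$) and hence maps each $K_n$ \emph{onto} $\tilde K_n$. Throughout, the key mechanism is this: a linear or multiplicative identity that must hold for all elements of $\cG$ is, because $s$ is surjective, equivalent to the ``tilded'' identity holding for all elements of $\cH$; and a positivity condition on $\cG$ restricted to (say) $K_1$ is, because $s(K_1)=\tilde K_1$, equivalent to the corresponding positivity condition on $\tilde K_1\subseteq\cH$. I would state this surjectivity principle once at the start of the proof and then apply it four times.

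For (i): that $\tilde\pi$ is multiplicative, unital, and $*$-preserving on $\cH$ is, by surjectivity of $s$, exactly equivalent to $\pi=\tilde\pi\circ s$ having these properties on $\cG$ — e.g. $\pi(ab)=\pi(a)\pi(b)$ for all $a,b\in\cG$ reads $\tilde\pi(s(a)s(b))=\tilde\pi(s(a))\tilde\pi(s(b))$, and as $a,b$ range over $\cG$ the pair $(s(a),s(b))$ ranges over all of $\cH\times\cH$. For (ii): the cocycle identity \eqref{*} for $\eta$ with respect to $\pi$ and $\e$, written out, is $\tilde\eta(s(a)s(b))=\tilde\pi(s(a))\tilde\eta(s(b))+\tilde\eta(s(a))\tilde\e(s(b))$, which by surjectivity is the cocycle identity for $\tilde\eta$ with respect to $\tilde\pi$ and $\tilde\e$. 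For (iii): hermitianity ($\psi(b^*)=\overline{\psi(b)}$) and $0$-normalization ($\psi(\U)=0$, using $s(\U_\cG)=\U_\cH$) transfer by surjectivity as above; conditional positivity requires $\psi(b^*b)\ge0$ for $b\in\ker\e=K_1$, and since $s(K_1)=\tilde K_1$ and $\psi(b^*b)=\tilde\psi(s(b)^*s(b))$, this is precisely conditional positivity of $\tilde\psi$ on $\tilde K_1$. For (iv): a Sch\"urmann triple is a representation plus a cocycle plus a functional whose $\e$-$\e$-$2$-coboundary is $\eta$-induced, i.e. equation \eqref{**} holding for all $a,b\in\cG$; by (i) and (ii) the first two conditions match up, and \eqref{**} for $(\pi,\eta,\psi)$ reads, after substituting the tilded maps, exactly as \eqref{**} for $(\tilde\pi,\tilde\eta,\tilde\psi)$, again by surjectivity. (Note that $\tilde\psi$ need not be a generating functional here — Definition \ref{def_triple} does not require it — so no appeal to (iii) is needed, though of course (iii) and (iv) together say the triple is ``the'' GNS triple in the generating case only up to the usual caveats.)

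I do not expect a genuine obstacle: every step is a direct ``push the identity through the surjection'' argument, and the only points demanding a word of care are (a) recording that $s$ maps $K_n$ onto $\tilde K_n$ — which follows because $s$ is a surjective algebra homomorphism with $s(\ker\e)=\ker\tilde\e$ — and (b) noting that when we read a universally-quantified identity over $\cG$ as one over $\cH$, we use surjectivity in the direction ``$\Rightarrow$'' and mere well-definedness of $s$ in the direction ``$\Leftarrow$''. If anything is mildly subtle it is making sure the \emph{uniqueness} of $\tilde\pi,\tilde\eta,\tilde\psi$ (guaranteed by surjectivity of $s$, as already remarked after Definition \ref{def_living}) is invoked so that the correspondences in (i)--(iv) are genuine equivalences and not merely implications; but this is immediate. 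The proof is therefore short and I would present it as four one-line verifications following a single lemma-like remark on surjectivity.
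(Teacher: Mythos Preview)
Your proposal is correct and follows exactly the paper's approach: the paper's own proof is the single line ``The \textit{if}-direction is clear, while the \textit{only if}-direction follows from $s(K_n)=\tilde{K}_n$,'' and your argument is precisely a careful unpacking of that sentence, with the surjectivity-of-$s$ principle you isolate being the whole content.
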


\begin{proof}
The {\it if}-direction is clear, while the {\it only if}-direction follows from $s(K_n)=\tilde{K}_n$.\end{proof}

Of course, the projections onto $K_1$ are compatible in the sense that
$$
(\id-\U\tilde{\e})\circ s
~=~
s\circ(\id-\U\e).
$$

\begin{cor} \label{subapprcor}
Suppose a $\pi$-$\e$-cocycle $\eta$ on $\cG$ can be approximated by coboundaries $(\pi\eta_n)\circ(\id-\U\e)$. If $\pi=\tilde{\pi}\circ s$ lives on $\cH$, then
$$
\tilde{\eta}
~:=~
\lim_{n\to\infty}(\tilde{\pi}\eta_n)\circ(\id-\U\tilde{\e})
$$
exists (pointwise), is a $\tilde{\pi}$-$\tilde{\e}$-cocycle, and fulfills $\eta=\tilde{\eta}\circ s$, so that $\eta$ lives on $\cH$, too.

Moreover, denoting $\psi$ as in Lemma \ref{cocaGFlem} and $\tilde{\psi}$ the analogue for $\tilde{\eta}$ and a projection $\cP^\cH$ onto $\tilde{K}_2$,
we get Sch{\"u}rmann triples $(\pi,\eta,\psi)$ and $(\tilde{\pi},\tilde{\eta},\tilde{\psi})$, and $\psi$ and $\tilde{\psi}$ are related by
$$
\psi
~=~
\tilde{\psi}\circ s\circ\cP.
$$
\end{cor}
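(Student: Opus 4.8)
The plan is to bootstrap everything from Lemma \ref{cocaGFlem} together with the surjectivity of $s$ and the compatibility $s(K_n)=\tilde{K}_n$. First I would observe that since $\pi=\tilde\pi\circ s$, each coboundary $(\pi\eta_n)\circ(\id-\U\e)$ already lives on $\cH$; indeed, $(\pi\eta_n)\circ(\id-\U\e) = (\tilde\pi\eta_n)\circ s\circ(\id-\U\e) = (\tilde\pi\eta_n)\circ(\id-\U\tilde\e)\circ s$, using the compatibility of the projections onto $K_1$ recorded just before the corollary. So the given assumption ``$\eta$ is the pointwise limit of $(\pi\eta_n)\circ(\id-\U\e)$'' reads, after unravelling, ``$\eta = \lim_n (\tilde\pi\eta_n)\circ(\id-\U\tilde\e)\circ s$''. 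To conclude that $\tilde\eta := \lim_n (\tilde\pi\eta_n)\circ(\id-\U\tilde\e)$ exists pointwise, I would use surjectivity of $s$: given $h\in\cH$, pick $b\in\cG$ with $s(b)=h$, and then $(\tilde\pi\eta_n)\circ(\id-\U\tilde\e)(h) = (\pi\eta_n)\circ(\id-\U\e)(b)$ converges to $\eta(b)$; independence of the choice of $b$ is automatic because the right-hand side visibly factors through $s$. This simultaneously shows $\tilde\eta$ is well-defined and that $\eta=\tilde\eta\circ s$, i.e.\ $\eta$ lives on $\cH$.

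Next I would invoke Lemma \ref{cocaGFlem} twice. Applied to $(\tilde\pi,\tilde\eta)$ on the quantum semigroup $\cH$ with its fixed hermitian projection $\cP^\cH$ onto $\tilde K_2$, it yields that $\tilde\psi := \lim_n \langle\eta_n,\tilde\pi(\bullet)\eta_n\rangle\circ\cP^\cH$ exists pointwise, that $(\tilde\pi,\tilde\eta,\tilde\psi)$ is a Sch\"urmann triple, and that $\tilde\psi\circ\cP^\cH=\tilde\psi$, hence $\tilde\psi$ is a generating functional on $\cH$. Applied to $(\pi,\eta)$ on $\cG$ with the fixed projection $\cP$ onto $K_2$, it yields the Sch\"urmann triple $(\pi,\eta,\psi)$ with $\psi = \lim_n \langle\eta_n,\pi(\bullet)\eta_n\rangle\circ\cP$ and $\psi\circ\cP=\psi$. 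Both applications are legitimate because the hypothesis of the lemma — pointwise convergence of the coboundaries — holds for $(\pi,\eta_n)$ by assumption and for $(\tilde\pi,\eta_n)$ by the first paragraph.

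The remaining point is the relation $\psi = \tilde\psi\circ s\circ\cP$. I would prove this on a spanning set: since $\cP$ maps $\cG$ into $K_2$ and $K_2=\ls\{a^*b : a,b\in K_1\}$, it suffices to evaluate both sides at an element of the form $a^*b$. On the left, $\psi(a^*b) = \lim_n\langle\pi(a)\eta_n,\pi(b)\eta_n\rangle = \lim_n\langle\tilde\pi(s(a))\eta_n,\tilde\pi(s(b))\eta_n\rangle$ by the factorization of $\pi$. On the right, $s(a),s(b)\in\tilde K_1$ because $s(K_1)=\tilde K_1$, so $s(a)^*s(b)\in\tilde K_2$, whence $\cP^\cH$ fixes it and $\tilde\psi(s(a^*b)) = \tilde\psi(s(a)^*s(b)) = \lim_n\langle\tilde\pi(s(a))\eta_n,\tilde\pi(s(b))\eta_n\rangle$; the two limits agree. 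Since $\psi=\psi\circ\cP$, for general $b\in\cG$ we get $\psi(b)=\psi(\cP b)=\tilde\psi(s(\cP b))$, i.e.\ $\psi=\tilde\psi\circ s\circ\cP$ as claimed. I do not expect any genuine obstacle here: the one point to handle with a little care is the independence-of-lift argument in the first paragraph (making sure $\tilde\eta$ and $\tilde\psi$ are honestly well-defined on $\cH$ rather than only defined up to the choice of preimage under $s$), but this is immediate once one notes that all the relevant expressions manifestly factor through $s$.
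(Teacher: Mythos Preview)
Your proposal is correct and follows essentially the same approach as the paper's proof, which is extremely terse: the paper says ``almost everything follows, appealing to surjectivity of $s$, by writing arguments of `twiddled' maps in the form $s(a)$,'' then verifies the final formula by noting that the coboundary identity $(\tilde{\pi}\eta_n)\circ(\id-\U\tilde{\e})\circ s=(\pi\eta_n)\circ(\id-\U\e)$ forces $\tilde{\psi}\circ s$ and $\psi$ to agree on $K_2$. You have simply unpacked these steps in detail, including the independence-of-lift argument and the explicit computation on elements $a^*b$ with $a,b\in K_1$.
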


\begin{proof}
Almost everything follows, appealing to surjectivity of $s$, by writing arguments of ``twiddled'' maps in the form $s(a)$. The only thing that needs a word, is the last formula. Clearly, $\tilde{\psi}\circ s$ defines a generating functional on $\cG$ (that lives on $\cH$). By
$$
(\tilde{\pi}\eta_n)\circ(\id-\U\tilde{\e})\circ s
~=~
(\pi\eta_n)\circ(\id-\U\e),
$$
we see that $\tilde{\psi}\circ s$ coincides with $\psi$ on $K_2$. From that, the formula follows.\end{proof}

We may ask, whether the projections $\cP$ and $\cP^\cH$ may be chosen compatible, too, so that in the last formula we really get $\psi=\tilde{\psi}\circ s$, without the correction of the drift part via composition with $\cP$. The answer is yes, as long as the $\cG$ and its subsemigroup $\cH$ are fixed; but the possible choices of $\cP$ depend on $\cH$.

\begin{lemma} \label{Psublem}
Let $E_1$ be the basis extension from $K_2$ to $K_1$ determining $\cP$ and let $E^\cH_1$ be the basis extension from $\tilde{K}_2$ 
to $\tilde{K}_1$ determining $\cP^\cH$. Then
$$
\cP^\cH\circ s
~=~
s\circ\cP
$$
if and only if {\normalfont $\,\ls\,s(E_1)\subset\,\ls\,E^\cH_1$}.

Moreover, if {\normalfont $\,\ls\,s(E_1)\subset\,\ls\,E^\cH_1$}, then necessarily {\normalfont $\,\ls\,s(E_1)=\,\ls\,E^\cH_1$}.
\end{lemma}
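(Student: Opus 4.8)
The plan is to work with the explicit formula for a projection associated to a basis extension given in Lemma~\ref{PElem}, namely $\cP = \id - \sum_{\vk\in E_1}\vk\,\e'_\vk$ (composed with $\id-\U\e$; here the $\U\e$ part is handled by the compatibility of the projections onto $K_1$), and the analogous formula $\cP^\cH = \id - \sum_{\vk'\in E^\cH_1}\vk'\,\tilde\e'_{\vk'}$. Since the two projections onto $K_1$ are compatible, $(\id-\U\tilde\e)\circ s = s\circ(\id-\U\e)$, the identity $\cP^\cH\circ s = s\circ\cP$ is equivalent to the corresponding identity for the ``$E_1$-part'' of each projection, i.e.\ to
$$
s\Bigl(\sum_{\vk\in E_1}\vk\,\e'_\vk(b)\Bigr) ~=~ \sum_{\vk'\in E^\cH_1}\vk'\,\tilde\e'_{\vk'}(s(b))
$$
for all $b\in\cG$ (after projecting onto $K_1$, which we may do throughout). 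Both sides are elements of $\tilde K_1$; the key observation is that the left-hand side, as a function of $b$, takes values in $\ls\,s(E_1)$, while the right-hand side takes values in $\ls\,E^\cH_1$, and each side is, up to the respective projection onto $\tilde K_2$, the identity on its span.

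For the ``if'' direction I would argue as follows. Assume $\ls\,s(E_1)\subset\ls\,E^\cH_1$. The composition $\cP^\cH\circ s$ is a linear map $\cG\to\tilde K_2$ that kills $\U$ (since $s(\U)=\U\in\ker\cP^\cH$) and kills every $\vk\in E_1$ (since $s(\vk)\in\ls\,s(E_1)\subset\ls\,E^\cH_1=\ker\cP^\cH$ after the $K_1$-projection is taken into account). Likewise $s\circ\cP$ kills $\U$ and every $\vk\in E_1$. Both maps restrict to $s$ on $K_2$: indeed $\cP$ is the identity on $K_2$, so $s\circ\cP = s$ there, and $\cP^\cH$ is the identity on $\tilde K_2 = s(K_2)$, so $\cP^\cH\circ s = s$ on $K_2$ as well. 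Since $\{\U\}\cup E_1$ is a basis extension from $K_2$ to $\cG$, the subspace $K_2$ together with $\U$ and the $\vk\in E_1$ spans all of $\cG$; two linear maps agreeing on a spanning set agree everywhere, whence $\cP^\cH\circ s = s\circ\cP$. The ``only if'' direction is the reverse bookkeeping: if $\cP^\cH\circ s = s\circ\cP$, apply both sides to an element $\vk\in E_1$; the right side is $s(\cP\vk)=s(0)=0$ (as $\vk\in\ker\cP$), hence $s(\vk)\in\ker\cP^\cH$, i.e.\ (after the $K_1$-projection, which fixes $s(\vk)$ since $s(\vk)\in\tilde K_1$) $s(\vk)\in\ls\,E^\cH_1$; varying $\vk$ over $E_1$ gives $\ls\,s(E_1)\subset\ls\,E^\cH_1$.

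For the final ``moreover'' clause, suppose $\ls\,s(E_1)\subset\ls\,E^\cH_1$; I must show equality. Here the input is that $s$ is surjective, hence $s(K_1)=\tilde K_1$ and $s(K_2)=\tilde K_2$, so $s$ induces a surjection $K_1/K_2\to\tilde K_1/\tilde K_2$, and the images of the $\vk\in E_1$ span $K_1/K_2$ while the $\vk'\in E^\cH_1$ form a basis of $\tilde K_1/\tilde K_2$. Therefore the classes $[s(\vk)]$ span $\tilde K_1/\tilde K_2$, so modulo $\tilde K_2$ we have $\ls\{[s(\vk)]:\vk\in E_1\} = \tilde K_1/\tilde K_2 = \ls\{[\vk']:\vk'\in E^\cH_1\}$; lifting back, $\ls\,s(E_1) + \tilde K_2 = \ls\,E^\cH_1 + \tilde K_2$. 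Combining with the hypothesis $\ls\,s(E_1)\subset\ls\,E^\cH_1$ and with $\ls\,E^\cH_1\cap\tilde K_2=\{0\}$ (as $E^\cH_1$ is a basis extension from $\tilde K_2$), a short dimension/complement chase forces $\ls\,s(E_1)=\ls\,E^\cH_1$. The main obstacle I anticipate is precisely this last step: one must be careful that ``$E_1$ spans $K_1$ together with $K_2$'' yields, via surjectivity of $s$, not just that $s(E_1)$ lands inside $\ls\,E^\cH_1$ but that it fills it out — this is where the surjectivity of $s$ is genuinely used, rather than merely formally, and where one should double-check that no hidden linear dependence among the $s(\vk)$ spoils the count (it cannot, because we only claim spanning, not linear independence, of $s(E_1)$).
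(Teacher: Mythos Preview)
Your proof is correct and follows essentially the same approach as the paper: both verify the identity by checking agreement on the spanning set $\{\U\}\cup E_1\cup K_2$ (the paper organizes this slightly differently, first observing that $s\circ\cP=\cP^\cH\circ s\circ\cP$ always holds and then reducing to whether $\cP^\cH\circ s$ vanishes on $\ls E_1$). For the ``moreover'' clause your quotient-space argument is a mild variant of the paper's direct element chase (pick $\tilde\vk\in E^\cH_1$, write it as $s(k_1)+s(k_2)$ with $k_1\in\ls E_1$, $k_2\in K_2$, and apply $\cP^\cH$ to force $s(k_2)=0$); both are straightforward linear algebra and your complement chase goes through without any dimension hypothesis.
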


\begin{proof}
First of all, since $\cP$ maps into $K_2$, since $s$ maps $K_2$ into (actually, onto) $\tilde{K}_2$, and since $\cP^\cH$ acts as identity on $\tilde{K}_2$, the right-hand side always coincides with $\cP^\cH\circ s\circ\cP$. Therefore, we have to examine, when $\cP^\cH\circ s$ coincides with $\cP^\cH\circ s\circ\cP$. Since $\cP$ acts as identity on
$K_2$,
the two maps always coincide on $K_2$. Since $K_2$ and $E_1$ span $K_1$, it remains to examine, when the two maps coincide on $\,\ls\, E_1$.

By Lemma \ref{PElem}, the restriction of $\cP$ to $K_1$ is the unique idempotent onto $K_2$ with kernel $\,\ls\, E_1$. Therefore, $\cP^\cH\circ s\circ\cP$ is $0$ on $\,\ls\, E_1$. Likewise, the restriction of $\cP^\cH$ to $\tilde{K}_1$ is the unique idempotent onto $\tilde{K}_2$ with kernel $\,\ls\,E^\cH_1$. Therefore, $\cP^\cH\circ s$ is $0$ on $\,\ls\, E_1$ if and only if $\,\ls\,s(E_1)\subset\,\ls\,E^\cH_1$.

As for the last statement, since $E_1$ and $K_2$ span $K_1$ and since $s$ is surjective, for any $\tilde{\vk}\in E^\cH_1$ there are $k_1\in\,\ls\,E_1$ and $k_2\in K_2$ such that $s(k_1)+s(k_2)=\tilde{\vk}$. Plugging this in into $\cP^\cH$, taking also into account that $\tilde{\vk}\in\,\ls\,E^\cH_1=\ker\cP^\cH$, under the hypothesis $\,\ls\,s(E_1)\subset\,\ls\,E^\cH_1$ we get
$$
0
~=~
\cP^\cH(\tilde{\vk})
~=~
\cP^\cH\circ s(k_1)+\cP^\cH\circ s(k_2)
~=~
0+s(k_2).
$$
It follows that $\tilde{\vk}=s(k_1)\in\,\ls\,s(E_1)$. Therefore, $\,\ls\,E^\cH_1\subset\,\ls\,s(E_1)$, too.\end{proof}

\medskip
\begin{cor} \label{subPlcor}
If   {\normalfont $\,\ls\,s(E_1)\subset\,\ls\,E^\cH_1$}, then the generating functional $\psi$ from Corollary \ref{subapprcor} lives on $\cH$.
\end{cor}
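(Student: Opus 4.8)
The plan is to chain together the two preceding results, Corollary~\ref{subapprcor} and Lemma~\ref{Psublem}. Recall that Corollary~\ref{subapprcor} already produced the Sch\"urmann triples $(\pi,\eta,\psi)$ and $(\tilde{\pi},\tilde{\eta},\tilde{\psi})$ together with the identity $\psi = \tilde{\psi}\circ s\circ\cP$, where $\cP^\cH$ is the chosen projection onto $\tilde{K}_2$ used to manufacture $\tilde{\psi}$. The goal is to upgrade the correction term $s\circ\cP$ to plain $s$, which is exactly what the hypothesis $\ls\,s(E_1)\subset\ls\,E^\cH_1$ buys us.

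First I would invoke Lemma~\ref{Psublem}: under the stated span condition we have $\cP^\cH\circ s = s\circ\cP$. Substituting this into the formula from Corollary~\ref{subapprcor} gives
$$
\psi ~=~ \tilde{\psi}\circ(s\circ\cP) ~=~ \tilde{\psi}\circ(\cP^\cH\circ s) ~=~ (\tilde{\psi}\circ\cP^\cH)\circ s.
$$
Next I would observe that $\tilde{\psi}$, being precisely the functional produced by Lemma~\ref{cocaGFlem} applied on $\cH$ with the projection $\cP^\cH$ (it is the pointwise limit of $\langle\eta_n,\tilde{\pi}(\bullet)\eta_n\rangle\circ\cP^\cH$), satisfies the fixed-point relation $\tilde{\psi}\circ\cP^\cH = \tilde{\psi}$ by part~(3) of that lemma. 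Hence $\psi = \tilde{\psi}\circ s$, and, taking $\tilde{T} := \tilde{\psi}$ in Definition~\ref{def_living}, this says exactly that $\psi$ lives on $\cH$.

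There is essentially no obstacle here: the genuine content has already been discharged in Lemma~\ref{Psublem} (the equivalence of $\cP^\cH\circ s = s\circ\cP$ with the span condition) and in Corollary~\ref{subapprcor}. The only point worth a sentence is to make explicit that $\tilde{\psi}$ is itself of the form handled by Lemma~\ref{cocaGFlem}, so that the relation $\tilde{\psi}\circ\cP^\cH=\tilde{\psi}$ is legitimately available; everything else is bookkeeping with the factorizations through $s$.
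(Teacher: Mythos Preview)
Your argument is correct and is exactly the intended one: the paper states this corollary without proof, leaving the reader to combine the identity $\psi=\tilde{\psi}\circ s\circ\cP$ from Corollary~\ref{subapprcor}, the intertwining $\cP^\cH\circ s=s\circ\cP$ from Lemma~\ref{Psublem}, and the fixed-point property $\tilde{\psi}\circ\cP^\cH=\tilde{\psi}$ from Lemma~\ref{cocaGFlem}, just as you do.
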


\begin{cor} \label{subPcor}
Suppose $s(E_1)=E^\cH_1\cup\{0\}$. Then $\cP^\cH\circ s=s\circ\cP$.
\end{cor}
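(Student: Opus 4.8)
The plan is to deduce this directly from Lemma \ref{Psublem}, which already gives a complete characterization: $\cP^\cH\circ s=s\circ\cP$ holds \emph{precisely} when $\ls\,s(E_1)\subset\ls\,E^\cH_1$. So all that is needed is to check that the hypothesis $s(E_1)=E^\cH_1\cup\{0\}$ forces this span inclusion.

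First I would record the elementary observation that adjoining the zero vector to a family does not change its linear span; applying this to $E^\cH_1\cup\{0\}$ gives
$$
\ls\,s(E_1)~=~\ls\bigl(E^\cH_1\cup\{0\}\bigr)~=~\ls\,E^\cH_1.
$$
In particular $\ls\,s(E_1)\subset\ls\,E^\cH_1$ (indeed with equality, in agreement with the final assertion of Lemma \ref{Psublem}). Then I would invoke the \emph{if}-direction of the first equivalence in Lemma \ref{Psublem} to conclude $\cP^\cH\circ s=s\circ\cP$.

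There is no genuine obstacle here: the corollary is an immediate specialization of Lemma \ref{Psublem}, the only (routine) point being that the set-level equality $s(E_1)=E^\cH_1\cup\{0\}$ may legitimately be passed to a span-level equality even though $s$ collapses some vectors of $E_1$ to $0$. It may be worth remarking that this hypothesis is exactly what one arranges in practice, namely by building $E_1$ from a lift of $E^\cH_1$ along a linear section of $s$ together with extra vectors of $E_1$ lying in $\ker s\cap K_1$; Corollary \ref{subPlcor} then shows that the resulting $\psi$ of Corollary \ref{subapprcor} satisfies $\psi=\tilde\psi\circ s$ outright, with no drift correction.
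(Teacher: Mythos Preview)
Your proof is correct and matches the paper's approach: the corollary is stated without proof in the paper, as an immediate consequence of Lemma~\ref{Psublem}, and your argument makes explicit exactly the one-line verification needed, namely that $\ls\,s(E_1)=\ls\,(E^\cH_1\cup\{0\})=\ls\,E^\cH_1$.
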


Corollary \ref{subapprcor} will develop its full power only in Section \ref{sec_uqn}, when we reduce the case $U_q(N)$ to the case $SU_q(N)$.
In the decomposition of generating functionals on $SU_q(N)$ into components that live on the subgroups $SU_q(n)$ $(n\le N)$, Corollary \ref{subapprcor} does not help. We do get a decomposition of the representations into representations that live on the subgroups (Subsection \ref{repdSSEC}). We also can show that (for $n\ge2$) the corresponding components of the cocycles are limits of coboundaries (Subsection \ref{cocaSSEC}). But, before we can show the latter statement, we first have to show that the components of the cocycles live on the subgroups without knowing they are limits of coboundaries (Subsection \ref{cocdSSEC}).

This leaves us with the general problem to check when maps $\pi$ and $\eta$ on $\cG$ do live on $\cH$. Our algebras are generated (as algebras) by sets of generators, usually, arranged in matrices, and dividing out
{some relations} on these generators.
Already when (well)defining representations and their cocycles, we are using the well-known facts that a representation (or, more generally, a homomorphism) is well-defined by assigning its values on the generators, and checking whether the assigned values satisfy the relations. The same is true for a cocycle.

All our quantum subgroups arise by adding more relations. Therefore, the ($*$-)algebra describing the quantum subgroup can be obtained by taking the quotient of the ($*$-)algebra describing the containing quantum group and the ($*$-)ideal generated by the extra relations. The following, purely algebraic, lemma tells us what we have to do to check if a representation and its cocycle live on a quantum subsemigroup. (Applying it to the free algebra and its quotients, we also obtain a proof of the preceding statements about representations and cocycles on algebras generated by relations.)

\begin{lemma} \label{exrellem}
Let $\A$ be a unital algebra over $\C$ and let $\e$ be a homomorphism into $\C$. Suppose $R=\{r_1,\ldots,r_M\}$ is a subset of $\ker\e$, let {\normalfont $I:=\ls\A R\A$} denote the ideal generated by $R$, and denote by $s$ the canonical homomorphism $\A\rightarrow\A/I$. Then:

A representation (or homomorphism) $\pi$ of $\A$ defines a representation (or homomorphism) $\tilde{\pi}\colon s(a)\mapsto\pi(a)$ of $\A/I$ if and only if $\pi(R)=\{0\}$.

If $\pi(R)=\{0\}$, then a $\pi$-$\e$-cocycle $\eta$ defines a $\tilde{\pi}$-$\tilde{\e}$-cocycle $\tilde{\eta}\colon s(a)\mapsto\eta(a)$ if and only if $\eta(R)=\{0\}$.
\end{lemma}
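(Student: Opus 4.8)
The plan is to verify the two biconditionals by unwinding the definitions, exploiting that $s\colon\A\to\A/I$ is a surjective algebra homomorphism and that $I=\ls\A R\A$ is the ideal generated by $R$. In both cases the "only if"-direction is immediate: if $\tilde\pi$ (resp.\ $\tilde\eta$) is well-defined, then since $r\in R\subset\ker\e$ we have $s(r)=0$, hence $\pi(r)=\tilde\pi(s(r))=\tilde\pi(0)=0$ (resp.\ $\eta(r)=\tilde\eta(s(r))=0$). So the content is in the "if"-directions.

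For the representation statement, assuming $\pi(R)=\{0\}$, I would first check that $\pi$ vanishes on all of $I$: a typical element of $I=\ls\A R\A$ is a finite sum $\sum_j a_j r_{i_j} b_j$, and $\pi(\sum_j a_j r_{i_j} b_j)=\sum_j\pi(a_j)\pi(r_{i_j})\pi(b_j)=0$ since each $\pi(r_{i_j})=0$. Hence $\pi$ factors through the quotient: the prescription $\tilde\pi(s(a)):=\pi(a)$ is well-defined because if $s(a)=s(a')$ then $a-a'\in I$, so $\pi(a)=\pi(a')$. Well-definedness being granted, $\tilde\pi$ is automatically a unital $*$-homomorphism (resp.\ homomorphism), as $s$ is a surjective homomorphism and $\pi=\tilde\pi\circ s$, which is exactly the "if"-direction already recorded in the proposition preceding Definition~\ref{def_triple} (specializing Proposition in Subsection~\ref{sec_Stqsub} with $\cG=\A$, $\cH=\A/I$).

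For the cocycle statement, assume in addition $\eta(R)=\{0\}$. The key observation is that the $\pi$-$\e$-cocycle identity \eqref{*}, $\eta(ab)=\pi(a)\eta(b)+\eta(a)\e(b)$, lets one compute $\eta$ on a product $arb$ with $r\in R$: using $\e(r)=0$ one gets $\eta(arb)=\pi(a)\eta(rb)=\pi(a)\bigl(\pi(r)\eta(b)+\eta(r)\e(b)\bigr)=0$, since $\pi(r)=0$ and $\eta(r)=0$. By linearity $\eta$ vanishes on $\ls\A R\A=I$. Therefore $\tilde\eta(s(a)):=\eta(a)$ is well-defined on $\A/I$ (again: $s(a)=s(a')\Rightarrow a-a'\in I\Rightarrow\eta(a)=\eta(a')$), and that it is a $\tilde\pi$-$\tilde\e$-cocycle is then automatic from $\eta=\tilde\eta\circ s$, $\pi=\tilde\pi\circ s$, $\e=\tilde\e\circ s$ and surjectivity of $s$ — once more an instance of the "if"-direction of the proposition in Subsection~\ref{sec_Stqsub}.

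The only mildly delicate point — really the heart of the argument — is the claim that $\eta$ annihilates $I$, i.e.\ that $\eta(arb)=0$ for all $a,b\in\A$, $r\in R$; here one must be slightly careful that the cocycle identity is applied in the right order (peeling $a$ off from the left via \eqref{*}, then using $\e(r)=0$ to kill the drift term $\eta(a)\e(rb)$, and only then invoking $\pi(r)=0$ and $\eta(r)=0$ on $\eta(rb)=\pi(r)\eta(b)+\eta(r)\e(b)$). Everything else is a routine factorization-through-a-quotient argument, and the homomorphism/cocycle properties of $\tilde\pi,\tilde\eta$ transfer for free by surjectivity of $s$.
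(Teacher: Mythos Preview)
Your proof is correct and follows essentially the same approach as the paper: both arguments reduce to showing that $\pi$ and $\eta$ vanish on $I=\ls\A R\A$ by expanding $\pi(arb)$ and $\eta(arb)$ via multiplicativity and the cocycle identity, using $\pi(r)=0$, $\eta(r)=0$, and $\e(r)=0$. The paper's expansion $\eta(br_kc)=\pi(b)\pi(r_k)\eta(c)+\pi(b)\eta(r_k)\e(c)+\eta(b)\e(r_k)\e(c)$ is just your two-step computation done in one line, and your additional remarks on well-definedness and the ``only if'' direction make explicit what the paper leaves implicit.
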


\begin{proof}
We have to show that $\pi(a)=0$ for all $a\in I$. Clearly, by $\pi(br_kc)=\pi(b)\pi(r_k)\pi(c)$, this is fulfilled if and (recall that representations are assumed unital!) only if $\pi(R)=\{0\}$.

In particular, $\e$ satisfies the condition, so there is $\tilde{\e}$.

Finally, we have to show that  $\eta(a)=0$ for all $a\in I$. Clearly, by $\eta(br_kc)=\pi(b)\pi(r_k)\eta(c)+\pi(b)\eta(r_k)\e(c)+\eta(b)\e(r_k)\e(c)$, this is fulfilled if and only if $\eta(R)=\{0\}$.
\end{proof}

And for making this lemma more applicable in our context:

\begin{cor} \label{exrelcor}
Under the hypotheses of Lemma \ref{exrellem}, suppose that $S$ is a subset of $\A/I$ and denote by $J=\ls(\A/I)S(\A/I)$ the ideal in $\A/I$ generated by $S$. For each $s\in S$ choose $a_s\in\A$ such that $a_s+I=s$ and denote by $K$ the ideal in $\A$ generated by $R\cup\{a_s\colon s\in S\}$. Then
$$
(a+I)+J
~\longmapsto~
a+K
$$
defines an isomorphism $(\A/I)/J\rightarrow\A/K$.
\end{cor}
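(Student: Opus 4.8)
The plan is to apply Lemma \ref{exrellem} twice along the obvious tower $\A \to \A/I \to (\A/I)/J$, and to observe that composing the two quotient maps produces exactly the quotient of $\A$ by an ideal that coincides with $K$. First I would fix notation: write $s\colon \A \to \A/I$ for the canonical map of Lemma \ref{exrellem}, write $t\colon \A/I \to (\A/I)/J$ for the canonical map killing $J$, and set $u := t\circ s\colon \A \to (\A/I)/J$. Since $s$ and $t$ are both surjective algebra homomorphisms, $u$ is a surjective algebra homomorphism, so by the first isomorphism theorem it induces an isomorphism $\A/\ker u \to (\A/I)/J$, $a+\ker u \mapsto u(a)$. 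The whole task is therefore to identify $\ker u$ with the ideal $K$ generated in $\A$ by $R \cup \{a_s \colon s\in S\}$.

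The containment $K \subseteq \ker u$ is immediate: each $r_k \in R$ lies in $I = \ker s$, hence in $\ker u$; and for each $s\in S$ we have $u(a_s) = t(s(a_s)) = t(a_s + I) = t(s) $, which lies in $J = \ker t$ since $s\in S\subseteq J$, so $a_s \in \ker u$. As $\ker u$ is an ideal of $\A$ containing the generators of $K$, it contains $K$. For the reverse containment, take $a\in\ker u$, i.e. $s(a)\in J = \ker t$. Since $J$ is the ideal of $\A/I$ generated by $S$, we may write $s(a) = \sum_j x_j \, s_j \, y_j$ with $x_j, y_j \in \A/I$ and $s_j\in S$. Lift each $x_j, y_j$ arbitrarily to $\tilde x_j, \tilde y_j \in \A$; then $a' := \sum_j \tilde x_j \, a_{s_j} \, \tilde y_j$ lies in $K$ (it is built from the generators $a_{s_j}$), and $s(a') = \sum_j x_j \, s_j \, y_j = s(a)$, so $a - a' \in \ker s = I$. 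But $I$ is the ideal of $\A$ generated by $R$, hence $I \subseteq K$; therefore $a = a' + (a - a') \in K + K = K$. This proves $\ker u = K$, and the induced isomorphism is precisely the map $(a+I)+J \mapsto a+K$ displayed in the statement (trace through: $u(a) = t(s(a)) = (a+I)+J$, which the isomorphism $\A/\ker u \cong (\A/I)/J$ sends back from $a + K$).

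I do not expect a genuine obstacle here; the statement is a bookkeeping consequence of the first isomorphism theorem together with the ``lift the generators'' trick already used inside the proof of Lemma \ref{exrellem}. The one point deserving a sentence of care is the lifting step: the elements $\tilde x_j, \tilde y_j$ chosen as preimages of $x_j, y_j$ are not canonical, but this does not matter, since any choice yields an $a'$ with $s(a') = s(a)$ and $a' \in K$, and the argument only uses the existence of such an $a'$. Similarly, the chosen lifts $a_s$ of the elements $s\in S$ are not canonical, but $K$ is defined using whichever lifts were fixed, and the displayed isomorphism refers to those same lifts, so there is no ambiguity. Everything else is formal.
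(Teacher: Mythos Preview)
Your argument is correct. Both proofs are elementary bookkeeping, but they are organised differently. The paper applies Lemma~\ref{exrellem} in the forward direction: it starts from the canonical map $\pi\colon\A\to\A/K$, observes that $\pi$ vanishes on $R$ and hence factors through $\A/I$, then that the factored map vanishes on $S$ and hence factors through $(\A/I)/J$; the inverse is built by the same trick applied to $a\mapsto(a+I)+J$. You instead run the composite $u\colon\A\to(\A/I)/J$ and identify $\ker u$ with $K$ by an explicit lifting argument, then invoke the first isomorphism theorem. The content is the same---your lifting step is exactly what is needed to show the paper's inverse map is well-defined---and neither approach has an advantage here; yours is perhaps marginally more self-contained since it does not cite Lemma~\ref{exrellem} at all. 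One small wording point: the isomorphism produced by the first isomorphism theorem goes $\A/K\to(\A/I)/J$, so the displayed map in the corollary is its inverse; you acknowledge this in your parenthetical, but it would read more cleanly to say so up front.
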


\begin{proof}
Since $R\subset R\cup\{a_s\colon s\in S\}\subset K$, the canonical homomorphism $\pi\colon a\mapsto a+K$ vanishes on $R$, hence, defines a homomorphism $\tilde{\pi}\colon a+I\mapsto a+K$. Since $\{a_s\colon s\in S\}\subset R\cup\{a_s\colon s\in S\}\subset K$ and since $S=\{a_s+I\colon s\in S\}$, the homomorphism $\tilde{\pi}$ vanishes on $S$, hence, defines a homomorphism $\tilde{\tilde{\pi}}\colon(a+I)+J\mapsto a+K$. Conversely, the homomorphism $a\mapsto (a+I)+J$ defines a homomorphism $\A/K\rightarrow(\A+I)+J$, obviously the inverse of $\tilde{\tilde{\pi}}$, because $(r+I)+J=0$ for all $r\in R$ and because $(a_s+I)+J=s+J=0$ for all $s\in S$.
\end{proof}

%%%%%%%%%%%%%%%%%%%%%%%%%%%%%%%%%%%%%%%
\medskip
\subsection{Gaussian generating functionals and L\'evy-Khintchine decomposition}
\label{ssec_gaussian}
%%%%%%%%%%%%%%%%%%%%%%%%%%%%%%%%%%%%%%%

In the classical theory of L{\'e}vy processes with values in abelian Lie groups, the coefficient algebra can be thought of as polynomials in the coordinate functions. The ideals $K_n$ correspond to polynomials with expansion starting with monomials of degree at least $n$. Correspondingly, functionals vanishing on $K_n$ may be viewed as having no contribution on $n$th and higher powers. We have met the {\it drifts} that vanish on $K_2$; they may, therefore, be called as {\it linear}. Consequently, the functionals that vanish on $K_3$ may be referred to as {\it quadratic}. Quadratic generating functionals, in the classical theory, correspond to second order differential operators. They generate {\it Brownian motions}, and are called {\it gaussian}. A crucial part of the classical L{\'e}vy-Khintchine formula consists in splitting an arbitrary generating functional into a (maximal) gaussian part and a residue part (with no gaussian component remaining). The residue part, classically is a (topological) convex combination of generating functionals of pure jump processes. (On $\R$ we get a {\it compound Poisson process}.) It is, therefore, sometimes referred to as {\it jump part}. (Recall Remark \ref{JPrem}.)

In this subsection (following Sch{\"u}rmann \cite{MSchue90b} and Skeide \cite{skeide99}), we discuss gaussian parts and
{explain}
what we expect from a L{\'e}vy-Khintchine decomposition in the general context of (algebraic) quantum semigroups. We also report results from Franz, Gerhold, and Thom \cite{franz_gerhold_thom}, where the basic problems are discussed and classified.

We have already examined the generating functionals with trivial Sch{\"u}rmann triples, the drifts. The one step less simple class would be Sch{\"u}rmann triples with representations that are multiples $\id_D\e$ of $\e$. From
\begin{align*}
\eta(ab)
&
~=~
\pi(a)\eta(b)
&
\langle\eta(a),\pi(b)\eta(c)\rangle
&
~=~
\psi(a^*bc)
&
(a,b,c\in K_1),
\end{align*}
we conclude:

\begin{prop}
Let  $\psi$ be a generating  functional and let $(\pi,\eta,\psi)$ be its Sch{\"u}rmann triple. Then the following conditions are equivalent:
\begin{enumerate}
\item
$\psi$ vanishes on $K_3$.

\item
$\eta$ vanishes on $K_2$.

\item
$\pi$ vanishes on $K_1$.
\end{enumerate}
\end{prop}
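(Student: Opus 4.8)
The plan is to prove the cyclic chain of implications $(1)\Rightarrow(2)\Rightarrow(3)\Rightarrow(1)$, exploiting the two structural identities recorded just before the statement, namely $\eta(ab)=\pi(a)\eta(b)$ and $\langle\eta(a),\pi(b)\eta(c)\rangle=\psi(a^*bc)$ for $a,b,c\in K_1$, together with the fact that $(\pi,\eta,\psi)$ is the \emph{cyclic} Sch\"urmann triple associated with $\psi$, so $\eta(\cG)=\eta(K_1)=D$.

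First I would show $(1)\Rightarrow(2)$. Assume $\psi$ vanishes on $K_3$. A typical element of $K_2$ has the form $\sum_i a_i b_i$ with $a_i,b_i\in K_1$, and a typical element of $D$ is $\eta(c)$ with $c\in K_1$ (using cyclicity). Then
\[
\langle\eta(c),\eta(ab)\rangle
~=~
\langle\eta(c),\pi(a)\eta(b)\rangle
~=~
\psi(c^*ab)
~=~
0,
\]
since $c^*ab\in K_3$. As $\eta(c)$ ranges over a spanning set of $D$, this forces $\eta(ab)=0$, hence $\eta$ vanishes on $K_2$.

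Next, $(2)\Rightarrow(3)$. Assume $\eta$ vanishes on $K_2$. By the cocycle property \eqref{*}, for $a\in K_1$ and arbitrary $b\in\cG$ we have $\eta(ab)=\pi(a)\eta(b)+\eta(a)\e(b)$; writing $b=(\id-\U\e)(b)+\U\e(b)$ and using that $\eta$ vanishes on $\U\C$, we get $\pi(a)\eta(b')=\eta(ab')$ where $b'=(\id-\U\e)(b)\in K_1$, so $ab'\in K_2$ and thus $\pi(a)\eta(b')=0$. Since such $\eta(b')$ span $D$, this gives $\pi(a)=0$ for all $a\in K_1$; that is, $\pi$ vanishes on $K_1$.

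Finally, $(3)\Rightarrow(1)$. Assume $\pi$ vanishes on $K_1$. Any element of $K_3$ is a sum of terms $a^*bc$ with $a,b,c\in K_1$, and
\[
\psi(a^*bc)
~=~
\langle\eta(a),\pi(b)\eta(c)\rangle
~=~
\langle\eta(a),0\rangle
~=~
0,
\]
so $\psi$ vanishes on $K_3$. This closes the loop. I do not expect a genuine obstacle here: the only point requiring a little care is the systematic use of cyclicity of the \emph{associated} triple to guarantee that the vectors $\eta(c)$, $c\in K_1$, are a total (indeed spanning) set in $D$ — without this, the implications $(1)\Rightarrow(2)$ and $(2)\Rightarrow(3)$ would only yield statements ``on the range of $\eta$'' rather than on all of $D$. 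Since the proposition explicitly takes $(\pi,\eta,\psi)$ to be \emph{the} Sch\"urmann triple of $\psi$, which was noted to be cyclic by construction, this is exactly what we are entitled to use.
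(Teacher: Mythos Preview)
Your proof is correct and follows exactly the route the paper indicates: the paper simply displays the two identities $\eta(ab)=\pi(a)\eta(b)$ and $\langle\eta(a),\pi(b)\eta(c)\rangle=\psi(a^*bc)$ for $a,b,c\in K_1$ and writes ``we conclude'', leaving the reader to fill in the cyclic chain of implications via cyclicity of the associated triple --- which is precisely what you have done. Your explicit remark that cyclicity is essential for $(1)\Rightarrow(2)$ and $(2)\Rightarrow(3)$ is also in line with the paper, which notes immediately after the proposition that without cyclicity $\pi$ may fail to vanish on $K_1$ even when the other two conditions hold.
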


\noindent
Recall that $\eta$ in {\bf the} Sch{\"u}rmann triple is cyclic. Without that, $\pi$ may fail to vanish on $K_1$, even if the other two, still equivalent, conditions are satisfied.

\begin{defi}
A generating functional, a cocycle, and a representation on a  quantum semigroup are called \hl{gaussian} if they vanish on $K_3$, $K_2$, and $K_1$, respectively.
\end{defi}

From
$$
\pi
~=~
\pi\circ(\id-\U\e)+\pi(\U)\e,
$$
we conclude:

\begin{prop}
A (unital!) $*$-representation $\pi$ is gaussian if and only $\pi=\id_D\e$.
\end{prop}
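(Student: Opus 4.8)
The equivalence is essentially immediate from the decomposition displayed just before the statement, $\pi=\pi\circ(\id-\U\e)+\pi(\U)\e$, together with the fact (Observation \ref{ideob}) that $\id-\U\e$ is the canonical projection of $\cG$ onto $K_1=\ker\e$. I would prove the two implications separately.

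For the \emph{if} direction, assume $\pi=\id_D\e$. Then for any $b\in K_1$ we have $\pi(b)=\id_D\e(b)=0$, since $\e(b)=0$ by definition of $K_1$. Hence $\pi$ vanishes on $K_1$, which is exactly the definition of $\pi$ being gaussian. (One could also note en route that $\id_D\e$ is indeed a unital $*$-representation, so the statement is not vacuous.)

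For the \emph{only if} direction, assume $\pi$ is gaussian, i.e.\ $\pi\upharpoonright K_1=0$. Apply the identity $\pi=\pi\circ(\id-\U\e)+\pi(\U)\e$ to an arbitrary $b\in\cG$: the first summand is $\pi\big((\id-\U\e)(b)\big)$, which vanishes because $(\id-\U\e)(b)\in K_1$ and $\pi$ kills $K_1$; the second summand is $\pi(\U)\e(b)=\id_D\,\e(b)$ because $\pi$ is unital. Therefore $\pi(b)=\id_D\e(b)$ for all $b$, i.e.\ $\pi=\id_D\e$.

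There is no real obstacle here; the only thing to be careful about is that both directions genuinely use unitality of $\pi$ (in the \emph{only if} direction to identify $\pi(\U)$ with $\id_D$), which is why the parenthetical exclamation mark in the statement is warranted — this is also the point flagged in the remark preceding the Definition of gaussian, that dropping cyclicity/unitality would let $\pi$ fail to vanish on $K_1$ while the other equivalent conditions still hold.
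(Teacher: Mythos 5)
Your proof is correct and follows exactly the argument the paper intends: the displayed identity $\pi=\pi\circ(\id-\U\e)+\pi(\U)\e$ together with unitality gives the \emph{only if} direction, and the \emph{if} direction is immediate since $\e$ vanishes on $K_1$. The paper leaves this as a one-line consequence of that identity, so your write-up is just a spelled-out version of the same approach.
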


\noindent
Recall that we fixed a hermitian basis extension $E_1$ from $K_2$ to $K_1$, and that we have the functionals $\e'_\vk$ ($\vk\in E_1$) as in Lemma \ref{PElem}.

\begin{prop} \label{Gcocprop}
A cocycle $\eta$ is gaussian if and only if it has the form
$$
\eta
~=~
\sum_{\vk\in E_1}\eta_\vk\e'_\vk
$$
for vectors  $\eta_\vk\in D$. The vectors are unique; in fact, $\eta_\vk=\eta(\vk)$.
\end{prop}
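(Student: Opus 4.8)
The plan is to characterize gaussian cocycles by using the cocycle identity \eqref{*} together with the fact that $\eta$ vanishes on $K_2$. First I would verify that a map of the stated form $\eta = \sum_{\vk\in E_1}\eta_\vk\e'_\vk$ is indeed a gaussian cocycle: since each $\e'_\vk$ vanishes on $K_2\cup\{\U\}$ (as recalled just before Corollary \ref{Gdelcor}), so does $\eta$, giving the gaussian property; and since each $\e'_\vk$ is an $\e$-$\e$-cocycle (Proposition \ref{Gdelprop}(1), or directly from Lemma \ref{PElem} together with the form \eqref{P-form} of $\cP$), any linear combination $\sum_\vk \eta_\vk \e'_\vk$ with constant vectors $\eta_\vk$ is a $\pi$-$\e$-cocycle for a gaussian representation $\pi=\id_D\e$. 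Indeed, for such $\pi$ the cocycle identity \eqref{*} becomes $\eta(ab)=\e(a)\eta(b)+\eta(a)\e(b)$, which is exactly the defining identity of an $\e$-$\e$-cocycle, and this is clearly preserved under forming $D$-valued linear combinations of scalar $\e$-$\e$-cocycles.

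For the converse, suppose $\eta$ is a gaussian cocycle, i.e.\ $\eta$ vanishes on $K_2$ and (by the Proposition immediately preceding the definition of gaussian, or directly) the associated representation is gaussian, $\pi=\id_D\e$. Using Observation \ref{ideob} and the fact that $\{\U\}\cup E_1$ is a basis extension from $K_2$ to $\cG$, every $b\in\cG$ decomposes uniquely as $b = k + \U\e(b) + \sum_{\vk\in E_1}\vk\,\e'_\vk(b)$ with $k\in K_2$. Applying $\eta$, and using $\eta(K_2)=\{0\}$, $\eta(\U)=0$, and linearity, I get $\eta(b)=\sum_{\vk\in E_1}\eta(\vk)\,\e'_\vk(b)$. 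Setting $\eta_\vk := \eta(\vk)$ gives the claimed form, and uniqueness of the coefficients follows by applying both sides to $\vk'\in E_1$ and using $\e'_\vk(\vk')=\delta_{\vk,\vk'}$: this forces $\eta_{\vk'}=\eta(\vk')$.

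The only point that requires a little care is that the decomposition of an arbitrary $b\in\cG$ along $\{\U\}\cup E_1$ and $K_2$ really is finite and unique — but this is precisely the content of Lemma \ref{PElem} (equivalently, the definition of basis extension): $\cP(b)\in K_2$ and $b-\cP(b) = \U\e(b) + \sum_{\vk\in E_1}\vk\,\e'_\vk(b)$ is a finite sum because $\cP$ has the form \eqref{P-form} and, for each fixed $b$, only finitely many $\e'_\vk(b)$ are nonzero. So there is no genuine obstacle here; the statement is essentially a bookkeeping consequence of the machinery already set up in Subsection \ref{sec_S_trip}. If anything, the mild subtlety is making sure the representation attached to a \emph{non-cyclic} gaussian cocycle is handled correctly — but the argument above never used cyclicity, only $\pi=\id_D\e$ (equivalently $\pi$ vanishes on $K_1$), which holds for gaussian $\pi$ by the Proposition preceding the definition.
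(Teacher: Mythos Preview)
Your proof is correct and follows essentially the same approach as the paper: decompose an arbitrary element along $\{\U\}\cup E_1$ and $K_2$, then use that a gaussian $\eta$ kills $\U$ and $K_2$. You are more thorough than the paper, which only writes out the ``only if'' direction and declares the rest obvious; your explicit verification of the ``if'' direction and of uniqueness via $\e'_\vk(\vk')=\delta_{\vk,\vk'}$ is fine. One small remark: in the ``only if'' direction you invoke $\pi=\id_D\e$, but in fact you never use it --- all that is needed is $\eta(K_2)=\{0\}$ and $\eta(\U)=0$, which is the definition of a gaussian cocycle together with the cocycle identity at $\U$; so the closing caveat about non-cyclic representations is unnecessary.
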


\begin{proof}
Express $a\in\cG$ as $\U\e(a)+k+\sum_{\vk\in E_1}\vk\e'_\vk(a)$. Such a decomposition is unique and, necessarily, fulfills $k\in K_2$.
Taking into account that a gaussian $\eta$ vanishes on $\U$ and on $K_2$, the formula follows. The other statements are obvious.
\end{proof}

We described gaussian representations in an easy and concise way. We described gaussian cocycles in a similarly easy and concise way, provided we have found a hermitian basis extension $E_1$. It would be desirable to have a similar description of gaussian generating functionals. However, it turns out that the form of a general gaussian generating functional on a quantum semigroup depends on the quantum semigroup. In
{particular}
already the answer to the question, which gaussian cocycles actually {\bf do} admit a gaussian generating functional, does depend on the quantum semigroup. Sch\"urmann \cite[Proposition 5.1.11]{schurmann93} showed that a sufficient (but, in general, not necessary) condition for that a gaussian cocycle $\eta$ admits a generating functional is that it be \hl{hermitian}, that is, $\langle\eta(a^*),\eta(b)\rangle=\langle\eta(b^*),\eta(a)\rangle$. In other words, since the $\vk$ are self-adjoint, the matrix $\langle\eta_\vk,\eta_{\vk'}\rangle$ is real and symmetric. The following little consequence applies to our case $SU_q(N)$; see Lemma \ref{lem_kernel_descr} and its corollary.

\begin{cor} \label{K3gcor}
Suppose that $ab-ba\in K_3$ for all $a,b\in K_1$. Then for a gaussian cocycle $\eta$ there is a Sch{\"u}rmann triple $(\pi,\eta,\psi)$ if and only if $\eta$ is hermitian.
\end{cor}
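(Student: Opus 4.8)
The plan is to reduce the statement to the general criterion of Corollary \ref{pPpcor}: a nontrivial Schürmann triple $(\pi,\eta,\psi)$ completes to a generating functional precisely when the unique projection $\cP$ onto $K_2$ with $\psi\circ\cP=\psi$ is hermitian, equivalently (since $\cP$ is fixed hermitian) when $\psi\upharpoonright K_2$ is hermitian. So the real content is: \emph{under the hypothesis $ab-ba\in K_3$, a gaussian cocycle $\eta$ admits a functional $\psi$ with $\eta$-induced $2$-coboundary if and only if $\eta$ is hermitian}. The sufficiency direction is already covered by Schürmann's observation quoted just above (a hermitian gaussian cocycle always admits a generating functional); so the only-if direction is what I actually need to prove.

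First I would unwind what ``$\psi$ has $\eta$-induced $2$-coboundary'' says for a \emph{gaussian} cocycle. By \eqref{**}, $\psi$ is forced on $K_2$: for $a,b\in K_1$ we have $\psi(a^*b)=\langle\eta(a),\eta(b)\rangle$ (using $\eta(\U)=0$ and the cocycle identity to reduce to this form). Now I use that $\psi$ is \emph{hermitian} on $K_2$ whenever it comes from a genuine Schürmann triple via Corollary \ref{pPpcor}: hermiticity of $\psi$ on a self-adjoint spanning set of $K_2$ of the form $a^*b$ gives $\psi(b^*a)=\overline{\psi(a^*b)}$, i.e. $\langle\eta(b),\eta(a)\rangle=\overline{\langle\eta(a),\eta(b)\rangle}=\langle\eta(a),\eta(b)\rangle$ after replacing $a\mapsto a^*$, $b\mapsto b^*$ (legitimate since $K_1$ is $*$-invariant). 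Writing this out, hermiticity of $\psi\upharpoonright K_2$ is \emph{equivalent} to $\langle\eta(a^*),\eta(b)\rangle=\langle\eta(b^*),\eta(a)\rangle$ for all $a,b\in K_1$, which is exactly the hermiticity of $\eta$. So far this holds for \emph{any} gaussian cocycle and does not yet use the hypothesis $ab-ba\in K_3$.

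The point of the hypothesis $ab-ba\in K_3$ enters when I worry about \emph{well-definedness}: the prescription $a^*b\mapsto\langle\eta(a^*),\eta(b)\rangle$ on $K_2$ must be consistent with all ways of writing an element of $K_2$ as a combination of such products. Because a gaussian cocycle vanishes on $K_2\supset K_3$, and because $ab-ba\in K_3$ forces $\eta$ to be symmetric in $a,b$ up to the null directions, the sesquilinear expression descends unambiguously; concretely, any relation among products $a^*b$ in $K_2$ lands, after applying the cocycle identities, in a combination of $\eta$-values on $K_2$, which all vanish. I would make this precise by noting that the hypothesis makes $K_2/K_3$ (a quotient of $K_1\otimes K_1$ modulo symmetric tensors) the natural target, so that the quadratic form $\langle\eta(\cdot),\eta(\cdot)\rangle$ factors through $\mathrm{Sym}^2$, and hermiticity is then visibly the real-symmetry of the Gram matrix $\langle\eta_\vk,\eta_{\vk'}\rangle$ in the basis extension $E_1$ — which is exactly the reformulation in the paragraph before the corollary. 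Having established that $\eta$-inducedness forces $\psi\upharpoonright K_2$ hermitian iff $\eta$ is hermitian, I extend $\psi$ to $\cG$ by $\psi\circ\cP=\psi$ (i.e. $0$ on $\U$ and on $E_1$) and invoke Corollary \ref{pPpcor} to conclude.

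The main obstacle I anticipate is precisely the well-definedness/consistency step: one must be careful that ``$\eta$ is hermitian'' as a condition on the abstract sesquilinear data is genuinely the same as ``the prescription \eqref{**} defines a functional on $K_2$ that is hermitian,'' and the bookkeeping of the cocycle identities needed to move commutators into $K_3$ (where $\eta$ vanishes) is where an error could hide. Everything else is essentially a repackaging of Corollary \ref{pPpcor} together with Schürmann's sufficiency result and Proposition \ref{Gcocprop}.
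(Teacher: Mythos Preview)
Your only-if argument has a gap. You claim that hermiticity of $\psi\upharpoonright K_2$ is equivalent to hermiticity of $\eta$, but this is not so: $\psi$ is \emph{always} hermitian on $K_2$ whenever it exists (the paper even notes this parenthetically just before introducing basis extensions). Indeed, $\overline{\psi(a^*b)}=\overline{\langle\eta(a),\eta(b)\rangle}=\langle\eta(b),\eta(a)\rangle=\psi(b^*a)=\psi((a^*b)^*)$ is a tautology of the inner product, and your chain ``$\langle\eta(b),\eta(a)\rangle=\overline{\langle\eta(a),\eta(b)\rangle}=\langle\eta(a),\eta(b)\rangle$'' either repeats this tautology or smuggles in that the inner product is real, which is exactly what is to be proved. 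So hermiticity of $\psi$ on $K_2$ gives you nothing, and the detour through Corollary~\ref{pPpcor} is idle. You have also misplaced the role of the hypothesis $ab-ba\in K_3$: for the only-if direction you are \emph{given} a $\psi$, so well-definedness is not at issue.

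The paper's argument is a one-liner that uses the hypothesis where it actually bites. Since $\eta$ is gaussian it vanishes on $K_2$, hence $\psi$ vanishes on $K_3$ (write any element of $K_3$ as $a^*b$ with $a\in K_1$, $b\in K_2$; then $\psi(a^*b)=\langle\eta(a),\eta(b)\rangle=0$). Now the hypothesis $ab-ba\in K_3$ gives $\psi(ab)=\psi(ba)$ for $a,b\in K_1$, and therefore
\[
\langle\eta(a^*),\eta(b)\rangle=\psi(ab)=\psi(ba)=\langle\eta(b^*),\eta(a)\rangle,
\]
which is precisely hermiticity of $\eta$. The if-direction is, as you say, Sch{\"u}rmann's result.
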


\begin{proof}
If $(\pi,\eta,\psi)$ is a Sch{\"u}rmann triple for a gaussian cocycle $\eta$, then $\langle\eta(a^*),\eta(b)\rangle=\psi(ab)=\psi(ba)=\langle\eta(b^*),\eta(a)\rangle$ for all $a,b\in K_1$. (The middle equality because, under the stated hypothesis, $\psi(ab-ba)=0$.) The other direction is Sch{\"u}rmann's result.
\end{proof}

\vspace{0ex}
\begin{remark} \label{rem_5.1.11}
Sch{\"u}rmann's proof of \cite[Proposition 5.1.11]{schurmann93} is direct. (It does use the comultiplication.) In the course of arriving at Theorem \ref{Gclassthm}, we also recover Sch{\"u}rmann's result for $SU_q(N)$ (giving an explicit and classifying form to its gaussian generating functionals), appealing to a procedure similar to Proposition \ref{Gdelprop}.
\end{remark}

\medskip
After these generalities about gaussian parts, let us come back to the main problem, decomposition into a gaussian and a {\it completely non-gaussian part}, the first scope of a L{\'e}vy-Khintchine formula. Almost as easy as it is to understand gaussian representations and gaussian cocycles, it is also easy to separate a representation and, accordingly (by means of Proposition \ref{decomprop}), the cocycles into a (maximal) gaussian part and a remaining (completely non-gaussian) part. We report the output of the careful discussion preceding \cite[Definition 2.4]{franz_gerhold_thom}:

\begin{prop} \label{GcnGprop}
Let $(\pi,\eta,\psi)$ be a cyclic Sch{\"u}rmann triple on a pre-Hilbert space $D$ for a generating functional $\psi$. Then there are pre-Hilbert subspaces
{$D_1$ and $D_2$}
of $\overline{D}$ with representation $\pi_1$ and $\pi_2$, respectively, such that:
\begin{itemize}
\item
 $D\subset D_1\oplus D_2(\subset\overline{D})$.
 
 \item
 $\pi=(\pi_1\oplus\pi_2)\upharpoonright D$, so that also $\eta=\eta_1\oplus\eta_2$.
 
 \item
$\eta_1(\cG)=D_1$  and $\pi_1$ is gaussian.
 
 \item
 $\eta_2(\cG)=D_2$  and $\pi_2$ is \hl{completely non-gaussian}, that is, the only invariant subspace of $D_2$ on which $\pi_2$ restricts to a gaussian representation, is the trivial subspace $\{0\}$.
\end{itemize}
\end{prop}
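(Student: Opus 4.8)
The plan is to construct $D_1$ as the largest subspace of $\overline{D}$ on which $\pi$ (extended to $\overline{D}$ by boundedness -- recall that from Subsection \ref{ssec_suqn} on all representations are by bounded operators, so $\pi$ extends to $\overline{D}$) acts as $\id\,\e$, and to take $D_2$ as its orthogonal complement inside $\overline{\eta(\cG)}$. First I would observe that the set $G:=\{\xi\in\overline{D}\colon \pi(a)\xi=\e(a)\xi\text{ for all }a\in\cG\}=\bigcap_{a\in\ker\e}\ker\pi(a)$ is a closed subspace, invariant for $\pi$, on which $\pi$ restricts to a gaussian representation (indeed $\pi(K_1)G=\{0\}$, which by the last Proposition before the Definition of gaussian objects is exactly gaussianity). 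By maximality, $G$ contains every invariant subspace on which $\pi$ is gaussian. Its orthogonal complement $G^\perp$ in $\overline{D}$ is also $\pi$-invariant (since $\pi(a)^*=\pi(a^*)$ and $G$ is invariant), so $\pi=\pi|_G\oplus\pi|_{G^\perp}$.

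Next I would set $D_1:=\eta_1(\cG)$ where $\eta_1:=P_G\circ\eta$ ($P_G$ the orthogonal projection onto $G$) and $D_2:=\eta_2(\cG)$ where $\eta_2:=P_{G^\perp}\circ\eta=\eta-\eta_1$. By Proposition \ref{decomprop} applied to the decomposition $\overline{D}=G\oplus G^\perp$ (with $\pi_1:=\pi|_G$, $\pi_2:=\pi|_{G^\perp}$), the maps $\eta_i$ are cocycles with respect to $\pi_i$; since $\eta_i(\cG)$ is $\pi_i$-invariant (because $\pi_i(a)\eta_i(b)=\eta_i(ab)-\eta_i(a)\e(b)\in\eta_i(\cG)$), the triples restrict to cyclic ones on $D_i$. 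The first three bullet points are then immediate: $D=\eta(\cG)\subset\eta_1(\cG)+\eta_2(\cG)=D_1\oplus D_2\subset\overline D$, $\pi=(\pi_1\oplus\pi_2)|_D$ and $\eta=\eta_1\oplus\eta_2$, $\eta_1(\cG)=D_1$ with $\pi_1$ gaussian (it is a restriction of $\pi|_G$, and any restriction of a gaussian representation is gaussian since it still kills $K_1$). The remaining point is that $\pi_2=\pi|_{G^\perp}\upharpoonright D_2$ is completely non-gaussian.

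The main obstacle is precisely this last claim: showing that $D_2$ contains no nonzero $\pi_2$-invariant subspace on which $\pi_2$ is gaussian. Suppose $W\subset D_2$ is such a subspace. Gaussianity of $\pi_2|_W$ means $\pi(a)w=0$ for all $a\in\ker\e$, $w\in W$ -- but wait, $\pi_2$ restricted to an invariant $W$ being gaussian means $\pi_2(a)|_W=\e(a)\id_W$, i.e. $\pi(a)w=\e(a)w$ for all $a$, $w\in W$; hence $W\subseteq G$ by definition of $G$. But $W\subseteq D_2\subseteq G^\perp$, so $W\subseteq G\cap G^\perp=\{0\}$. The only subtlety to be careful about is that the notion ``completely non-gaussian'' in the statement is formulated for the \emph{closed} subspace $\overline{D_2}$ in the ambient completion (so that invariant subspaces and orthogonal complements behave well); one should check that $W$ may be taken closed, replacing it by its closure, which is still $\pi_2$-invariant (the operators are bounded) and still satisfies $\pi_2(a)|_{\overline W}=\e(a)\id$ by continuity. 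With that observation the argument closes. Finally I would remark that $G=D_1$ in fact (not merely $D_1\subseteq G$), because $\eta_1$ is cyclic for $\pi_1=\pi|_G$ and one can identify $D_1$ with the closure of $\eta_1(\cG)$ inside $G$; but since the proposition only asserts $\eta_1(\cG)=D_1$ and $\pi_1$ gaussian, this refinement is not needed here.
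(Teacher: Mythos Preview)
Your construction is exactly the one the paper sketches in the parenthetical remark following the proposition: take the maximal gaussian subspace $D_0=\bigcap_{a\in K_1}\ker\pi(a)$, let $p_1$ be the orthogonal projection onto its closure, put $p_2=\id-p_1$, and set $D_i=p_i\eta(\cG)$ with $\pi_i(a)\colon p_i\eta(b)\mapsto p_i\pi(a)\eta(b)$. Your verification of the four bullet points, in particular the ``completely non-gaussian'' claim via $W\subset G\cap G^\perp=\{0\}$, is correct.

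There is one gap relative to the stated generality. You extend $\pi$ to $\overline{D}$ by boundedness, citing Subsection~\ref{ssec_suqn}; but Proposition~\ref{GcnGprop} lives in Subsection~\ref{ssec_gaussian}, \emph{before} that restriction is in force, and Footnote~\ref{FN} stresses that the proposition is deliberately stated for possibly unbounded representations on pre-Hilbert spaces. In that setting $\pi(a)$ need not extend to $\overline{D}$, so your $G\subset\overline{D}$ is not defined as written. The paper's route avoids this by taking $D_0$ inside $D$ itself and then checking that $p_i\eta(b)\mapsto p_i\pi(a)\eta(b)$ is well-defined (which comes down to showing that $\eta(b)\perp\overline{D_0}$ implies $\pi(a)\eta(b)\perp\overline{D_0}$, using only that $\pi(a^*)$ acts as $\overline{\e(a)}\,\id$ on $D_0$). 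For the bounded case---all that is actually used later---your argument is complete.

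A minor slip in your closing remark: $D_1=P_G\eta(\cG)$ is in general only dense in $G$, not equal to it; cyclicity of $\eta_1$ gives $\eta_1(\cG)=D_1$ by definition, not $D_1=G$. As you say, this is irrelevant to what the proposition asserts.
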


\noindent
One may show that $D_1$ and $D_2$ fulfilling these conditions are unique. (In fact, one necessarily has $D_1=p_1\eta(\cG)$, where $p_1\in B(\overline{D})$ is the projection onto the completion of $D_0=\bigcap_{a\in K_1}\ker\pi(a)$, and $D_2=p_2\eta(\cG)$, where $p_2=\id-p_1$. One verifies that $\pi_i(a)\colon p_i\eta(b)\mapsto p_i\pi(a)\eta(b)$ (well)defines the desired presentations.) Also, $\pi_1$ and $\pi_2$ are \hl{maximally gaussian} and \hl{maximally completely non-gaussian}, respectively.%
\footnote{ \label{FN}
It is noteworthy that the proof of the proposition and the supplementary uniqueness statement do not really gain much simplicity, if the representation operators are bounded (the case that interests us). Therefore, we prefer to formulate here the, not so well-known, general version from \cite{franz_gerhold_thom}, which works for all (algebraic) quantum semigroups.
}

\begin{defi} \label{LKddefi}
A \hl{L{\'e}vy-Khintchine decomposition} for $\psi$ is a pair of generating functionals $\psi_1$ and $\psi_2$ such that $\psi=\psi_1+\psi_2$,
{ and, with the pairs $(\pi_i,\eta_i)$ $(i=1,2)$ as in Proposition \ref{GcnGprop},} 
each $(\pi_i,\eta_i,\psi_i)$ $(i=1,2)$ is a Sch{\"u}rmann triple.

If $\psi$ has L{\'e}vy-Khintchine decomposition, then we put $\psi_G:=\psi_1$ and $\psi_L:=\psi_2$. Given $\cP$, among all L{\'e}vy-Khintchine decompositions $\psi=\psi_G+\psi_L$ there is a unique one satisfying $\psi_L\circ\cP=\psi_L$.
\end{defi}

\medskip
In view of Corollary \ref{decomcor}, for getting a L{\'e}vy-Khintchine decomposition, it is enough to guarantee existence of $\psi_i$ for one $i$. Franz, Gerhold, and Thom \cite{franz_gerhold_thom} have analyzed the corresponding conditions and showed that none of the following four properties are equivalent, though all of them imply the last one.

\medskip
\begin{defi}~
{We say that a quantum semigroup $\cG$ has}
\begin{itemize}
\item
\hl{property} {\bf (AC)}
{if for}
each pair $(\pi,\eta)$ there exists a Sch{\"u}rmann triple $(\pi,\eta,\psi)$.

\item
\hl{property} {\bf (GC)}
{if for}
each gaussian pair $(\pi,\eta)$ there exists a Sch{\"u}rmann triple $(\pi,\eta,\psi)$.

\item
\hl{property} {\bf (NC)}
{if for}
each completey non-gaussian pair $(\pi,\eta)$ there exists a Sch{\"u}rmann triple $(\pi,\eta,\psi)$.

\item
\hl{property} {\bf (LK)}
{if every generating functional $\psi$ on $\cG$}
admits a L{\'e}vy-Khintchine decomposition.
\end{itemize}
\end{defi}

\noindent
One of our main results asserts that $SU_q(N)$ and $U_q(N)$ have Property (NC), hence, (LC). $SU_q(N)$ does not have Property (GC) for $N\ge3$ (showed also, by different means, in Das, Franz, Kula, and Skalski \cite[Proposition 2.3]{dfks18}) and it does have Property (GC), hence,
{also}
(AC) for $N\le2$.
$U_q(1)$ is equal to $U(1)$ and has (AC), while for $N\ge 2$, $U_q(N)$ does only have Property (NC), but not (GC), hence, nor (AC).

%%%%%%%%%%%%%%%%%%%%%%%%%%%%%%%%%%%%%%%
\medskip
\subsection{The quantum groups $SU_q(N)$ and $U_q(N)$}
\label{ssec_suqn}
%%%%%%%%%%%%%%%%%%%%%%%%%%%%%%%%%%%%%%%

As a compact quantum group, $SU_q(2)$ was introduced by Woronowicz in \cite{Wor87b}, and in \cite{Wor88} he obtained the rest of the family $SU_q(N)$, $N\ge 3$, as an application of a generalization of the Tannaka-Krein duality theorem. Rosso \cite{rosso90} extended these results further to $q$-deformations of other semi-simple compact Lie groups.

Recall that in these notes we forget about the fact that $\cG$ has an antipode. So we are looking at $SU_q(N)$ and $U_q(N)$ rather as quantum semigroups. For simplicity, we also always assume $0<q<1$. (In general, one considers $0<|q|\le 1$, where $q=1$ corresponds to the classical cases. In the, in some sense, degenerate case $q=0$, the antipode is missing. But, see also Number \eqref{two} in Section \ref{sec_final}.) However, $SU_q(N)$ and $U_q(N)$ ($q\ne0$) {\bf are} quantum groups and the various inclusions as quantum subsemigroups they satisfy, are actually inclusions as quantum subgroups. So, we simply trust the literature and will say, from now on, quantum group and quantum subgroup despite
never explicitly
addressing
their antipodes and properties referring to them.

We also take, from now on, into account that (by the unitarity conditions in \eqref{eq_U}, below) all $*$-representations of $\cG$ are by bounded operators  and, therefore, are assumed to act on a Hilbert space, rather than only on pre-Hilbert space. A cocycle $\eta$ with respect to a representation $\pi$ on a Hilbert space $H$ is, therefore, \hl{cyclic} if $\overline{\eta(\cG)}=H$.

Our focus is on the two series of compact quantum groups $SU_q(N)$ and $U_q(N)$ ($N\ge2)$. For formal reasons, we extend all our definitions to $N=1$. (Here $SU_q(1)=SU(1)=\{e\}$, the trivial group, and $U_q(1)=U(1)$, the torus.) All $SU_q(N)$ and $U_q(N)$ are examples of compact quantum \hl{matrix} groups (\hl{CQMG}s \cite{DiKo94} or the topological version \hl{compact matrix pseudo groups} \cite{Wor87a}) of order $N$. This means that $\cG$ is defined as the unital $*$-algebra generated by $N^2$ indeterminates arranged in a matrix $U=[u_{jk}]_{j,k=1}^N$ subject to the \hl{unitarity conditions}
\begin{equation} \label{eq_U}
\sum_{s=1}^{N} u_{js}u_{ks}^* 
~=~
\U\delta_{jk}
~=~
\sum_{s=1}^{N}u_{sj}^*u_{sk}
\end{equation}
and, depending on which quantum matrix group, further relations. The unitarity conditions guarantee that all
{$*$-representations}
map each $u_{jk}$ to a contraction. Comultiplication and counit are defined by extending 
\begin{subequations} \label{eq_epDe}
\begin{align}
\label{eq_Delta}
\Delta
\colon
 u_{jk}
 &~\longmapsto~
\sum_{s=1}^N u_{js}\otimes u_{sk}
\\
\label{eq_epsilon}
\e
\colon
 u_{jk}
&
~\longmapsto~
\delta_{j,k}
\end{align}
\end{subequations}
as unital $*$-homomorphisms. (Of course, for each $\cG$ one has to verify that the assignments $\Delta(u_{jk})$ and $\e(u_{jk})$, respectively, satisfy all the defining relations of $\cG$.)

%%%%%%%%%%%%%%%%%%%%%%%%%%%%%%%%%%%%%%%%%%%%%%%%%%%%%%%%%%%%%%%%%
Let $S_N:=\bigl\{\sigma\colon\sigma\text{ a bijection on $\{1,\ldots,N\}$}\bigr\}$ denote the \hl{symmetric group} of order $N$, and for any $\sigma\in S_N$ denote by $i(\sigma):=\#\{(j,k)\colon j<k,\sigma(j)>\sigma(k)\}$ the \hl{number of inversions} of $\sigma$. For every $\tau\in S_N$ we define
$$
D^q_\tau(U)
~:=~
\sum_{\sigma\in S_N} (-q)^{i(\sigma)}
u_{\sigma(1),\tau(1)}u_{\sigma(2),\tau(2)}\ldots u_{\sigma(N),\tau(N)}.
$$
Usually, $\pol$  is defined by adding to the unitarity conditions in \eqref{eq_U}, the \hl{twisted determinant conditions}
\begin{equation} \label{eq_TD}
D^q_\tau(U)
 ~=~
\U(-q)^{i(\tau)}
\end{equation}
for all $\tau\in S_N$.

Instead of the usual definition (unital $*$-algebra generated by $u_{jk}$ subject to the relations in \eqref{eq_U} and \eqref{eq_TD}), we prefer a different path and follow the exposition in Koelink \cite[Section 2]{koelink91}. We do not start with the unital $*$-algebra generated by the $u_{jk}$ but with the unital algebra, on which, then, an involution is defined. This has the enormous advantage that, here and later on, in (well)defining maps (representations and cocycles), we have to control relations only for the generators $u_{jk}$ but not for their adjoints; the saved amount of time is considerable.

Let us recall that the \hl{quantum determinant} of a matrix $U$ is defined as
$$
D_q(U)
~:=~
\sum_{\sigma\in S_N} (-q)^{i(\sigma)}
u_{1,\sigma(1)}u_{2,\sigma(2)}\ldots u_{N,\sigma(N)}.
$$
The \hl{quantum minor} $D^{jk}_q(U)$ is defined as the quantum determinant of the $(N-1)\times(N-1)$-matrix obtained from the matrix $U$ by removing the $j$-th row and the $k$-th column. That is,
\begin{equation} 
 \label{Dij}
D^{jk}_q(U)
~:=~
\sum_{\sigma \in S_{N-1}^{jk}} (-q)^{i(\sigma)}
u_{1,\sigma(1)}\ldots u_{j-1,\sigma(j-1)}u_{j+1,\sigma(j+1)}\ldots
u_{N,\sigma(N)},
\end{equation}
where $S_{N-1}^{jk}$ denotes the set of bijections $\sigma:\{1, \ldots, j-1,j+1
\ldots, N\}\to\{1, \ldots, k-1,k+1, \ldots, N\}$. We, usually, abbreviate $D:=D_q(U)$ and $D^{jk}:=D^{jk}_q(U)$.

We {\bf define} $U_q(N)$ to be the unital (complex, but not $*$) algebra generated by the $N^2+1$ indeterminates $u_{j,k}$ ($j,k=1,\ldots,N$) and $D^{-1}$ subject to the relations
\begin{subequations} \label{eq_Rq}
\begin{align}
\hspace{15ex}
u_{ij}u_{kj}&~=~ qu_{kj}u_{ij} && \mbox{for} \; i<k, \label{eq_i<k}\\
 u_{ij}u_{il}&~=~ qu_{il}u_{ij} && \mbox{for} \; j<l, \label{eq_j<l}\\
 u_{ij}u_{kl}&~=~ u_{kl}u_{ij} && \mbox{for} \; i<k, j>l, \label{eq_i<k,j>l}\\
 u_{ij}u_{kl} &~=~ u_{kl}u_{ij}-({\textstyle\frac{1}{q}}-q)u_{il}u_{kj} && \mbox{for} \;
i<k, j<l, \label{eq_i<k,j<l}
\hspace{10ex}
\end{align}
and
\begin{equation}
D^{-1}D=\U=DD^{-1}.
\end{equation}
\end{subequations}

Recall that $D$ is, by definition, in the subalgebra generated by the $u_{jk}$ alone, and that,
{ 
by \eqref{eq_i<k} to \eqref{eq_i<k,j<l}},
$D$ in central. (This also means that $U_q(N)$ is isomorphic to the central extension of the algebra generated indeterminates $u_{jk}$ subject to \eqref{eq_Rq},
{ including the extra relation $D^{-1}D=\U=DD^{-1}$.})

One may check that Equations \eqref{eq_epDe} plus $\Delta(D^{-1}):=D^{-1}\otimes D^{-1}$ and $\e(D^{-1}):=1$, turn $U_q(N)$ into a bialgebra. (There is also an antipode.) One may also check that $D=D^q_{\id}(U)$.

Defining
\begin{equation} 
 \label{eq_Du*}
 u_{jk}^* := (-q)^{k-j}D^{jk} D^{-1},
\end{equation}
one may show two things. Firstly, the map
$$
u_{jk}
~\longmapsto~
u_{jk}^*,
\hspace{10ex}
D^{-1}
~\longmapsto~
D
$$
extends to an involution, turning $U_q(N)$ into a $*$-bialgebra (even a Hopf $*$-algebra); this concludes the definition of $U_q(N)$. Secondly, the $u_{jk}$ and $u_{jk}^*$ fulfill the unitarity conditions in \eqref{eq_U}. (To be honest, one, first, verifies that the elements in \eqref{eq_Du*} would satisfy the unitarity condition which, then, motivates to define the involution in that way.)

Now, $\pol$ is {\bf defined} to be the quotient of $U_q(N)$ by the extra relation $D=\U$. Clearly, the homomorphisms $\Delta$ and $\e$ respect this extra relation, so, by Lemma \ref{exrellem}, they survive the quotient. A similar argument shows that the involution survives the quotient, too. By Corollary \ref{exrelcor}, $\pol$ is isomorphic to the algebra generated by indeterminates $u_{jk}$ subject to Relations \eqref{eq_Rq} and $D=\U$. 

\medskip
We briefly explore several homomorphisms, which illustrate how the several $SU_q(n)$ and $U_q(m)$ sit in each other as quantum subgroups. Of course, they reoccur when we examine whether maps live on a quantum subgroup. (See Subsection \ref{sec_Stqsub}.)

By definition, $SU_q(N)$ is a quantum subgroup of $U_q(N)$ via the quotient map
$$
\breve{t}_N\colon
U_q(N)\ni u_{jk}
~\longmapsto~
u_{jk}\in\pol,
\hspace{10ex}
D^{-1}
~\longmapsto~
\U.
$$
But also $U_q(N)$ is a quantum subgroup of $SU_q(N+1)$. Indeed, quite obviously the map
\begin{equation} \label{eq_uq_inclusion}
t_N\colon
\left( \begin{array}{ccccccc} 
u_{11} & \ldots & u_{1N}  & u_{1,N+1} \\
\vdots &  & \vdots & \vdots   \\
u_{N1} & \ldots & u_{NN} & u_{N,N+1}\\
u_{N+1,1} & \ldots & u_{N+1,N}  &u_{N+1,N+1}  \\ 
\end{array} \right)
~\longmapsto~
\left( \begin{array}{ccccccc} 
u_{11} & \ldots & u_{1N}  & 0 \\
\vdots &  & \vdots & \vdots   \\
u_{N1} & \ldots & u_{NN} & 0\\
0 & \ldots & 0  & D^{-1}  \\ 
\end{array} \right)
\end{equation}
determines a surjective homomorphism from $\pol[N+1]$ onto $U_q(N)$ (where, in the same way as in the definition of $\breve{t}_N$ the $u_{jk}$ on the left-hand side are the generators of $\pol[N+1]$ while the $u_{jk}$ on the right-hand side are the generators of $U_q(N)$), and a few computations show that $t_N$ respects comultiplication, counit, and involution. By iterating the $t_N$ and the $\breve{t}_N$ appropriately, we get a chain
$$
SU_q(1)\subset U_q(1)\subset
SU_q(2)\subset U_q(2)\subset
\ldots\subset
SU_q(N)\subset U_q(N)\subset
\ldots
$$
Of particular interest for us is the homomorphism $s_{N}:=\breve{t}_{N-1}\circ t_{N-1}$ given by
\begin{equation} \label{eq_suq_inclusion}
s_N\colon
\left( \begin{array}{ccccccc} 
u_{11} & \ldots & u_{1,N-1}  & u_{1N} \\
\vdots &  & \vdots & \vdots   \\
u_{N-1,1} & \ldots & u_{N-1,N-1} & u_{N-1,N}\\
u_{N1} & \ldots & u_{N,N-1}  &u_{NN}  \\ 
\end{array} \right)
~\longmapsto~
\left( \begin{array}{ccccccc} 
u_{11} & \ldots & u_{1,N-1}  & 0 \\
\vdots &  & \vdots & \vdots   \\
u_{N-1,1} & \ldots & u_{N-1,N-1} & 0\\
0 & \ldots & 0  & \U \\ 
\end{array} \right),
\end{equation}
which establishes the inclusion $SU_q(N-1)\subset SU_q(N)$. This case is so important for us, that we rest for a moment to convince ourselves that this really defines a homomorphism. For this moment, we distinguish between the generators $u_{jk}$ of $\pol$ on the left-hand side, and their images $v_{jk}:=s_N(u_{jk})$ on the right-hand side. (So, for $j,k\le N-1$, the $v_{jk}$ are the generators $u_{jk}$ of $\pol[N-1]$.) Clearly, $s_N$ respects the relations in \eqref{eq_Rq}. (Indeed, those relation that regard only indices not bigger than $N-1$ are fulfilled, because the generators of $\pol[N-1]$ fulfill them. Those relation in \eqref{eq_i<k} and \eqref{eq_j<l} that have at least one index equal to $N$, also contain at least one factor of the type $v_{kN}$ or $v_{Nk}$, hence, are identically $0$. The same is true for \eqref{eq_i<k,j>l} and \eqref{eq_i<k,j<l} if $k\ne l$. Only the case $k=l=N$ remains, which is also okay.) Clearly, the $v_{jk}$ satisfy the determinant condition $D=\U$. (Indeed, one easily checks $s_N(D)=s_N(D^{NN})\U$. And $s_N(D^{NN})=\U$, because the generators of $\pol[N-1]$ fulfill the determinant condition.) Therefore $s_N$ is a well-defined algebra homomorphism. Clearly, $s_N$ is also a $*$-algebra homomorphism. (Indeed, the matrix $\tilde{V}:=[v_{kj}^*]_{jk}$ fulfills $\tilde{V}V=\U_N=V\tilde{V}$. Therefore, $\tilde{V}=V^*$.) Obviously, $s_N$ respects the counits. (Indeed, the two counits are multiplicative, and $s_N$ intertwines the right values on the generators. The same argument, though checking is slightly more involved, shows that $s_N$ also respects the the comultiplication; but we do not need that.)

\begin{cor}
The algebra $\pol[N-1]$ is canonically isomorphic to the quotient of the algebra $\pol$ by the extra relations $u_{kN}=\U\delta_{kN}=u_{Nk}$.
\end{cor}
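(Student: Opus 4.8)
The plan is to make the quotient explicit using Corollary \ref{exrelcor}, reusing the presentation of $\pol$ just obtained.

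Recall that $\pol$ has been identified with $\A/I$, where $\A$ is the unital algebra generated by the $u_{jk}$ ($1\le j,k\le N$) subject only to Relations \eqref{eq_Rq}, and $I$ is the ideal of $\A$ generated by $D-\U$ (this element lies in $\ker\e$, since $\e(D)=1$). I would apply Corollary \ref{exrelcor} with
$$
S~:=~\{\,u_{kN}+I,~u_{Nk}+I:1\le k\le N-1\,\}~\cup~\{\,(u_{NN}-\U)+I\,\},
$$
lifting these to the obvious elements $u_{kN}$, $u_{Nk}$, $u_{NN}-\U$ of $\A$. The ideal $J$ of $\pol\cong\A/I$ generated by $S$ is precisely the ideal generated by the extra relations $u_{kN}=\U\delta_{kN}=u_{Nk}$, so that $(\A/I)/J$ is the algebra appearing in the statement; and Corollary \ref{exrelcor} identifies it canonically with $\A/K$, where $K$ is the ideal of $\A$ generated by $D-\U$ together with the $u_{kN}-\U\delta_{kN}$ and $u_{Nk}-\U\delta_{kN}$ ($1\le k\le N$).

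It then remains to compute $\A/K$. A routine presentation argument (a Tietze transformation eliminating the generators $u_{kN}$, $u_{Nk}$, $u_{NN}$, which are now pinned down to $\U\delta_{kN}$, resp.\ $\U$) presents $\A/K$ by the $v_{jk}:=u_{jk}+K$ with $j,k\le N-1$, subject to: the instances of \eqref{eq_Rq} in the indices $\le N-1$; the instances of \eqref{eq_Rq} involving the index $N$, which become trivial identities after the substitution (each either contains a factor $u_{kN}$ or $u_{Nk}$ with $k<N$, hence vanishes, or reduces to an equation that holds because $u_{iN}\equiv u_{Ni}\equiv 0$ and $u_{NN}\equiv\U$); and the image of $D-\U$. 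For this last one, in $D_q(U)=\sum_{\sigma\in S_N}(-q)^{i(\sigma)}u_{1,\sigma(1)}\cdots u_{N,\sigma(N)}$ every term with $\sigma(N)\ne N$ carries the factor $u_{N,\sigma(N)}$ with $\sigma(N)<N$ and so vanishes, while for $\sigma(N)=N$ one has $i(\sigma)=i(\sigma\vert_{\{1,\ldots,N-1\}})$ and the final factor $u_{NN}\equiv\U$; hence $D\equiv D_q(U')$ with $U'=[v_{jk}]_{j,k=1}^{N-1}$, so the relation $D=\U$ becomes the determinant condition $D_q(U')=\U$. Thus $\A/K$ is canonically the algebra generated by the $v_{jk}$ ($j,k\le N-1$) subject to \eqref{eq_Rq} and $D_q(U')=\U$, which is exactly the presentation of $\pol[N-1]$ established above.

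Finally I would trace through the composite $\pol\twoheadrightarrow\pol/J\xrightarrow{\ \sim\ }\A/K\cong\pol[N-1]$: on generators it sends $u_{jk}\mapsto u_{jk}$ for $j,k\le N-1$ and $u_{kN},u_{Nk}\mapsto\U\delta_{kN}$, i.e.\ it coincides with the map $s_N$ of \eqref{eq_suq_inclusion}; this confirms that the isomorphism is the canonical one and that it intertwines $*$, $\e$ and $\Delta$, as was already checked for $s_N$. The only step that is not pure bookkeeping with Corollary \ref{exrelcor} is the determinant reduction $D_q(U)\equiv D_q(U')\pmod K$ (together with the verification that the index-$N$ instances of \eqref{eq_Rq} impose nothing new on the $v_{jk}$); but this repeats the computation already recorded for $s_N$, namely $s_N(D)=s_N(D^{NN})\,\U$ and $s_N(D^{NN})=\U$, so there is no genuine obstacle.
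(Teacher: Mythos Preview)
Your proof is correct and amounts to the same argument as the paper's, packaged through Corollary \ref{exrelcor}. The paper proceeds more directly: since $s_N$ kills the extra relations it descends to a surjection $\tilde{s}_N$ from the quotient onto $\pol[N-1]$, and the inverse is obtained by sending the generators $u_{jk}$ of $\pol[N-1]$ to the classes of $u_{jk}$ in the quotient (checking that Relations \eqref{eq_Rq} and the determinant condition hold there). Your Tietze reduction of $\A/K$ is exactly this inverse map written out as a presentation computation, and the one nontrivial step---the determinant reduction $D\equiv D_q(U')$---is, as you note yourself, the computation $s_N(D)=s_N(D^{NN})\cdot\U$ already done just before the corollary. (One small notational wrinkle: your $\A$ should carry only Relations \eqref{eq_i<k}--\eqref{eq_i<k,j<l}, not the $D^{-1}$ relation, since $D^{-1}$ is not among its generators; this is the same harmless abuse the paper makes a paragraph earlier.)
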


\begin{proof}
The homomorphism $s_N$, clearly, respects the extra relations. Therefore it defines a homomorphism $\tilde{s}_N$ from the quotient of $\pol$ onto $\pol[N-1]$. Reading the definition backwards on the $(N-1)\times(N-1)$-submatrix, all of the relations in \eqref{eq_Rq} (for $\pol[N-1]$!) are fulfilled. Thanks to the extra relations, also the quantum determinant is sent to $\U$.
\end{proof}

Clearly, this $\tilde{s}_N$ is also a $*$-isomorphism and counits (and comultiplications) are the same. This makes applicable Lemma \ref{exrellem} when we wish to check if representations of $\pol$ and their cocycles live on the quantum subgroup $\pol[N-1]$.

\begin{cor} \label{lifecor}
Let $\pi$ ba a $*$-representation $\pi$ of $\pol$ and $\eta$ a $\pi$-$\e$-cocycle.
\begin{enumerate}
\item \label{lc1}
$\pi$ lives on $SU_q(N-1)$ if and only if $\pi(u_{kN})=\id_H\delta_{kN}=\pi(u_{Nk})$.

\item \label{lc2}
If $\pi$ lives on $SU_q(N-1)$, then $\eta$ lives on $SU_q(N-1)$ if and only if $\eta(u_{kN})=0=\eta(u_{Nk})$.
\end{enumerate}
\end{cor}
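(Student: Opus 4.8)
The plan is to read both parts off Lemma \ref{exrellem}, using the corollary immediately above (identifying $\pol[N-1]$ with a quotient of $\pol$). Put $\A:=\pol$ and
$$
R~:=~\bigl\{u_{kN}-\U\delta_{kN}\colon k=1,\ldots,N\bigr\}\cup\bigl\{u_{Nk}-\U\delta_{kN}\colon k=1,\ldots,N\bigr\},
$$
a finite subset of $\ker\e$ (since $\e(u_{jk})=\delta_{jk}$), and let $I:=\ls\A R\A$ with canonical homomorphism $s\colon\A\to\A/I$. By the preceding corollary there is a $*$-isomorphism $\tilde{s}_N\colon\A/I\to\pol[N-1]$ with $\tilde{s}_N\circ s=s_N$; hence, for any map $T$ defined on $\pol$, ``$T$ lives on $SU_q(N-1)$'' (i.e.\ $T$ factors through $s_N$, Definition \ref{def_living}) is equivalent to ``$T$ factors through $s$'', the form used in Lemma \ref{exrellem}.

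For part \eqref{lc1}, I would apply the first assertion of Lemma \ref{exrellem} to this $R$: a $*$-representation $\pi$ of $\pol$, being in particular a unital homomorphism, lives on $SU_q(N-1)$ if and only if $\pi(R)=\{0\}$. Since $\pi(\U)=\id_H$, the condition $\pi(R)=\{0\}$ reads exactly $\pi(u_{kN})=\id_H\delta_{kN}$ and $\pi(u_{Nk})=\id_H\delta_{kN}$ for $k=1,\ldots,N$, which is the claim. (The resulting representation of $\pol[N-1]$ is again a $*$-representation, since $s_N$ and $\pi$ are $*$-homomorphisms.) For part \eqref{lc2}, assume $\pi$ lives on $SU_q(N-1)$, so $\pi(R)=\{0\}$; by the second assertion of Lemma \ref{exrellem}, a $\pi$-$\e$-cocycle $\eta$ lives on $SU_q(N-1)$ if and only if $\eta(R)=\{0\}$. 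Every cocycle satisfies $\eta(\U)=0$ (put $a=b=\U$ in \eqref{*}), so $\eta(u_{kN}-\U\delta_{kN})=\eta(u_{kN})$ and $\eta(u_{Nk}-\U\delta_{kN})=\eta(u_{Nk})$; hence $\eta(R)=\{0\}$ is precisely $\eta(u_{kN})=0=\eta(u_{Nk})$ for $k=1,\ldots,N$, the case $k=N$ (i.e.\ $\eta(u_{NN})=0$) arising from the common member $u_{NN}-\U$ of the two halves of $R$.

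There is no real obstacle here: the substance is already contained in Lemma \ref{exrellem}, in the preceding corollary (that $\pol$ modulo the relations $u_{kN}=\U\delta_{kN}=u_{Nk}$ is $\pol[N-1]$), and in the verification that $s_N$ is a well-defined surjective $*$-homomorphism. The only points worth a word are that $R\subseteq\ker\e$, so that Lemma \ref{exrellem} is applicable, and that $\eta(\U)=0$ lets one drop the $\U\delta_{kN}$ terms in part \eqref{lc2}.
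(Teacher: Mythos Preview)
Your proof is correct and follows exactly the approach the paper intends: the sentence immediately preceding the corollary (``This makes applicable Lemma \ref{exrellem} when we wish to check if representations of $\pol$ and their cocycles live on the quantum subgroup $\pol[N-1]$'') indicates that the corollary is meant to be read off Lemma \ref{exrellem} via the identification of $\pol[N-1]$ with $\pol/I$ just established. Your handling of the small bookkeeping points ($R\subset\ker\e$, $\eta(\U)=0$) is also as intended.
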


When, in Section \ref{decompSEC}, we also take into account operator theoretic statements, this corollary improves considerably. For all representations (see the beginning of Subsection \ref{repdSSEC}) and at least for (all) cocycles with respect to certain representations (see the proof of Corollary \ref{subcoccor}), it is sufficient to check only the respective condition regarding $u_{NN}$.

\medskip\noindent
Throughout, we also will need the iterated homomorphisms
\begin{equation} \label{snN}
s_{n,N}
~:=~
s_{n+1}\circ\ldots\circ s_N
~=~
\breve{t}_n\circ t_n\circ\ldots\circ\breve{t}_{N-1}\circ t_{N-1}
\hspace{10ex}
(n<N),
\end{equation}
which establish $SU_q(n)$ as a quantum subgroup of $SU_q(N)$. In Section \ref{sec_uqn}, we will also need $\breve{s}_N:= t_{N-1}\circ\breve{t}_N\colon U_q(N)\rightarrow U_q(N-1)$ and its iterates
\begin{equation} \label{BsnN}
\breve{s}_{n,N}
~:=~
\breve{s}_{n+1}\circ\ldots\circ\breve{s}_N
~=~
t_n\circ\breve{t}_{n+1}\circ\ldots\circ t_{N-1}\circ\breve{t}_N
\hspace{10ex}
(n<N),
\end{equation}
which establish $U_q(n)$ as a quantum subgroup of $U_q(N)$. Note that 
\begin{align} \label{sBstCR}
\breve{t}_n\circ\breve{s}_{n,N}
&
~=~
s_{n,N}\circ\breve{t}_N,
&
t_n\circ s_{n+1,N+1}
&
~=~
\breve{s}_{n,N}\circ t_N.
\end{align}

%%%%%%%%%%%%%%%%%%%%%%%%%%%%%%%%%%%%%%%%%%%%%%%%%%%%%%%%%%%%%%%%%
%{Additional relations in $SU_q(N)$} 
\medskip
We close this subsection by collecting some more relations. By definition, the generators $u_{jk}$ of $U_q(N)$ (and, therefore, also the generators of $\pol$) satisfy the basic commutation relations in \eqref{eq_Rq}. We will need frequently the following special cases.
\begin{subequations} \label{eq_NRq}
\begin{align}
\hspace{15ex}
u_{jN}u_{NN}&~=~ qu_{NN}u_{jN}, \label{eqN_i<k}\\
 u_{Nk}u_{NN}&~=~ qu_{NN}u_{Nk}, \label{eqN_j<l}\\
 u_{jN}u_{Nk}&~=~ u_{Nk}u_{jN}, \label{eqN_i<k,j>l}\\
 u_{jk}u_{NN} &~=~ u_{NN}u_{jk}-({\textstyle\frac{1}{q}}-q)u_{jN}u_{Nk}, \label{eqN_i<k,j<l}
\hspace{10ex}
\end{align}
\end{subequations}
for $j,k<N$. Recall that (as explained in brackets after Equation \eqref{eq_Du*}) the convolution is defined such that the unitarity conditions in \eqref{eq_U} are fulfilled. Additionally, one may verify the following commutation
{relations}
among the generators and their adjoints.
\begin{subequations}
\begin{eqnarray}
  u_{ij}u_{kl}^*&=& u_{kl}^*u_{ij} \quad \mbox{for} \; i\neq k, j\neq l,
\label{eq_*neq}\\
  u_{ij}u_{kj}^*&=& qu_{kj}^*u_{ij}-(1-q^2)\sum_{p<j} u_{ip}u_{kp}^* \quad
\mbox{for} \; i\neq k,\label{eq_*ineq}\\
  u_{ij}u_{il}^*&=& {\textstyle\frac{1}{q}}u_{il}^*u_{ij}+({\textstyle\frac{1}{q}}-q)\sum_{s>i} u_{sl}^*u_{sj}
\quad \mbox{for} \; j\neq l, \label{eq_*jneq}\\
  u_{ij}u_{ij}^*&=& u_{ij}^*u_{ij}+(1-q^2)\sum_{s>i}
u_{sj}^*u_{sj}-(1-q^2)\sum_{p<j} u_{ip}u_{ip}^*. \label{eq_*ij}
\end{eqnarray}
\end{subequations}
Here are some special consequences.
\begin{subequations}
\begin{eqnarray}
  \label{eq_*jN}
u_{Nj}u_{Nk}^* &=& {\textstyle\frac{1}{q}}u_{Nk}^*u_{Nj} \quad \mbox{for} \; j\neq k,\\
	\label{eq_*Nj}
u_{jN}u_{kN}^* &=& {\textstyle\frac{1}{q}} u_{kN}^*u_{jN} \quad \mbox{for} \; j\neq k,\\
  \label{eq_NN}
u_{NN}^*u_{NN} &=& q^2u_{NN}u_{NN}^*+(1-q^{2}) \one,
\end{eqnarray}
\end{subequations}
for $j,k<N$. Equation \eqref{eq_*jN} follows from \eqref{eq_*jneq}. Moreover, 
\eqref{eq_*ineq} together with the unitarity condition \eqref{eq_U} implies 
that 
\begin{eqnarray*} 
u_{jN}u_{kN}^*&=& qu_{kN}^*u_{jN}-(1-q^2)\sum_{p<N} u_{jp}u_{kp}^* 
=
qu_{kN}^*u_{jN}+(1-q^2)u_{jN}u_{kN}^*, 
\end{eqnarray*}
which shows \eqref{eq_*Nj}. Finally, we get \eqref{eq_NN} thanks to 
\eqref{eq_*ij}:
$$
u_{NN}u_{NN}^*= u_{NN}^*u_{NN}-(1-q^2)\sum_{p<N} u_{Np}u_{Np}^*
 = u_{NN}^*u_{NN}-(1-q^2)(\one- u_{NN}u_{NN}^*).
$$

Now, passing to the generators $u_{jk}$ of $\pol$, additionally, we have $D=D^{-1}=\U$. The involution simplifies to
\begin{equation} 
 \label{eq_u*}
 u_{jk}^* := (-q)^{k-j}D^{jk}.
\end{equation}
One may show that also the twisted determinant condition in \eqref{eq_TD} is satisfied.

%%%%%%%%%%%%%%%%%%%%%%%%%%%%%%%%%%%%%%%
\medskip
\subsection{A key lemma}
\label{keySSEC}
%%%%%%%%%%%%%%%%%%%%%%%%%%%%%%%%%%%%%%%

The following little lemma is key for the approximation results in Section \ref{decompSEC}. The idea for the approximation is taken from Sch{\"u}rmann and Skeide \cite{Ske94,schurmann+skeide98}. But the lemma would have simplified considerably also the proofs in \cite{Ske94,schurmann+skeide98}, which, in fact, will be reproved here. (For optical reasons, we write $\U$ instead of $\id_H$.)

\begin{lemma} \label{(1-a)lemma}
Let $a$ be a contraction on a Hilbert space $H$. Then the following are equivalent:
\begin{enumerate}
\item \label{(1-a).1}
$\overline{(\U-a)H}=H$.
\vspace{1ex}

\item \label{(1-a).2}
$\lim\limits_{p\uparrow 1}\frac{\U-a}{\U-pa}=\U$, strongly.
\vspace{.5ex}

\item \label{(1-a).3}
$\lim\limits_{p\uparrow 1}\frac{\U-a}{\U-pa}=\U$, weakly.
\vspace{.5ex}

\item \label{(1-a).4}
$\U-a$ is injective.
\end{enumerate}
Moreover, under any of the conditions, $\displaystyle\lim_{p\uparrow 1}\frac{1-p}{\U-pa}=0$, strongly.
\end{lemma}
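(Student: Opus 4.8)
The plan is to prove the cycle of implications $(\ref{(1-a).1})\Rightarrow(\ref{(1-a).2})\Rightarrow(\ref{(1-a).3})\Rightarrow(\ref{(1-a).4})\Rightarrow(\ref{(1-a).1})$ and then to read the ``moreover'' clause off from~$(\ref{(1-a).2})$. The backbone of the whole argument is one elementary identity: for $0\le p<1$ the operator $\U-pa$ is boundedly invertible with $\|(\U-pa)^{-1}\|\le(1-p)^{-1}$, and since $\U-a=(\U-pa)-(1-p)a$,
$$
T_p
~:=~
\frac{\U-a}{\U-pa}
~=~
\U-(1-p)\,a\,(\U-pa)^{-1}
\qquad\text{with}\qquad
\|T_p\|\le2
$$
uniformly in $p$ (the subtracted term has norm at most $(1-p)(1-p)^{-1}=1$). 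I would establish this first, since the uniform bound is used in two later steps.

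The three ``cheap'' arrows I would dispose of as follows. $(\ref{(1-a).2})\Rightarrow(\ref{(1-a).3})$ is trivial. For $(\ref{(1-a).3})\Rightarrow(\ref{(1-a).4})$: if $a\xi=\xi$ then $(\U-pa)^{-1}\xi=\sum_{n\ge0}p^{n}\xi=(1-p)^{-1}\xi$, so $T_p\xi=(1-p)^{-1}(\U-a)\xi=0$ for every $p$, whereas $(\ref{(1-a).3})$ forces the weak limit of $T_p\xi$ to be $\xi$; hence $\xi=0$, i.e.\ $\U-a$ is injective. For $(\ref{(1-a).4})\Rightarrow(\ref{(1-a).1})$: one has $\overline{(\U-a)H}^{\perp}=\ker(\U-a^{*})$, so it is enough to note that a contraction $b$ satisfies $\ker(\U-b)=\ker(\U-b^{*})$ — indeed $b\xi=\xi$ gives $\langle b^{*}\xi,\xi\rangle=\langle\xi,b\xi\rangle=\|\xi\|^{2}$, hence $\|b^{*}\xi-\xi\|^{2}=\|b^{*}\xi\|^{2}-\|\xi\|^{2}\le0$, and the reverse inclusion follows by symmetry — and to apply this with $b=a$, which turns injectivity of $\U-a$ into injectivity of $\U-a^{*}$, that is, into~$(\ref{(1-a).1})$.

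The step with real content, and the one I expect to be the main obstacle, is $(\ref{(1-a).1})\Rightarrow(\ref{(1-a).2})$. Here the uniform bound $\|T_p\|\le2$ lets a routine $\frac{\e}{3}$-argument reduce strong convergence $T_p\to\U$ to convergence on the dense subspace $(\U-a)H$. On a vector $\xi=(\U-a)\zeta$, using that $(\U-pa)^{-1}=\sum_{n\ge0}p^{n}a^{n}$ commutes with $\U-a$, the backbone identity gives
$$
T_p\xi-\xi
~=~
-(1-p)\,a\,(\U-pa)^{-1}(\U-a)\zeta
~=~
-(1-p)\,a\,T_p\zeta,
$$
so $\|T_p\xi-\xi\|\le(1-p)\|a\|\,\|T_p\|\,\|\zeta\|\le2(1-p)\|\zeta\|\to0$ as $p\uparrow1$. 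The only delicate point is the bookkeeping with the commuting operators that rewrites $(\U-pa)^{-1}(\U-a)$ as $T_p$; once that is arranged the estimate is mechanical, and one gets $T_p\to\U$ strongly, closing the cycle.

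Finally, for the ``moreover'' clause I would start from $\U=(\U-a)+a$, i.e.\ $(\U-pa)^{-1}=T_p+a\,(\U-pa)^{-1}$, multiply by $1-p$, and use $(1-p)a\,(\U-pa)^{-1}=\U-T_p$ from the backbone identity; this yields the clean formula $(1-p)(\U-pa)^{-1}=\U-p\,T_p$. Since each of the four conditions now implies $(\ref{(1-a).2})$, we have $T_p\to\U$ strongly, whence $p\,T_p\to\U$ strongly (as $\|T_p\|\le2$, so $(1-p)T_p\to0$), and therefore $(1-p)(\U-pa)^{-1}\to0$ strongly, as claimed.
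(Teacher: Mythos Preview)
Your proof is correct, and the core step $(\ref{(1-a).1})\Rightarrow(\ref{(1-a).2})$ --- the uniform bound $\|T_p\|\le 2$ plus the estimate on the dense range $(\U-a)H$ --- is exactly what the paper does. The organization differs, though. The paper runs the short cycle $(\ref{(1-a).1})\Leftrightarrow(\ref{(1-a).2})\Leftrightarrow(\ref{(1-a).3})$ and then links in $(\ref{(1-a).4})$ by a duality trick: $(\ref{(1-a).1})$ and $(\ref{(1-a).4})$ swap under $a\leftrightarrow a^*$, while condition $(\ref{(1-a).3})$ is manifestly invariant under that swap, so equivalence with $(\ref{(1-a).4})$ comes for free. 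You instead prove $(\ref{(1-a).3})\Rightarrow(\ref{(1-a).4})$ directly (nice observation that $T_p$ kills any fixed vector of $a$) and close $(\ref{(1-a).4})\Rightarrow(\ref{(1-a).1})$ via the contraction fact $\ker(\U-a)=\ker(\U-a^*)$ --- which the paper actually records as a separate lemma \emph{after} this one. Your route is more self-contained; the paper's is a touch slicker but leans on the adjoint symmetry of weak convergence. For the ``moreover'' clause, the paper reads it off the same density argument used for $(\ref{(1-a).1})\Rightarrow(\ref{(1-a).2})$, whereas your identity $(1-p)(\U-pa)^{-1}=\U-pT_p$ is a clean alternative.
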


\begin{proof}
Obviously, \eqref{(1-a).2}$\Rightarrow$\eqref{(1-a).3}. The approximation in \eqref{(1-a).2} or \eqref{(1-a).3} shows that every $x$ is in the closure of the range of $\U-a$ in the respective topology. So, clearly, \eqref{(1-a).2}$\Rightarrow$\eqref{(1-a).1} and \eqref{(1-a).3}$\Rightarrow\overline{(\U-a)H}^w=H$.  Since weak and norm closure of linear subspaces of Hilbert spaces coincide, we also get \eqref{(1-a).3}$\Rightarrow$\eqref{(1-a).1}.

For \eqref{(1-a).1}$\Rightarrow$\eqref{(1-a).2}, let us start with the observations that, for all $0\le p<1$,
\begin{align*}
\U-\frac{\U-a}{\U-pa}
&
~=~
a\frac{1-p}{\U-pa}
&&
~~~~~~\text{and}~~~~~~
&
\Bigl\|\frac{1-p}{\U-pa}\Bigr\|
&
~\le~
\frac{1-p}{1-p\|a\|}
~\le~
\frac{1-p}{1-p}
~=~
1.
\end{align*}
So, $\bigl\|\frac{\U-a}{\U-pa}\bigr\|\le2$ and, therefore, it suffices to check strong convergence on the total subset $(\U-a)H$ of $H$. So, let us choose $y\in H$ and check strong convergence on $x=(\U-a)y$. We find
$$
\Bigl\|x-\frac{\U-a}{\U-pa}x\Bigr\|
~=~
\Bigl\|a\frac{1-p}{\U-pa}(\U-a)y\Bigr\|
~\le~
1(1-p)2\|y\|
~\to~
0.
$$
This also shows the supplement $\frac{1-p}{\U-pa}\to0$, strongly.

So, we have closed \eqref{(1-a).1}$\Rightarrow$\eqref{(1-a).2}$\Rightarrow$\eqref{(1-a).3}$\Rightarrow$\eqref{(1-a).1}. To show equivalence with \eqref{(1-a).4}, it is enough to observe that, firstly, \eqref{(1-a).1} and \eqref{(1-a).4} are ``dual under adjoint'' to
{each other}
($a$ injective if and only if $a^*H$ is dense in $H$), and that, secondly, \eqref{(1-a).3} is invariant under taking adjoints.
\end{proof}

Let us note that the equivalence of \eqref{(1-a).1} and \eqref{(1-a).4} (the latter expressed in the form $\overline{(\U-a^*)H}=H$) is well-known. It can easily be shown directly; see, for instance, the elementary argument that occurs in the proof of Shalit \cite[Theorem 8.2.7]{Sha17} or the occurrence as a corollary of dilation theory in the survey
{by}
Levy and Shalit in \cite{LeSha11p}. But,
{we will neeed the implication}
\eqref{(1-a).4}$\Rightarrow$\eqref{(1-a).2}, and for that we do not know a source.

Actually the statement proved in the proof of \cite[Theorem 8.2.7]{Sha17} is slightly better than \eqref{(1-a).1}$\Leftrightarrow$\eqref{(1-a).4}. As we need it, we state it and also furnish a (different) proof.

\begin{lemma}
Under the same hypothesis: $ax=x$ $\Leftrightarrow$ $a^*x=x$ 
~(for all $x\in H$).
\end{lemma}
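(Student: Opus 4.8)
The plan is to prove the equivalence $ax = x \Leftrightarrow a^*x = x$ for a contraction $a$ on a Hilbert space $H$. By symmetry (replacing $a$ by $a^*$, which is again a contraction), it suffices to prove one direction, say $ax = x \Rightarrow a^*x = x$.

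First I would fix $x$ with $ax = x$ and compute $\|a^*x - x\|^2$ directly by expanding the inner product:
\[
\|a^*x - x\|^2 ~=~ \langle a^*x, a^*x\rangle - \langle a^*x, x\rangle - \langle x, a^*x\rangle + \langle x, x\rangle.
\]
The middle terms can be rewritten using the adjoint relation and the hypothesis $ax = x$: indeed $\langle a^*x, x\rangle = \langle x, ax\rangle = \langle x, x\rangle = \|x\|^2$, and likewise $\langle x, a^*x\rangle = \langle ax, x\rangle = \|x\|^2$. For the first term, since $a$ is a contraction, $\langle a^*x, a^*x\rangle = \|a^*x\|^2 \le \|x\|^2$. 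Substituting everything in yields
\[
\|a^*x - x\|^2 ~\le~ \|x\|^2 - \|x\|^2 - \|x\|^2 + \|x\|^2 ~=~ 0,
\]
which forces $a^*x = x$. Then the reverse implication follows by applying what we just proved to the contraction $a^*$ in place of $a$, using $(a^*)^* = a$.

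I do not anticipate a genuine obstacle here; the only point that requires the contraction hypothesis (rather than just boundedness) is the inequality $\|a^*x\|^2 \le \|x\|^2$, and that is exactly $\|a^*\| \le 1$, which holds because $\|a^*\| = \|a\| \le 1$. The proof is purely a matter of organizing the expansion of $\|a^*x - x\|^2$ so that the contraction bound closes it; no appeal to the earlier key Lemma~\ref{(1-a)lemma} or to dilation theory is needed, and in fact this gives a self-contained fixed-point statement that is genuinely stronger than the mere equivalence of $\overline{(\U-a)H}=H$ with injectivity of $\U-a$.
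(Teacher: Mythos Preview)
Your proof is correct and follows essentially the same approach as the paper: expand $\|a^*x-x\|^2$ and use the contraction bound $\|a^*x\|\le\|x\|$ together with the hypothesis $ax=x$ to force it to zero, then appeal to symmetry. Your version is in fact slightly more direct, since you substitute $ax=x$ immediately into the cross terms $\langle a^*x,x\rangle=\langle x,ax\rangle=\|x\|^2$, whereas the paper writes out both $\|ax-x\|^2$ and $\|a^*x-x\|^2$, observes the middle terms agree, and then argues via $\|x\|=\|ax\|\le\|a^*x\|\le\|x\|$; but the underlying idea is the same.
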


\begin{proof}
Of course, we are only interested in the case, when $x\ne0$ so that $x$ would be an eigenvector. But for the proof there is no difference. We have
\begin{align*}
\|ax-x\|^2
&
~=~
\,\,\,\,\,\,\|ax\|^2-
\langle x,a^*x\rangle-
\langle x,ax\rangle+
\|x\|^2,
\\
\|a^*x-x\|^2
&
~=~
\,\,\,\,
\|a^*x\|^2-
\langle x,a^*x\rangle-
\langle x,ax\rangle+
\|x\|^2.
\end{align*}
If the first row, is $0$, then the second row would be negative if $\|a^*x\|<\|ax\|$. We get
$$
\|x\|
~=~
\|ax\|
~\le~
\|a^*x\|
~\le~
\|a^*\|\,\|x\|
~=~
\|x\|.
$$
Therefore, $\|a^*x-x\|=\|ax-x\|=0$. The other direction follows by $a\leftrightarrow a^*$.\end{proof}

\medskip
\begin{cor} \label{pqcor}
For any contraction $a\in B(H)$, the Hilbert space $H$ decomposes uniquely into invariant subspaces $H_0$ and $H_1$ such that $a$ acts on $H_0$ as identity and such that $\U-a$ is injective on $H_1$. Moreover, $H_0=\ker(\U-a)$ and $\frac{\U-a}{\U-pa}$ converges strongly to the projection onto $H_1$.
\end{cor}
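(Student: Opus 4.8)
The plan is to realise the decomposition as the orthogonal splitting $H = H_0 \oplus H_0^\perp$ with $H_0 := \ker(\U-a)$, and then to read off both the block form of $a$ and the convergence of $\frac{\U-a}{\U-pa}$ blockwise, using Lemma \ref{(1-a)lemma} on the second block and a triviality on the first.

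The first step is to check that $H_0 = \ker(\U-a)$ is a \emph{reducing} subspace for $a$. By the second lemma above, $ax = x$ holds if and only if $a^*x = x$, so $H_0 = \ker(\U-a) = \ker(\U-a^*)$; being simultaneously the kernel of an operator and of its adjoint, $H_0$ is invariant under both $a$ and $a^*$, hence so is $H_1 := H_0^\perp$. On $H_0$ the operator $a$ acts as the identity by definition, and on $H_1$ the operator $\U-a$ is injective: if $x \in H_1$ satisfied $ax = x$, then $x \in \ker(\U-a) = H_0$, forcing $x \in H_0 \cap H_1 = \{0\}$. This gives the asserted decomposition; for uniqueness, if $H = H_0' \oplus H_1'$ is any invariant decomposition with $a|_{H_0'} = \id$ and $\U-a$ injective on $H_1'$, then decomposing $x \in \ker(\U-a)$ accordingly and applying $\U-a$ (which respects the splitting and kills the $H_0'$-part) shows the $H_1'$-component of $x$ vanishes, so $\ker(\U-a) \subseteq H_0'$; the reverse inclusion is immediate from $a|_{H_0'} = \id$, whence $H_0' = \ker(\U-a) = H_0$ and, taking the orthogonal complement, $H_1' = H_1$.

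For the convergence statement I would first note that $\U - pa$ is invertible for $0 \le p < 1$ (Neumann series, since $\|pa\| \le p < 1$), that it commutes with $a$, and that, together with $a$, it leaves $H_0$ and $H_1$ invariant; hence $\frac{\U-a}{\U-pa}$ splits as a direct sum along $H = H_0 \oplus H_1$. On $H_0$ it is $0$, because there $\U - a = 0$ while $\U - pa = (1-p)\U$. On $H_1$ the restriction $a|_{H_1}$ is a contraction of the Hilbert space $H_1$ with $\U - a|_{H_1}$ injective, so Lemma \ref{(1-a)lemma} (the implication from injectivity of $\U-a$ to strong convergence of $\frac{\U-a}{\U-pa}$ to $\U$) applies and gives $\frac{\U-a|_{H_1}}{\U - p\,a|_{H_1}} \to \id_{H_1}$ strongly as $p \uparrow 1$. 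Writing a general $x \in H$ as $x = x_0 + x_1$ with $x_i \in H_i$, one gets $\frac{\U-a}{\U-pa}\,x = \frac{\U-a|_{H_1}}{\U - p\,a|_{H_1}}\,x_1 \to x_1$, i.e.\ $\frac{\U-a}{\U-pa}$ converges strongly to the orthogonal projection of $H$ onto $H_1$.

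The only point that is not pure bookkeeping is the very first step: that $H_0^\perp$ is again $a$-invariant. For an arbitrary contraction the orthogonal complement of $\ker(\U-a)$ need not be invariant; it is invariant here precisely because the ``$ax = x \iff a^*x = x$'' lemma makes $H_0$ reducing. Once that is in hand, the $2\times 2$ block splitting reduces everything to the scalar computation on $H_0$ and to a direct appeal to Lemma \ref{(1-a)lemma} on $H_1$, so I do not expect any further difficulty.
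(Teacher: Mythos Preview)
Your proof is correct and is exactly the argument the paper intends: the corollary is stated without proof precisely because it is meant to follow from the two preceding lemmas in the way you describe --- the $ax=x\Leftrightarrow a^*x=x$ lemma to make $H_0=\ker(\U-a)$ reducing, and Lemma~\ref{(1-a)lemma} applied to $a|_{H_1}$ for the strong convergence. Your uniqueness argument and blockwise analysis are the natural details one would fill in, and nothing is missing.
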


\newpage

%%%%%%%%%%%%%%%%%%%%%%%%%%%%%%%%%%%%%%%%%%%%%%%%%%%%%%%%%%%%%%%%%
\addtocontents{toc}{\vspace{1.5ex}}
\section{Classification of gaussian generating functionals} \label{sec_gaussian}
%%%%%%%%%%%%%%%%%%%%%%%%%%%%%%%%%%%%%%%%%%%%%%%%%%%%%%%%%%%%%%%%%

In this Section we investigate the gaussian generating functionals on $\pol$
and their Sch{\"u}rmann triples. We shall see that gaussian generating functionals on $\pol$ are classified by $(N-1)$ real 
numbers, which captures the freedom in the choice of a drift term, and a positive real $(N-1)\times (N-1)$-matrix , which captures the freedom in choosing a gaussian generating functional $\psi$ that satisfies $\psi\circ\cP=\psi$. Contrary to the case $N=2$, for $N\geq 3$ there exist gaussian pairs $(\id_H\e,\eta)$ that cannot be completed to a Sch{\"u}rmann triple. 

The first thing we have to do, also in order to actually indicate a projection $\cP$, is to find a  hermitian basis extension $E_1$ from $K_2$ to $K_1$. (See Subsection \ref{sec_S_trip}.) Our algebra is generated by the elements $u_{jk}-\U\delta_{jk}\in K_1$, their adjoints,  and $\U$. Since the elements of $E_1$ must be in $K_1$ but not in $K_2$, it is clear that we have to search them among the hermitian linear combinations of the $u_{jk}-\U\delta_{jk}$ and their adjoints. We put
$$
d_j
~:=~
\frac{u_{jj}-u_{jj}^*}{2i}.
$$

\begin{lemma} \label{lem_kernel_descr}
~
\begin{enumerate}
\item[(a)] $u_{jk}, u_{jk}^*\in K_2$
{ (actually, $u_{jk}, u_{jk}^*\in K_\infty$),  for $j\neq k$,}
\item[(b)] $(u_{jj}-\one)+(u_{jj}^*-\one) \in K_2$,
\item[(c)] $d_1+d_2+\ldots+d_N\in K_2$,
\item[(d)] $d_jd_k-d_kd_j\in K_3$ (actually, $d_jd_k-d_kd_j\in K_\infty$).
\end{enumerate}
\end{lemma}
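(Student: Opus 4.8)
The statement is a collection of four membership claims about the kernel ideal $K_1 = \ker\e$ and its powers. The overarching strategy is to exploit the quotient presentation of $\pol$ from Subsection \ref{ssec_suqn}: $u_{jk}$ maps to $\delta_{jk}$ under $\e$, so $u_{jk} - \U\delta_{jk} \in K_1$, and the defining relations \eqref{eq_Rq} together with the twisted determinant condition $D = \U$ give identities among products of generators that can be rewritten "modulo $K_n$". The key bookkeeping device throughout will be: if $a, b \in K_1$ then $ab \in K_2$, so whenever I can express an element as a polynomial all of whose terms are products of at least $n$ factors lying in $K_1$, I conclude membership in $K_n$.

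\emph{Part (a).} For $j \neq k$ we have $\e(u_{jk}) = 0$, so $u_{jk} \in K_1$ already. To get $u_{jk} \in K_\infty = \bigcap_n K_n$, I would iterate: using one of the commutation relations in \eqref{eq_Rq} (say \eqref{eq_i<k,j<l}, $u_{ij}u_{kl} = u_{kl}u_{ij} - (\tfrac1q - q)u_{il}u_{kj}$ for $i<k$, $j<l$) I can write a single off-diagonal generator as a combination of products of two off-diagonal generators — but that is only true in special index patterns, so the cleanest route is probably to use the unitarity condition \eqref{eq_U}, $\sum_s u_{js}u_{ks}^* = \U\delta_{jk}$, which for $j = k$ reads $\sum_s u_{js}u_{js}^* = \U$. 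Solving for $u_{jj}u_{jj}^*$ expresses it via $\U$ minus a sum of $u_{js}u_{js}^*$ with $s \neq j$, i.e.\ products of two off-diagonal generators. Iterating such substitutions repeatedly pushes any fixed $u_{jk}$ ($j \neq k$) into arbitrarily high powers $K_n$; I would set this up as an induction on $n$, the base case being $u_{jk} \in K_1$ and the inductive step using a relation that rewrites one off-diagonal factor as a sum of products of $\geq 2$ off-diagonal factors (modulo lower-order terms that are themselves handled inductively). This iteration argument is the part I expect to require the most care, since one must check that the "error terms" introduced at each step are already in high enough powers.

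\emph{Parts (b) and (c).} These follow from the same idea applied to the diagonal. For (b): from the unitarity relation $\sum_s u_{js}u_{js}^* = \U$ write $u_{jj}u_{jj}^* = \U - \sum_{s\neq j}u_{js}u_{js}^*$; the right side is $\U$ plus an element of $K_2$ by part (a). On the other hand $u_{jj}u_{jj}^* = ((u_{jj}-\U)+\U)((u_{jj}^*-\U)+\U) = (u_{jj}-\U)(u_{jj}^*-\U) + (u_{jj}-\U) + (u_{jj}^*-\U) + \U$, and the first term lies in $K_2$. Comparing the two expressions and cancelling $\U$ gives $(u_{jj}-\U) + (u_{jj}^*-\U) \in K_2$, which is (b). For (c): the twisted determinant condition $D_q(U) = \U$ (i.e.\ \eqref{eq_TD} with $\tau = \id$, or the quantum determinant $D = \U$) reads $\sum_{\sigma} (-q)^{i(\sigma)} u_{1\sigma(1)}\cdots u_{N\sigma(N)} = \U$; every term with $\sigma \neq \id$ contains at least two off-diagonal factors, hence lies in $K_2$, so modulo $K_2$ the relation collapses to $u_{11}u_{22}\cdots u_{NN} \equiv \U$. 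Expanding each $u_{jj} = \U + (u_{jj}-\U)$ and multiplying out, all terms with two or more factors $(u_{jj}-\U)$ lie in $K_2$, giving $\U + \sum_j (u_{jj}-\U) \equiv \U \pmod{K_2}$, i.e.\ $\sum_j (u_{jj}-\U) \in K_2$. Finally $\sum_j d_j = \tfrac{1}{2i}\sum_j(u_{jj} - u_{jj}^*) = \tfrac{1}{2i}\big(\sum_j(u_{jj}-\U) - \sum_j(u_{jj}^*-\U)\big)$, and both sums are in $K_2$ by what was just shown (the second by applying $*$, noting $K_2$ is a $*$-ideal), so $\sum_j d_j \in K_2$, which is (c).

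\emph{Part (d).} For the commutator $d_j d_k - d_k d_j$, expand $d_j = \tfrac{1}{2i}(u_{jj} - u_{jj}^*)$. The commutator becomes a linear combination of $[u_{jj}, u_{kk}]$, $[u_{jj}, u_{kk}^*]$, $[u_{jj}^*, u_{kk}]$, $[u_{jj}^*, u_{kk}^*]$. Using relation \eqref{eq_i<k,j<l} (for $i=j$, $k=k$ with appropriate index ordering) and \eqref{eq_i<k,j>l}, each such commutator of diagonal generators equals $\pm(\tfrac1q - q)$ times a product of two off-diagonal generators (e.g.\ \eqref{eqN_i<k,j<l}: $u_{jk}u_{NN} - u_{NN}u_{jk} = -(\tfrac1q-q)u_{jN}u_{Nk}$ is the $k = N$ instance, and the general pattern is analogous), similarly for the $*$-versions via \eqref{eq_*neq}--\eqref{eq_*ij}. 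So $d_jd_k - d_kd_j$ is a combination of products of two off-diagonal generators. By part (a) each off-diagonal generator lies in $K_\infty \subset K_n$ for every $n$, so such a product of two of them lies in $K_m$ for every $m$; hence $d_jd_k - d_kd_j \in K_\infty \subset K_3$, proving (d). The main obstacle here is simply organizing the case analysis over index orderings so that the correct relation from \eqref{eq_Rq} is invoked in each case; once part (a)'s "$K_\infty$" strength is available, the conclusion is immediate.
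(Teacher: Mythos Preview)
Your arguments for parts (b), (c), and (d) match the paper's proof essentially verbatim, and they are correct.

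Part (a), however, has a genuine gap. You correctly recognize that an induction is needed to climb from $K_1$ to $K_\infty$, but the relation you propose for the inductive step---the unitarity condition \eqref{eq_U}---does not do what you claim. The equation $\sum_s u_{js}u_{js}^* = \U$ expresses $u_{jj}u_{jj}^*$ as $\U$ minus a sum of off-diagonal products, but it never isolates a \emph{single} off-diagonal generator $u_{jk}$ on one side with products of two or more $K_1$-elements on the other. Nor do the off-diagonal unitarity relations $\sum_s u_{js}u_{ks}^* = 0$ ($j\neq k$), since every term there is already a product of two generators. Without such a bootstrap identity, your induction stalls at the base case $u_{jk}\in K_1$.

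The paper instead uses the $q$-commutation relations \eqref{eq_i<k} and \eqref{eq_j<l}. For $j\neq k$ and $l := \max(j,k)$ one has $u_{jk}u_{ll} = qu_{ll}u_{jk}$, which rearranges to
\[
u_{jk} \;=\; \frac{q(u_{ll}-\U)u_{jk} - u_{jk}(u_{ll}-\U)}{1-q}.
\]
Since $u_{ll}-\U \in K_1$, this gives $u_{jk}\in K_n \Rightarrow u_{jk}\in K_{n+1}$ in one line, and the induction goes through. Note the crucial use of $q\neq 1$: the denominator $1-q$ is exactly what makes the $q$-deformation provide this bootstrap, and there is no analogous one-step identity available from unitarity alone. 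You actually glanced at the right family of relations (you mention \eqref{eq_i<k,j<l}) but dismissed it for the wrong reason; the simpler relations \eqref{eq_i<k}/\eqref{eq_j<l} with one diagonal index are precisely what is needed.
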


\begin{proof}
(a) 
Uniting appropriately Relations \eqref{eq_i<k} and \eqref{eq_j<l}, we obtain $u_{jk}u_{ll}=qu_{ll}u_{jk}$ for $j\neq k$ and $l:=\max(j,k)$.
Therefore (expanding the brackets on the right-hand side),
$$
u_{jk}=\frac{q(u_{ll}-\one)u_{jk}-u_{jk}(u_{ll}-\one)}{1-q}.
$$
Since $u_{ll}-\one, u_{jk}\in K_1$, we get $u_{jk}\in K_2$. (By induction it follows that $u_{jk}\in K_n$ for all integers $n\ge 1$.)

(b) 
By the unitarity relation in \eqref{eq_U} we see that 
$$
\one-u_{jj}u_{jj}^*=\sum_{p\neq j} u_{jp} u_{jp}^*\in K_\infty.
$$
Hence $(u_{jj}-1)+(u_{jj}-1)^*=-(\one-u_{jj}u_{jj}^*)- (u_{jj}-1)(u_{jj}-1)^*
\in K_2$. 

(c) 
Putting $v_j:=u_{jj}-\one \in K_1$, we obtain
$$u_{11}\ldots u_{NN}=(v_1+\one)\ldots (v_N+\one)
=
 \one+ (v_1+\ldots+v_N)+ 
\text{terms in $K_2$}.
$$
Therefore,
$$
 v_1+\ldots+v_N + (\one-u_{11}\ldots u_{NN}) 
\in K_2,
$$
Since $D=\U$, we have
$$ 
\one - u_{11}\ldots u_{NN}=\sum_{\sigma\in S_N, \sigma \neq {\rm id}}
(-q)^{i(\sigma)} u_{1,\sigma(1)}\ldots u_{N,\sigma(N)}.
$$
Since for $\sigma\ne\id$ there is at least one $j$ with $j\ne\sigma(j)$, we see by Part (a) that the right-hand side is in $K_\infty$. So, $v_1+\ldots+v_N\in K_2$, hence,
$$
d_1+\ldots+d_N
 = 
\frac{(v_1+\ldots+v_N)-(v_1+\ldots + v_N)^*}{2i}
  \in 
K_2.
$$

(d)
This follows from \eqref{eq_i<k,j<l}, \eqref{eq_*neq}, and Part (a).\end{proof}

\lf
By (a), (b), and (c), we have:

\begin{cor}
Put $E_1:=\{d_2,\ldots,d_N\}$. Then the set $E_1\cup K_2$ spans $K_1$.
\end{cor}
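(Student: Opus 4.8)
The plan is to show that $K_1$ is spanned by $K_2$ together with the $d_j$, and then to use Lemma~\ref{lem_kernel_descr}(c) to reduce the spanning set $\{d_1,\dots,d_N\}$ to $\{d_2,\dots,d_N\}$. First I would recall that, since $\cG=SU_q(N)$ is generated as a unital $*$-algebra by the $u_{jk}$ (equivalently by the $u_{jk}-\U\delta_{jk}$, their adjoints, and $\U$), every element of $\cG$ is a polynomial in these generators. Writing such a polynomial and collecting terms, any monomial of degree $\ge 2$ in the elements $u_{jk}-\U\delta_{jk}$ and $u_{jk}^*-\U\delta_{jk}$ lies in $K_2$ by definition of $K_2=\ls(\ker\e)^2$. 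Hence modulo $K_2$, and recalling $K_1=\ker\e$ so that the constant term must vanish, an arbitrary element of $K_1$ is congruent to a linear combination of the degree-one generators $u_{jj}-\U$, $u_{jj}^*-\U$ (for the diagonal) and $u_{jk}$, $u_{jk}^*$ for $j\ne k$.

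Next I would kill the off-diagonal and the ``real diagonal'' contributions using the earlier parts of Lemma~\ref{lem_kernel_descr}. By part~(a), $u_{jk},u_{jk}^*\in K_2$ for $j\ne k$, so those terms drop out modulo $K_2$. By part~(b), $(u_{jj}-\U)+(u_{jj}^*-\U)\in K_2$, so modulo $K_2$ we may replace the pair $u_{jj}-\U$, $u_{jj}^*-\U$ by their difference; that is, every diagonal contribution is congruent modulo $K_2$ to a linear combination of the elements $\tfrac{(u_{jj}-\U)-(u_{jj}^*-\U)}{2i}=\tfrac{u_{jj}-u_{jj}^*}{2i}=d_j$. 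Therefore $K_1 = K_2 + \ls\{d_1,\dots,d_N\}$.

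Finally, I would invoke Lemma~\ref{lem_kernel_descr}(c), which says $d_1+d_2+\dots+d_N\in K_2$, hence $d_1\equiv -(d_2+\dots+d_N)$ modulo $K_2$. Substituting this into the spanning set obtained above eliminates $d_1$, giving $K_1=K_2+\ls\{d_2,\dots,d_N\}=\ls\big(E_1\cup K_2\big)$, which is the assertion. I do not expect any real obstacle here: the only point that needs a little care is the passage from ``$\cG$ is generated by the $u_{jk}$ and their adjoints'' to ``every element of $K_1$ is, modulo $K_2$, a linear combination of the degree-one parts'', and this is exactly the standard observation that in the filtration by powers of $\ker\e$ a product of two or more generators-minus-their-counit-values lands in $K_2$; everything else is bookkeeping with the three already-proved congruences (a), (b), (c).
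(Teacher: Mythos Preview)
Your proof is correct and follows exactly the approach the paper intends: the corollary is stated immediately after Lemma~\ref{lem_kernel_descr} with the one-line justification ``By (a), (b), and (c)'', and you have simply spelled out what that means --- reducing modulo $K_2$ first to linear combinations of the generators $u_{jk}-\U\delta_{jk}$ and their adjoints, then using (a) to kill off-diagonal terms, (b) to replace diagonal pairs by $d_j$, and (c) to eliminate $d_1$.
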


\lf
By (d) and Corollary \ref{K3gcor}, we have:

\begin{cor} \label{cor_gaussian_existence}
The gaussian cocycle $\eta$ can be completed to a Sch{\"u}rmann triple $(\id_H\e,\eta,\psi)$ if and only if $\eta$ is hermitian.
\end{cor}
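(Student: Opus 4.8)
The plan is to deduce Corollary \ref{cor_gaussian_existence} from Corollary \ref{K3gcor}, which asserts that for a gaussian cocycle $\eta$ a Sch{\"u}rmann triple $(\pi,\eta,\psi)$ exists if and only if $\eta$ is hermitian, \emph{provided} the hypothesis $ab-ba\in K_3$ holds for all $a,b\in K_1$. So the whole task reduces to verifying that hypothesis for $\cG=\pol$, and then observing that the representation $\pi$ in such a triple is forced to be $\id_H\e$ (this is immediate: a gaussian representation is, by the Proposition preceding Definition of ``gaussian'', exactly $\id_D\e$). Thus I would first state that we only need to check $ab - ba \in K_3$ for $a,b \in K_1$.

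First I would reduce the commutator condition to generators. Since $K_1 = \ker\e$ is spanned, as established in Subsection \ref{ssec_suqn} and used in Lemma \ref{lem_kernel_descr}, by the elements $u_{jk}$ ($j\neq k$), $u_{jk}^*$ ($j \neq k$), the elements $u_{jj}-\U$, and the elements $u_{jj}^*-\U$ — together with products thereof lying in $K_2 \subset K_1$ — it suffices by bilinearity to check $ab-ba\in K_3$ when $a,b$ each range over this spanning set; and when either $a$ or $b$ already lies in $K_2$, the commutator $ab-ba$ lies in $K_2 K_1 + K_1 K_2 \subset K_3$ automatically, so the only cases that need work are $a,b$ both among the ``linear'' generators $u_{jk}-\U\delta_{jk}$ and their adjoints. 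Moreover, by Lemma \ref{lem_kernel_descr}(a), every off-diagonal $u_{jk}$ and $u_{jk}^*$ already lies in $K_\infty \subset K_2$, so those are also absorbed; the genuinely remaining cases involve at least the diagonal combinations $u_{jj}-\U$ and $u_{jj}^*-\U$.

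The key computation is then: for $j \neq k$ (and $l = \max(j,k)$) the relation $u_{jk}u_{ll} = q u_{ll} u_{jk}$ gives $u_{jk}u_{ll} - u_{ll}u_{jk} = -(1-q)u_{ll}u_{jk}$, which lies in $K_1 \cdot K_\infty \subset K_\infty$; and for the diagonal case, relations \eqref{eq_i<k,j<l} with $k \ne l$ and \eqref{eq_*neq}, \eqref{eq_*ineq}, \eqref{eq_*jneq}, \eqref{eq_*ij} express $u_{jj}u_{kk} - u_{kk}u_{jj}$ and $u_{jj}u_{kk}^* - u_{kk}^*u_{jj}$ (and $u_{jj}^*u_{kk}^* - u_{kk}^*u_{jj}^*$) as sums of products of two off-diagonal generators; each such product lies in $K_\infty \cdot K_\infty \subset K_3$ by Lemma \ref{lem_kernel_descr}(a). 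Since $[u_{jj}-\U, u_{kk}-\U] = [u_{jj}, u_{kk}]$ etc., this settles all remaining cases, so $ab - ba \in K_3$ for all $a,b \in K_1$. (Indeed one gets the stronger $ab-ba\in K_\infty$, exactly as in Lemma \ref{lem_kernel_descr}(d), which treated the special case $a=d_j$, $b=d_k$; the argument here is the same, just carried out for the full spanning set rather than only the $d_j$.)

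The main obstacle — really the only nontrivial point — is the diagonal-diagonal commutator $[u_{jj}, u_{kk}]$ and its starred variants: one must feed in the quadratic relation \eqref{eq_i<k,j<l} (which produces the correction term $(\tfrac{1}{q}-q)u_{il}u_{kj}$, a product of two off-diagonal entries) and the analogous $*$-relations \eqref{eq_*ineq}, \eqref{eq_*jneq}, \eqref{eq_*ij}, and then invoke Lemma \ref{lem_kernel_descr}(a) to see the correction term sits in $K_\infty$. Everything else is bookkeeping. Once $ab-ba\in K_3$ is in hand, Corollary \ref{K3gcor} applies verbatim with $\pi = \id_H\e$, and the stated equivalence follows.
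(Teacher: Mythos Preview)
Your proof is correct and follows the same overall strategy as the paper: verify the hypothesis $ab-ba\in K_3$ for all $a,b\in K_1$, then invoke Corollary~\ref{K3gcor}. The only difference is economy. The paper, having just established (in the corollary immediately preceding this one) that $E_1=\{d_2,\ldots,d_N\}$ together with $K_2$ spans $K_1$, simply cites part~(d) of Lemma~\ref{lem_kernel_descr}: since any $a,b\in K_1$ decompose as linear combinations of the $d_j$ plus elements of $K_2$, and since commutators involving a $K_2$-term land in $K_3$ automatically, the hypothesis of Corollary~\ref{K3gcor} reduces to $[d_j,d_k]\in K_3$, which is exactly~(d). You instead work with the larger spanning set $\{u_{jk}-\U\delta_{jk}\}$ (the adjoints you include are redundant, since by \eqref{eq_u*} the $u_{jk}$ already generate $\pol$ as an algebra) and re-derive the commutator facts from part~(a) and the defining relations --- which is precisely how (d) itself is proved. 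So your argument is a valid but slightly longer route to the same destination; the paper's two-line proof ``By (d) and Corollary~\ref{K3gcor}'' packages the same content more efficiently by leaning on the basis extension $E_1$ already in hand.
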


\lf
\begin{remark}
Recall that, in the proof of Corollary \ref{K3gcor}, we did show that the cocycles of gaussian generating functionals are hermitian under an extra condition (fulfilled by $\pol$, by the lemma). The backwards direction, still depends on Sch{\"u}rmann's \cite[Proposition 5.1.11]{schurmann93}. In the sequel, after completing the discussion of $E_1$ and $\cP$, we will construct (explicitly and in a classifying way) for each hermitian gaussian cocycles of $\pol$ a generating functional, thus, making Corollary \ref{K3gcor} (for $\pol$) independent of \cite[Proposition 5.1.11]{schurmann93}.
\end{remark}

\medskip
To show that $E_1$ is a basis extension, it remains to show that the elements $d_j$ of $E_1$ are linearly independent and are not in $K_2$. To that goal, let us consider the family of characters defined by
$$
\e_{\theta_2,\ldots, \theta_N}(u_{kl}):= e^{i\theta_k} \delta_{k,l},
$$
where $\theta_2,\ldots,\theta_N\in\mathbb{R}$, and where $\theta_1$ is determined by $\sum_{k=1}^N\theta_k=0$. (Of course, one easily verifies directly that this (well)defines a $*$-homomorphism into $\C$. But, see also Remark \ref{TNrem}.) One easily verifies that the functionals $\e'_j:=\frac{\partial\e_{\theta_2,\ldots, \theta_N}}{\partial\theta_j}\big|_{\theta_2=\ldots=\theta_N=0}$ $(j=2,\ldots,N)$ (pointwise derivative) vanish on $K_2$ and satisfy
$$
\e'_j(d_k)
~=~
\delta_{jk}.
$$

\begin{cor}
The elements $d_2,\ldots,d_N$ in $E_1$ are linearly independent and not in $K_2$. Therefore, by Corollary \ref{Gdelcor}, $E_1$ is a hermitian basis extension from $K_2$ to $K_1$.
\end{cor}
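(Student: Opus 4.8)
The plan is to read off both assertions directly from Corollary \ref{Gdelcor}, applied with index set $I=\{2,\ldots,N\}$, the functionals $(\e'_j)_{j=2}^N$, and the elements $(d_j)_{j=2}^N$; the only work is to check that the hypotheses of that corollary hold.

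First I would exhibit each $\e'_j$ as an $\e$-$\e$-cocycle of the kind produced by Proposition \ref{Gdelprop}(1). For fixed $j\in\{2,\ldots,N\}$ set $\e^j_\theta:=\e_{\theta_2,\ldots,\theta_N}$ with $\theta_j=\theta$ and $\theta_i=0$ for the other indices $i\ge2$ (so $\theta_1=-\theta$). This $(\e^j_\theta)_{\theta\ge0}$ is a family of characters with $\e^j_0=\e$, the dependence on $\theta$ is pointwise (indeed entire), and the derivative at $\theta=0$ is by definition the partial derivative $\e'_j$. Hence $\e'_j$ is an $\e$-$\e$-cocycle, and in particular it vanishes on $K_2\cup\{\U\}$ (as observed just after Proposition \ref{Gdelprop}, or directly from the cocycle identity for the one-dimensional representation $\e$).

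Next I would compute the pairings. A character is a $*$-homomorphism into $\C$, hence hermitian, so $\e_{\theta_2,\ldots,\theta_N}(u_{kk}^*)=\overline{e^{i\theta_k}}=e^{-i\theta_k}$, and therefore $\e_{\theta_2,\ldots,\theta_N}(d_k)=\tfrac{1}{2i}\bigl(e^{i\theta_k}-e^{-i\theta_k}\bigr)=\sin\theta_k$; since for $j,k\ge2$ the variables $\theta_j,\theta_k$ are free, differentiating at the origin gives $\e'_j(d_k)=\delta_{jk}$. (Consistency check: applying $\e'_j$ to the relation $d_1+\cdots+d_N\in K_2$ of Lemma \ref{lem_kernel_descr}(c) forces $\e'_j(d_1)=-1$, matching $\theta_1=-\sum_{m\ge2}\theta_m$.) Corollary \ref{Gdelcor} now yields at once that $d_2,\ldots,d_N$ lie in $K_1\setminus K_2$, are linearly independent, and extend to a basis extension from $K_2$ to $K_1$; and since we already know $E_1\cup K_2$ spans $K_1$, the ``moreover'' clause of Corollary \ref{Gdelcor} upgrades this to: $E_1=\{d_2,\ldots,d_N\}$ is itself a basis extension from $K_2$ to $K_1$. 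It is hermitian because $d_j^*=\tfrac{u_{jj}^*-u_{jj}}{-2i}=d_j$.

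There is no real obstacle here: once the framework of Subsection \ref{sec_S_trip} and Corollary \ref{Gdelcor} are in place the argument is purely formal. The single point that still deserves an honest (if elementary) verification is that the prescription $u_{kl}\mapsto e^{i\theta_k}\delta_{kl}$ does define a character of $\pol$ --- it must respect the commutation relations \eqref{eq_Rq} (automatic, the images being scalars), the determinant condition $D=\U$ (which is exactly what forces $\sum_k\theta_k=0$ and thus pins down $\theta_1$), and the involution \eqref{eq_u*} --- but this is precisely the check flagged parenthetically in the text just before the corollary (cf. Remark \ref{TNrem}), so I would not repeat it here.
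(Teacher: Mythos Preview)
Your proposal is correct and follows exactly the approach the paper intends: the corollary is stated without proof because the preceding paragraph already records that the $\e'_j$ arise as derivatives of the character family $\e_{\theta_2,\ldots,\theta_N}$ and satisfy $\e'_j(d_k)=\delta_{jk}$, which is precisely the input to Corollary~\ref{Gdelcor}. You have simply spelled out the verifications (the one-parameter slices $\e^j_\theta$, the computation $\e_{\theta_2,\ldots,\theta_N}(d_k)=\sin\theta_k$, and the hermiticity $d_j^*=d_j$) that the paper leaves to the reader.
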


The $\e'_k$ here coincide, indeed, with the functionals $\e'_k$ occurring in Subsection \ref{ssec_gaussian} from the basis extension. We, now, also can fix our $\cP$ as in \eqref{P-form}. By Proposition \ref{Gcocprop}, we get all gaussian cocycles in the form
$$
\eta
~=~
\sum_{j=2}^N\eta_j\e'_j.
$$
But what are the hermitian ones, and how do they give rise to a generating functional?

Well, the first question is easy: $\eta$ is hermitian if and only if the matrix with entries $r_{jk}:=\langle\eta_j,\eta_k\rangle$ is real (hence, symmetric). For $N\ge3$, it is easy to write down gaussian cocycles that violate this. (See also \cite[Proposition 2.3]{dfks18}.)

\begin{cor} \label{ngccor}
For $N\ge3$, the quantum group $SU_q(N)$ does not have Property (GC).
\end{cor}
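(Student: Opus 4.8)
The plan is to exhibit, for each $N\ge3$, an explicit gaussian cocycle $\eta$ that fails to be hermitian; by Corollary \ref{cor_gaussian_existence} such a cocycle cannot be completed to a Sch\"urmann triple, and hence (GC) fails. Since, by Proposition \ref{Gcocprop}, every gaussian cocycle has the form $\eta=\sum_{j=2}^N\eta_j\e'_j$ for vectors $\eta_j$ in the representation space of $\id_H\e$, and since being hermitian is equivalent to the matrix $r_{jk}=\langle\eta_j,\eta_k\rangle$ being real (hence symmetric), it suffices to choose the $\eta_j$ so that some $r_{jk}$ is not real.

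Concretely, I would take $H=\C$ (one-dimensional, with $\pi=\id_H\e$), set $\eta_2=1$, $\eta_3=i$, and $\eta_j=0$ for $4\le j\le N$; this requires $N\ge3$ precisely so that the indices $2$ and $3$ are both available. Then $r_{23}=\langle\eta_2,\eta_3\rangle=\langle 1,i\rangle=i\notin\R$ (with the convention that the inner product is conjugate-linear in the first or second slot, one of $r_{23},r_{32}$ is $\pm i$), so $\eta$ is not hermitian. To conclude, I would invoke Corollary \ref{cor_gaussian_existence}: the gaussian cocycle $\eta$ admits no generating functional $\psi$ completing $(\id_H\e,\eta,\psi)$ to a Sch\"urmann triple. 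Since a counterexample to (GC) is exactly a gaussian pair $(\pi,\eta)$ with no such $\psi$, this shows $SU_q(N)$ fails (GC) for $N\ge3$.

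There is essentially no obstacle here: the real content was already packaged into Corollary \ref{cor_gaussian_existence} (itself resting on Lemma \ref{lem_kernel_descr}(d) and Corollary \ref{K3gcor}), and into the preceding identification of gaussian cocycles via Proposition \ref{Gcocprop} together with the basis extension $E_1=\{d_2,\dots,d_N\}$. The only thing to be careful about is that the $\e'_j$ are genuinely independent functionals vanishing on $K_2$ — but this is exactly the content of the corollary stating $E_1$ is a hermitian basis extension, so the prescription $\eta=\sum\eta_j\e'_j$ really does define a cocycle for any choice of vectors $\eta_j$. Hence for $N=2$ there is only one free vector $\eta_2$ and the scalar $\langle\eta_2,\eta_2\rangle$ is automatically real, which is why the phenomenon appears only from $N=3$ on.
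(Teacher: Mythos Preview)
Your proof is correct and follows exactly the approach the paper indicates: the paper merely remarks that ``for $N\ge3$, it is easy to write down gaussian cocycles that violate this'' (i.e., cocycles whose matrix $r_{jk}=\langle\eta_j,\eta_k\rangle$ is not real) and then states the corollary, while you supply the explicit example $\eta_2=1$, $\eta_3=i$ in $H=\C$ and spell out the appeal to Corollary~\ref{cor_gaussian_existence}. Your added remark explaining why the construction is unavailable for $N=2$ is a nice touch.
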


This contrasts the fact that $SU_q(2)$ has Property (AC); essentially, \cite[Lemma 2.6+Theorem 2.8]{Ske94} or \cite[Lemma 3.2+Theorem 3.3]{schurmann+skeide98}.

As for the second question: Almost as easy as for the one-dimensional case in  Proposition \ref{Gdelprop}, one checks by direct verification that, if the matrix $(r_{jk})$ is real then the functional
$$
\psi
~:=~
\frac{1}{2}\sum_{j,k=2}^Nr_{j,k}\e''_{j,k},
$$
with $\e''_{jk}:=\frac{\partial^2\e_{\theta_2,\ldots, \theta_N}}{\partial\theta_j\,\partial\theta_k}\big|_{\theta_2=\ldots=\theta_N=0}$ $(j,k=2,\ldots,N)$, has an $\eta$-induced $2$-coboundary (being, therefore, conditionally positive and $0$-normalized). Note  that $\e''_{jk}(d_l)=0$ as soon as $j\ne l$ or $k\ne l$. (In particular, it is $0$ if $j\ne k$.) And  $\e''_{jj}(d_j)=0$, by direct computation. So, $\psi$ also fulfills $\psi\circ\cP=\psi$ (being, therefore, hermitian).

Any real positive matrix $(r_{jk})$ may occur. Indeed, denote by $(q_{jk})$ the unique positive square root, which, necessarily, also has real entries. (Just diagonalize by a real unitary.) Then, the gaussian cocycle $\eta:=\sum_{j=2}^N\eta_j\e'_j$ with
{$\eta_j\in\C^{N-1}$}
having $\ell$-coordinate $q_{\ell j}$ ($\ell=2,\ldots,N$) has the matrix $\langle\eta_j,\eta_k\rangle=r_{jk}$.

Obviously, $\psi$ determines the $r_{jk}$. Adding also a drift term, we, thus, obtain:

\begin{thm} \label{Gclassthm}
Gaussian generating functionals on $\pol$ are parametrized one-to-one by $N-1$ real numbers $r_j$ $(j=2,\ldots,N)$ and
{ a positive real $(N-1)\times(N-1)$-matrices $(r_{jk})_{j,k=2,\ldots,N}$}
as
\begin{equation} \label{eq_gaussian_functional}
\psi
~=~
\sum_{j=2}^Nr_j\e'_j+\frac{1}{2}\sum_{j,k=2}^Nr_{j,k}\e''_{j,k}.
\end{equation}
\end{thm}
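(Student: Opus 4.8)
The plan is to assemble Theorem \ref{Gclassthm} from the pieces developed in the preceding discussion, verifying two things: that every functional of the form \eqref{eq_gaussian_functional} is a gaussian generating functional, and that the map $(r_j,(r_{jk}))\mapsto\psi$ is a bijection onto the set of gaussian generating functionals. For the first direction, I would start from an arbitrary gaussian generating functional $\psi$ with associated (cyclic) Sch{\"u}rmann triple $(\id_H\e,\eta,\psi)$, which exists because any generating functional has one and, by the Proposition preceding the definition of gaussian, $\pi$ is gaussian, i.e.\ $\pi=\id_H\e$ by the Proposition on (unital!) gaussian representations. By Proposition \ref{Gcocprop} the cocycle has the form $\eta=\sum_{j=2}^N\eta_j\e'_j$ with $\eta_j=\eta(d_j)\in H$ uniquely determined, and by Corollary \ref{cor_gaussian_existence} the matrix $r_{jk}:=\langle\eta_j,\eta_k\rangle$ is real and symmetric, hence positive. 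Then I would invoke the computation already sketched in the text: the functional $\frac12\sum_{j,k=2}^N r_{jk}\e''_{jk}$ has an $\eta$-induced $2$-coboundary and satisfies $\circ\cP=\cdot$, so it differs from $\psi$ only by a drift (here I use the third bullet after the definition of ``trivial'' Sch{\"u}rmann triple: two generating functionals with the same pair $(\pi,\eta)$ differ by a drift), and the drifts are exactly the real linear combinations $\sum_{j=2}^N r_j\e'_j$. This produces the representation \eqref{eq_gaussian_functional}.

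Conversely, for realizability and injectivity, I would argue as the text indicates: given real $r_j$ and a positive real matrix $(r_{jk})$, take its positive square root $(q_{jk})$ (real entries, obtained by diagonalizing with a real orthogonal matrix) and set $\eta_j\in\C^{N-1}$ with $\ell$-th coordinate $q_{\ell j}$, so $\langle\eta_j,\eta_k\rangle=r_{jk}$; then $\eta:=\sum\eta_j\e'_j$ is a hermitian gaussian cocycle, Corollary \ref{cor_gaussian_existence} completes it to a Sch{\"u}rmann triple, and adding the drift $\sum r_j\e'_j$ to the canonical choice $\frac12\sum r_{jk}\e''_{jk}$ yields a gaussian generating functional of the stated form. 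Injectivity is then immediate: applying such a $\psi$ to $d_j$ recovers $r_j$ (since $\e'_k(d_j)=\delta_{jk}$ and $\e''_{kl}(d_j)=0$), and its associated cocycle recovers $\langle\eta_j,\eta_k\rangle=r_{jk}$; alternatively one notes $\e''_{jk}(u_{jj}u_{kk})$-type evaluations pin down $r_{jk}$ directly. I would also remark that the values of $\psi$ on all of $\cG$ are determined once the values on the basis extension $\{\U\}\cup E_1$ and on $K_2$ are known (Observation \ref{ideob} and Lemma \ref{PElem}), and on $K_2$ the coboundary property \eqref{**} fixes $\psi$ in terms of $\eta$, hence in terms of $(r_{jk})$ — this makes the bijection transparent.

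The only genuine point requiring care — the ``main obstacle'' — is the direct verification, already asserted in the text, that when $(r_{jk})$ is real the functional $\frac12\sum_{j,k=2}^N r_{jk}\e''_{jk}$ has an $\eta$-induced $2$-coboundary for $\eta=\sum\eta_j\e'_j$ with $\langle\eta_j,\eta_k\rangle=r_{jk}$. Conceptually this is the $SU_q(N)$-analogue of Proposition \ref{Gdelprop}(ii): the characters $\e_{\theta_2,\ldots,\theta_N}$ form a pointwise-smooth family through $\e$, so $\e'_j$ is an $\e$-$\e$-cocycle and the $2$-coboundary of $\frac12\e''_{jj}$ is $\e'_j$-induced; the cross terms require checking that the mixed second derivative $\e''_{jk}$ contributes the correct bilinear form, which follows because $\e_{\theta}(a^*b)=\overline{\e_\theta(a)}\,\e_\theta(b)$ reduces the computation to the abelian case on the commutative $*$-subalgebra generated by the $u_{ll}$. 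One must also confirm the two normalization facts stated in the text: $\e''_{jk}(d_l)=0$ unless $j=k=l$, and $\e''_{jj}(d_j)=0$, which are short explicit computations from $\e_\theta(u_{jj})=e^{i\theta_j}$ (recalling $\theta_1=-\sum_{k\ge2}\theta_k$), giving $\psi\circ\cP=\psi$ and hence hermiticity via Corollary \ref{pPpcor}. Once these verifications are in place, the theorem follows by collecting the displayed statements, with no further subtlety.
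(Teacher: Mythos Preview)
Your proposal is correct and follows essentially the same approach as the paper: you assemble the theorem from the surrounding pieces (Proposition \ref{Gcocprop}, Corollary \ref{cor_gaussian_existence}, the explicit functional $\frac12\sum r_{jk}\e''_{jk}$, the square-root realization of an arbitrary positive real matrix, and the drift classification), and you correctly identify the only genuine verification as the $\eta$-induced $2$-coboundary property of $\frac12\sum r_{jk}\e''_{jk}$ together with $\e''_{jk}(d_l)=0$. One small wording quibble: the matrix $(r_{jk})$ is positive because it is a Gram matrix, not because it is real and symmetric; and the paper's intent (cf.\ Remark \ref{rem_5.1.11}) is to use the explicit $\e''_{jk}$-construction rather than Corollary \ref{cor_gaussian_existence} in the converse direction, so as to avoid reliance on Sch{\"u}rmann's \cite[Proposition 5.1.11]{schurmann93}.
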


\medskip
We add that our projections $\cP$ for $\pol$ and for $\pol[N-1]$, are compatible with the subgroup structure:

\begin{prop} \label{subNPprop}
\hfill $\cP\circ s_N=s_N\circ\cP$. \hspace{40ex} {~}
\end{prop}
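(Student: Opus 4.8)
The plan is to invoke Lemma~\ref{Psublem}, which reduces the claimed identity $\cP\circ s_N=s_N\circ\cP$ to the inclusion $\ls\,s_N(E_1)\subset\ls\,E_1^{\cH}$, where $\cH=\pol[N-1]$, $E_1=\{d_2,\ldots,d_N\}$ is the hermitian basis extension for $\pol$ fixed in this section, and $E_1^{\cH}=\{d_2,\ldots,d_{N-1}\}$ is the corresponding basis extension for $\pol[N-1]$ (built in exactly the same way from the generators of $\pol[N-1]$). So the entire proof is the computation of $s_N(d_j)$ for $j=2,\ldots,N$.

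First I would compute these images directly from the definition of $s_N$ in \eqref{eq_suq_inclusion}: writing $v_{jk}:=s_N(u_{jk})$, we have $v_{jk}=u_{jk}$ (the generator of $\pol[N-1]$) for $j,k\le N-1$, $v_{kN}=v_{Nk}=0$ for $k<N$, and $v_{NN}=\U$. Since $s_N$ is a $*$-homomorphism, $s_N(u_{jj}^*)=v_{jj}^*$. Hence for $2\le j\le N-1$ we get
$$
s_N(d_j)
~=~
\frac{v_{jj}-v_{jj}^*}{2i}
~=~
\frac{u_{jj}-u_{jj}^*}{2i}
~=~
d_j
~\in~
E_1^{\cH},
$$
while for $j=N$ we get $s_N(d_N)=\frac{v_{NN}-v_{NN}^*}{2i}=\frac{\U-\U}{2i}=0$. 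Therefore $s_N(E_1)=\{d_2,\ldots,d_{N-1}\}\cup\{0\}=E_1^{\cH}\cup\{0\}$, which in particular gives $\ls\,s_N(E_1)=\ls\,E_1^{\cH}$, so the hypothesis of Lemma~\ref{Psublem} is satisfied. (Equivalently, one may simply cite Corollary~\ref{subPcor}, which is exactly the statement that $s(E_1)=E_1^{\cH}\cup\{0\}$ implies $\cP^{\cH}\circ s=s\circ\cP$; here $\cP^{\cH}$ is the projection onto $\tilde K_2$ for $\pol[N-1]$, and since $s_N$ respects counits one has $\cP^{\cH}=\cP$ in the notation of the proposition, as the projection for $\pol[N-1]$ is literally constructed from $\{d_2,\ldots,d_{N-1}\}$.)

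There is essentially no obstacle here; the only point that needs a moment's care is the bookkeeping of which $d_j$'s belong to which algebra. One must check that the $E_1$ chosen for $\pol[N-1]$ in this section is indeed $\{d_2,\ldots,d_{N-1}\}$ with $d_j=(u_{jj}-u_{jj}^*)/2i$ formed from the generators of $\pol[N-1]$, and that under the identification $v_{jj}=u_{jj}$ ($j\le N-1$) furnished by $s_N$ this matches $s_N(d_j)$. Both are immediate from the construction. I would state the proof in two lines: record the images $s_N(d_j)$ as above, observe $s_N(E_1)=E_1^{\cH}\cup\{0\}$, and conclude by Corollary~\ref{subPcor} (or directly by Lemma~\ref{Psublem}). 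The main conceptual content was already done in establishing Lemma~\ref{Psublem} and Corollary~\ref{subPcor}; the proposition is just the verification that the natural choices of $E_1$ for the two groups are compatible, which they are by design.
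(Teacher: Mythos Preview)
Your proof is correct and follows exactly the same approach as the paper: compute $s_N(d_j)=d_j$ for $2\le j\le N-1$ and $s_N(d_N)=0$, conclude $s_N(E_1)=E_1^{\cH}\cup\{0\}$, and apply Corollary~\ref{subPcor}. The paper's proof says precisely this in two sentences; yours is simply a more explicit version of the same argument.
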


\begin{proof}
{The homomorphism}
$s_N$ (see \eqref{eq_suq_inclusion}) sends $d_N$ to $0$ and it sends the $d_n$ $(2\le n\le N-1)$ of $\pol$ to the $d_n$ of $\pol[N-1]$. Therefore, $s_N(E_1)=E^{SU_q(N-1)}_1\cup\{0\}$, and the statement follows from Corollary \ref{subPcor}.
\end{proof}

And iterating:

\begin{cor}
\hfill $\cP\circ s_{n,N}=s_{n,N}\circ\cP$. \hspace{37ex} {~}
\end{cor}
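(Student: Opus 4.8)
The plan is to reduce the iterated statement $\cP\circ s_{n,N}=s_{n,N}\circ\cP$ to the single-step statement of Proposition \ref{subNPprop} by a straightforward induction on $N-n$. The base case $N-n=1$ is exactly Proposition \ref{subNPprop}. For the inductive step, I would recall the defining factorization \eqref{snN}, namely $s_{n,N}=s_{n+1,N}\circ s_N$ (reading off the composition order; equivalently $s_{n,N}=s_{n,N-1}\circ s_N$), so that the identity to prove splits along the chain $SU_q(N)\twoheadrightarrow SU_q(N-1)\twoheadrightarrow SU_q(n)$.

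First I would write $\cP\circ s_{n,N}=\cP\circ s_{n,N-1}\circ s_N$, then apply the inductive hypothesis for the pair $(n,N-1)$ to get $\cP\circ s_{n,N-1}=s_{n,N-1}\circ\cP$, yielding $\cP\circ s_{n,N}=s_{n,N-1}\circ\cP\circ s_N$. Then I would apply Proposition \ref{subNPprop} (the single-step compatibility $\cP\circ s_N=s_N\circ\cP$) to rewrite $\cP\circ s_N=s_N\circ\cP$, obtaining $\cP\circ s_{n,N}=s_{n,N-1}\circ s_N\circ\cP=s_{n,N}\circ\cP$, which closes the induction. One small point to be careful about is that the three occurrences of $\cP$ live on three different algebras ($SU_q(N)$, $SU_q(N-1)$, $SU_q(n)$), each being the projection onto the corresponding $K_2$ determined by the basis extension $E_1=\{d_2,\dots\}$ of that algebra; the homomorphisms $s_N$ send $d_N\mapsto 0$ and $d_n\mapsto d_n$ for $n\le N-1$, so the hypotheses of Corollary \ref{subPcor} are met at each stage, which is precisely what makes Proposition \ref{subNPprop} (and hence the induction) applicable.

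There is no real obstacle here: the statement is a formal consequence of the single-step result already proved, combined with the semigroup identity $s_{n,N}=s_{n,N-1}\circ s_N$ from \eqref{snN}. The only thing worth stating explicitly in the write-up is that compatibility of projections is transitive along a chain of quantum subgroups, which is exactly the content of Lemma \ref{Psublem} applied twice (or, more directly, the bookkeeping above).

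\begin{proof}
We argue by induction on $N-n$. For $N-n=1$ this is Proposition \ref{subNPprop}. Assume the statement for $(n,N-1)$, i.e.\ $\cP\circ s_{n,N-1}=s_{n,N-1}\circ\cP$. By \eqref{snN} we have $s_{n,N}=s_{n,N-1}\circ s_N$, so
\begin{align*}
\cP\circ s_{n,N}
&~=~
(\cP\circ s_{n,N-1})\circ s_N
~=~
(s_{n,N-1}\circ\cP)\circ s_N
\\
&~=~
s_{n,N-1}\circ(\cP\circ s_N)
~=~
s_{n,N-1}\circ(s_N\circ\cP)
~=~
s_{n,N}\circ\cP,
\end{align*}
where the second equality uses the inductive hypothesis and the fourth uses Proposition \ref{subNPprop}.
\end{proof}
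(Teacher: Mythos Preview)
Your proof is correct and takes essentially the same approach as the paper, which simply states ``And iterating:'' to indicate that the corollary follows by repeated application of Proposition \ref{subNPprop} along the chain $SU_q(N)\to SU_q(N-1)\to\cdots\to SU_q(n)$. Your induction is exactly this iteration written out in detail, including the correct factorization $s_{n,N}=s_{n,N-1}\circ s_N$ from \eqref{snN} and the observation that the projections $\cP$ on the intermediate algebras are compatible at each step.
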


\medskip
\begin{remark} \label{TNrem}
Note that the classical $(N-1)$-torus $\mathbb{T}^{N-1}$ may be identified with the quantum group generated by $N$ commuting unitaries $u_j$ subject to the relation $u_1\ldots u_N=\U$. Sending $u_{jk}$ to $u_j\delta_{jk}$ defines a $*$-homomorphism $\tau_N$, identifying $\mathbb{T}^{N-1}$ as a quantum subgroup of $SU_q(N)$. Moreover, the family $\e_{\theta_2,\ldots, \theta_N}$ lives on $\mathbb{T}^{N-1}$. This shows several things:
\begin{enumerate}
\item \label{TN1}
The gaussian generating functionals of $\pol$ live on $\mathbb{T}^{N-1}$; in this sense, $SU_q(N)$ and $\mathbb{T}^{N-1}$ have the same gaussian generating functionals. Effectively, we could have deduced Theorem \ref{Gclassthm}, applying the results about classical compact Lie groups in \cite{skeide99} to $\mathbb{T}^{N-1}$. (Our approach here is simpler and improves also on \cite{skeide99}.) Obviously, the projection $\cP$ for $SU_q(N)$ lives on $\mathbb{T}^{N-1}$, too, and the (unique!) map $\tilde{\cP}$ that illustrates it, is the projection $\cP$ for $\mathbb{T}^{N-1}$.

\item \label{TN2}
Every quantum (semi)group $\cG$ sitting as $SU_q(N)\supset\cG\supset\mathbb{T}^{N-1}$, has the same gaussian parts. Moreover, the projection $\cP$ for $\cG$ may be chosen compatible with those for $SU_q(N)$ and $\mathbb{T}^{N-1}$.
\end{enumerate}
\end{remark}

\newpage

\addtocontents{toc}{\vspace{1.5ex}}
\section{Decomposition} \label{decompSEC}

This is the central Section of these notes. We decompose an arbitrary representation $\pi$ of $SU_q(N)$ into a unique direct sum $\pi=\pi_1\oplus\pi_2\oplus\ldots\oplus\pi_N$, where $\pi_n$ lives on $SU_q(n)$ for $2\le n\le N$, and where $\pi_1$
{ is the maximal gaussian part (living on the trivial quantum subgroup $SU_q(1)\cong\{e\}$, since $\pi_1$ is the trivial representation $\pi_1=\id_{H_1}\e$).} 
Then, on the completely non-gaussian part, we show that each cocycle $\eta$ is determined by the vectors $\eta(u_{nn})$ and can be approximated by coboundaries. Therefore, $SU_q(N)$ possesses Property (NC), hence, (LK). Since the cocycles $\eta_n$ with respect to $\pi_n$ obtained from the decomposition of $\pi$ are completely non-gaussian whenever $n\ge2$, we also get a decomposition $\psi=\psi_1+\ldots+\psi_N$ of $\psi$ into generating functionals $\psi_n$ that live on $SU_q(n)$.

\medskip
\subsection{Decomposition of representations} \label{repdSSEC}

Obviously, a representation $\rho$ of $SU_q(N)$ that lives on $SU_q(N-1)$ sends $u_{NN}$ to $\id$. But, this condition is also sufficient. Indeed, we have
\begin{equation} \label{ker-uNN}
\U-u_{NN}^*u_{NN}
~=~
\sum_{k=1}^{N-1}u_{kN}^*u_{kN}.
\end{equation}
So, if the left-hand side is sent by $\rho$ to $0$, then the sum of the positive operators to which $\rho$ sends the right-hand side, must be $0$ as well. So, $\rho(u_{kN})=0$ for all $1\le k\le N-1$. Starting from $\U-u_{NN}u_{NN}^*$, the same argument shows that $\rho(u_{Nk})=0$ for all $1\le k\le N-1$. Clearly, the remaining matrix $(\rho(u_{jk}))_{1\le j,k\le N-1}$ has to be unitary. By Corollary \ref{lifecor}, $\rho$ lives on $SU_q(N-1)$.

\begin{lemma}
Let $\pi$ be a representation of $\pol$. Then the subspace $\ker(\id-\pi(u_{NN}))$ is invariant for $\pi$.
\end{lemma}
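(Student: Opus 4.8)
The plan is to exhibit explicitly, for each generator $u_{jk}$, how $\pi(u_{jk})$ moves a vector $x\in\ker(\id-\pi(u_{NN}))$, and to show in each case that the result still lies in $\ker(\id-\pi(u_{NN}))$. The first and crucial step is to observe, exactly as in the paragraph preceding the lemma (using \eqref{ker-uNN} and its adjoint counterpart $\U-u_{NN}u_{NN}^*=\sum_{k<N}u_{Nk}u_{Nk}^*$), that on the subspace $H_0:=\ker(\id-\pi(u_{NN}))$ one has $\pi(u_{NN})x=x$; since $\pi(u_{NN})$ is a contraction, the Lemma following Lemma~\ref{(1-a)lemma} (applied with $a=\pi(u_{NN})$) gives also $\pi(u_{NN})^*x=x=\pi(u_{NN}^*)x$. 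Feeding this back into the two norm identities $\|\pi(u_{kN})x\|^2=\langle x,\pi(u_{kN}^*u_{kN})x\rangle$ and $\|\pi(u_{Nk}^*)x\|^2=\langle x,\pi(u_{Nk}u_{Nk}^*)x\rangle$, summed over $k<N$ via \eqref{ker-uNN}, shows that $\pi(u_{kN})x=0$ and $\pi(u_{Nk}^*)x=0$ for all $k<N$. By a symmetric use of the Lemma on eigenvectors we also get $\pi(u_{kN}^*)x=0$ and $\pi(u_{Nk})x=0$.

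Next I would check invariance generator by generator. For $u_{NN}$ itself this is immediate: $\pi(u_{NN})$ fixes $x$, so $\pi(u_{NN})x=x\in H_0$. For $u_{kN}$ and $u_{Nk}$ with $k<N$ it is trivial since $\pi(u_{kN})x=\pi(u_{Nk})x=0\in H_0$ by the previous paragraph. The only substantive case is $u_{jk}$ with $j,k<N$: I must show $\pi(u_{jk})x\in H_0$, i.e. $\pi(u_{NN})\pi(u_{jk})x=\pi(u_{jk})x$. Here I invoke the commutation relation \eqref{eqN_i<k,j<l}, namely $u_{jk}u_{NN}=u_{NN}u_{jk}-(\tfrac1q-q)u_{jN}u_{Nk}$ for $j,k<N$, applied to $x$: the left-hand side gives $\pi(u_{jk})\pi(u_{NN})x=\pi(u_{jk})x$, while on the right-hand side $\pi(u_{jN})\pi(u_{Nk})x=\pi(u_{jN})\bigl(\pi(u_{Nk})x\bigr)=0$ because $\pi(u_{Nk})x=0$. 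Hence $\pi(u_{jk})x=\pi(u_{NN})\pi(u_{jk})x$, which is precisely $\pi(u_{jk})x\in H_0$.

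Finally, since $\pi$ is a unital $*$-homomorphism and $\pol$ is generated as a $*$-algebra by the $u_{jk}$ together with $\U$ (recall $D=\U$, so no separate generator is needed and the involution is determined by \eqref{eq_u*}), invariance of $H_0$ under each $\pi(u_{jk})$, together with the already-established invariance under each $\pi(u_{jk}^*)$ — which follows either from the same relations read for the adjoints, or more cheaply by noting $H_0$ is also invariant under all $\pi(u_{jk})^*$ once we know it is invariant under all $\pi(u_{jk})$ and $\pi$ is a $*$-representation on a Hilbert space — propagates to all of $\pi(\pol)$, so $H_0$ is $\pi$-invariant.

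The main obstacle is the mixed case $j,k<N$: unlike the ``$N$-row/$N$-column'' generators, $\pi(u_{jk})x$ is generally nonzero, so one genuinely needs the twisted commutation relation \eqref{eqN_i<k,j<l} and the vanishing $\pi(u_{Nk})x=0$ to push the eigenvector property through. Everything else is bookkeeping once the key observation $\pi(u_{NN})^*x=x$ (from the eigenvector Lemma) is in place.
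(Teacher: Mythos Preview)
Your approach is the paper's: kill the $N$-row and $N$-column generators on $H_0$ via unitarity, then handle the interior block with the twisted commutation relation \eqref{eqN_i<k,j<l}. One step, however, does not go through as written. The claim ``by a symmetric use of the Lemma on eigenvectors we also get $\pi(u_{kN}^*)x=0$ and $\pi(u_{Nk})x=0$'' is not justified: that lemma says $ax=x\Leftrightarrow a^*x=x$ for a contraction $a$, which concerns eigenvalue $1$, not $0$; it cannot convert $\pi(u_{Nk}^*)x=0$ into $\pi(u_{Nk})x=0$. The paper closes this via \eqref{eq_*ij}, which for $i=N$ reads $u_{Nk}^*u_{Nk}=u_{Nk}u_{Nk}^*+(1-q^2)\sum_{p<k}u_{Np}u_{Np}^*$; every term on the right annihilates $x$, hence $\|\pi(u_{Nk})x\|^2=0$. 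Alternatively, \eqref{eqN_j<l} gives $\pi(u_{NN})\bigl(\pi(u_{Nk})x\bigr)=\tfrac{1}{q}\pi(u_{Nk})x$, and since $\pi(u_{NN})$ is a contraction and $\tfrac{1}{q}>1$ this forces $\pi(u_{Nk})x=0$. You do need this vanishing, both for invariance under $\pi(u_{Nk})$ itself and in your treatment of the interior block.

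A second, smaller point: the ``cheap'' fallback at the end---that invariance under all $\pi(u_{jk})$ automatically yields invariance under all $\pi(u_{jk})^*$ because $\pi$ is a $*$-representation---is false in general (an invariant subspace for an operator need not be invariant for its adjoint). The correct argument, which you also mention and which is exactly the paper's, is that by \eqref{eq_u*} the $u_{jk}$ generate $\pol$ as an \emph{algebra}, so invariance under all $\pi(u_{jk})$ already gives invariance under all of $\pi(\pol)$.
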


\begin{proof}
Let $f$ be in $\ker(\id-\pi(u_{NN}))$. In other words, let $f$ be such that $\pi(u_{NN})f=f$.

Applying $\pi$ to Equation \eqref{ker-uNN}
and, then, the positive functional $\langle f,\bullet f\rangle$, we get $\pi(u_{kN})f=0$ for all $1\le k\le N-1$. Just as easy, from the other unitarity condition we get $\pi(u_{Nk}^*)f=0$ for all $1\le k\le N-1$. From Relation \eqref{eq_*ij}, we get
$$
u_{Nk}^*u_{Nk}
~=~
u_{Nk}u_{Nk}^*
+(1-q^2)\sum_{j<k}u_{Nj}u_{Nj}^*,
$$
so that also $\pi(u_{Nk})f=0$ for all $1\le k\le N-1$.
So, $\ker(\id-\pi(u_{NN}))$ is invariant for all $\pi(u_{kN})$ and $\pi(u_{Nk})$.

From Equation \eqref{eq_i<k}, for $j,k\le N-1$ we get $\pi(u_{NN})(\pi(u_{jk})f)=\frac{\pi(u_{jk})\pi(u_{NN})f}{q}=\frac{(\pi(u_{jk})f)}{q}$. Since $\pi(u_{NN})$ is a contraction and $\frac{1}{q}>1$, we get also $\pi(u_{jk})f=0$.

So,  $\ker(\id-\pi(u_{NN}))$ is invariant under all $\pi(u_{jk})$ ($j,k\le N$) and since, by Equation \eqref{eq_u*}, the $u_{jk}$ generate the $*$-algebra $\pol$ as an algebra, $\ker(\id-\pi(u_{NN}))$ is invariant for $\pi$.\end{proof}

\begin{cor}
Putting $H_N:=\ker(\id-\pi(u_{NN}))^\perp$, the representation $\pi$ decomposes uniquely into a part $\pi_N$ acting on $H_N$ where $\pi_N(\U-u_{NN})$ is injective, and a part on $H_N^\perp$ that lives on $SU_q(N-1)$.
\end{cor}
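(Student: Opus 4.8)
The plan is to read off the decomposition directly from the preceding Lemma together with Corollary \ref{pqcor}. First I would note that, since $\pi$ is a $*$-representation, the subspace $\ker(\id-\pi(u_{NN}))$ — which the preceding Lemma shows to be $\pi$-invariant — is in fact reducing: it is invariant under $\pi(a)$ for every $a\in\pol$, in particular under $\pi(u_{NN}^*)=\pi(u_{NN})^*$, so its orthogonal complement $H_N$ is $\pi$-invariant as well. Hence $\pi$ splits as an orthogonal direct sum $\pi=\pi_N\oplus\pi'$, with $\pi_N$ acting on $H_N$ and $\pi'$ acting on $H_N^\perp=\ker(\id-\pi(u_{NN}))$.

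Next I would identify the two pieces. On $H_N^\perp$ we have $\pi'(u_{NN})=\id_{H_N^\perp}$ by the very definition of that subspace; then, exactly as in the discussion preceding the Lemma — apply $\pi'$ to \eqref{ker-uNN} and to the other unitarity condition $\U-u_{NN}u_{NN}^*=\sum_{k<N}u_{Nk}u_{Nk}^*$, use positivity of the resulting operators to conclude $\pi'(u_{kN})=0=\pi'(u_{Nk})$ for $k<N$, and invoke Corollary \ref{lifecor} — the representation $\pi'$ lives on $SU_q(N-1)$. On $H_N$, I would apply Corollary \ref{pqcor} to the contraction $a:=\pi(u_{NN})$ (it is a contraction by the unitarity conditions \eqref{eq_U}), obtaining the reducing decomposition $H=H_0\oplus H_1$ with $H_0=\ker(\U-a)=\ker(\id-\pi(u_{NN}))$ and $\U-a$ injective on $H_1$. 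Since $H_1=H_0^\perp=H_N$, this says precisely that $\pi_N(\U-u_{NN})=(\U-a)\upharpoonright H_N$ is injective.

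Finally, uniqueness: in any decomposition of the asserted form the summand living on $SU_q(N-1)$ must send $u_{NN}$ to the identity, hence its underlying space is contained in $\ker(\id-\pi(u_{NN}))$; and the summand on which $\U-u_{NN}$ is injective cannot meet $\ker(\id-\pi(u_{NN}))$, so its underlying space is contained in $H_N$. As these two spaces are complementary, each summand must coincide with the corresponding one constructed above. I do not expect a genuine obstacle here; the only point requiring a little care is the passage between the abstract splitting furnished by Corollary \ref{pqcor} (phrased for a single contraction) and the concrete subspaces $H_N$, $H_N^\perp$, which is exactly the matching $H_0=\ker(\id-\pi(u_{NN}))$, $H_1=H_N$ recorded above, together with the observation that an invariant subspace of a $*$-representation is reducing.
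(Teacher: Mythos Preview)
Your proposal is correct and matches the paper's intended argument: the paper states this result as an immediate corollary of the preceding Lemma (invariance of $\ker(\id-\pi(u_{NN}))$, hence reducibility since $\pi$ is a $*$-representation) together with the discussion opening the subsection (a representation sending $u_{NN}$ to $\id$ lives on $SU_q(N-1)$), with Corollary~\ref{pqcor} supplying both the injectivity on $H_N$ and the uniqueness of the splitting. The only minor redundancy is that injectivity of $\pi_N(\U-u_{NN})$ on $H_N=\ker(\id-\pi(u_{NN}))^\perp$ is automatic from the definition without invoking Corollary~\ref{pqcor}, but your use of it is certainly not wrong.
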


And by induction:

\begin{thm} \label{repdecthm}
Let $\pi$ be a representation of $\pol$ on a Hilbert space $H$. Then $\pi$ decomposes uniquely into representations $\pi_n$ on invariant subspaces $H_n$ $(n=1,\ldots,N)$ such that
\begin{itemize}
\item
$\pi_1$ is gaussian.

\item
$\pi_n$ $(2\le n\le N)$ lives on $SU_q(n)$.
\end{itemize}

Moreover, each $\pi_n(\U-u_{nn})$ is injective.
\end{thm}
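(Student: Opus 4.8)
The plan is to prove the theorem by induction on $N$, with the Corollary immediately preceding it supplying the entire inductive step. For the base case $N=1$ there is nothing to do: on $\pol[1]$ the single determinant condition forces $u_{11}=\U$, so every unital $*$-representation equals $\id_H\e$, and I would take $H_1:=H$, $\pi_1:=\pi$, which is gaussian since $K_1=\ker\e=\{0\}$. For $N\ge2$ and $\pi$ acting on $H$, I would first apply the Corollary to obtain the invariant orthogonal splitting $\pi=\pi_N\oplus\pi'$ with $H_N:=\ker(\id-\pi(u_{NN}))^{\perp}$, with $\pi_N(\U-u_{NN})$ injective on $H_N$, and with $\pi'$ on $H_N^{\perp}$ living on $SU_q(N-1)$; since $s_N$ is surjective there is a unique representation $\tilde\pi'$ of $\pol[N-1]$ with $\pi'=\tilde\pi'\circ s_N$. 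Then I would apply the induction hypothesis to $\tilde\pi'$, obtaining $\tilde\pi'=\tilde\pi_1\oplus\cdots\oplus\tilde\pi_{N-1}$ on invariant subspaces $\tilde H_1,\dots,\tilde H_{N-1}$ of $H_N^{\perp}$ with $\tilde\pi_1$ gaussian, $\tilde\pi_n$ living on $SU_q(n)$ for $2\le n\le N-1$, and each $\tilde\pi_n(\U-u_{nn})$ injective. Setting $H_n:=\tilde H_n$ and $\pi_n:=\tilde\pi_n\circ s_N$ for $1\le n\le N-1$ (and keeping $H_N,\pi_N$) yields $\pi=\pi_1\oplus\cdots\oplus\pi_N$ with all summands on $\pi$-invariant subspaces, because $H_N^{\perp}$ is $\pi$-invariant.

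Transporting the three properties through $s_N$ is routine and uses only that $s_N$ is a surjective $*$-bialgebra homomorphism sending $u_{nn}$ to the generator $u_{nn}$ of $\pol[N-1]$ for $n<N$ and $u_{NN}$ to $\U$. Since $s_N(\ker\e)=\ker\tilde\e$, the representation $\pi_1=\tilde\pi_1\circ s_N$ vanishes on $K_1$, hence is gaussian (explicitly $\pi_1=\id_{H_1}\e$). For $2\le n\le N-1$, $\pi_n=\tilde\pi_n\circ s_N$ lives on $SU_q(n)$ via $s_{n,N}=s_{n,N-1}\circ s_N$ (see \eqref{snN}), and $\pi_n(\U-u_{nn})=\tilde\pi_n(\U-u_{nn})$ is injective; combined with the injectivity of $\pi_N(\U-u_{NN})$ already provided by the Corollary, this gives the ``moreover'' clause (for $n=1$ it is read trivially, $\pi_1(\U-u_{11})=0$). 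Uniqueness propagates along the induction: in any decomposition of the asserted form each $\pi_n$ with $n\le N-1$ has $\pi_n(u_{NN})=\id$ because $s_{n,N}(u_{NN})=\U$, so lies inside $\ker(\id-\pi(u_{NN}))$, while $\pi_N(\U-u_{NN})$ injective forces $H_N\cap\ker(\id-\pi(u_{NN}))=\{0\}$; hence $H_N=\ker(\id-\pi(u_{NN}))^{\perp}$ is determined --- this is the uniqueness half of the Corollary --- and the induction hypothesis determines the splitting of the remainder.

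I do not expect any real obstacle in this theorem \emph{per se}: all the analytic content --- the invariance of $\ker(\id-\pi(u_{NN}))$ and the single-step splitting --- is already established in the Lemma and Corollary preceding the statement. The only point that needs care is the purely combinatorial bookkeeping of the chain $SU_q(n)\subset SU_q(N-1)\subset SU_q(N)$, that is, the identity $s_{n,N}=s_{n,N-1}\circ s_N$ together with the explicit behaviour $s_N(u_{nn})=u_{nn}$ for $n<N$ and $s_N(u_{NN})=\U$; it is exactly this that makes all three asserted properties survive pull-back along $s_N$.
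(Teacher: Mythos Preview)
Your proof is correct and follows the same inductive strategy as the paper: the Corollary provides the split $\pi=\pi_N\oplus\pi'$ with $\pi'$ living on $SU_q(N-1)$, and induction handles the rest. The paper's own proof is two sentences long and starts the induction at $N=2$ rather than $N=1$, but this is immaterial; your version simply spells out the bookkeeping (transport along $s_N$, the chain identity $s_{n,N}=s_{n,N-1}\circ s_N$, and the uniqueness argument) that the paper leaves implicit.
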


\begin{proof}
For $N=2$, the statement follows directly from the lemma. And the corollary provides the inductive step.% ()
\end{proof}
Note that $\pi_1(\U-u_{11})$ is injective if and only if $\pi_1=0$.

\medskip
\subsection{Decomposition of cocycles} \label{cocdSSEC}

Well, we know that if a $*$-representation $\pi$ decomposes into representations $\pi_n$, then a $\pi$-$\e$-cocycle $\eta$ decomposes into $\pi_n$-$\e$-cocycle $\eta_n$. We now convince ourselves that not only (in the decomposition from the preceding subsection) $\pi_n$ lives on $SU_q(n)$, but
{that, for $n=2,\ldots,N$,}
also $\eta_n$ lives on $SU_q(n)$ and is determined by $\eta_n(u_{nn})$.

Let us start with some general cocycle computations.

\begin{lemma} \label{lem_eta_relations}
Let $\pi$ be a $*$-representation of $\pol$ and let $\eta$ be a $\pi$-$\e$-cocycle. Then   
\begin{subequations} \label{eq_eta}
\begin{eqnarray}
\label{eq_eta_jn}
 \eta(u_{jN}) &=& \textstyle\frac{-1}{\id-q\pi(u_{NN}))}\pi(u_{jN})\eta(u_{NN}), 
\\
\label{eq_eta_nk}
 \eta(u_{Nk}) &=& \textstyle\frac{-1}{\id-q\pi(u_{NN}))}\pi(u_{Nk})\eta(u_{NN}), 
\end{eqnarray}
\end{subequations}
and
\begin{equation}\label{eq_eta_jk}
 \pi(u_{NN}-\one)\eta(u_{jk}) = \Bigl(\pi( u_{jk}-\U\delta_{jk}) - \textstyle\frac{{\frac{1}{q}}-q}{\pi(\one-q^2 u_{NN})}\pi(u_{jN} u_{Nk})\Bigr)\, \eta(u_{NN})
\end{equation}
for any $j,k<N$.
\end{lemma}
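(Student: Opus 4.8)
The plan is to feed the $q$-commutation relations \eqref{eq_NRq} into the cocycle identity \eqref{*}, remembering that $\e(u_{NN})=1$ while $\e(u_{jN})=\e(u_{Nk})=0$ for $j,k<N$, and that $\pi(u_{NN})$ is a contraction by the unitarity conditions \eqref{eq_U}, so that (since $0<q<1$, hence $q^2<q<1$) both $\id-q\pi(u_{NN})$ and $\id-q^2\pi(u_{NN})$ are invertible.

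First I would apply $\eta$ to \eqref{eqN_i<k}. By \eqref{*}, its left-hand side becomes $\pi(u_{jN})\eta(u_{NN})+\eta(u_{jN})$ (using $\e(u_{NN})=1$) and its right-hand side becomes $q\pi(u_{NN})\eta(u_{jN})$ (using $\e(u_{jN})=0$); rearranging gives $(\id-q\pi(u_{NN}))\eta(u_{jN})=-\pi(u_{jN})\eta(u_{NN})$, which is \eqref{eq_eta_jn} after inverting. The very same computation applied to \eqref{eqN_j<l} yields \eqref{eq_eta_nk}.

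For \eqref{eq_eta_jk} I would apply $\eta$ to \eqref{eqN_i<k,j<l}. Using \eqref{*} together with the values of $\e$ above, this collapses to
\[
(\pi(u_{NN})-\one)\,\eta(u_{jk})
~=~
\pi(u_{jk}-\U\delta_{jk})\,\eta(u_{NN})+(\tfrac1q-q)\,\pi(u_{jN})\,\eta(u_{Nk}).
\]
Now I substitute \eqref{eq_eta_nk} for $\eta(u_{Nk})$. The one non-routine point is that $\pi(u_{jN})$ does not commute with the resolvent $(\id-q\pi(u_{NN}))^{-1}$; however \eqref{eqN_i<k} gives $\pi(u_{jN})\,\pi(u_{NN})^n=q^n\,\pi(u_{NN})^n\,\pi(u_{jN})$ for all $n$, hence $\pi(u_{jN})\,g(\pi(u_{NN}))=g(q\pi(u_{NN}))\,\pi(u_{jN})$ for every $g$ holomorphic near the spectrum of $\pi(u_{NN})$, and taking $g(z)=(1-qz)^{-1}$ we get $\pi(u_{jN})\,(\id-q\pi(u_{NN}))^{-1}=(\id-q^2\pi(u_{NN}))^{-1}\,\pi(u_{jN})$. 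Combining this with $\pi(u_{jN})\pi(u_{Nk})=\pi(u_{jN}u_{Nk})$ turns the last term above into $-(\tfrac1q-q)\,(\id-q^2\pi(u_{NN}))^{-1}\,\pi(u_{jN}u_{Nk})\,\eta(u_{NN})$, and since $\pi$ is unital $\pi(u_{NN})-\one=\pi(u_{NN}-\one)$; this is exactly \eqref{eq_eta_jk}.

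The main (and essentially only) obstacle is the bookkeeping of the $q$-shift $q\mapsto q^2$ when pushing $\pi(u_{jN})$ through the resolvent of $\pi(u_{NN})$; everything else is a direct application of the cocycle identity \eqref{*} together with the operator bound $\|\pi(u_{NN})\|\le1$.
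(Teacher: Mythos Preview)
Your proof is correct and follows essentially the same route as the paper's: apply the cocycle identity \eqref{*} to the relations \eqref{eqN_i<k}, \eqref{eqN_j<l}, \eqref{eqN_i<k,j<l}, invert $\id-q\pi(u_{NN})$ using contractivity of $\pi(u_{NN})$, and then push $\pi(u_{jN})$ through the resolvent to produce the $q\mapsto q^2$ shift. Your justification of that last step via the power-series/functional-calculus argument is in fact more explicit than the paper's, which performs the shift in one line without comment.
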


\begin{proof}
If $a=u_{jN}$ or $a=u_{Nk}$
{ for $j,k<N$},
then $a\in \ker \e$ and 
$au_{NN}=qu_{NN}a$. Hence the cocycle property leads to
$$ \pi(a)\eta(u_{NN})+\eta(a)=q\pi(u_{NN})\eta(a).$$
Since $\pi(u_{NN})$ is a contraction, we know that
$\id-q\pi(u_{NN})$ is (boundedly) invertible. Thus,
$$
\eta(a) = \textstyle\frac{-1}{\id-q\pi(u_{NN}))}\pi(a)\eta(u_{NN}) 
$$
for any such element $a$.

On the other hand, if $a=u_{jk}$ with $j,k<N$, then 
the cocycle property applied to \eqref{eq_i<k,j<l} reads
\begin{eqnarray*}
\pi(u_{jk})\eta(u_{NN})+\eta(u_{jk}) &=& \eta(u_{jk}u_{NN}) =
\eta(u_{NN}u_{jk})-({\textstyle\frac{1}{q}}-q)\eta(u_{jN}u_{Nk}) \\
&=&
\pi(u_{NN})\eta(u_{jk})+\eta(u_{NN})\e(u_{jk})-({\textstyle\frac{1}{q}}-q)\pi(u_{jN})
\eta(u_{Nk}),
\end{eqnarray*}
so,
\begin{eqnarray*}
\lefteqn{ \pi(u_{NN}-\one)\eta(u_{jk})
= \pi(u_{jk}-\delta_{jk}\U) \eta(u_{NN}) + ({\textstyle\frac{1}{q}}-q)\pi(u_{jN})
\eta(u_{Nk}) }\\
&=& \big[\pi(u_{jk}-\delta_{jk}\U) - ({\textstyle\frac{1}{q}}-q)\pi(u_{jN})
\textstyle\frac{1}{\id-q\pi(u_{NN}))}\pi(u_{Nk})\big] \,\eta(u_{NN}) \\
&=& \big[ \pi(u_{jk}-\delta_{jk}\one) - ({\textstyle\frac{1}{q}}-q)\textstyle\frac{1}{\id-q^2\pi(u_{NN}))}\pi(u_{jN})
u_{Nk}\big] \,\eta(u_{NN}).
\end{eqnarray*}

\vspace{-5ex}
\end{proof}

\begin{cor} \label{uNNdetcor}
If $\pi(\U-u_{NN})$ is injective, then any  $\pi$-$\e$-cocycle $\eta$ is determined by its value $\eta(u_{NN})$.
\end{cor}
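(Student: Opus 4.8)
The plan is to show that the single vector $\eta(u_{NN})$ already pins down $\eta(u_{jk})$ for every pair $(j,k)$, and then to invoke the general fact that a cocycle is determined by its values on an algebra-generating set.

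First I would recall that, by \eqref{eq_u*}, the $N^2$ elements $u_{jk}$ ($1\le j,k\le N$) generate $\pol$ as a unital algebra, the adjoints $u_{jk}^*$ being polynomials in them. Hence, for the fixed representation $\pi$, the cocycle identity \eqref{*} together with $\eta(\U)=0$ determines $\eta$ on all of $\pol$ as soon as the vectors $\eta(u_{jk})$ are known (a routine induction on the length of a word in the generators). So it suffices to express each $\eta(u_{jk})$ through $\eta(u_{NN})$. The case $j=k=N$ is the datum itself. For $j<N,\ k=N$ and for $j=N,\ k<N$, formulas \eqref{eq_eta_jn} and \eqref{eq_eta_nk} of Lemma \ref{lem_eta_relations} do exactly this, the operator $\id-q\pi(u_{NN})$ being boundedly invertible because $\pi(u_{NN})$ is a contraction and $0<q<1$. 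For $j,k<N$, formula \eqref{eq_eta_jk} expresses $\pi(u_{NN}-\one)\,\eta(u_{jk})$ entirely in terms of $\eta(u_{NN})$; since $\pi(u_{NN}-\one)=-\pi(\U-u_{NN})$ is injective by hypothesis, this determines $\eta(u_{jk})$ uniquely. Consequently, if $\eta$ and $\eta'$ are $\pi$-$\e$-cocycles with $\eta(u_{NN})=\eta'(u_{NN})$, then $\eta(u_{jk})=\eta'(u_{jk})$ for all $j,k$, whence $\eta=\eta'$.

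There is no genuine obstacle here; the corollary is an immediate harvest of Lemma \ref{lem_eta_relations}. The only point worth flagging is that the argument consumes exactly \emph{injectivity} of $\pi(\U-u_{NN})$, not invertibility: ``determined by $\eta(u_{NN})$'' is a uniqueness statement, so for the block $j,k<N$ one does not need to solve \eqref{eq_eta_jk} for $\eta(u_{jk})$ by a closed formula, only to know that the solution, if it exists, is unique. This is also why the hypothesis of the corollary matches precisely the conclusion ``each $\pi_n(\U-u_{nn})$ is injective'' of Theorem \ref{repdecthm}, so that the corollary will apply to each completely non-gaussian summand $\pi_n$ ($n\ge 2$) of the decomposition.
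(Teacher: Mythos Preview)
Your proof is correct and follows exactly the paper's approach: the paper's own proof is the single line ``If $\pi(\U-u_{NN})$ is injective, then also $\eta(u_{jk})$ $(j,k<N)$ is determined by the lemma,'' and you have simply spelled out the implicit steps (algebra generation by the $u_{jk}$ via \eqref{eq_u*}, determination of a cocycle by its values on generators, and the distinction between injectivity and invertibility for the block $j,k<N$).
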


\begin{proof}
If $\pi(\U-u_{NN})$ is injective, then also $\eta(u_{jk})$ $(j,k<N)$ is determined by the lemma.
\end{proof}

\lf
{
\begin{lemma} \label{subcoclem}
Suppose for $n<N$ we have a $*$-representation $\pi$ of $\pol$ that lives on $\pol[n]$ and such that $\pi(\U-u_{nn})$ is injective. Then every $\pi$-$\e$-cocycle $\eta$ satisfies $\eta(u_{mm})=0$ for all $m>n$.
\end{lemma}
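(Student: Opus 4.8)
The plan is to read off the values of $\pi$ on the generators carrying a ``large'' index and then push the single commutation relation between $u_{nn}$ and $u_{mm}$ through the cocycle identity \eqref{*}. First I would record that, since $\pi=\tilde\pi\circ s_{n,N}$ lives on $\pol[n]$, unwinding the matrix form of $s_{n,N}$ from \eqref{eq_suq_inclusion} and \eqref{snN} (equivalently, iterating Corollary \ref{lifecor}) shows that $s_{n,N}(U)$ is block diagonal, with the $n\times n$ block of $\pol[n]$-generators in the upper left corner and $\U$ on all remaining diagonal entries. Hence $\pi(u_{jk})=\id_H\delta_{jk}$ whenever $\max(j,k)>n$; in particular, for every $m$ with $n<m\le N$ one has $\pi(u_{mm})=\id_H$ and $\pi(u_{nm})=\pi(u_{mn})=0$, while trivially $\e(u_{nm})=\e(u_{mn})=0$ and $\e(u_{nn})=\e(u_{mm})=1$.

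Next, for a fixed $m$ with $n<m\le N$ I would apply $\eta$ to Relation \eqref{eq_i<k,j<l} with $i=j=n$ and $k=l=m$ — legitimate precisely because $n<m$ — that is, to
\[
u_{nn}u_{mm} ~=~ u_{mm}u_{nn} - \bigl(\tfrac{1}{q}-q\bigr)u_{nm}u_{mn}.
\]
Using \eqref{*} and the values just recorded, the left-hand side becomes $\pi(u_{nn})\eta(u_{mm})+\eta(u_{nn})$; the first term on the right becomes $\eta(u_{nn})+\eta(u_{mm})$ (because $\pi(u_{mm})=\id_H$); and the last term drops out since $\eta(u_{nm}u_{mn})=\pi(u_{nm})\eta(u_{mn})+\eta(u_{nm})\e(u_{mn})=0$. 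Cancelling $\eta(u_{nn})$ leaves $\pi(u_{nn})\eta(u_{mm})=\eta(u_{mm})$, i.e. $\pi(\U-u_{nn})\eta(u_{mm})=0$. Since $\pi(\U-u_{nn})$ is injective by hypothesis, this forces $\eta(u_{mm})=0$, and as $m>n$ was arbitrary the lemma follows.

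I do not expect a genuine obstacle: this is simply the $m$-level instance of the computation already performed for $N$ in the proof of Lemma \ref{lem_eta_relations}, with \eqref{eq_i<k,j<l} playing the role that \eqref{eqN_i<k,j<l} plays there. The only point needing a moment's care is the first step — verifying that living on $\pol[n]$ really annihilates $u_{nm}$ and $u_{mn}$ and fixes $u_{mm}$ — which is immediate from the block form of $s_{n,N}$, and observing that the whole argument breaks down (as it must) without the injectivity of $\pi(\U-u_{nn})$, since a purely gaussian $\pi$ lives on every $\pol[n]$ yet admits cocycles with $\eta(u_{mm})\ne0$.
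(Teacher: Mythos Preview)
Your proof is correct and essentially identical to the paper's: both apply the cocycle identity to relation \eqref{eq_i<k,j<l} with $i=j=n$, $k=l=m$, use that $\pi$ living on $\pol[n]$ forces $\pi(u_{mm})=\id$ and $\pi(u_{nm})=0$, and conclude from injectivity of $\pi(\U-u_{nn})$. Your write-up is slightly more explicit about why the cross term vanishes, but the argument is the same.
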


\begin{proof}
Applying $\eta$ to \eqref{eq_i<k,j<l} for $i=j=n$ and $k=l=m$, we obtain
$$
\pi(u_{nn})\eta(u_{mm})+\eta(u_{nn})
~=~
\pi(u_{mm})\eta(u_{nn})+\eta(u_{mm})-({\textstyle\frac{1}{q}-q})\pi(u_{nm})\eta(u_{mn}).
$$
$\pi$ lives on $\pol[n]$, so, $\pi(u_{mm})=\id$ and $\pi(u_{nm})=0$. Hence, the equation simplifies to
$$
\pi(u_{nn})\eta(u_{mm})
~=~
\eta(u_{mm}).
$$
Since $\pi(\U-u_{nn})$ is injective, we get $\eta(u_{mm})=0$.\end{proof}

\lf
\begin{cor} \label{subcoccor}
Such $\eta$ lives on $\pol[n]$, too.
\end{cor}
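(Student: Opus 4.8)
The plan is to invoke Corollary \ref{lifecor}\eqref{lc2}. By hypothesis $\pi$ lives on $\pol[n]$ (we are in the situation of Lemma \ref{subcoclem}), so by part \eqref{lc1} of Corollary \ref{lifecor} we already have $\pi(u_{kN})=\id_H\delta_{kN}=\pi(u_{Nk})$ for the top index $N$; more to the point, iterating $s_{n,N}$ we know $\pi$ factors through $s_{n,N}$, so $\pi(u_{jk})=\id_H\delta_{jk}$ and $\pi(u_{kj})=\id_H\delta_{kj}$ whenever $\max(j,k)>n$. Thus to conclude that $\eta$ lives on $\pol[n]$ it suffices, by part \eqref{lc2} applied along the chain $\pol[n]\subset\pol[n+1]\subset\dots\subset\pol[N]$, to check that $\eta(u_{jk})=0=\eta(u_{kj})$ for all $j<k$ with $k>n$ (equivalently, that $\eta$ annihilates the extra relations defining each successive quotient).

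First I would dispose of the diagonal entries: Lemma \ref{subcoclem} already gives $\eta(u_{mm})=0$ for every $m>n$. Next, for the off-diagonal entries with large index, I would feed the vanishing of $\eta(u_{NN})$ — and, after peeling off one layer, of each $\eta(u_{mm})$, $m>n$ — into the formulas of Lemma \ref{lem_eta_relations}. Concretely, the lemma (with $N$ there replaced by any $m$ in the range $n<m\le N$, which is legitimate because $\pi$ lives on $\pol[m]$ and $\pi(\U-u_{mm})=\id_H-\pi(u_{mm})=0$ is certainly not injective — wait, that fails) — so instead I would argue directly from the commutation relations. Since $\pi$ kills all $u_{jm}$, $u_{mj}$ with $m>n\ge j$ and sends $u_{mm}$ to $\id_H$, the cocycle identity applied to $u_{jm}u_{mm}=qu_{mm}u_{jm}$ (relation \eqref{eqN_i<k}, with indices shifted) reads
$$
\pi(u_{jm})\eta(u_{mm})+\eta(u_{jm})
~=~
q\pi(u_{mm})\eta(u_{jm})+q\eta(u_{mm})\e(u_{jm}),
$$
and using $\pi(u_{jm})=0$, $\pi(u_{mm})=\id_H$, $\eta(u_{mm})=0$, $\e(u_{jm})=0$ this collapses to $\eta(u_{jm})=q\,\eta(u_{jm})$, hence $\eta(u_{jm})=0$; the symmetric relation \eqref{eqN_j<l} gives $\eta(u_{mj})=0$. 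For two large indices $j,k>n$ with $j\ne k$, relation \eqref{eq_i<k,j>l} or \eqref{eq_i<k,j<l} together with $\pi(u_{jk})=0$, $\pi(u_{j'k'})=0$ for the cross terms, and the $\eta$-values just computed, forces $\eta(u_{jk})=0$ as well (the $({\textstyle\frac1q}-q)$-correction term in \eqref{eq_i<k,j<l} involves $\pi(u_{il})\eta(u_{kj})$ or $\pi(u_{il})$ which is again $0$).

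With $\eta(u_{jk})=0=\eta(u_{kj})$ established for all pairs with $\max(j,k)>n$, the extra relations $u_{jk}=\U\delta_{jk}$, $u_{kj}=\U\delta_{kj}$ ($\max(j,k)>n$) cutting $\pol[n]$ out of $\pol$ are annihilated by both $\pi$ and $\eta$, so Corollary \ref{lifecor}\eqref{lc2} (applied to the composite $s_{n,N}$, via Lemma \ref{exrellem}) yields a $\tilde\pi$-$\tilde\e$-cocycle $\tilde\eta$ on $\pol[n]$ with $\eta=\tilde\eta\circ s_{n,N}$, i.e. $\eta$ lives on $\pol[n]$. The only point that needs a little care is the bookkeeping of which $\pi$-values vanish — since $\pi$ factors through $s_{n,N}$ and $s_{n,N}$ sends $u_{jk}$ to $\U\delta_{jk}$ whenever $\max(j,k)>n$, \emph{all} the relevant $\pi(u_{jk})$ are as claimed; I expect this is the main (though still routine) obstacle, with the cocycle computations above being mechanical once the $\pi$-pattern is in hand. (Alternatively, one can cite the slightly stronger assertion promised right before Subsection \ref{snN}'s neighbourhood, that for such representations it suffices to check the condition on $u_{nn}$ alone, so that Corollary \ref{uNNdetcor}-style reasoning already pins $\eta$ down from $\eta(u_{nn})$; but the elementary argument above is self-contained.)
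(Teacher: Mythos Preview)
Your proposal is essentially correct, but it takes an unnecessary detour and muddles one sub-case. Two remarks:

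\textbf{The ``wait, that fails'' is a false alarm.} Equations \eqref{eq_eta} from Lemma \ref{lem_eta_relations} involve $(\id-q\pi(u_{NN}))^{-1}$, not $(\id-\pi(u_{NN}))^{-1}$; since $q<1$ and $\pi(u_{NN})$ is a contraction, this inverse always exists. No injectivity hypothesis is needed. The paper's proof simply feeds $\eta(u_{NN})=0$ (from Lemma \ref{subcoclem}) into \eqref{eq_eta} to get $\eta(u_{kN})=0=\eta(u_{Nk})$ at once, invokes Corollary \ref{lifecor}\eqref{lc2} to pass to $\pol[N-1]$, and then inducts one layer at a time. Your direct cocycle computation from $u_{jm}u_{mm}=qu_{mm}u_{jm}$ is of course the same calculation that proves \eqref{eq_eta}, so you are rederiving the lemma rather than citing it.

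\textbf{The two-large-indices case.} Your appeal to \eqref{eq_i<k,j>l} or \eqref{eq_i<k,j<l} is off target: those relations involve four distinct indices and do not obviously isolate $\eta(u_{jk})$. But no separate argument is needed --- for $j,k>n$ with $j\ne k$, set $m:=\max(j,k)$ and reuse your own computation verbatim (the relation $u_{jk}u_{mm}=qu_{mm}u_{jk}$ still holds, $\pi(u_{jk})=0$, $\pi(u_{mm})=\id$, $\eta(u_{mm})=0$, so again $\eta(u_{jk})=q\eta(u_{jk})$). Alternatively, the paper's inductive peeling-off avoids this case entirely: once you are on $\pol[N-1]$, the top row and column are gone, and you never see two ``large'' indices simultaneously.

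In short: your argument works after this small repair, and it amounts to unrolling the paper's induction and redoing the proof of \eqref{eq_eta} inline.
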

 
\begin{proof}
Since $\eta(U_{NN})=0$, by Equations \eqref{eq_eta} we get $\eta(u_{kN})=0=\eta(u_{Nk})$ for all $1\le k\le N$. By Corollary \ref{lifecor}, $\eta$ lives on $\pol[N-1]$. 

The result follows, now, by induction. (Applying the same argument to the representation $\tilde{\pi}_{N-1}$ and the cocycle $\tilde{\eta}_{N-1}$ on $\pol[N-1]$, we get that they live on $\pol[N-2]$, and so forth.)
\end{proof}

\lf
Consequently:

\begin{thm} \label{cocdecthm}
In the notations of Theorem \ref{repdecthm}: Every $\pi$-$\e$-cocycle $\eta$ decomposes (uniquely) into the direct sum over $\pi_n$-$\e$-cocycle $\eta_n$, where $\eta_1$ is gaussian and where $\eta_n$ lives on $\pol[n]$ for $2\le n\le N$. Moreover, $\eta$ is determined by its values $\eta(u_{nn})$ $(1\le n\le N)$.
\end{thm}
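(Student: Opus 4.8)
The plan is to read the statement off the component results already established, treating the direct summands one at a time; almost nothing beyond bookkeeping is needed. First I would invoke Theorem~\ref{repdecthm} for the unique decomposition $\pi = \pi_1\oplus\cdots\oplus\pi_N$ on invariant subspaces $H_n$. Since that decomposition, hence the orthogonal projections $p_n\colon H\to H_n$, depends only on $\pi$, Proposition~\ref{decomprop} (iterated, or directly via $\eta_n:=p_n\circ\eta$) gives the unique decomposition $\eta = \eta_1\oplus\cdots\oplus\eta_N$ into $\pi_n$-$\e$-cocycles. That disposes of the decomposition claim; it remains to identify each $\eta_n$ and to see that all of $\eta$ is pinned down by the diagonal values $\eta(u_{nn})$.

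Next I would treat the gaussian summand. As $\pi_1=\id_{H_1}\e$ vanishes on $K_1$, the cocycle identity~\eqref{*} applied to $a,b\in K_1$ gives $\eta_1(ab)=\pi_1(a)\eta_1(b)+\eta_1(a)\e(b)=0$, so $\eta_1$ vanishes on $K_2$ and is gaussian. By Proposition~\ref{Gcocprop}, $\eta_1=\sum_{j=2}^{N}\eta_1(d_j)\,\e'_j$. Moreover $\U-u_{jj}u_{jj}^*\in K_2$ by Lemma~\ref{lem_kernel_descr}, and $\pi_1(u_{jj})=\id$, so the cocycle identity together with $\eta_1(\U)=0$ forces $\eta_1(u_{jj}^*)=-\eta_1(u_{jj})$ and hence $\eta_1(d_j)=-i\,\eta_1(u_{jj})$; thus $\eta_1$ is determined by the vectors $\eta_1(u_{jj})=p_1\eta(u_{jj})$, $j=2,\dots,N$.

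For $2\le n\le N$ I would argue as follows. Here $\pi_n$ lives on $\pol[n]$ and $\pi_n(\U-u_{nn})$ is injective, so for $n<N$ Lemma~\ref{subcoclem} yields $\eta_n(u_{mm})=0$ for $m>n$, and Corollary~\ref{subcoccor} then shows $\eta_n$ lives on $\pol[n]$ (for $n=N$ this is vacuous). Writing $\eta_n=\tilde\eta_n\circ s_{n,N}$ with $\tilde\eta_n$ a cocycle over $\pol[n]$ with respect to a representation that still sends $\U-u_{nn}$ to an injective operator (because $s_{n,N}(u_{nn})=u_{nn}$), Corollary~\ref{uNNdetcor} applied over $\pol[n]$ --- where $u_{nn}$ is the last diagonal generator --- shows $\tilde\eta_n$, hence $\eta_n$, is determined by $\eta_n(u_{nn})=p_n\eta(u_{nn})$. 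Assembling the pieces: if two $\pi$-$\e$-cocycles agree on every $u_{nn}$, then, applying the $p_n$, their components agree on the relevant diagonal generators, hence coincide summand by summand, hence coincide; so $\eta$ is determined by $\eta(u_{nn})$, $1\le n\le N$.

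The only points I expect to require genuine care are in the gaussian summand --- using Proposition~\ref{Gcocprop} to pin $\eta_1$ down by the $\eta_1(d_j)$ and then recovering these from the diagonal values via $\eta_1(u_{jj}^*)=-\eta_1(u_{jj})$ --- and the bookkeeping that turns the summand-wise determination into a statement about $\eta$, which works precisely because the decomposition in Theorem~\ref{repdecthm} is canonical (it depends on $\pi$ only, so the $p_n$ are available). Everything else is an immediate appeal to Proposition~\ref{decomprop}, Lemma~\ref{subcoclem}, Corollary~\ref{subcoccor} and Corollary~\ref{uNNdetcor}.
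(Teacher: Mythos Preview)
Your proposal is correct and follows essentially the same route as the paper: decompose $\eta$ via Proposition~\ref{decomprop}, invoke Corollary~\ref{subcoccor} for the $\eta_n$ with $n\ge 2$, and use Corollary~\ref{uNNdetcor} componentwise to get determination by the diagonal values. Your explicit handling of the gaussian summand (reducing $\eta_1(d_j)$ to $\eta_1(u_{jj})$ via Lemma~\ref{lem_kernel_descr}(b)) is in fact more careful than the paper, which treats this point as obvious and whose displayed formula for $\eta(u_{nn})$ tacitly ignores the generally nonzero first component $\eta_1(u_{nn})$.
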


\begin{proof}
Only the last statement needs a proof. It follows from
$$
\eta(u_{nn})
~=~
0\oplus\ldots\oplus 0\oplus\eta_n(u_{nn})\oplus\ldots\oplus\eta_N(u_{nn}),
$$
because, by Corollary \ref{uNNdetcor}, each $\eta_{n'}(u_{nn})$ $(n'\ge n)$ is determined by $\eta_{n'}(u_{n'n'})$.\end{proof}
}

\lf
\begin{remark}
In the following subsection we will show that, for $2\le n\le N$, the cocycles $\eta_n$
are
limits of coboundaries.
So, it is legitimate to ask, why, in order to show that $\eta_n$ lives on $\pol[n]$, we did not appeal to Corollary \ref{subapprcor}. The point is that our basic approximation result, Proposition \ref{cocaprop}, cannot be applied directly to the cocycle $\eta_n$ on $\pol$, but only to a cocycle on $\pol[n]$.
Only after having shown in the present subsection that $\eta_n$ lives on $\pol[n]$, we have granted the cocycle $\tilde{\eta}_n$ on $\pol[n]$ such that
$\eta_n=\tilde{\eta}_n\circ s_{n,N}$
to which Proposition \ref{cocaprop} can be applied. (See the proof of Theorem \ref{cocapprthm}.)
\end{remark}

\medskip
\subsection{Approximation of cocycles} \label{cocaSSEC}

We now show that each of the cocycles $\eta_n$ $(n\ge2)$ in Theorem \ref{cocdecthm} can be approximated by coboundaries. (For a gaussian cocycle this is, obviously, not so, because gaussian coboundaries are $0$.)

\begin{prop} \label{cocaprop}
Let $\pi$ be a $*$-representation of $\pol$ such that $\pi(\U-u_{NN})$ is injective, and let $\eta$ be a $\pi$-$\e$-cocycle. Then the coboundaries
\begin{eqnarray} \label{cocadefi}
a
~\longmapsto~
\pi\circ(\id-\U\e)(a)\frac{-1}{\pi(\U-pu_{NN})}\eta(u_{NN})
~~~~~~
(0<p<1)
\end{eqnarray}
converge (pointwise on $\pol$) to $\eta$ for $p\to1$.
\end{prop}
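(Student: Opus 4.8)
First I would introduce the shorthand $a_{NN}:=\pi(u_{NN})$, which is a contraction on $H$ with $\U-a_{NN}$ injective by hypothesis. By Lemma \ref{(1-a)lemma} (with $a=a_{NN}$), injectivity of $\U-a_{NN}$ is equivalent to $\overline{(\U-a_{NN})H}=H$, to the strong convergence $\frac{\U-a_{NN}}{\U-pa_{NN}}\to\U$ as $p\uparrow1$, and to the supplementary convergence $\frac{1-p}{\U-pa_{NN}}\to0$ strongly. These are the only analytic facts I will need; the rest is the cocycle bookkeeping already packaged in Lemma \ref{lem_eta_relations}. The point of Proposition \ref{cocaprop} is that the map $\eta$ is, by Corollary \ref{uNNdetcor}, completely determined by the single vector $\eta(u_{NN})$, so it suffices to check that the coboundaries in \eqref{cocadefi} converge, pointwise on a generating set of $\pol$, to the right thing; and the ``right thing'' on each generator is exactly the formula provided by Lemma \ref{lem_eta_relations}.

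The core computation I would do first is the convergence on the generator $u_{NN}$ itself. Writing $\eta^{(p)}$ for the coboundary in \eqref{cocadefi}, we have $\eta^{(p)}(u_{NN})=(a_{NN}-\U)\frac{-1}{\U-pa_{NN}}\eta(u_{NN})=\frac{\U-a_{NN}}{\U-pa_{NN}}\eta(u_{NN})$, which by Lemma \ref{(1-a)lemma} converges to $\eta(u_{NN})$. For an off-diagonal generator $u_{jN}$ with $j<N$, we have $\e(u_{jN})=0$, so $\eta^{(p)}(u_{jN})=\pi(u_{jN})\frac{-1}{\U-pa_{NN}}\eta(u_{NN})$. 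To see this converges to $\eta(u_{jN})=\frac{-1}{\U-qa_{NN}}\pi(u_{jN})\eta(u_{NN})$ (Equation \eqref{eq_eta_jn}), I would commute $\pi(u_{jN})$ past the resolvent using the relation \eqref{eqN_i<k}, $u_{jN}u_{NN}=qu_{NN}u_{jN}$, which gives $\pi(u_{jN})(\U-pa_{NN})=(\U-pqa_{NN})\pi(u_{jN})$, hence $\pi(u_{jN})\frac{1}{\U-pa_{NN}}=\frac{1}{\U-pqa_{NN}}\pi(u_{jN})$; then as $p\uparrow1$ the resolvent $\frac{1}{\U-pqa_{NN}}$ converges in norm to $\frac{1}{\U-qa_{NN}}$ (since $\|qa_{NN}\|\le q<1$, this is an analytic/norm-continuous family, no subtlety), giving the claim. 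The generator $u_{Nk}$, $k<N$, is handled identically using \eqref{eqN_j<l}.

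For the diagonal generators $u_{jj}$ with $j<N$ (equivalently $u_{jk}$ with $j,k<N$, which is what Equation \eqref{eq_eta_jk} covers), the strategy is the same but now I must also feed in the supplementary limit $\frac{1-p}{\U-pa_{NN}}\to0$. Starting from $\eta^{(p)}(u_{jk})=\pi(u_{jk}-\U\delta_{jk})\frac{-1}{\U-pa_{NN}}\eta(u_{NN})$, I would multiply on the left by $(a_{NN}-\U)$ and use the commutation relation \eqref{eqN_i<k,j<l}, i.e. $u_{jk}u_{NN}=u_{NN}u_{jk}-(\frac1q-q)u_{jN}u_{Nk}$, to push $(a_{NN}-\U)$ through $\pi(u_{jk}-\U\delta_{jk})$; this reproduces, up to a correction term involving $\pi(u_{jN}u_{Nk})$ and $\frac{1-p}{\U-pa_{NN}}$-type factors, the bracket on the right-hand side of \eqref{eq_eta_jk} acting on $\frac{1}{\U-pa_{NN}}\eta(u_{NN})$ — and then passing $p\uparrow1$, the correction term vanishes by the supplement of Lemma \ref{(1-a)lemma} while the main term converges by the same resolvent-continuity argument as above. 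Since $\U-a_{NN}$ is injective with dense range, matching $(a_{NN}-\U)\eta^{(p)}(u_{jk})$ with $(a_{NN}-\U)\eta(u_{jk})$ in the limit, on the dense set where things are controlled, forces $\eta^{(p)}(u_{jk})\to\eta(u_{jk})$ — here one has to be mildly careful that $\{\eta^{(p)}(u_{jk})\}_p$ is norm-bounded (it is, since $\|\frac{1}{\U-pa_{NN}}\|\le\frac{1}{1-p\|a_{NN}\|}$ need not be bounded as $p\uparrow1$ — so actually the cleanest route is to argue directly from the rewritten expression rather than ``dividing by $\U-a_{NN}$''). This last point is the main obstacle: one must organize the algebra so that the troublesome unbounded resolvent $\frac{1}{\U-pa_{NN}}$ only ever appears multiplied by something that tames it (either the numerator $\U-a_{NN}$, producing $\frac{\U-a_{NN}}{\U-pa_{NN}}$ which is uniformly bounded by $2$, or the factor $1-p$, which kills it in the limit). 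Once the convergence is established on all generators $u_{jk}$, it extends to all of $\pol$ because both $p\mapsto\eta^{(p)}$ and $\eta$ are $\pi$-$\e$-cocycles (the coboundary property is automatic, and $\eta$ is given), and a cocycle is determined on products via the Leibniz-type rule \eqref{*}; pointwise convergence on generators plus uniform-on-finite-sets control of the representation operators then propagates to arbitrary words, since $\pol$ is generated as a $*$-algebra by the $u_{jk}$ and, by \eqref{eq_u*}, no separate control of the adjoints is needed.
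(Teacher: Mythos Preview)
Your treatment of the easy generators ($u_{NN}$, $u_{jN}$, $u_{Nk}$) is correct and essentially identical to the paper's. The reduction ``convergence on generators suffices, and adjoints are redundant by \eqref{eq_u*}'' is also exactly how the paper proceeds.

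The gap is in the hard case $u_{jk}$ with $j,k<N$. Your first idea, multiplying by $a_{NN}-\U$ and using \eqref{eqN_i<k,j<l}, does cleanly give
\[
(a_{NN}-\U)\,\eta^{(p)}(u_{jk})\ \longrightarrow\ (a_{NN}-\U)\,\eta(u_{jk}),
\]
but, as you yourself note, this does \emph{not} yield $\eta^{(p)}(u_{jk})\to\eta(u_{jk})$: injectivity of $\U-a_{NN}$ gives no continuous inverse, and you have no uniform bound on $\eta^{(p)}(u_{jk})$ to fall back on (your observation that $\|\frac{1}{\U-pa_{NN}}\|$ can blow up is exactly the obstruction). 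You then say the cleanest route is to ``argue directly from the rewritten expression'' so that the dangerous resolvent is always tamed --- but you do not carry this out. That missing computation is precisely the content of the proof.

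Here is what the paper actually does. It compares $\eta^{(p)}(u_{jk})$ not with $\eta(u_{jk})$ but with $\frac{-1}{\U-pa_{NN}}$ applied to the right-hand side of \eqref{eq_eta_jk}; by Lemma \ref{(1-a)lemma} this second quantity \emph{does} converge to $\eta(u_{jk})$. The difference of the two, after cancelling the $\delta_{jk}$ terms and clearing the harmless factor $(\U-q^2a_{NN})$, is
\[
(\U-q^2a_{NN})\Bigl[\pi(u_{jk}),\tfrac{1}{\U-pa_{NN}}\Bigr]
\;+\;(\tfrac1q-q)\,\tfrac{1}{\U-pa_{NN}}\,\pi(u_{jN}u_{Nk}).
\]
The commutator is computed via the power series $\frac{1}{\U-pa_{NN}}=\sum_{s\ge0}(pa_{NN})^s$ and the iterated relation $[u_{jk},u_{NN}^s]=-(\tfrac1q-q)[s]_{q^2}\,u_{NN}^{s-1}u_{jN}u_{Nk}$. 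After a double-sum reorganisation the entire expression collapses (up to the bounded factor $(\tfrac1q-q)\pi(u_{jN}u_{Nk})$) to
\[
\frac{1}{\U-q^2p\,a_{NN}}\cdot\frac{1-p}{\U-p\,a_{NN}}\,,
\]
whose first factor converges in norm to $\frac{1}{\U-q^2a_{NN}}$ while the second converges strongly to $0$ by the supplement to Lemma \ref{(1-a)lemma}. This is the ``organize the algebra so the bad resolvent is always tamed'' step you allude to, and it is not automatic: the cancellation producing the factor $(1-p)$ is the point of the whole computation, and your proposal does not supply it.
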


\begin{proof}
This is essentially a consequence of Lemmata \ref{(1-a)lemma} and \ref{lem_eta_relations}. By the cocycle property, it suffices to control convergence on the generators $u_{jk}$.

For $a=u_{NN}$, we get $\lim_{p\to 1}\frac{\pi(\U-u_{NN})}{\pi(\U-pu_{NN})}\eta(u_{NN})=\eta(u_{NN})$.

For $a=u_{kN}$ or $a=u_{kN}$ $(k<N)$, by  Lemmata \ref{lem_eta_relations} and \ref{(1-a)lemma} we get
\begin{multline*}
\textstyle
\eta(a)
~=~
\frac{-1}{\pi(\U-qu_{NN})}\pi(a)\eta(u_{NN})
~=~
\lim_{p\to 1}\frac{-1}{\pi(\U-qu_{NN})}\pi(a)\frac{\pi(\U-u_{NN})}{\pi(\U-pu_{NN})}\eta(u_{NN})
\\[1ex]
\textstyle
~=~
\lim_{p\to 1}\pi(a)\frac{-1}{\pi(\U-pu_{NN})}\eta(u_{NN}).
\end{multline*}
Since $a\in\ker\e$, we have $\pi(a)=\pi\circ(\id-\U\e)(a)$.

After these two easy cases, here is the difficult one: For $a=u_{jk}$ $(j,k<N)$, by  Lemmata \ref{lem_eta_relations} and \ref{(1-a)lemma} we see that $\frac{-1}{\pi(\U-pu_{NN})}$ $\times$ the right-hand side of \eqref{eq_eta_jk} converges to $\eta(a)$. We are done if we show that the difference with the right-hand side of \eqref{cocadefi}, that is,
$$
\textstyle
\Bigl(
 \frac{-1}{\pi(\U-pu_{NN})}\big( \pi( u_{jk}-\U\delta_{jk}) - \textstyle\frac{{\frac{1}{q}}-q}{\pi(\one-q^2 u_{NN})}\pi(u_{jN} u_{Nk})\big) 
-
\pi(u_{jk}-\U\delta_{jk})\frac{-1}{\pi(\U-pu_{NN})}
\Bigr)
\eta(u_{NN})
\lf
$$
converges to $0$. We observe that the terms with $\delta_{jk}$ cancel out. Omitting $\eta(u_{NN})$, taking also into account that $\frac{1}{\pi(\U-pu_{NN})}$ and $\frac{1}{\pi(\U-q^2u_{NN})}$ commute, we remain with
\lf
$$
\textstyle
\pi(u_{jk})\frac{1}{\pi(\U-pu_{NN})}
-
\frac{1}{\pi(\U-pu_{NN})}\pi(u_{jk})
+
\frac{{\frac{1}{q}}-q}{\pi(\one-q^2 u_{NN})} \frac{1}{\pi(\U-pu_{NN})} \pi(u_{jN} u_{Nk}).
$$

\noindent
We will conclude the proof by showing that this converges to $0$, strongly. Recall that $q$ is fixed, and that multiplying the whole thing from the left with $\pi(\U-q^2u_{NN})$ to make disappear $ \frac{1}{\pi(\U-q^2u_{NN})}$ does not change
{whether this expression converges for $p\to1$ or not},
nor does it, in the case of convergence, change the answer to the question if the limit is $0$ or not. We get
\lf
\begin{eqnarray} \label{u_ij_diff}
\textstyle
\pi(\U-q^2u_{NN})\bigl[\pi(u_{jk}),\frac{1}{\pi(\U-pu_{NN})}\bigr]
+
\frac{{\frac{1}{q}}-q}{\pi(\U-pu_{NN})} \pi(u_{jN} u_{Nk}).
\end{eqnarray}

For simplicity, in the following computations (for fixed $p<1$) we omit the representation $\pi$. (In $\pol$ this does not make sense. But in the enveloping $C^*$-algebra it does, and after reinserting $\pi$ the result is the right one, because $\pi$ is bounded.) Let us first compute the commutator $\bigl[u_{jk},\frac{1}{\U-pu_{NN}}\bigr]$. From \eqref{eq_i<k,j<l}, we get by induction that
$$
[u_{jk},u_{NN}^s]
~=~
-({\textstyle\frac{1}{q}-q})(1+\ldots+q^{2(s-1)})u_{NN}^{s-1}u_{jN}u_{Nk},
$$
where the sum $1+\ldots+q^{2(s-1)}=\sum_{t=0}^{s-1}q^{2t}$ is understood to have $s$ summands (also if $s=0$); it coincides with the well known $q^2$-number $[s]_{q^2}$. We get
$$
\bigl[u_{jk},\frac{1}{\U-pu_{NN}}\bigr]
~=~
-({\textstyle\frac{1}{q}-q})\sum_{s=1}^\infty\sum_{t=0}^{s-1}q^{2t}p^su_{NN}^{s-1}u_{jN}u_{Nk}.
$$
Inserting this in \eqref{u_ij_diff}, expanding also $\frac{1}{\U-pu_{NN}}$, and omitting the common factor $(\frac{1}{q}-q)u_{jN}u_{Nk}$,we obtain
\begin{multline*}
(q^2u_{NN}-\U)\sum_{s=1}^\infty\sum_{t=0}^{s-1}q^{2t}p^su_{NN}^{s-1}+\sum_{s=0}^\infty(pu_{NN})^s
\\
~=~
\sum_{s=1}^\infty\sum_{t=0}^{s-1}q^{2(t+1)}p^su_{NN}^s-\sum_{s=1}^\infty\sum_{t=0}^{s-1}q^{2t}p^su_{NN}^{s-1}+\U+\sum_{s=1}^\infty(pu_{NN})^s
\\
~=~
\U+\sum_{s=1}^\infty\sum_{t=0}^sq^{2t}(pu_{NN})^s-p\sum_{s=1}^\infty\sum_{t=0}^{s-1}q^{2t}(pu_{NN})^{s-1}
\\
~=~
\sum_{s=0}^\infty\sum_{t=0}^sq^{2t}(pu_{NN})^s-p\sum_{s=0}^\infty\sum_{t=0}^sq^{2t}(pu_{NN})^s
~=~
(1-p)\sum_{s=0}^\infty\sum_{t=0}^sq^{2t}(pu_{NN})^s.
\end{multline*}
Reordering for powers of $q^2$, we get
$$
(1-p)\sum_{t=0}^\infty q^{2t}\sum_{s=t}^\infty(pu_{NN})^s
~=~
(1-p)\sum_{t=0}^\infty q^{2t}(pu_{NN})^t\sum_{s=0}^\infty(pu_{NN})^s
~=~
\textstyle
\frac{1}{\U-q^2pu_{NN}}\,\frac{1-p}{\U-pu_{NN}}.
$$
For $p\uparrow1$, the first factor converges in norm to $\frac{1}{\U-q^2u_{NN}}$. Under $\pi$, by the supplement of Lemma \ref{(1-a)lemma}, the second factor converges to $0$, strongly.\end{proof}

\begin{thm} \label{cocapprthm}
In the notations of Theorems \ref{repdecthm} and \ref{cocdecthm}: The cocycles $\eta_n$ $(2\le n\le N)$ are (pointwise) limits
$$
\eta_n
~=~
\lim_{p\uparrow 1}\pi_n\circ(\id-\U\e)\frac{-1}{\pi_n(\U-pu_{nn})}\eta_n(u_{nn})
$$
of coboundaries.
\end{thm}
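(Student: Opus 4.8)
The plan is to deduce the statement for each $n$ with $2\le n\le N$ from a single application of Proposition~\ref{cocaprop}, carried out not on $\pol$ but on the quantum subgroup $\pol[n]$, and then transported back to $\pol$ along the surjection $s_{n,N}$ of \eqref{snN}. For $n=N$ there is nothing to transport: by Theorem~\ref{repdecthm} the operator $\pi_N(\U-u_{NN})$ is injective, so Proposition~\ref{cocaprop} applies verbatim to the pair $(\pi_N,\eta_N)$. So I would fix $n$ with $2\le n<N$ and argue as follows.

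First I would invoke Theorems~\ref{repdecthm} and~\ref{cocdecthm}: both $\pi_n$ and $\eta_n$ live on $\pol[n]$, so there are a $*$-representation $\tilde{\pi}_n$ of $\pol[n]$ on $H_n$ and a $\tilde{\pi}_n$-$\tilde{\e}$-cocycle $\tilde{\eta}_n$ with $\pi_n=\tilde{\pi}_n\circ s_{n,N}$ and $\eta_n=\tilde{\eta}_n\circ s_{n,N}$. Unwinding the definition of $s_{n,N}$ one factor $s_m$ at a time (see \eqref{eq_suq_inclusion}), one checks that $s_{n,N}$ sends the generator $u_{nn}$ of $\pol$ to the bottom-right generator $u_{nn}$ of $\pol[n]$; hence $\tilde{\pi}_n(u_{nn})=\pi_n(u_{nn})$ as operators on $H_n$, and by Theorem~\ref{repdecthm} the operator $\tilde{\pi}_n(\U-u_{nn})$ is injective. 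Thus the hypothesis of Proposition~\ref{cocaprop}, read now for $\pol[n]$ (with the role of the last generator $u_{NN}$ played by $u_{nn}$), is met for the pair $(\tilde{\pi}_n,\tilde{\eta}_n)$, and it gives that the coboundaries
\[
b~\longmapsto~\tilde{\pi}_n\circ(\id-\U\tilde{\e})(b)\,\frac{-1}{\tilde{\pi}_n(\U-pu_{nn})}\,\tilde{\eta}_n(u_{nn})\qquad(0<p<1)
\]
converge pointwise on $\pol[n]$ to $\tilde{\eta}_n$ as $p\uparrow1$.

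It then remains to precompose with $s_{n,N}$. Fixing $a\in\pol$ and using the compatibility $(\id-\U\tilde{\e})\circ s_{n,N}=s_{n,N}\circ(\id-\U\e)$ of the projections onto $K_1$ together with $\pi_n=\tilde{\pi}_n\circ s_{n,N}$, $\eta_n=\tilde{\eta}_n\circ s_{n,N}$, and $s_{n,N}(u_{nn})=u_{nn}$, one obtains the operator identities $\tilde{\pi}_n\circ(\id-\U\tilde{\e})(s_{n,N}(a))=\pi_n\circ(\id-\U\e)(a)$, $\tilde{\pi}_n(\U-pu_{nn})=\pi_n(\U-pu_{nn})$ and $\tilde{\eta}_n(u_{nn})=\eta_n(u_{nn})$; evaluating the displayed limit at the point $s_{n,N}(a)$ and substituting these yields $\eta_n(a)=\lim_{p\uparrow1}\pi_n\circ(\id-\U\e)(a)\,\frac{-1}{\pi_n(\U-pu_{nn})}\,\eta_n(u_{nn})$, and since $a$ was arbitrary this is the assertion.

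I expect the only genuine obstacle to be the one already flagged in the remark preceding this subsection, namely that Proposition~\ref{cocaprop} is intrinsic to $\pol[n]$ and cannot be applied to $\eta_n$ directly as a cocycle on $\pol$; the work of Subsection~\ref{cocdSSEC} (Corollary~\ref{subcoccor} and Theorem~\ref{cocdecthm}), which produces the cocycle $\tilde{\eta}_n$ on $\pol[n]$, is exactly what unblocks it. Everything else is bookkeeping: $s_{n,N}$ is a unital $*$-homomorphism, so the displayed operator equalities on $H_n$ are literal, and the behaviour of $s_{n,N}$ on generators is read off immediately from \eqref{eq_suq_inclusion}.
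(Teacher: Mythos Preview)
Your proof is correct and follows exactly the same approach as the paper: reduce to $\pol[n]$ via Theorem~\ref{cocdecthm}, apply Proposition~\ref{cocaprop} there, and pull back along $s_{n,N}$. You have simply spelled out the bookkeeping (that $s_{n,N}(u_{nn})=u_{nn}$, that the projections onto $K_1$ commute with $s_{n,N}$, etc.) which the paper leaves implicit in its two-line proof.
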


\begin{proof}
Recall that by Theorem \ref{cocdecthm}, $\eta_n$ lives on $\pol[n]$. The result follows by applying Proposition \ref{cocaprop} to the (unique) cocycle $\tilde{\eta}_n$ on $\pol[n]$ such that $\eta_n=\tilde{\eta}_n\circ s_{n,N}$.
\end{proof}

\medskip
\subsection{Decomposition of generating functionals} \label{GFdSSEC}

We are now ready for the punch line.

\begin{prop} \label{GFcocprop}
Let $\pi$ be a $*$-representation of $\pol$ such that $\pi(\U-u_{NN})$ is injective. Then, every $\pi$-$\e$ cocycle $\eta$ determines a unique generating functional $\psi$ that completes $(\pi,\eta)$ to a Sch{\"u}rmann triple and satisfies $\psi\circ\cP=\psi$.
\end{prop}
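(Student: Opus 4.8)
The plan is to derive existence directly from Proposition \ref{cocaprop} together with Lemma \ref{cocaGFlem}, and then to extract uniqueness from the description of the remaining freedom in $\psi$ given in Subsection \ref{sec_S_trip}.

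For existence, fix any sequence $p_n\uparrow1$ and set $\eta_n:=\frac{-1}{\pi(\U-p_nu_{NN})}\eta(u_{NN})$; this is a legitimate vector in the representation space because $\pi(u_{NN})$ is a contraction, so $\pi(\U-p_nu_{NN})$ is boundedly invertible. Since $\pi(\U-u_{NN})$ is injective by hypothesis, Proposition \ref{cocaprop} applies and shows that the coboundaries $(\pi\eta_n)\circ(\id-\U\e)=\pi\circ(\id-\U\e)(\bullet)\,\frac{-1}{\pi(\U-p_nu_{NN})}\eta(u_{NN})$ converge pointwise on $\pol$ to $\eta$. Now Lemma \ref{cocaGFlem} applies to the sequence $(\eta_n)_{n\in\N}$: it yields a linear functional $\psi$, namely the pointwise limit of $\langle\eta_n,\pi(\bullet)\eta_n\rangle\circ\cP$, such that $(\pi,\eta,\psi)$ is a Sch{\"u}rmann triple, $\psi\circ\cP=\psi$, and $\psi$ is a generating functional (recall that our fixed $\cP$ is hermitian). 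This produces the required $\psi$.

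For uniqueness, let $\psi'$ be any generating functional such that $(\pi,\eta,\psi')$ is a Sch{\"u}rmann triple and $\psi'\circ\cP=\psi'$. Since $\psi$ and $\psi'$ complete the same pair $(\pi,\eta)$ to a Sch{\"u}rmann triple, by the last of the bulleted statements following the definition of trivial Sch{\"u}rmann triple in Subsection \ref{sec_S_trip} they differ by a drift $d$, that is, a real linear combination of the functionals $\e'_\vk$, $\vk\in E_1$. Each $\e'_\vk$ vanishes on $K_2$ (Lemma \ref{PElem}) and $\cP$ maps $\pol$ into $K_2$, so $d\circ\cP=0$. Hence $\psi'=\psi'\circ\cP=\psi\circ\cP+d\circ\cP=\psi$, which is the asserted uniqueness (and it shows in particular that the $\psi$ obtained above does not depend on the chosen sequence $p_n$).

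There is no real obstacle of its own in this proposition; the substantial work lies entirely in Proposition \ref{cocaprop}, and the only points worth a word are that the injectivity hypothesis of that proposition is exactly the one assumed here, and that replacing the net $p\to1$ by a sequence $p_n\uparrow1$ is harmless.
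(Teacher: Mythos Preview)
Your proof is correct and follows exactly the approach the paper takes: the paper's own proof consists of the single sentence ``This is a corollary of Proposition \ref{cocaprop} and Lemma \ref{cocaGFlem}.'' You have simply spelled out the details, including the uniqueness argument (which the paper leaves implicit in the discussion of Subsection \ref{sec_S_trip}).
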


\begin{proof}
This is a corollary of Proposition \ref{cocaprop} and Lemma \ref{cocaGFlem}.\end{proof}

\begin{cor} \label{GFcoccor}
In the notations of Theorems \ref{repdecthm} and \ref{cocdecthm}: For every $2\le n\le N$, the cocycle $\eta_n$ determines a unique generating functional $\psi_n$ that completes $(\pi_n,\eta_n)$ to a Sch{\"u}rmann triple and satisfies $\psi_n\circ\cP=\psi_n$. Moreover, $\psi_n$ lives on $SU_q(n)$.
\end{cor}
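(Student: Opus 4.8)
The plan is to reduce everything to Proposition~\ref{GFcocprop}, but applied to the quantum subgroup $\pol[n]$ rather than to $\pol$ itself. This detour is forced: for $n<N$ the representation $\pi_n$ lives on $\pol[n]$ and therefore sends $u_{NN}$ to $\id$, so $\pi_n(\U-u_{NN})=0$ is about as far from injective as possible, and the hypothesis of Proposition~\ref{GFcocprop} is simply unavailable on $\pol$. It is recovered only after descending to $\pol[n]$, where it turns into injectivity of $\pi_n(\U-u_{nn})$ --- precisely the last clause of Theorem~\ref{repdecthm}.

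First I would record the relevant factorizations. By Theorem~\ref{repdecthm}, $\pi_n=\tilde\pi_n\circ s_{n,N}$ for a (unique) $*$-representation $\tilde\pi_n$ of $\pol[n]$ on $H_n$; by Theorem~\ref{cocdecthm}, $\eta_n=\tilde\eta_n\circ s_{n,N}$ for a (unique) map $\tilde\eta_n$ on $\pol[n]$; and the Proposition of Subsection~\ref{sec_Stqsub} confirms that $\tilde\pi_n$ is a $*$-representation of $\pol[n]$ and $\tilde\eta_n$ a $\tilde\pi_n$-$\tilde\e$-cocycle. Since $s_{n,N}$ fixes the generator $u_{nn}$, injectivity of $\pi_n(\U-u_{nn})$ on $H_n$ is literally injectivity of $\tilde\pi_n(\U-u_{nn})$. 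Now Proposition~\ref{GFcocprop}, read with $\pol[n]$ in the role of $\pol$, yields a unique generating functional $\tilde\psi_n$ on $\pol[n]$ completing $(\tilde\pi_n,\tilde\eta_n)$ to a Sch\"urmann triple and satisfying $\tilde\psi_n\circ\cP^{\pol[n]}=\tilde\psi_n$. (For $n=N$ there is nothing to pull back and Proposition~\ref{GFcocprop} is applied to $\pol$ directly.)

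Set $\psi_n:=\tilde\psi_n\circ s_{n,N}$; then $\psi_n$ lives on $\pol[n]$ by construction. The Proposition of Subsection~\ref{sec_Stqsub} promotes ``$(\tilde\pi_n,\tilde\eta_n,\tilde\psi_n)$ is a Sch\"urmann triple'' to ``$(\pi_n,\eta_n,\psi_n)$ is a Sch\"urmann triple'' and ``$\tilde\psi_n$ is generating'' to ``$\psi_n$ is generating''. For the normalisation $\psi_n\circ\cP=\psi_n$ I would invoke the compatibility $\cP\circ s_{n,N}=s_{n,N}\circ\cP$ (the iterated form of Proposition~\ref{subNPprop}), whence $\psi_n\circ\cP=\tilde\psi_n\circ\cP^{\pol[n]}\circ s_{n,N}=\tilde\psi_n\circ s_{n,N}=\psi_n$. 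For uniqueness on $\pol$: any competitor $\psi_n'$ with $(\pi_n,\eta_n,\psi_n')$ a Sch\"urmann triple and $\psi_n'\circ\cP=\psi_n'$ completes the same pair as $\psi_n$, hence differs from it by a drift $d$ (Subsection~\ref{sec_S_trip}); since $\cP$ maps onto $K_2$ and each $\e'_\vk$ vanishes on $K_2$, we have $d\circ\cP=0$, so $\psi_n'=\psi_n'\circ\cP=(\psi_n+d)\circ\cP=\psi_n\circ\cP=\psi_n$.

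No deep obstacle survives here: the corollary is essentially an assembly of Theorems~\ref{repdecthm}--\ref{cocdecthm} and Proposition~\ref{GFcocprop}. The one step I expect to be genuinely error-prone is exactly the one flagged above --- noticing that Proposition~\ref{GFcocprop} cannot be used directly on $\pol$ and must be routed through $\pol[n]$ --- together with the attendant bookkeeping of keeping the two incarnations of $\cP$ (for $\pol$ and for $\pol[n]$) apart and invoking the correct compatibility between them.
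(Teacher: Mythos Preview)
Your proof is correct and follows essentially the same route as the paper: descend to $\pol[n]$ via $s_{n,N}$, apply Proposition~\ref{GFcocprop} there to obtain $\tilde\psi_n$, pull back to $\pol$, and use the compatibility of the projections (Proposition~\ref{subNPprop} and its corollary) to secure $\psi_n\circ\cP=\psi_n$. The paper's version is terser --- it writes $\psi_n:=\tilde\psi_n\circ s_{n,N}\circ\cP$ first and then observes this equals $\tilde\psi_n\circ s_{n,N}$ --- but the content is the same, and your explicit uniqueness argument via the drift discussion is a helpful unpacking of what the paper leaves implicit.
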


\begin{proof}
Let $\tilde{\psi}_n$ denote the generating functional granted (for some $\cP_n$ on $\pol[n]$) by Proposition \ref{GFcocprop} for the cocycle $\tilde{\eta}_n$ on $\pol[n]$. Then $\psi_n:=\tilde{\psi}_n\circ s_{n,N}\circ\cP$ does the job. By Proposition \ref{subNPprop} and its corollary, $\psi_n=\tilde{\psi}_n\circ s_{n,N}$.\end{proof}

\begin{cor} \label{NCcor}
$\pol$ has property {\normalfont (NC)}, hence, {\normalfont (LK)}.
\end{cor}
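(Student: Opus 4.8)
The plan is to assemble the three decomposition results of this Section; once those are in hand the statement is essentially a bookkeeping exercise, and I expect no genuine obstacle — the real work has already been done in Subsections \ref{repdSSEC}--\ref{GFdSSEC}. So the proof will be short.

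First I would take a completely non-gaussian pair $(\pi,\eta)$, i.e.\ $\pi$ a $*$-representation of $\pol$ on a Hilbert space $H$ whose only invariant subspace on which $\pi$ restricts to a gaussian representation is $\{0\}$, together with a $\pi$-$\e$-cocycle $\eta$. Applying Theorem \ref{repdecthm} gives $\pi=\pi_1\oplus\pi_2\oplus\ldots\oplus\pi_N$, where $\pi_1=\id_{H_1}\e$ is gaussian, $\pi_n$ lives on $SU_q(n)$ for $2\le n\le N$, and each $\pi_n(\U-u_{nn})$ is injective. The key observation is that $H_1$ is an invariant subspace on which $\pi$ restricts to the gaussian representation $\pi_1$, so complete non-gaussianness forces $H_1=\{0\}$; hence $\pi=\pi_2\oplus\ldots\oplus\pi_N$. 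By Theorem \ref{cocdecthm} the cocycle $\eta$ splits correspondingly as $\eta=\eta_1\oplus\eta_2\oplus\ldots\oplus\eta_N$ with $\eta_1$ gaussian; since $\pi_1=0$ necessarily $\eta_1=0$, so $\eta=\eta_2\oplus\ldots\oplus\eta_N$ with each $\eta_n$ a $\pi_n$-$\e$-cocycle.

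Next I would invoke Corollary \ref{GFcoccor}: for each $2\le n\le N$ it produces a generating functional $\psi_n$ (living on $SU_q(n)$, satisfying $\psi_n\circ\cP=\psi_n$) completing $(\pi_n,\eta_n)$ to a Sch{\"u}rmann triple. Setting $\psi:=\psi_2+\ldots+\psi_N$, I would assemble the triple $(\pi,\eta,\psi)$ by a downward induction on $n$: applying Corollary \ref{decomcor} to the two-fold splitting $\pi_n\oplus(\pi_{n+1}\oplus\ldots\oplus\pi_N)$ shows that $(\pi_n\oplus\ldots\oplus\pi_N,\ \eta_n\oplus\ldots\oplus\eta_N,\ \psi_n+\ldots+\psi_N)$ is a Sch{\"u}rmann triple with generating functional, the base case $n=N$ being Corollary \ref{GFcoccor} itself. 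Taking $n=2$ then yields that $\psi$ is a generating functional completing $(\pi,\eta)$ to a Sch{\"u}rmann triple, which is exactly Property (NC). The only point needing a little care is to match the two-summand statement of Corollary \ref{decomcor} with the $N$-fold decomposition of $\pi$, but the induction handles this cleanly; there is no real obstacle.

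Finally, for Property (LK) I would use the reduction already recorded before the definition of the four properties: given any generating functional $\psi$ with its cyclic Sch{\"u}rmann triple $(\pi,\eta,\psi)$, Proposition \ref{GcnGprop} writes $\pi$ (up to completion) as $\pi_1\oplus\pi_2$ with $\pi_1$ gaussian and $\pi_2$ completely non-gaussian, and $\eta=\eta_1\oplus\eta_2$. Property (NC), now available, supplies a generating functional $\psi_2$ completing $(\pi_2,\eta_2)$ to a Sch{\"u}rmann triple; then $\psi_1:=\psi-\psi_2$ completes $(\pi_1,\eta_1)$ to a Sch{\"u}rmann triple by Corollary \ref{decomcor}, which also gives that $\psi_1$ is generating. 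Thus $\psi=\psi_1+\psi_2$ is a L{\'e}vy--Khintchine decomposition, so $\pol$ has Property (LK).
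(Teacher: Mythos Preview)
Your proof is correct and follows essentially the same approach as the paper's own proof, which is just two sentences: ``A cocycle $\eta$ being completely non-gaussian, means precisely that $\eta_1$ is $0$. Then $\psi:=\psi_2+\ldots+\psi_N$ from the preceding corollary, does the job.'' You have simply spelled out the details that the paper leaves implicit --- the reduction to $H_1=\{0\}$ via complete non-gaussianness, the assembly of the $\psi_n$ via iterated direct sums, and the passage from (NC) to (LK) via Proposition \ref{GcnGprop} and Corollary \ref{decomcor}.
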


\begin{proof}
A cocycle $\eta$ being completely non-gaussian, means precisely that $\eta_1$ is $0$. Then $\psi:=\psi_2+\ldots+\psi_N$ from the preceding corollary, does the job.
\end{proof}

The $\psi_n$ are determined uniquely by $\psi_n\circ\cP=\psi_n$ and the requirement that $\tilde{\psi}_n$ has a Sch{\"u}rmann triple where $\tilde{\pi}_n(\U-u_{nn})$ is injective. We wish to capture this property without explicit reference to the Sch{\"u}rmann triple.

\begin{defi}
A generating functional $\psi$ on $\pol$ ($N\ge2$) is \hl{irreductible} if it is completely non-gaussian and if for every generating functional $\tilde{\psi}$ on $\pol[N-1]$, the statement $\psi-\tilde{\psi}\circ s_N$ is a generating functional implies the statement $\tilde{\psi}\circ s_N$ is a drift.
\end{defi}

This definition does the job:

\begin{prop}
Let $\psi$ be a generating functional on $\pol$ ($N\ge2$) and $(\pi,\eta,\psi)$ its Schürmann triple. Then, $\psi$ is irreductible if and only if, in Theorem \ref{repdecthm}'s decomposition, $\pi=\pi_N$.
\end{prop}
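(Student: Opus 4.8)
The plan is to prove both implications using the canonical decompositions of Theorems \ref{repdecthm} and \ref{cocdecthm}. Throughout, $(\pi,\eta,\psi)$ is \emph{the} (cyclic) Sch\"urmann triple, and $\pi=\pi_1\oplus\cdots\oplus\pi_N$, $\eta=\eta_1\oplus\cdots\oplus\eta_N$ are the corresponding decompositions, where $\pi_1$ is gaussian, $\pi_n$ lives on $\pol[n]$ for $2\le n\le N$, and $\pi_n(\U-u_{nn})$ is injective for $2\le n\le N$; for such $n$ let $\psi_n=\tilde\psi_n\circ s_{n,N}$ be the generating functional furnished by Corollary \ref{GFcoccor}, which completes $(\pi_n,\eta_n)$ to a Sch\"urmann triple. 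Recall that ``$\psi$ completely non-gaussian'' means $\pi_1=0$, equivalently $\eta_1=0$.

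For ``$\pi=\pi_N\Rightarrow\psi$ irreductible'': here $H=H_N$, so $H_1=\{0\}$ and $\psi$ is completely non-gaussian, which is condition~(i). For~(ii), let $\tilde\psi$ be a generating functional on $\pol[N-1]$ with $\chi:=\psi-\tilde\psi\circ s_N$ a generating functional, let $(\tilde\pi,\tilde\eta,\tilde\psi)$ and $(\pi_\chi,\eta_\chi,\chi)$ be the associated (cyclic) Sch\"urmann triples, and put $\pi'':=\tilde\pi\circ s_N$, $\eta'':=\tilde\eta\circ s_N$, $\psi'':=\tilde\psi\circ s_N$. Since $s_N$ is a surjective $*$-bialgebra homomorphism, $(\pi'',\eta'',\psi'')$ is the cyclic Sch\"urmann triple of $\psi''$ (Subsection \ref{sec_Stqsub}), and $\pi''(u_{NN})=\tilde\pi(s_N(u_{NN}))=\tilde\pi(\U)=\id_{D''}$, so $\pi''(\U-u_{NN})=0$. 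By Proposition \ref{Stsumprop}, $(\pi''\oplus\pi_\chi,\ \eta''\oplus\eta_\chi,\ \psi)$ is a Sch\"urmann triple, so the universal property of \emph{the} triple (the discussion after Definition \ref{def_triple}) gives an isometry $v\colon H\to D''\oplus D_\chi$ with $v\eta(b)=\eta''(b)\oplus\eta_\chi(b)$ and $(\pi''\oplus\pi_\chi)(a)\,v=v\,\pi(a)$ for all $a,b$. Composing with the orthogonal projection $P$ onto the invariant subspace $D''\oplus\{0\}$, the map $w:=Pv\colon H\to D''$ intertwines $\pi=\pi_N$ with $\pi''$; hence $w\,\pi_N(u_{NN})=\pi''(u_{NN})\,w=w$, so $w\,(\U-\pi_N(u_{NN}))=0$ and therefore $w\,\frac{\U-\pi_N(u_{NN})}{\U-p\,\pi_N(u_{NN})}=0$ for every $0<p<1$. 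Since $\pi_N(\U-u_{NN})$ is injective, Lemma \ref{(1-a)lemma} gives $\frac{\U-\pi_N(u_{NN})}{\U-p\,\pi_N(u_{NN})}\to\id_H$ strongly as $p\uparrow1$, so $w=0$. Then $\eta''(b)=w\,\eta(b)=0$ for all $b$, whence $D''=\overline{\eta''(\cG)}=\{0\}$, $\tilde\pi=0$, and $\tilde\psi$ (hence $\tilde\psi\circ s_N$) is a drift. This is~(ii), so $\psi$ is irreductible.

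For ``$\psi$ irreductible $\Rightarrow\pi=\pi_N$'': complete non-gaussianity gives $\pi_1=0$, $\eta_1=0$. Using $s_{n,N}=s_{n,N-1}\circ s_N$ (for $n\le N-1$), the functional $\psi':=\psi_2+\cdots+\psi_{N-1}$ has the form $\tilde\psi'\circ s_N$ with $\tilde\psi'$ a generating functional on $\pol[N-1]$ (for $N=2$ the sum is empty and $\psi'=0$). Because $\pi_1=0$ and $\eta_1=0$, both $\psi$ and $\psi_2+\cdots+\psi_N$ complete the pair $(\pi_2\oplus\cdots\oplus\pi_N,\ \eta_2\oplus\cdots\oplus\eta_N)$ to a Sch\"urmann triple (the latter by Proposition \ref{Stsumprop}), so they differ by a drift $d$; hence $\psi-\tilde\psi'\circ s_N=\psi_N+d$ is a generating functional. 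Irreductibility of $\psi$ then forces $\tilde\psi'\circ s_N=\psi'$ to be a drift, i.e.\ $\psi'\!\upharpoonright\!K_2=0$. Evaluating this in the Sch\"urmann triple $(\pi_2\oplus\cdots\oplus\pi_{N-1},\ \eta_2\oplus\cdots\oplus\eta_{N-1},\ \psi')$ (again Proposition \ref{Stsumprop}) yields $\sum_{n=2}^{N-1}\|\eta_n(a)\|^2=\psi'(a^*a)=0$ for all $a\in K_1$, so each $\eta_n$ ($2\le n\le N-1$) vanishes on $K_1$, hence on the generators $u_{jk}$, hence identically. Together with $\eta_1=0$ this gives $\eta(\cG)\subseteq H_N$, and since $\overline{\eta(\cG)}=H$ we conclude $H=H_N$, i.e.\ $\pi=\pi_N$.

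The hard part is the first implication: one must boost the single trivially available fact $\eta''(u_{NN})=0$ (true because $s_N(u_{NN})=\U$) to $\eta''\equiv0$, and the device that makes this work is the strong approximate invertibility of $\pi_N(\U-u_{NN})$ provided by Lemma \ref{(1-a)lemma} --- the very phenomenon behind Corollary \ref{uNNdetcor} and Proposition \ref{cocaprop} --- here transported along the intertwiner $v$; alternatively, one could run the approximation of Proposition \ref{cocaprop} directly and use that $v\eta(u_{NN})$ lands in the $D_\chi$-summand. Everything else is bookkeeping with the canonical decomposition and with the drift ambiguity in completing a pair $(\pi,\eta)$ to a Sch\"urmann triple.
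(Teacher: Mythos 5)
Your proof is correct. For the implication ``$\psi$ irreductible $\Rightarrow\pi=\pi_N$'' you follow essentially the paper's route: collect the summands $\psi_2+\dots+\psi_{N-1}$ into a single generating functional $\tilde\psi'\circ s_N$ living on $\pol[N-1]$, use the drift ambiguity in completing a pair to a Sch\"urmann triple to see that $\psi-\tilde\psi'\circ s_N$ is still generating, and then let cyclicity of $\eta$ convert ``$\tilde\psi'\circ s_N$ is a drift'' into $H=H_N$. For the converse your argument is genuinely different from --- and more complete than --- the one printed in the paper. The paper's proof only verifies that, in the canonical decomposition $\psi=\psi_N+\tilde\psi\circ s_N$ coming from Theorems \ref{repdecthm} and \ref{cocdecthm}, the summand $\tilde\psi\circ s_N$ is a drift when $\pi=\pi_N$ (trivially, since then $\tilde\pi=0$); it does not explicitly discharge the universal quantifier in the definition of irreductibility, i.e.\ an \emph{arbitrary} generating functional $\tilde\psi$ on $\pol[N-1]$ with $\psi-\tilde\psi\circ s_N$ generating. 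You close exactly this point: Proposition \ref{Stsumprop} yields a (generally non-cyclic) Sch\"urmann triple for $\psi$ on $D''\oplus D_\chi$, universality of the cyclic triple (discussion after Definition \ref{def_triple}) supplies an intertwining isometry $v$, and the compression $w$ of $v$ onto the $\pol[N-1]$-summand satisfies $w\,\pi_N(\U-u_{NN})=0$, whence $w=0$ by the strong convergence $\frac{\U-a}{\U-pa}\to\U$ of Lemma \ref{(1-a)lemma}, available precisely because $\pi_N(\U-u_{NN})$ is injective. This is an apt reuse of the key lemma outside its original role in Proposition \ref{cocaprop}, and it makes the ``if'' direction watertight. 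Two half-sentences worth adding if you write this up: the step $w(\U-a)=0\Rightarrow w\frac{\U-a}{\U-pa}=0$ uses that $\U-a$ and $(\U-pa)^{-1}$ commute (they do, both being functions of $a$), and the passage to the limit uses boundedness of $w=Pv$.
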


\begin{proof}
Obviously, the statement is true if $\psi$ is not completely non-gaussian. So, we assume that $\psi$ is completely non-gaussian.

We know from Theorem \ref{repdecthm} that $\pi=\pi_N\oplus\tilde{\pi}\circ s_N$ for some representation $\tilde{\pi}$ of $\pol[N-1]$. Moreover, $\tilde{\pi}_1=0$, because, otherwise, $\tilde{\pi}_1\circ s_N$ would contribute to $\pi_1$. So, if $N=2$, then there is nothing left to prove, so we assume $N\ge3$.

By Corollary \ref{subcoccor}, the cocycle $\eta-\eta_N$ with respect to $\tilde{\pi}\circ s_N$ also lives on $\pol[N-1]$, hence, has the form $\tilde{\eta}\circ s_N$ for a (unique) cocycle $\tilde{\eta}$ with respect to $\tilde{\pi}$.

By Corollary \ref{NCcor}, both pairs $(\pi_N,\eta_N)$ and $(\tilde{\pi},\tilde{\eta})$ may be completed to Schürmann triples $(\pi_N,\eta_N,\psi_N)$ and $(\tilde{\pi},\tilde{\eta},\tilde{\psi})$. Clearly, we may arrange $\psi_N$ such that $\psi=\psi_N+\tilde{\psi}\circ s_N$.

Clearly, $\tilde{\psi}\circ s_N$ is a drift if and only if $\tilde{\eta}\circ s_N=0$. Of course, if $\tilde{\pi}\circ s_N=0$, that is, if $\pi=\pi_N$, then $\tilde{\eta}\circ s_N=0$ so that $\tilde{\psi}\circ s_N$ is a drift. Conversely, since $\eta$ is cyclic, if $\tilde{\pi}\circ s_N\ne0$, that is, if $\pi\ne\pi_N$, then $\tilde{\eta}\circ s_N\ne0$ so that $\tilde{\psi}\circ s_N$ is a not drift.\end{proof}

As an immediate corollary, we get the main result:

\begin{thm} \label{psidecthm}
Let $\psi$ be a generating functional on $\pol$. Then $\psi$ decomposes uniquely into a sum $\psi:=\psi_1+\ldots+\psi_N$ such that the $\psi_n$ satisfy:
\begin{itemize}
\item
$\psi_1$ is gaussian.

\item
$\psi_n=\tilde{\psi}_n\circ s_{n,N}$ $(n\ge2)$ for some irreductible generating functional $\tilde{\psi}_n$ on $\pol[n]$ satisfying $\tilde{\psi}_n\circ\cP=\tilde{\psi}_n$.
\end{itemize}
\end{thm}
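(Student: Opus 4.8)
\emph{Existence.} The plan is to read the decomposition off the canonical cyclic Sch\"urmann triple $(\pi,\eta,\psi)$ of $\psi$. First I would apply Theorem~\ref{repdecthm} to write $\pi=\pi_1\oplus\ldots\oplus\pi_N$ and Theorem~\ref{cocdecthm} to write the matching $\eta=\eta_1\oplus\ldots\oplus\eta_N$, so that $\pi_1,\eta_1$ are gaussian and, for $2\le n\le N$, the pair $(\pi_n,\eta_n)$ lives on $\pol[n]$ with $\pi_n(\U-u_{nn})$ injective. For each such $n$, Corollary~\ref{GFcoccor} provides the unique $\cP$-normalized generating functional $\psi_n$ completing $(\pi_n,\eta_n)$, and it lives on $\pol[n]$, so $\psi_n=\tilde\psi_n\circ s_{n,N}$. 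From $\cP\circ s_{n,N}=s_{n,N}\circ\cP$ (Proposition~\ref{subNPprop} and its corollary) the equality $\psi_n\circ\cP=\psi_n$ passes to $\tilde\psi_n\circ\cP=\tilde\psi_n$, and $\tilde\psi_n$ is irreductible by the proposition preceding this theorem: its GNS representation is a subrepresentation of the twiddled $\tilde\pi_n$, so ``$\tilde\pi_n(\U-u_{nn})$ injective'' persists, and any gaussian invariant subspace would lie in $\ker(\U-\tilde\pi_n(u_{nn}))=\{0\}$; hence that GNS representation is completely non-gaussian and equals its own top component. Finally I put $\psi_1:=\psi-\sum_{n=2}^N\psi_n$; combining Proposition~\ref{Stsumprop} (for the partial sums) with Corollary~\ref{decomcor} shows that $(\pi_1,\eta_1,\psi_1)$ is a Sch\"urmann triple and $\psi_1$ a generating functional, and since $\eta_1$ vanishes on $K_2$ the cocycle identity $\langle\eta_1(a^*),\eta_1(b)\rangle=\psi_1(ab)$ forces $\psi_1$ to vanish on $K_3$, i.e.\ $\psi_1$ is gaussian. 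This yields a decomposition of the stated form.

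\emph{Uniqueness.} Let $\psi=\psi_1'+\sum_{n\ge2}\psi_n'$ be any decomposition as in the statement, with GNS triples $(\rho_n,\zeta_n,\psi_n')$ (and $\rho_1=\id\e$). Since $\psi_n'$ lives on $\pol[n]$ and $\tilde\psi_n'$ is irreductible, the preceding proposition gives $\rho_n(\U-u_{nn})$ injective and $\rho_n$ completely non-gaussian, and $\psi_n'\circ\cP=\psi_n'$ just as above. By Proposition~\ref{Stsumprop}, $\rho:=\bigoplus_n\rho_n$ with cocycle $\bigoplus_n\zeta_n$ is a Sch\"urmann triple for $\psi$, and it is literally of the shape delivered by Theorem~\ref{repdecthm}; so by the uniqueness in Theorems~\ref{repdecthm} and~\ref{cocdecthm} its $n$-th component is the pair $(\rho_n,\zeta_n)$. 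The canonical cyclic triple $(\pi,\eta,\psi)$ embeds isometrically into $(\rho,\bigoplus_n\zeta_n,\psi)$ by an intertwiner $v$ with $v\eta=\bigoplus_n\zeta_n$, and the decomposition of Theorem~\ref{repdecthm} is compatible with $v$: because $\ker(\U-\pi(u_{nn}))=\ker(\U-\pi(u_{nn})^*)$ (the lemma preceding Corollary~\ref{pqcor}) and $v$ intertwines each $\pi(u_{nn})$ and its adjoint with those of $\rho$, $v$ carries $\ker(\U-\pi(u_{NN}))$ into $\ker(\U-\rho(u_{NN}))$ and, by the adjoint relation, its orthogonal complement into the corresponding complement; running this down the chain $N,N-1,\ldots,2$ shows $v$ maps the $n$-th piece $(\pi_n,\eta_n)$ of $(\pi,\eta)$ into $(\rho_n,\zeta_n)$ with $v\eta_n=\zeta_n$. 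Since $\zeta_n$ is cyclic for $\rho_n$, this forces $v$ to restrict to a unitary identifying $(\pi_n,\eta_n)$ with $(\rho_n,\zeta_n)$. Hence $\psi_n'$ completes $(\pi_n,\eta_n)$ and is $\cP$-normalized, so by the uniqueness in Corollary~\ref{GFcoccor} it coincides with the $\psi_n$ of the existence step for every $n\ge2$, and then $\psi_1'=\psi-\sum_{n\ge2}\psi_n'=\psi_1$.

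\emph{Expected main obstacle.} The only step that is not pure bookkeeping with the earlier ``unique'' statements is the compatibility of the canonical decomposition (Theorems~\ref{repdecthm}--\ref{cocdecthm}) with intertwining isometries between Sch\"urmann triples of $\psi$. Its key input is the elementary fact $ax=x\Leftrightarrow a^*x=x$ for a contraction $a$, which makes each $\ker(\U-\pi(u_{nn}))$ \emph{reducing} rather than merely invariant and so lets one push the splitting through the intertwiner and down the subgroup chain; everything else is an assembly of Theorems~\ref{repdecthm}--\ref{cocdecthm}, Corollaries~\ref{GFcoccor} and~\ref{decomcor}, Proposition~\ref{Stsumprop}, and the projection-compatibility from Proposition~\ref{subNPprop}.
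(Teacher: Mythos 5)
Your proposal is correct and follows essentially the same route as the paper, which presents the theorem as an ``immediate corollary'' of the preceding irreductibility proposition together with Theorems \ref{repdecthm} and \ref{cocdecthm}, Corollary \ref{GFcoccor}, Corollary \ref{decomcor}, and Proposition \ref{subNPprop}. The only difference is that you spell out the uniqueness step (pushing the canonical decomposition through the intertwining isometry between the cyclic Schürmann triple and the direct sum of the GNS triples of an arbitrary decomposition), which the paper leaves implicit.
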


\noindent
Recall that, by definition, all $\psi_n$ $(2\le n)$, hence their sum, are completely non-gaussian.

\begin{cor}
$\psi=\psi_G+\psi_L$ with $\psi_G:=\psi_1$ and $\psi_L:=\psi_2+\ldots+\psi_N$ is the unique L{\'e}vy-Khintchine decompositions satisfying $\psi_L\circ\cP=\psi_L$.
\end{cor}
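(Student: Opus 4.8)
The plan is to obtain the corollary directly from Theorem \ref{psidecthm} and the general machinery of Subsections \ref{sec_G_D_A} and \ref{ssec_gaussian}: essentially everything has already been proved, and what remains is to identify the grouped decomposition with the abstract one and to read off uniqueness. Fix the cyclic Sch{\"u}rmann triple $(\pi,\eta,\psi)$ of $\psi$, with its decompositions $\pi=\pi_1\oplus\cdots\oplus\pi_N$, $\eta=\eta_1\oplus\cdots\oplus\eta_N$ (Theorems \ref{repdecthm} and \ref{cocdecthm}) and $\psi=\psi_1+\cdots+\psi_N$ (Theorem \ref{psidecthm}); write $\pi_{\mathrm{cnG}}:=\pi_2\oplus\cdots\oplus\pi_N$ and $\eta_{\mathrm{cnG}}:=\eta_2\oplus\cdots\oplus\eta_N$, so that $\pi=\pi_1\oplus\pi_{\mathrm{cnG}}$, $\eta=\eta_1\oplus\eta_{\mathrm{cnG}}$, $\psi=\psi_G+\psi_L$.

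First I would check that $\pi=\pi_1\oplus\pi_{\mathrm{cnG}}$ is the maximal gaussian / maximal completely non-gaussian splitting of $(\pi,\eta)$ of Proposition \ref{GcnGprop}. Here $\pi_1=\id_{H_1}\e$ is gaussian, and $\pi_{\mathrm{cnG}}$ is completely non-gaussian: a $\pi_{\mathrm{cnG}}$-invariant subspace on which $\pi_{\mathrm{cnG}}$ restricts to a gaussian representation is annihilated by every $\pi_n(u_{nn}-\U)$ $(n\ge2)$, hence is $\{0\}$ by the injectivity clause of Theorem \ref{repdecthm}. The same observation gives $\bigcap_{a\in K_1}\ker\pi(a)=H_1$, so in the uniqueness statement following Proposition \ref{GcnGprop} the projection $p_1$ is the projection onto $H_1$; since $\eta_1$ is a cocycle for the gaussian representation $\pi_1$ it vanishes on $K_2$, so $\eta_1(\cG)$ is finite-dimensional (Proposition \ref{Gcocprop}) and, being dense in $H_1$ by cyclicity of $\eta$, equals $H_1$. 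Thus the pairs produced by Proposition \ref{GcnGprop} are exactly $(\pi_1,\eta_1)$ and the restriction of $(\pi_{\mathrm{cnG}},\eta_{\mathrm{cnG}})$ to the $\pi_{\mathrm{cnG}}$-invariant dense subspace $\eta_{\mathrm{cnG}}(\cG)$.

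Next I would assemble the Sch{\"u}rmann triples. By Corollary \ref{GFcoccor} each $(\pi_n,\eta_n,\psi_n)$ with $n\ge2$ is a Sch{\"u}rmann triple, hence so is $(\pi_{\mathrm{cnG}},\eta_{\mathrm{cnG}},\psi_L)$ (Proposition \ref{Stsumprop}); since $(\pi,\eta,\psi)$ is a Sch{\"u}rmann triple, Corollary \ref{decomcor} applied to $\psi=\psi_G+\psi_L$ with $\pi=\pi_1\oplus\pi_{\mathrm{cnG}}$ forces $(\pi_1,\eta_1,\psi_G)$ to be one too. Passing to the cyclic subspaces of the previous paragraph leaves the identities \eqref{*} and \eqref{**} intact, so $\psi=\psi_G+\psi_L$ is a L{\'e}vy-Khintchine decomposition in the sense of Definition \ref{LKddefi}. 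That $\psi_L\circ\cP=\psi_L$ is immediate: $\psi_n\circ\cP=\psi_n$ for $n\ge2$ by Corollary \ref{GFcoccor}, hence $\psi_L\circ\cP=\sum_{n\ge2}\psi_n\circ\cP=\psi_L$.

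Uniqueness is then automatic. Given another L{\'e}vy-Khintchine decomposition $\psi=\psi_G'+\psi_L'$ with $\psi_L'\circ\cP=\psi_L'$, both use the same pairs from Proposition \ref{GcnGprop} (which depend only on $\psi$), so by the drift analysis of Subsection \ref{sec_S_trip} the difference $\psi_L-\psi_L'$ is a drift; but a drift is a real linear combination of the functionals $\e'_\vk$, which vanish on $K_2$, while $\cP$ maps into $K_2$, so $\psi_L-\psi_L'=(\psi_L-\psi_L')\circ\cP=0$ and hence $\psi_G=\psi_G'$ as well. (This recovers the uniqueness already announced in Definition \ref{LKddefi}.) The only step I expect to need genuine care is the identification in the second paragraph — matching the grouped decomposition of Theorem \ref{repdecthm} with the abstract maximal splitting of Proposition \ref{GcnGprop}, including the cyclic-subspace bookkeeping — while everything else follows at once from results already in hand.
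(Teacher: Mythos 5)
Your argument is correct and follows exactly the route the paper intends: the corollary is stated there without proof as an immediate consequence of Theorem \ref{psidecthm}, the remark that the $\psi_n$ $(n\ge2)$ are completely non-gaussian, and the uniqueness clause of Definition \ref{LKddefi}, and your write-up just supplies the details (matching the grouped splitting with Proposition \ref{GcnGprop} and killing the residual drift via $\cP$). No gaps.
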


\lf\lf
\begin{remark}
~

\begin{enumerate}
\item
Recall from Section \ref{sec_gaussian} that $\pol$, unlike $\pol[2]$, does not have property {\normalfont (GC)} as soon as $N\ge3$.

\item
Like for $\pol[2]$, completely non-gaussian cocycles on $\pol$ are all limits of coboundaries and determined by $N-1$ vectors. However, unlike for $\pol[2]$, not all vectors in the representation spaces of $\pi_n$ may occur. (This is subject of Section \ref{sec_class}.) While for $\pol[2]$ the cocycle would be defined as the limit in Proposition \ref{cocaprop} (including a proof that the limit for $N=2$ exists for whatever vector we chose), for $\pol$, the fact that we start with a given cocycle was used essentially in the proof that the limit for $a=u_{jk}$ $(j,k<N)$ exists and gives the right result. (In fact, for the vector in the counter example in Proposition \ref{prop_counterexample}, the limit {\bf cannot} exist for all $a$, because, otherwise, it would define a cocycle.)
\end{enumerate}
\end{remark}

\newpage

%%%%%%%%%%%%%%%%%%%%%%%%%%%%%%%%%%%%%%%%%%%%%%%%%%%%%%%%%%%%%%%%%
\addtocontents{toc}{\vspace{1.5ex}}
\section{Parametrization of generating functionals} \label{sec_class}
%%%%%%%%%%%%%%%%%%%%%%%%%%%%%%%%%%%%%%%%%%%%%%%%%%%%%%%%%%%%%%%%%

In Section \ref{decompSEC}, we have shown that any Sch{\"u}rmann triple on $\pol$ can be 
decomposed (uniquely, in the sense explained in Theorem \ref{psidecthm}) into a gaussian part and a sum of generating functionals that live on $\pol[n]$ $(2\le n\le N)$ and which, as functionals on $\pol[n]$, are irreductible. In Section \ref{sec_gaussian}, we have classified the gaussian generating functionals. In this
section we also parametrize the irreductible generating functionals. Putting everything together by means of Theorem \ref{psidecthm}, we obtain a parametrization of all generating functionals on $SU_q(N)$, and, thus, up to quantum stochastic equivalence (up to unitary equivalence), of all (cyclic) L\'evy processes on this quantum group. 

\medskip
The irreductible generating functionals on $\pol$ are exactly those generating functionals $\psi$ that admit a Sch{\"u}rmann triple $(\pi,\eta,\psi)$ where $\pi(\U-u_{NN})$ is injective. Since, by Proposition \ref{GFcocprop}, every cocycle in such a triple determines the values of a generating functional on $K_2$, we may classify the irreductible generating functionals (up to unitary equivalence) by cyclic cocycles with respect to $*$-representations $\pi$ with injective $\pi(\U-u_{NN})$. However, concentrating on the cocycle and only, then, determining its representation (and check whether it fulfills the condition) is not really very practicable. Not for nothing, the order in Procedure \ref{proc} is to start with the representation $\pi$ and, then, to determine all its cocycles. Not for nothing,
{did we explain}
in Remark \ref{cycrem} that we do not usually require that the cocycle be cyclic. (This can be done better under symmetry conditions on $\psi$; see Das, Franz, Kula, and Skalski \cite{dfks15}.)

On the other hand, by Corollary \ref{uNNdetcor} (or, better, by Lemma \ref{lem_eta_relations}) we know that a cocycle $\eta$ with respect to 
{$\pi$ with injective $\pi(1-u_{NN})$,}
is determined by its value $\eta(u_{NN})$. It is, therefore, tempting to parametrize such cocycles (and, then, the functionals they determine) by vectors $\eta_{NN}\in H$ such that $\eta(u_{NN})=\eta_{NN}$. For $N=2$ (where the irreductible generating functionals are exactly the completely non-gaussian ones), we know from \cite{Ske94,schurmann+skeide98} that every vector in $H$ may occur as $\eta(u_{22})$ for a cocycle. However, for $N\ge3$, this is not so.

\begin{prop} \label{prop_counterexample}
For every $N\ge3$ there exists a $*$-representation $\pi$ of $\pol$ on $H$ with injective $\pi(\U-u_{NN})$ and a vector $\eta_{NN}\in H$ such that no $\pi$-$\e$-cocycle $\eta$ fulfills $\eta(u_{NN})=\eta_{NN}$.
\end{prop}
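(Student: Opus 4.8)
The plan is to turn the rigidity of Lemma~\ref{lem_eta_relations} against itself. If $\pi(\U-u_{NN})$ is injective and $\eta$ is \emph{any} $\pi$-$\e$-cocycle, then \eqref{eq_eta_jk} forces, for every $j,k<N$,
\[
\pi(u_{NN}-\U)\,\eta(u_{jk})=\Bigl(\pi(u_{jk}-\U\delta_{jk})-\tfrac{\frac1q-q}{\pi(\U-q^2u_{NN})}\,\pi(u_{jN}u_{Nk})\Bigr)\eta(u_{NN}),
\]
and this is a genuine necessary condition for existence. Hence it suffices to exhibit, for each $N\ge3$, a $*$-representation $\pi$ with $\pi(\U-u_{NN})$ injective and a vector $\eta_{NN}$ such that, for some pair $j,k<N$, the vector on the right-hand side does not lie in the range of $\pi(u_{NN}-\U)$: then no admissible value $\eta(u_{jk})$ can be chosen, so no $\pi$-$\e$-cocycle $\eta$ with $\eta(u_{NN})=\eta_{NN}$ exists.

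For the representation I would use the comultiplication, as follows. Let $\sigma$ be a $*$-representation of $\pol$ with $\sigma(u_{NN})$ the standard weighted shift $e_n\mapsto\sqrt{1-q^{2(n+1)}}\,e_{n+1}$ on $\ell^2(\N)$, $\sigma(u_{11})=\id$, and $\sigma(u_{1N})=0$; such $\sigma$ exist — e.g.\ compose the standard infinite-dimensional irreducible $*$-representation of $\pol[2]$ with the quotient of $\pol$ onto the copy of $\pol[2]$ sitting in the bottom-right $2\times2$ corner (a well-defined $*$-bialgebra quotient, by the same kind of verification done in Subsection~\ref{ssec_suqn} for $s_N$). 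Note $\U-\sigma(u_{NN})$ is injective with dense, non-closed range. Now choose a character $\chi$ of $\pol$, $\chi(u_{jk})=\zeta_j\delta_{jk}$ with $\prod_j\zeta_j=1$ and $\zeta_1\ne1$, and form the $*$-representation
\[
\pi:=(\sigma\otimes\chi)\circ\Delta
\]
of $\pol$ on $\ell^2(\N)$. Reading off from $\Delta(u_{jk})=\sum_s u_{js}\otimes u_{sk}$ gives $\pi(u_{NN})=\zeta_N\,\sigma(u_{NN})$ (so $\pi(\U-u_{NN})$ is injective), $\pi(u_{1N})=\zeta_N\,\sigma(u_{1N})=0$, and $\pi(u_{11})=\zeta_1\,\sigma(u_{11})=\zeta_1\id$. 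Consequently, for $j=k=1$ the right-hand side of the displayed identity collapses to $\pi(u_{11}-\U)\eta(u_{NN})=(\zeta_1-1)\eta(u_{NN})$.

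It then remains to choose $\eta_{NN}:=e_0$ and verify the range failure by hand. Solving $\pi(u_{NN}-\U)w=(\zeta_1-1)e_0$ for $w=\sum_n c_n e_n$ and using $\pi(u_{NN})=\zeta_N\sigma(u_{NN})$ yields the first-order recursion $c_0=1-\zeta_1$, $c_m=\zeta_N\sqrt{1-q^{2m}}\,c_{m-1}$, whose unique solution satisfies $|c_m|^2=|1-\zeta_1|^2\prod_{k=1}^m(1-q^{2k})\to|1-\zeta_1|^2\prod_{k\ge1}(1-q^{2k})>0$ (here $\zeta_1\ne1$ is essential), so $w\notin\ell^2(\N)$. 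Thus $(\zeta_1-1)e_0\notin\mathrm{ran}\,\pi(u_{NN}-\U)$, no value $\eta(u_{11})$ is available, and therefore no $\pi$-$\e$-cocycle $\eta$ can satisfy $\eta(u_{NN})=e_0$.

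I expect the main obstacle to be precisely this range computation, together with understanding why it genuinely requires $N\ge3$. For $N=2$ the only relevant pair is $j=k=1$, but then $u_{11}=\alpha$ and $u_{22}=\alpha^*$ are mutual adjoints, so $\pi(u_{11})$ and $\pi(u_{NN})$ cannot be decoupled; one checks that the analogous range condition is then always solvable --- which is exactly what Property~(AC) for $\pol[2]$ asserts --- the square-summable solution arising from an exact cancellation in the recursion. It is the freedom, available only for $N\ge3$, to twist a corner-$\pol[2]$ representation by a character acting nontrivially on the \emph{other} corner $u_{11}$ that destroys this cancellation and produces the obstruction.
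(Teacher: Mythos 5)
Your proposal is correct and rests on exactly the same mechanism as the paper's proof: the necessary relation \eqref{eq_eta_jk} of Lemma \ref{lem_eta_relations} at $(j,k)=(1,1)$, a representation built by convolving a corner-embedded $\pol[2]$ representation with something acting nontrivially at the $(1,1)$-entry, and the non-vanishing of $\prod_{k\ge1}(1-q^{2k})$ to show the resulting recursion has no square-summable solution. The only (welcome) simplification is that you convolve with a torus character instead of a second copy of the infinite-dimensional irreducible $\pol[2]$ representation, which keeps the Hilbert space equal to $\ell^2(\N)$ and removes the slicing step $e_0^*\otimes\id$ from the paper's argument.
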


\begin{proof}
For the $N\times N$-matrix $(u_{jk})$ of $SU_q(N)$, let us refer by the $[m,n]$-block ($1\le m<n\le N$) to the submatrix with indices $j,k\in\{m,\ldots,n\}$. So far, we always embedded $SU_q(n)$ by, roughly speaking, ``identifying'' its defining $n\times n$-matrix with the $[1,n]$-block of $SU_q(N)$. (See the definition of $s_N$ in \eqref{eq_suq_inclusion} for the precise meaning of this in the case $n=N-1$.) It is noteworthy, that we may embed $SU_q(n)$ in the same way to any other $[m+1,m+n]$-block of $SU_q(N)$. Here, we are interested in particular in the
lower right
$n\times n$-square, that is, in the $[N-n+1,N]$-block of $SU_q(N)$. Since a $*$-representation $\pi_3$ of $\pol[3]$ gives, when embedded into the $[N-2,N]$-block of $\pol$, rise to a $*$-representation $\pi$ of $\pol$, and since $\pi$ has injective $\pi(\U-u_{NN})$ if and only if $\pi_3$ has injective $\pi_3(\U-u_{33})$, we see that it is enough to show the statement for $N=3$, only.

Now, for $N=3$, every $*$-representation $\rho$ of $\pol[2]$ gives rise to a $*$-representation $\rho_1$ of $\pol[3]$ when embedding $SU_q(2)$ into the $[1,2]$-block of $SU_q(3)$ and a $*$-representation $\rho_2$ of $\pol[3]$ when embedding $SU_q(2)$ into the $[2,3]$-block of $SU_q(3)$. Recall that the generators $u_{jk}$ of $\pol[2]$ are commonly written as
$$
\left(\begin{matrix}
u_{11}&u_{12}\\u_{21}&u_{22}
\end{matrix}\right)
~=~
\left(\begin{matrix}
\alpha&-q\gamma^*\\\gamma&\alpha^*
\end{matrix}\right).
$$
For $\rho$ let us choose the irreducible $*$-representation on the Hilbert space $H$ with ONB $(e_k)_{k\in\N_0}$ defined by
\begin{align*}
\rho(\alpha)
&
\colon
e_k\mapsto e_{k-1}\sqrt{1-q^{2k}},
&
\rho(\gamma)
&
\colon
e_k\mapsto e_kq
\end{align*}
(where $e_{-1}:=0$). We put $\pi:=\rho_1\star\rho_2$, so that
\begin{multline*}
\pi(U)
~=~
(\rho_1\otimes\rho_2)\Bigl(\sum_{i=1}^3u_{ji}\otimes u_{ik}\Bigr)
~=~
(\rho_1\otimes\rho_2)
\left( \left(\begin{array}{ccc} 
\alpha & -q\gamma^* & 0 \\
\gamma & \alpha^* & 0 \\
0 & 0 & \one 
\end{array}\right)
\otimes \left( \begin{array}{ccc} 
\one & 0 & 0 \\
0 & \alpha & -q\gamma^*\\
0 & \gamma & \alpha^* \\
\end{array}\right)\right)
\\
~=~
 \left( \begin{array}{ccc} 
\rho(\alpha) \otimes \id & -q\rho(\gamma)^* \otimes \rho(\alpha) & q^2 \rho(\gamma)^* \otimes  \rho(\gamma)^* 
\\
\rho(\gamma) \otimes  \id & \rho(\alpha)^* \otimes  \rho(\alpha) & -q \rho(\alpha)^* \otimes \rho(\gamma)^* 
\\
0 & \id \otimes \rho(\gamma) & \id \otimes \rho(\alpha)^*
\end{array}\right).
\end{multline*}
(The reader who diligently followed us when we said we do not (really) need the comultiplication, will now have to check that this assignment really defines a $*$-representation of $\pol[3]$; the reader who accepts that we have a comultiplication, may use the fact that the convolution of $*$-representations is a $*$-representation.) Now, $\rho(\U-\alpha^*)$ is injective, so $\pi(\U-u_{33})=\id\otimes\rho(\U-\alpha^*)$ is injective, too. By Relation \eqref{eq_eta_jk} for $j=1=k$, taking also into account that $\pi(u_{31})=0$, we obtain
\begin{equation} \label{n3eq}
(\id\otimes\rho(\alpha^*-\U))\eta(u_{11})
~=~
(\rho(\alpha-\U)\otimes\id)\eta(u_{33})
\end{equation}
for every cocycle with respect to $\pi$.

Suppose there was a cocycle $\eta$ with $\eta(u_{33})=e_0\otimes e_0$, so that $(\rho(\alpha-\U)\otimes\id)\eta(u_{33})=-e_0\otimes e_0$. Inserting this into \eqref{n3eq} and applying to the whole thing the map $e_0^*\otimes\id\colon x\otimes y \mapsto \langle e_0,x\rangle y$, we obtain 
$$
\rho(\alpha^*-\U)\eta(u_{11})
~=~
-e_0.
$$
However, examining what this means for the coefficients of the vector $\eta(u_{11})$, taking also into account that the products $(1-q^2)\ldots(1-q^{2k})$ converge to a non-zero limit (see \cite[Theorem A.4]{Ske94}), we would obtain $\|\eta(u_{11})\|=\infty$. Therefore, there is no such cocycle $\eta$.\end{proof}

This leaves us with the question, which vectors in $H$ may, actually, occur as values $\eta(u_{NN})$ for a cocycle. First of all, there are many of them. More precisely, every element $f$ in the (dense!) subspace $\pi(\U-u_{NN})H$ of $H$ may occur; and the (unique!) cocycles determined by them, are (exactly!) the coboundaries. Indeed, for $g=-\pi(\U-u_{NN})f$, by Lemma \ref{(1-a)lemma}, we have (see Proposition \ref{cocaprop} and around Lemma \ref{cocaGFlem} for notation)
\begin{eqnarray}
\lim_{p\uparrow1}\pi\circ(\id-\U\e)\bigl[{\textstyle\frac{-1}{\pi(\U-pu_{NN})}}g\bigr]
~=~
\pi\circ(\id-\U\e)f
~=~
(\pi f)\circ(\id-\U\e)
~=:~
\eta_f,
\end{eqnarray}
pointwise on $\pol$; and $\eta_f(u_{NN})=\pi(u_{NN}-\U)f=g$.

In general, Proposition \ref{cocaprop} tells us that, given an arbitrary cocycle $\eta$, putting
$$
f_p~:=~\frac{-1}{\pi(\U-pu_{NN})}\eta(u_{NN}),
$$
the cocycle $\eta$ is the pointwise limit of the coboundaries $\eta_{f_p}$. In other words, for each cocycle there is a sequence 
{$(f_m)_{m\in\mathbb{N}}$}
of elements in $H$ such that the coboundaries
{$(\eta_{f_m})_{m\in\mathbb{N}}$}
converge pointwise to $\eta$. Let us characterize
better, what are the sequences
{$(f_m)_{m\in\mathbb{N}}$}
that make that happen.

As in the proof of Proposition \ref{cocaprop}, it is, clearly, enough to check convergence of
{$(\eta_{f_m})_{m\in\mathbb{N}}$}
on the generators $u_{jk}$. However, we can do better.

\begin{prop}
The sequence of coboundaries
{$(\eta_{f_m})_{m\in\mathbb{N}}$}
converges (pointwise on $\pol$) if (and, of course, only if) it converges on all $u_{jj}$ $(1\le j\le N)$.
\end{prop}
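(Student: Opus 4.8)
The plan is to let $G\subseteq\pol$ denote the set of those $a$ for which the sequence $\bigl(\eta_{f_m}(a)\bigr)_{m\in\mathbb{N}}$ converges in $H$, and to prove $G=\pol$. The first step is to observe that $G$ is a unital subalgebra of $\pol$: it is plainly a linear subspace, it contains $\U$ (since $\eta_{f_m}(\U)=0$ for all $m$), and if $a,b\in G$ then the cocycle identity $\eta_{f_m}(ab)=\pi(a)\eta_{f_m}(b)+\eta_{f_m}(a)\e(b)$ — valid because every $\eta_{f_m}$, being a coboundary for $\pi$, is a $\pi$-$\e$-cocycle — exhibits $\eta_{f_m}(ab)$ as the sum of the fixed bounded operator $\pi(a)$ applied to a convergent sequence and a convergent sequence multiplied by the fixed scalar $\e(b)$; hence it converges, i.e.\ $ab\in G$. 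Since $\pol$ is generated as an algebra by the $u_{jk}$ (recall \eqref{eq_u*}), it then suffices to show $u_{jk}\in G$ for all $j,k$.

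The diagonal generators $u_{jj}$ $(1\le j\le N)$ lie in $G$ by the hypothesis of convergence on the $u_{jj}$. For an off-diagonal generator $u_{jk}$ $(j\ne k)$ set $l:=\max(j,k)$; then the relation $u_{jk}u_{ll}=qu_{ll}u_{jk}$ holds — it is the instance of \eqref{eq_i<k} when $j<k$ (same column) and of \eqref{eq_j<l} when $j>k$ (same row). Applying the cocycle identity to both sides of this relation and using $\e(u_{ll})=1$ and $\e(u_{jk})=0$, one obtains
\[
\bigl(\id-q\pi(u_{ll})\bigr)\,\eta_{f_m}(u_{jk}) ~=~ -\pi(u_{jk})\,\eta_{f_m}(u_{ll}).
\]
Because $\pi(u_{ll})$ is a contraction and $0<q<1$, the operator $\id-q\pi(u_{ll})$ is boundedly invertible, so
\[
\eta_{f_m}(u_{jk}) ~=~ -\bigl(\id-q\pi(u_{ll})\bigr)^{-1}\pi(u_{jk})\,\eta_{f_m}(u_{ll}).
\]
Here the operator in front of $\eta_{f_m}(u_{ll})$ is fixed and bounded, while $\eta_{f_m}(u_{ll})$ converges since $l\le N$ is one of the diagonal indices covered by the hypothesis; hence $\eta_{f_m}(u_{jk})$ converges, i.e.\ $u_{jk}\in G$. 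Thus $G$ contains all generators, so $G=\pol$, which is the asserted ``if'' direction (the ``only if'' being trivial).

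I do not expect a genuine obstacle: the argument is just the cocycle-computation trick already exploited in Lemma \ref{lem_eta_relations} and in the proof of Lemma \ref{lem_kernel_descr}(a), applied to the single ``corner'' commutation relation attached to each off-diagonal entry, and — perhaps worth remarking — it does not even use injectivity of $\pi(\U-u_{NN})$. The only points that require a little care are to check that $G$ really is a subalgebra (so that verifying membership of the generators suffices) and to keep in mind that convergence is needed only on a generating set of $\pol$, not on an exhaustive family of monomials.
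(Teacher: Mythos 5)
Your proof is correct and follows essentially the same route as the paper: the reduction to generators via the cocycle/coboundary property, and then the identity $\eta_{f_m}(u_{jk})=-\bigl(\id-q\pi(u_{ll})\bigr)^{-1}\pi(u_{jk})\eta_{f_m}(u_{ll})$ with $l=\max(j,k)$ coming from $u_{jk}u_{ll}=qu_{ll}u_{jk}$, which is exactly the formula the paper derives (there by commuting $\pi(u_{jk})$ past $\pi(u_{ll}-\U)f_m$ rather than by applying the cocycle identity, but the computation is the same).
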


\begin{proof}
We have to show that, under the stated condition,
{$\big(\eta_{f_m}(u_{jk})\big)_{m\in\mathbb{N}}$}
converges for all $j\ne k$.

Put $l:=\max(j,k)$. Then, by Relation \eqref{eq_i<k} or by Relation \eqref{eq_j<l}, we get
\begin{multline*}
\eta_{f_m}(u_{jk})
~=~
\pi(u_{jk})f_m
~=~
\frac{\pi(\U-qu_{ll})}{\pi(\U-qu_{ll})}\pi(u_{jk})f_m
\\
~=~
\frac{-1}{\pi(\U-qu_{ll})}\pi(u_{jk})\pi(u_{ll}-\U)f_m
~=~
\frac{-1}{\pi(\U-qu_{ll})}\pi(u_{jk})\eta_{f_m}(u_{ll}),
\end{multline*}
which converges, because
{$\big(\eta_{f_m}(u_{ll})\big)_{m\in\mathbb{N}}$}
converges.
\end{proof}

By introducing an adequate norm $\|\bullet\|_\pi$ on $H$, we may characterize the suitable sequences $f_m$ as Cauchy sequences in that norm and the elements in the completion $H^\pi$ with respect to that norm uniquely parametrize the cocycles.

Recall that for $0\le a\in B(H)$, the function $\|\bullet\|_a\colon f\mapsto\sqrt{\langle f,af\rangle}=\|\sqrt{a}f\|$ is a seminorm on $H$; it is a norm if and only if $a$ is injective. Choosing $a:=\sum_{j=1}^N\pi(\U-u_{jj})^*\pi(\U-u_{jj})$ to define $\|\bullet\|_\pi:=\|\bullet\|_a$, settles our problem. Indeed, because $a\ge\pi(\U-u_{NN})^*\pi(\U-u_{NN})$ and $\pi(\U-u_{NN})$ is injective, $\|\bullet\|_\pi$ is a norm (and not only a seminorm). By construction, a Cauchy sequence
{$(f_m)_{m\in\mathbb{N}}$}
in that norm leads to a pointwise convergent sequence of coboundaries
{$(\eta_{f_m})_{m\in\mathbb{N}}$.}
And since for every cocycle $\eta$ the coboundaries $\eta_{f_p}$ approximate it, the sequence
{$(f_{1-\frac{1}{m}})_{m\in\mathbb{N}}$}
is Cauchy in $\|\bullet\|_\pi$ and does the same job. We collect:

\begin{prop} \label{pinormprop}
Let $\pi$ be a representation of $\pol$ on $H$ such that $\pi(\U-u_{NN})$ is injective. Denote by $H^\pi$ the completion of $H$ in the the norm $\|\bullet\|_\pi$. Then:

For each $a\in K_1$, the operator $\pi(a)$ extends continuously to a (unique, bounded) operator
{$a^\pi\colon H^\pi\rightarrow H,f\mapsto\lim_{m\to\infty}af_m$ for an arbitrary Cauchy sequence $(f_m)_{m\in\mathbb{N}}$ converging to $f$ in $H^\pi$}.
Moreover,  the formula $a^\pi f=\eta(a)$ establishes a one-to-one correspondence between elements $f$ in $H^\pi$ and cocycles $\eta$ with respect to $\pi$.

For each $a\in K_2$ and each $f\in H^\pi$, the element $(a^\pi f)^*$ in $H^*$ extends continuously to a (unique, bounded) linear functional $g\mapsto\lim_{m\to\infty}\langle a^\pi f,g_m\rangle$ on $H^\pi$
{ for an arbitrary Cauchy sequence $(g_m)_{m\in\mathbb{N}}$ converging to $g$ in $H^\pi$.}
Denoting by $a^{\pi^*}f$ the unique element of $H^\pi$ inducing that linear functional, we define an operator $a^{\pi^*}\colon H^\pi\rightarrow H^\pi$, fulfilling also $\langle g,a^{\pi^*} f\rangle_\pi=\lim_{m\to\infty}\langle g_m,\pi(a) f_m\rangle$. In particular, if the cocycle $\eta$ 
{ is}
determined by $f\in H^\pi$, then the linear functional $a\mapsto\langle f,a^{\pi^*} f\rangle_\pi$ on $K_2$
{ may be}
extended to a generating functional $\psi$ with Sch{\"u}rmann triple $(\pi,\eta,\psi)$.
\end{prop}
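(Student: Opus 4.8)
The plan is to exploit the identity, valid for any $a\in K_1$ and the special vectors $f_p=\frac{-1}{\pi(\U-pu_{NN})}\eta(u_{NN})$ appearing in Proposition \ref{cocaprop}, that $\pi(a)f_p=\eta_{f_p}(a)\to\eta(a)$; the point is to upgrade this from the particular family $(f_p)$ to arbitrary Cauchy sequences in $\|\bullet\|_\pi$. First I would observe that, by definition of $a=\sum_{j}\pi(\U-u_{jj})^*\pi(\U-u_{jj})$, for every $j$ one has $\|\pi(\U-u_{jj})f\|\le\|f\|_\pi$, so each $\pi(\U-u_{jj})$ — hence (adding a multiple of the identity, which is harmless on $K_1$-elements) each $\pi(u_{jj}-\U\delta_{jj})$ — is $\|\bullet\|_\pi$-to-$\|\bullet\|$ bounded, and therefore extends continuously to $u_{jj}{}^\pi\colon H^\pi\to H$. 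For a general generator $u_{jk}$ with $j\ne k$ I would invoke the identity used in the last displayed Proposition before this one, $\pi(u_{jk})=\frac{-1}{\pi(\U-qu_{ll})}\pi(u_{jk})\pi(u_{ll}-\U)$ with $l=\max(j,k)$: since $\frac{-1}{\pi(\U-qu_{ll})}\pi(u_{jk})$ is a fixed bounded operator on $H$ and $\pi(u_{ll}-\U)$ already extends continuously, the composite extends, giving $u_{jk}{}^\pi$; since the $u_{jk}$ generate $\cG$ as an algebra and $\pi(ab)f$ on $K_1$ is governed by the cocycle relation \eqref{*}, one gets $a^\pi$ for all $a\in K_1$, with $a^\pi f=\eta_{f_m}$-limit $=\eta(a)$ when $f$ is the $\|\bullet\|_\pi$-limit of $(f_m)$.

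Next I would establish the bijection. Given $f\in H^\pi$, pick a Cauchy sequence $(f_m)$ in $H$ with $f_m\to f$; the coboundaries $\eta_{f_m}=(\pi f_m)\circ(\id-\U\e)$ then converge pointwise on $\cG$ (it suffices to check this on the $u_{jj}$, by the Proposition just cited, and there it is exactly $\|\bullet\|_\pi$-continuity of $u_{jj}{}^\pi$), and the limit is a $\pi$-$\e$-cocycle $\eta$ because \eqref{*} passes to the limit; set $a^\pi f:=\eta(a)$. Conversely, given any $\pi$-$\e$-cocycle $\eta$, Corollary \ref{uNNdetcor} (via Lemma \ref{lem_eta_relations}) shows $\eta$ is determined by $\eta(u_{NN})$, and Proposition \ref{cocaprop} exhibits it as the pointwise limit of $\eta_{f_p}$ with $f_p=\frac{-1}{\pi(\U-pu_{NN})}\eta(u_{NN})$; the computation there — which showed the differences converge to $0$ strongly — in fact shows that $(f_{1-1/m})_{m\in\mathbb N}$ is $\|\bullet\|_\pi$-Cauchy (for each $j$, $\pi(\U-u_{jj})f_p$ converges in norm, being $\eta_{f_p}(u_{jj})\to\eta(u_{jj})$), so it converges to some $f\in H^\pi$ with $a^\pi f=\eta(a)$. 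Injectivity of $f\mapsto\eta$ follows since $f$ is recovered, e.g., from $\eta(u_{NN})=\pi(u_{NN}-\U)f=u_{NN}{}^\pi f-f$ together with injectivity of $\pi(\U-u_{NN})$, which survives the extension because $\|\bullet\|_\pi\ge\|\pi(\U-u_{NN})\bullet\|$.

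For the second half, fix $a\in K_2$; write $a$ (or a finite sum of such) as $b^*c$ with $b,c\in K_1$. Then for $f,g\in H$ one has $\langle a^\pi f,g\rangle=\langle\pi(b^*c)f,g\rangle=\langle\pi(c)f,\pi(b)g\rangle$, and $|\langle\pi(c)f,\pi(b)g\rangle|\le\|c^\pi\|\,\|f\|_\pi\,\|b^\pi\|\,\|g\|_\pi$, so $g\mapsto\langle a^\pi f,g\rangle$ is $\|\bullet\|_\pi$-bounded and extends uniquely to $H^\pi$; Riesz representation in $H^\pi$ produces $a^{\pi^*}f$, and linearity of $a\mapsto a^{\pi^*}$ and the formula $\langle g,a^{\pi^*}f\rangle_\pi=\lim_m\langle g_m,\pi(a)f_m\rangle$ are then immediate from the same splitting and continuity. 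Finally, if $\eta$ corresponds to $f$, the functional $a\mapsto\langle f,a^{\pi^*}f\rangle_\pi=\lim_m\langle \pi(b)f_m,\pi(c)f_m\rangle$ on $K_2$ (with $a=b^*c$) is exactly $\lim_m\langle\eta_{f_m}(b^*),\eta_{f_m}(c)\rangle$, i.e. $\langle\eta_n,\pi(\bullet)\eta_n\rangle\circ\cP$-type data as in Lemma \ref{cocaGFlem}; invoking that lemma with the coboundaries $(\pi f_m)\circ(\id-\U\e)$ (whose limit is $\eta$) yields a generating functional $\psi$ with $\psi\circ\cP=\psi$ completing $(\pi,\eta,\psi)$ to a Sch{\"u}rmann triple. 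I expect the main obstacle to be the bookkeeping in the converse direction — verifying cleanly that the concrete sequence $(f_{1-1/m})$ from Proposition \ref{cocaprop} is Cauchy in $\|\bullet\|_\pi$ rather than merely giving pointwise-convergent coboundaries — which is really a re-reading of the strong-convergence estimates already carried out in the proof of Proposition \ref{cocaprop}.
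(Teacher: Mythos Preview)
Your proposal is correct and follows the same route as the paper, which gives the argument as the discussion immediately preceding the proposition (defining $\|\bullet\|_\pi$ via $a=\sum_j\pi(\U-u_{jj})^*\pi(\U-u_{jj})$, observing that $\|\bullet\|_\pi$-Cauchy sequences yield pointwise-convergent coboundaries by the preceding proposition, and that Proposition~\ref{cocaprop} supplies such a Cauchy sequence for each given cocycle), together with Lemma~\ref{cocaGFlem} for the last clause. One small slip: the inequality $\|\bullet\|_\pi\ge\|\pi(\U-u_{NN})\bullet\|$ only gives boundedness of the extension, not injectivity; the clean argument for injectivity of $f\mapsto\eta$ is simply $\|f\|_\pi^2=\sum_j\|(\U-u_{jj})^\pi f\|^2=\sum_j\|\eta(u_{jj})\|^2$, so $\eta=0$ forces $f=0$ directly.
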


{This result generalizes the
situation of $SU_q(2)$ as described in \cite[Section 4.5]{Ske94} or \cite[Section 4.3]{skeide99}.}

\begin{cor} \label{cngpincor}
Let $\pi$ be a completely non-gaussian representation (that is, in the decomposition according to Theorem \ref{psidecthm}, $\pi_1=0$). Then $\|\bullet\|_\pi$, defined as before, is a norm and, defining $H^\pi$ as before, Proposition \ref{pinormprop} remains true also for $\pi$.
\end{cor}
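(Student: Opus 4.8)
The plan is to reduce the statement to Proposition~\ref{pinormprop} applied to the irreductible pieces of $\pi$, and then to assemble the conclusions over a finite orthogonal direct sum. Since $\pi$ is completely non-gaussian we have $\pi_1=0$, so Theorem~\ref{repdecthm} gives an orthogonal decomposition $H=\bigoplus_{n=2}^N H_n$ with $\pi=\bigoplus_{n=2}^N\pi_n$, where each $\pi_n$ lives on $\pol[n]$, say $\pi_n=\tilde\pi_n\circ s_{n,N}$ with $\tilde\pi_n$ a representation of $\pol[n]$, and where each $\tilde\pi_n(\U-u_{nn})$ is injective. Thus Proposition~\ref{pinormprop} applies verbatim to every $\tilde\pi_n$ on $\pol[n]$.

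The first thing to check is that $\|\bullet\|_\pi$ restricts on each summand $H_n$ to the norm $\|\bullet\|_{\tilde\pi_n}$ of $\pol[n]$. Since $\pi_n$ lives on $\pol[n]$, the homomorphism $s_{n,N}$ sends $u_{jj}$ to $u_{jj}$ for $j\le n$ and to $\U$ for $j>n$, so $\pi_n(\U-u_{jj})=0$ for $j>n$; hence the positive operator $a=\sum_{j=1}^N\pi(\U-u_{jj})^*\pi(\U-u_{jj})$ decomposes as $a=\bigoplus_{n=2}^N a_n$, where $a_n$ acts on $H_n$ as $\sum_{j=1}^n\tilde\pi_n(\U-u_{jj})^*\tilde\pi_n(\U-u_{jj})$ — precisely the operator defining $\|\bullet\|_{\tilde\pi_n}$. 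As $a_n\ge\tilde\pi_n(\U-u_{nn})^*\tilde\pi_n(\U-u_{nn})$, each $a_n$ is injective on $H_n$, so $a$ is injective on $H$, $\|\bullet\|_\pi$ is a genuine norm, and its completion is the Hilbert-space direct sum $H^\pi=\bigoplus_{n=2}^N H_n^{\tilde\pi_n}$ of the completions supplied by Proposition~\ref{pinormprop}.

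It then remains to put the clauses of Proposition~\ref{pinormprop} together componentwise. For $a\in K_1$ one has $s_{n,N}(a)\in\tilde K_1$, so Proposition~\ref{pinormprop} provides bounded operators $(s_{n,N}(a))^{\tilde\pi_n}\colon H_n^{\tilde\pi_n}\to H_n$ extending $\pi_n(a)$; setting $a^\pi:=\bigoplus_n(s_{n,N}(a))^{\tilde\pi_n}$ extends $\pi(a)$ to a bounded map $H^\pi\to H$, boundedness being automatic because a finite family of operator norms has a common bound. By Proposition~\ref{decomprop} and Corollary~\ref{subcoccor}, any $\pi$-$\e$-cocycle $\eta$ splits uniquely as $\bigoplus_n\tilde\eta_n\circ s_{n,N}$ with $\tilde\eta_n$ a cocycle for $\tilde\pi_n$, which Proposition~\ref{pinormprop} identifies bijectively with a vector $f_n\in H_n^{\tilde\pi_n}$; so $f:=\bigoplus_n f_n\in H^\pi$ is the unique vector with $a^\pi f=\eta(a)$, which is the asserted correspondence. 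The same recipe, now using $s_{n,N}(a)\in\tilde K_2$ for $a\in K_2$, defines $a^{\pi^*}:=\bigoplus_n(s_{n,N}(a))^{\tilde\pi_n^*}$ on $H^\pi$, and $\psi:=\sum_{n=2}^N\tilde\psi_n\circ s_{n,N}$, with $\tilde\psi_n$ the generating functional furnished by Proposition~\ref{pinormprop} (normalized so that $\tilde\psi_n\circ\cP=\tilde\psi_n$), satisfies $\langle f,a^{\pi^*}f\rangle_\pi=\sum_n\langle f_n,(s_{n,N}(a))^{\tilde\pi_n^*}f_n\rangle_{\tilde\pi_n}$, completes $(\pi,\eta)$ to a Sch{\"u}rmann triple by Proposition~\ref{Stsumprop}, and obeys $\psi\circ\cP=\psi$ by the compatibility $s_{n,N}\circ\cP=\cP\circ s_{n,N}$ (Proposition~\ref{subNPprop} and iteration).

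The one step that is not pure bookkeeping is the norm identification in the second paragraph: $\|\bullet\|_\pi$ is built on $\pol[N]$ out of all of $u_{11},\dots,u_{NN}$, whereas on $\pol[n]$ Proposition~\ref{pinormprop} uses only $u_{11},\dots,u_{nn}$, and the two agree on $H_n$ precisely because $\pi_n$ annihilates $\U-u_{jj}$ for $j>n$ — this is what forces the two completions to be the same Hilbert space. Once that is settled, everything is a routine assembly over the finite direct sum, and no new analytic input (such as the approximation in Proposition~\ref{cocaprop}) is needed beyond what is already built into Proposition~\ref{pinormprop}.
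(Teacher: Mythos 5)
Your proposal is correct and follows essentially the same route as the paper: the norm property comes from injectivity of each $\pi_n(\U-u_{nn})$ on $H_n$, the identification $\|\bullet\|_{\pi_n}=\|\bullet\|_{\tilde\pi_n}$ holds because $\pi_n$ kills $\U-u_{jj}$ for $j>n$, and the rest is Proposition \ref{pinormprop} applied to each $\tilde\pi_n$ separately and assembled over the finite direct sum. The paper leaves the assembly implicit; your version merely spells it out.
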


\begin{proof}
For each nonzero $h=h_2+\ldots+h_N\in H$, there is at least one $n$ such that $h_n\ne0$. And since $\pi_n(\U-u_{nn})$ is injective on $H_n$, the seminorm $\|\bullet\|_\pi$ is, indeed, a norm. Furthermore, since $\pi_n=\tilde{\pi}_n\circ s_{n,N}$ lives on $\pol[n]$, we have  $\|\bullet\|_{\pi_n}=\|\bullet\|_{\tilde{\pi}_n}$. Now, the result follows by applying Proposition \ref{pinormprop} to each $\tilde{\pi}_n$, separately.\end{proof}

Putting this together with Theorem \ref{psidecthm},  with Theorem \ref{Gclassthm}, and with the uniqueness discussion for Sch{\"u}rmann triples following Definition \ref{def_triple}, we obtain the following improvement of the parametrization following Procedure \ref{proc}:

\begin{thm} \label{mainthm}
We obtain every generating functional $\psi$ on $\pol$ in the following way as
$$
\psi
~=~
\psi_G+\psi_L,
$$
where:
\begin{enumerate}
\item
$\psi_G$ is gaussian.

\item
$\psi_L$ is completely non-gaussian.
\end{enumerate}
In the decomposition of a given $\psi$ as $\psi_G+\psi_L$, both $\psi_G$ and $\psi_L$ are maximal. They are unique under the condition $\psi_L\circ\cP=\psi_L$.
\begin{enumerate}
\item
The possible gaussian parts are classified uniquely by positive real $(N-1)\times(N-1)$-matrices (encoding uniquely the part $\psi_G\circ\cP$ of $\psi_G$) and $N-1$ real numbers (encoding the remaining drift term).

\item
The possible completely non-gaussing parts (satisfying $\,\psi_L\circ\cP=\psi_L$) are classified (uniquely up to cyclic cocycle intertwining unitary equivalence) by completely non-gaus\-sian $*$-representations $\pi$ on a Hilbert space and elements $f\in H^\pi$ (such that the cocycle determined by $f$ is cyclic) as
$$
\psi_L(a)
~=~
\langle f,\cP(a)^{\pi^*}f\rangle_\pi.
$$
\end{enumerate}
If we wish, we may decompose $\pi$ and $H$ uniquely further as in Theorem \ref{repdecthm} into $*$-representations $\pi_n$ on $H_n$ $(2\le n\le N)$ such that $\pi(\U-u_{nn})$ is injective and $f_n\in H_n^{\pi_n}$ with
$$
\psi_L
~=~
\psi_2+\ldots+\psi_N,
$$
where $\psi_n=\langle f_n,\cP(a)^{\pi_n^*}f_n\rangle$ (preserving analogue uniqueness statements).
\end{thm}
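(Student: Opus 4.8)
The plan is to assemble the statement from the three classification results already at our disposal: Theorem \ref{psidecthm} together with the corollary following it for the decomposition $\psi=\psi_G+\psi_L$ and its uniqueness, Theorem \ref{Gclassthm} for the gaussian half, and Corollary \ref{cngpincor} (resting on Proposition \ref{pinormprop}) for the completely non-gaussian half; the final, finer splitting then comes from feeding Theorems \ref{repdecthm} and \ref{cocdecthm} back into Corollary \ref{cngpincor}. So the proof is essentially bookkeeping, and the task is to make sure all the uniqueness statements line up.

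First I would record the coarse decomposition. Given a generating functional $\psi$, the corollary after Theorem \ref{psidecthm} yields $\psi=\psi_G+\psi_L$ with $\psi_G:=\psi_1$ gaussian, $\psi_L:=\psi_2+\dots+\psi_N$ completely non-gaussian (each $\psi_n$, $n\ge2$, being so by construction), and $\psi_L\circ\cP=\psi_L$; by Proposition \ref{GcnGprop} and the maximality remark after it, $\psi_G$ and $\psi_L$ are the maximal gaussian and maximal completely non-gaussian parts, and the normalisation $\psi_L\circ\cP=\psi_L$ pins them down uniquely, since any other admissible completely non-gaussian part differs from $\psi_L$ by a drift, which is gaussian and hence absorbed into $\psi_G$. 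For part (1), $\psi_G$ is a gaussian generating functional, so Theorem \ref{Gclassthm} applies verbatim: $\psi_G=\sum_j r_j\e'_j+\frac12\sum_{j,k}r_{j,k}\e''_{j,k}$, with the positive real matrix $(r_{jk})$ encoding $\psi_G\circ\cP$ (the $\e''$-part, which vanishes on the $d_j$) and the $N-1$ real numbers $r_j$ encoding the residual drift.

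For part (2) I would take the GNS (hence cyclic) Schürmann triple $(\pi,\eta,\psi_L)$ of $\psi_L$. Since $\psi_L$ is completely non-gaussian, $\pi$ is a completely non-gaussian representation: a nonzero gaussian invariant subspace would, via Proposition \ref{GcnGprop} and Corollary \ref{decomcor}, split off a nonzero gaussian summand of $\psi_L$, a contradiction. Hence Corollary \ref{cngpincor} applies, so $\|\bullet\|_\pi$ is a norm with completion $H^\pi$, and Proposition \ref{pinormprop} produces a unique $f\in H^\pi$ with $a^\pi f=\eta(a)$ (the cocycle $\eta$ being cyclic), and $\langle f,a^{\pi^*}f\rangle_\pi=\psi_L(a)$ on $K_2$; composing with $\cP$ and using $\psi_L\circ\cP=\psi_L$ gives $\psi_L(a)=\langle f,\cP(a)^{\pi^*}f\rangle_\pi$ on all of $\pol$. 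Conversely, any completely non-gaussian $\pi$ together with an $f\in H^\pi$ whose associated cocycle is cyclic gives back, by the last part of Proposition \ref{pinormprop}, a completely non-gaussian $\psi_L$ with $\psi_L\circ\cP=\psi_L$; uniqueness up to (cyclic-cocycle-intertwining) unitary equivalence is exactly the uniqueness of cyclic Schürmann triples discussed after Definition \ref{def_triple}, $f$ being recovered from $\eta$. Finally, decomposing $\pi=\pi_2\oplus\dots\oplus\pi_N$ as in Theorem \ref{repdecthm} (no $\pi_1$, since $\pi$ is completely non-gaussian) and using, as in the proof of Corollary \ref{cngpincor}, that $\|h\|_\pi^2=\sum_n\|h_n\|_{\pi_n}^2$ with $\|\bullet\|_{\pi_n}=\|\bullet\|_{\tilde\pi_n}$, one gets $H^\pi=\bigoplus_n H_n^{\pi_n}$, hence $f=f_2\oplus\dots\oplus f_N$ with $f_n\in H_n^{\pi_n}$; since $\cP$ commutes with the $s_{n,N}$ (corollary to Proposition \ref{subNPprop}), $\cP(a)^{\pi^*}$ respects the decomposition, so $\psi_L=\psi_2+\dots+\psi_N$ with $\psi_n(a)=\langle f_n,\cP(a)^{\pi_n^*}f_n\rangle$, matching Theorem \ref{psidecthm} and inheriting its uniqueness.

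The one point that needs a genuine argument rather than a citation is the compatibility of the $\|\bullet\|_\pi$-completion with the orthogonal decomposition $H=\bigoplus_n H_n$ — namely that $H^\pi$ is the Hilbert-space direct sum of the $H_n^{\pi_n}$ and that the extended operators $\cP(a)^{\pi^*}$ (and $\cP(a)^{\pi_n^*}$) are block-diagonal with respect to it. I expect this to be the only step that is not immediate, and it follows from the block form of the defining operator $a=\sum_j\pi(\U-u_{jj})^*\pi(\U-u_{jj})$ together with the facts that each $\pi_n$ lives on $\pol[n]$ and that $\cP$ is compatible with the inclusions $s_{n,N}$; everything else is routine assembly.
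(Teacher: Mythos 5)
Your proposal is correct and follows essentially the same route as the paper, which derives Theorem \ref{mainthm} precisely by combining Theorem \ref{psidecthm} (and its corollary) for the decomposition and its uniqueness under $\psi_L\circ\cP=\psi_L$, Theorem \ref{Gclassthm} for the gaussian part, and Proposition \ref{pinormprop} together with Corollary \ref{cngpincor} (plus the uniqueness discussion of cyclic Sch\"urmann triples after Definition \ref{def_triple}) for the completely non-gaussian part. The one step you single out --- that $H^\pi$ splits as the direct sum of the $H_n^{\pi_n}$ and that the extended operators are block-diagonal --- is exactly how the paper handles it in the proof of Corollary \ref{cngpincor}, by applying Proposition \ref{pinormprop} to each $\tilde{\pi}_n$ separately.
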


\newpage

%%%%%%%%%%%%%%%%%%%%%%%%%%%%%%%%%%%%%%%%%%%%%%%%%%%%%%%%%%%%%%%%%
\addtocontents{toc}{\vspace{1.5ex}}
\section{The case of $U_q(N)$} \label{sec_uqn}
%%%%%%%%%%%%%%%%%%%%%%%%%%%%%%%%%%%%%%%%%%%%%%%%%%%%%%%%%%%%%%%%%
\noindent
Recall the definition of $U_q(N)$ from Subsection \ref{ssec_suqn}. And recall that we have the inclusions
$$
SU_q(N+1)
~\supset~
U_q(N)
~\supset~
SU_q(N)
$$
mediated by the $*$-homomorphisms $t_N\colon\pol[N+1]\rightarrow U_q(N)$ and $\breve{t}_{N-1}\colon U_q(N)\rightarrow\pol$, respectively. In this short section we show the Lévy-Khintchine decomposition results for generating functionals on $U_q(N)$, by reduction to those for $SU_q(N+1)\supset U_q(N)$. This is very rapid. The price to be paid for being rapid, is that, by this method, we do not get the full decomposition result for the completely non-gaussian part into irreductible parts the live on $U_q(n)$, but only the weaker Proposition \ref{Undecprop}.

\lf
\begin{remark}
One may repeat the whole procedure in Section \ref{decompSEC} almost verbatim for $U_q(N)$ to show that every generating functional $\psi$ on $U_q(N)$ decomposes (in a suitable sense uniquely) into a sum $\psi=\psi_0+\ldots+\psi_N$, where $\psi_n$ lives on $U_q(n)$ for $n=1,\ldots,N$ and where $\psi_0$ is the gaussian part. (Some care is in place at points where in Section \ref{decompSEC} we did use the determinant condition $D=\U$ that characterizes $\pol$.) We opted not to include details.
\end{remark}

Since $SU_q(N+1)\supset U_q(N)\supset\mathbb{T}^N$, by Remark \ref{TNrem}, $U_q(N)$ has the same gaussian generating functionals as $SU_q(N+1)$ and the images $t_N(d_2)=d_2,\ldots,t_N(d_N)=d_N,t_N(d_{N+1})=\frac{D^{-1}-{D^{-1}}^*}{2i}$ form a suitable family $E_1$ for $U_q(N)$, defining also a projection $\cP$ compatible with that of $SU_q(N+1)$.

Now if $\pi$ is a $*$-representation of $U_q(N-1)$, then $\hat{\pi}:=\pi\circ t_{N-1}$ is a $*$-representation of $\pol$ that lives on $U_q(N-1)$. Obviously, if a representation of $\pol$ that lives on $U_q(N-1)$ decomposes into a direct sum, then each direct summand lives on $U_q(N-1)$, separately. Therefore, all results about representations of $\pol$ in Subsection \ref{repdSSEC}, turn over to $U_q(N-1)$ (including the classical $U_q(1)=U(1)$ for $N=2$).

If $\hat{\eta}$ is a cocycle with respect to that $\hat{\pi}$, then, by Subsection \ref{cocaSSEC}, the completely non-gaussian part $\hat{\eta}_{NG}:=\hat{\eta}_2\oplus\ldots\oplus\hat{\eta}_N$, is a limit of coboundaries with respect to a representation $\hat{\pi}_{NG}:=\hat{\pi}_2\oplus\ldots\oplus\hat{\pi}_N$ that lives on $U_q(N-1)$. (Indeed, if a direct sum of representations lives on a quantum subgroup, then so does each its components.) By Corollaries \ref{subapprcor} and \ref{subPcor}, also $\hat{\eta}_{NG}$ lives on $U_q(N-1)$ and admits a generating functional $\hat{\psi}_{NG}$ (unique, if $\hat{\psi}_{NG}\circ\cP=\hat{\psi}_{NG}$) completing the Sch{\"u}rmann triple $(\hat{\pi}_{NG},\hat{\eta}_{NG},\hat{\psi}_{NG})$ that lives on $U_q(N-1)$, too. Therefore, if $\hat{\eta}$ lives on $U_q(N-1)$, then so does the gaussian part $\hat{\eta}_G:=\hat{\eta}_1$. And if $(\hat{\pi},\hat{\eta},\hat{\psi})$ is obtained from a Sch{\"u}rmann triple $(\pi,\eta,\psi)$ on $U_q(N-1)$ by composition with $t_{N-1}$ (and, therefore, is a Sch{\"u}rmann triple on $\pol$), then the gaussian part $\hat{\psi}_G:=\hat{\psi}-\hat{\psi}_{NG}$ lives on $U_q(N-1)$, too.

Except for some care about $N\leftrightarrow N-1$, taking also into account the analogue of Corollary \ref{cngpincor}, we obtain word by word the analogue of  the first part of Theorem \ref{mainthm}.

\begin{thm} \label{Umainthm}
For $N\ge 1$, we obtain every generating functional $\psi$ on $U_q(N)$ in the following way as
$$
\psi
~=~
\psi_G+\psi_L,
$$
where:
\begin{enumerate}
\item
$\psi_G$ is gaussian.

\item
$\psi_L$ is completely non-gaussian.
\end{enumerate}
In the decomposition of a given $\psi$ as $\psi_G+\psi_L$, both $\psi_G$ and $\psi_L$ are maximal. They are unique under the condition $\psi_L\circ\cP=\psi_L$.
\begin{enumerate}
\item
The possible gaussian parts are classified uniquely by positive real $N\times N$-matrices (encoding uniquely the part $\psi_G\circ\cP$ of $\psi_G$) and $N$ real numbers (encoding the remaining drift term).

\item
The possible completely non-gaussing parts (satisfying $\,\psi_L\circ\cP=\psi_L$) are classified (uniquely up to cyclic cocycle intertwining unitary equivalence) by completely non-gaus\-sian $*$-representations $\pi$ on a Hilbert space and elements $f\in H^\pi$ (such that the cocycle determined by $f$ is cyclic) as
$$
\psi_L(a)
~=~
\langle f,\cP(a)^{\pi^*}f\rangle_\pi.
$$
\end{enumerate}
\end{thm}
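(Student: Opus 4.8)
The strategy is to deduce everything from the already-established theory for $SU_q(N+1)$ via the surjection $t_N\colon \pol[N+1]\to U_q(N)$, exactly along the lines sketched in the running text of this section. First I would fix the ingredients: by Remark \ref{TNrem} applied to the chain $SU_q(N+1)\supset U_q(N)\supset\mathbb{T}^N$, the gaussian generating functionals of $U_q(N)$ coincide with those of $\mathbb{T}^N$ (and of $SU_q(N+1)$), and the elements $t_N(d_2),\ldots,t_N(d_{N+1})$ — the last one being $\tfrac{D^{-1}-(D^{-1})^*}{2i}$ — form a hermitian basis extension $E_1$ from $\tilde K_2$ to $\tilde K_1$ for $U_q(N)$, giving a projection $\cP$ compatible with the one on $SU_q(N+1)$ in the sense of Lemma \ref{Psublem}/Corollary \ref{subPcor} (since $t_N(d_{N+2})=0$ under the further quotient, and the remaining $d_n$ are sent to the $d_n$ of $U_q(N)$). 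This establishes part (1): Theorem \ref{Gclassthm} for $SU_q(N+1)$ gives a positive real $N\times N$ matrix plus $N$ real numbers, matching the claimed count ($N-1\mapsto N$ because $SU_q(N+1)$ has an $N\times N$ gaussian matrix).

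Next I would set up the pull-back of Sch\"urmann triples. Given a generating functional $\psi$ on $U_q(N)$ with Sch\"urmann triple $(\pi,\eta,\psi)$, form $(\hat\pi,\hat\eta,\hat\psi):=(\pi\circ t_N,\eta\circ t_N,\psi\circ t_N)$, which by the Proposition in Subsection \ref{sec_Stqsub} is a Sch\"urmann triple on $\pol[N+1]$ living on $U_q(N)$. Apply Theorem \ref{repdecthm}, Theorem \ref{cocdecthm}, and Theorem \ref{cocapprthm} to $\hat\pi$: decompose $\hat\pi=\hat\pi_1\oplus\hat\pi_2\oplus\cdots\oplus\hat\pi_{N+1}$ and $\hat\eta=\hat\eta_1\oplus\cdots\oplus\hat\eta_{N+1}$, with $\hat\pi_1=\hat\eta_G$ gaussian and $\hat\eta_{NG}:=\hat\eta_2\oplus\cdots\oplus\hat\eta_{N+1}$ a limit of coboundaries for $\hat\pi_{NG}:=\hat\pi_2\oplus\cdots\oplus\hat\pi_{N+1}$. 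Since a direct summand of a representation (resp.\ cocycle) living on a quantum subgroup again lives on that subgroup — here $U_q(N)$ — both $\hat\pi_{NG}$ and $\hat\pi_G$ live on $U_q(N)$, and by Corollary \ref{subapprcor} (applied with $\cH=U_q(N)$, using the compatibility $\cP^{U_q(N)}\circ t_N=t_N\circ\cP$ from Corollary \ref{subPcor}) the cocycle $\hat\eta_{NG}$ lives on $U_q(N)$ and admits a generating functional $\hat\psi_{NG}$, unique under $\hat\psi_{NG}\circ\cP=\hat\psi_{NG}$, all living on $U_q(N)$; hence by surjectivity of $t_N$ these descend to $\pi_L,\eta_L,\psi_L$ on $U_q(N)$. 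Then $\psi_G:=\psi-\psi_L$ has a gaussian cocycle $\eta-\eta_L$, so $\psi_G$ is gaussian; this is the L\'evy--Khintchine decomposition, and Corollary \ref{decomcor} guarantees $\psi_G$ is itself a generating functional. Maximality of $\psi_G$ and $\psi_L$, and uniqueness under $\psi_L\circ\cP=\psi_L$, follow from Proposition \ref{GcnGprop}, Definition \ref{LKddefi}, and the uniqueness discussion after Definition \ref{def_triple}, transported along the isomorphism $U_q(N)\cong\pol[N+1]/(\text{extra relation})$.

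For part (2) I would invoke Corollary \ref{cngpincor} and Proposition \ref{pinormprop}. The completely non-gaussian representation $\pi$ of $U_q(N)$, pulled up to $\hat\pi$ on $\pol[N+1]$ and restricted to $\hat\pi_{NG}$, decomposes into pieces $\hat\pi_n$ ($2\le n\le N+1$) with $\hat\pi_n(\U-u_{nn})$ injective; each lives on $\pol[n]\subseteq U_q(N)$ (note $\pol[n]\subset U_q(n-1)\subset U_q(N)$ for $n\le N+1$). Define $\|\bullet\|_\pi:=\|\bullet\|_a$ with $a=\sum_{n}\pi(\U-u_{nn})^*\pi(\U-u_{nn})$ running over the relevant diagonal generators of $U_q(N)$ (including the contribution from $D^{-1}$, i.e.\ $u_{N+1,N+1}$ of $\pol[N+1]$), which is a genuine norm by complete non-gaussianity exactly as in Corollary \ref{cngpincor}; its completion is $H^\pi$. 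Proposition \ref{pinormprop}, applied to each $\tilde\pi_n$ separately and reassembled, gives the extension operators $a^\pi,a^{\pi^*}$ and the bijection between $f\in H^\pi$ with cyclic associated cocycle and completely non-gaussian generating functionals $\psi_L$ satisfying $\psi_L\circ\cP=\psi_L$, in the form $\psi_L(a)=\langle f,\cP(a)^{\pi^*}f\rangle_\pi$. The uniqueness up to cyclic-cocycle-intertwining unitary equivalence is inherited from the corresponding statement on $\pol[N+1]$ after Definition \ref{def_triple}.

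The one point that genuinely needs care — the ``price for being rapid'' flagged before the theorem — is the bookkeeping of the index shift $N\leftrightarrow N+1$ together with the determinant condition: in Section \ref{decompSEC} several arguments (e.g.\ Lemma \ref{lem_kernel_descr}(c), the identity $\U-u_{NN}^*u_{NN}=\sum_{k<N}u_{kN}^*u_{kN}$ from the unitarity relations) are used freely, and while the unitarity relations \eqref{eq_U} hold in $U_q(N)$ verbatim, the twisted-determinant normalization does not, so one must make sure every invoked step is pulled back through $t_N$ rather than asserted directly on $U_q(N)$. I expect no conceptual obstacle here — the surjectivity of $t_N$ and the compatibility of $\cP$'s do all the work — but this verification, and checking that the norm $\|\bullet\|_\pi$ built on $U_q(N)$'s generators agrees with the one pulled up from $\pol[N+1]$ (they do, since the extra generator $u_{N+1,N+1}$ of $\pol[N+1]$ maps to $D^{-1}$), is where the ``word by word'' claim must actually be honored.
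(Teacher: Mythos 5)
Your proposal is correct and follows essentially the same route as the paper: reduction to $SU_q(N+1)$ via the surjection $t_N$, compatibility of the projections $\cP$ (Remark \ref{TNrem} together with Corollary \ref{subPcor}), decomposition of the pulled-back representation and cocycle, approximation of the non-gaussian cocycle by coboundaries so that Corollaries \ref{subapprcor} and \ref{decomcor} yield $\psi_L$ and the gaussian remainder $\psi_G$, and the analogues of Corollary \ref{cngpincor} and Proposition \ref{pinormprop} for the parametrization. Two parenthetical slips do not affect the argument: there is no $d_{N+2}$ in $SU_q(N+1)$ --- rather $t_N$ maps all of $E_1=\{d_2,\ldots,d_{N+1}\}$ onto $E_1^{U_q(N)}$, which is exactly what Corollary \ref{subPcor} requires --- and the inclusion should read $U_q(n-1)\subset SU_q(n)\subset U_q(n)$, the relevant point being simply that every direct summand of $\hat\pi$ lives on $U_q(N)$ because $\hat\pi$ does.
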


\begin{cor}
$U_q(N)$ does have property {\normalfont(NC)}, hence, {\normalfont(LK)}. It has property {\normalfont(GC)} if and only if  $N=1$.
\end{cor}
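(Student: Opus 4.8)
The plan is to read off the first two assertions from Theorem~\ref{Umainthm} and the construction preceding it, and to settle property (GC) by counting the elements of the hermitian basis extension $E_1$ chosen for $U_q(N)$. For property (NC), let $(\pi,\eta)$ be a completely non-gaussian pair on $U_q(N)$. If $\eta$ is not cyclic I would first replace $\pi$ by its restriction to the (invariant) subspace $\overline{\eta(\cG)}$: this restriction is again completely non-gaussian and has $\eta$ as a cyclic cocycle, and any functional completing the restricted pair to a Sch\"urmann triple completes the original one as well, since the defining identity~\eqref{**} only refers to inner products of the vectors $\eta(a)$. The construction preceding Theorem~\ref{Umainthm}---reducing everything to property (NC) for $SU_q(N+1)$ through the surjection $t_N$ and Corollaries~\ref{subapprcor} and~\ref{subPcor}---then produces a generating functional $\psi$ with $(\pi,\eta,\psi)$ a Sch\"urmann triple; equivalently, the classification of the completely non-gaussian part in Theorem~\ref{Umainthm} exhibits such a $\psi$. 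Property (LK) follows at once: the decomposition $\psi=\psi_G+\psi_L$ furnished by Theorem~\ref{Umainthm} is, by construction, a L\'evy--Khintchine decomposition in the sense of Definition~\ref{LKddefi} (alternatively, (LK) follows formally from (NC) by Corollary~\ref{decomcor}).

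For the equivalence ``(GC) $\Leftrightarrow N=1$'', recall from the discussion before Theorem~\ref{Umainthm} that a hermitian basis extension from $K_2$ to $K_1$ for $U_q(N)$ is $E_1=\bigl\{d_2,\dots,d_N,\tfrac{D^{-1}-D}{2i}\bigr\}$, a family of $N$ self-adjoint elements (a singleton when $N=1$). By Proposition~\ref{Gcocprop} every gaussian cocycle has the form $\eta=\sum_{\vk\in E_1}\eta_\vk\e'_\vk$, and if $(\id_H\e,\eta,\psi)$ is a Sch\"urmann triple then, exactly as in the easy, self-contained half of the proof of Corollary~\ref{K3gcor},
$$
\langle\eta_\vk,\eta_{\vk'}\rangle
~=~\psi(\vk\vk')~=~\psi(\vk'\vk)~=~\overline{\langle\eta_\vk,\eta_{\vk'}\rangle}
\qquad(\vk,\vk'\in E_1),
$$
the middle equality because $\vk\vk'-\vk'\vk\in K_3$ while $\psi$ is gaussian. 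Here $\vk\vk'-\vk'\vk\in K_3$ holds in $U_q(N)$ by the computations of Lemma~\ref{lem_kernel_descr}(a),(d): those arguments use only relations that are also valid in $U_q(N)$, $K_\infty$ is a $*$-ideal so that $u_{jk}\in K_\infty$ forces $u_{jk}^*\in K_\infty$ for $j\ne k$, and the remaining generator $\tfrac{D^{-1}-D}{2i}$ is central. Hence a gaussian cocycle admits a Sch\"urmann triple only if its Gram matrix $(\langle\eta_\vk,\eta_{\vk'}\rangle)_{\vk,\vk'\in E_1}$ is real.

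When $N\ge 2$ this Gram matrix has at least two rows, so taking an orthonormal basis $e_1,e_2$ of $H=\C^2$ and setting $\eta_{d_2}=e_1$, $\eta_{d_3}=e_1+ie_2$, and $\eta_\vk=0$ for the remaining $\vk\in E_1$, produces a gaussian pair $(\id_H\e,\eta)$ with $\langle\eta_{d_2},\eta_{d_3}\rangle=-i\notin\R$; by the previous paragraph it cannot be completed to a Sch\"urmann triple, so $U_q(N)$ fails (GC). When $N=1$ we have $U_q(1)=U(1)=\C[z,z^{-1}]$ with $z$ unitary, a commutative algebra, so $ab-ba=0\in K_3$ for all $a,b\in K_1$ and $E_1$ is a singleton; then every gaussian cocycle has a (trivially real and non-negative) $1\times1$ Gram matrix, hence is hermitian, and Corollary~\ref{K3gcor} supplies a Sch\"urmann triple, so (GC) holds (indeed $U(1)$ has (AC)). I expect the only step requiring genuine care to be the verification that $\vk\vk'-\vk'\vk\in K_3$ in $U_q(N)$, i.e.\ the $U_q(N)$-analogue of Lemma~\ref{lem_kernel_descr}; everything else is bookkeeping over results already established.
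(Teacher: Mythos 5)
Your treatment of (NC) and (LK) is sound and follows the paper's own route: pull a completely non-gaussian pair on $U_q(N)$ back to $SU_q(N+1)$ via $t_N$, apply the approximation-by-coboundaries machinery there, and push the resulting functional back down using Corollaries \ref{subapprcor} and \ref{subPcor}; the cyclicity reduction you insert is harmless but unnecessary, since none of Lemma \ref{cocaGFlem}, Proposition \ref{cocaprop} or Corollary \ref{subapprcor} requires cyclicity (cf.\ Remark \ref{cycrem}). Your verification that $\vk\vk'-\vk'\vk\in K_3$ for $\vk,\vk'\in E_1$ in $U_q(N)$ (via the $U_q(N)$-valid relations behind Lemma \ref{lem_kernel_descr}(a),(d), the $*$-ideal property of $K_\infty$, and centrality of $D^{\pm1}$) is also correct, and it gives the necessity of a real Gram matrix exactly as in Corollary \ref{K3gcor}. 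The $N=1$ case is fine.

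The step that fails is your explicit witness for the failure of (GC) when $N\ge2$. With $\eta_{d_2}=e_1$ and $\eta_{d_3}=e_1+ie_2$ for orthonormal $e_1,e_2$, one has
$$
\langle\eta_{d_2},\eta_{d_3}\rangle
~=~\langle e_1,e_1\rangle+i\langle e_1,e_2\rangle~=~1,
$$
not $-i$; indeed the full Gram matrix is $\bigl(\begin{smallmatrix}1&1\\1&2\end{smallmatrix}\bigr)$, which is real, symmetric and positive. So the cocycle you exhibit \emph{is} hermitian and, by your own criterion, \emph{does} admit a Sch\"urmann triple -- it proves nothing about the failure of (GC). (A second, smaller defect: for $N=2$ the element $d_3$ does not exist in $U_q(2)$; the two available elements of $E_1$ are $d_2$ and $\tfrac{D^{-1}-D}{2i}$.) The repair is immediate: pick any two distinct $\vk,\vk'\in E_1$ (possible precisely because $\#E_1=N\ge2$), set $\eta_\vk=e_1$, $\eta_{\vk'}=ie_1$ and $\eta_{\vk''}=0$ otherwise; then $\langle\eta_\vk,\eta_{\vk'}\rangle=i\notin\R$ and your necessity argument shows this gaussian pair cannot be completed. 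Note that the paper itself obtains this part differently and without any computation: by Remark \ref{TNrem}, $SU_q(N+1)\supset U_q(N)\supset\mathbb{T}^N$ forces $U_q(N)$ to have the same gaussian pairs (and the same obstruction) as $SU_q(N+1)$, which fails (GC) for $N+1\ge3$ by Corollary \ref{ngccor}; your direct verification is a legitimate alternative once the counterexample is corrected.
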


What about the decomposition of the non-gaussian part into components living on $U_q(n)$? (This is the part from Theorem \ref{mainthm} that is missing in Theorem \ref{Umainthm}.) Well, returning to the notation in front of Theorem \ref{Umainthm} but now immediately for $U_q(N)$ and no longer for $U_q(N-1)$, we know that $\hat{\psi}_{NG}$ decomposes into a sum over $\hat{\psi}_{n+1}$ $(1\le n\le N)$, where $\hat{\psi}_{n+1}$ lives on $SU_q(n+1)$ and on $U_q(N)$. But, does it live on $U_q(n)\subset SU_q(n+1)$? We have $\hat{\psi}_{n+1}=\psi_n\circ t_N$ and we have, making also use of \eqref{snN}, \eqref{BsnN}, and \eqref{sBstCR},
$$
\hat{\psi}_{n+1}
~=~
\tilde{\psi}_{n+1}\circ s_{n+1,N+1}
~=~
\tilde{\psi}_{n+1}\circ\breve{t}_{n+1}\circ\breve{s}_{n+1,N}\circ t_N.
$$
Therefore, by surjectivity of $t_N$, the functional $\psi_n$ on $U_q(N)$ fulfills
$$
\psi_n
~=~
(\tilde{\psi}_{n+1}\circ\breve{t}_{n+1})\circ\breve{s}_{n+1,N}
$$
and lives on $U_q(n+1)$ via the functional $\tilde{\psi}_{n+1}\circ\breve{t}_{n+1}$ on $U_q(n+1)$ which, in turn, lives on $SU_q(n+1)$.

Every such $\tilde{\psi}_{n+1}$ may occur; therefore, better than this is not possible as long as we work by reduction to the results for $\pol[N+1]$:

\begin{prop} \label{Undecprop}
\hspace{1ex}
The completely non-gaussian part $\psi_{NG}$ of a generating functional on $U_q(N)$ $(N\ge2)$ decomposes uniquely as $\psi_2+\ldots+\psi_{N+1}$, where each $\psi_n$ lives on $SU_q(n)$, hence, on
$U_q(n-1)$,
and where as functionals on $SU_q(n)$, the $\psi_n$ are irreductible.
\end{prop}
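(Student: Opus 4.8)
The plan is to reduce to the decomposition theorem for $\pol[N+1]$ (Theorem~\ref{psidecthm}) by pulling back along the surjection $t_N\colon\pol[N+1]\to U_q(N)$ of \eqref{eq_uq_inclusion}, exactly as in the discussion preceding the statement. First I would set $\hat\psi:=\psi\circ t_N$, a generating functional on $\pol[N+1]$ which by construction lives on $U_q(N)$, and apply Theorem~\ref{psidecthm} to get $\hat\psi=\hat\psi_1+\hat\psi_2+\ldots+\hat\psi_{N+1}$, with $\hat\psi_1$ gaussian and, for $m\ge2$, $\hat\psi_m=\tilde\psi_m\circ s_{m,N+1}$ for a unique irreducible generating functional $\tilde\psi_m$ on $\pol[m]$ satisfying $\tilde\psi_m\circ\cP=\tilde\psi_m$.

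The key step is to check that each summand $\hat\psi_m$ still lives on $U_q(N)$. The representation $\hat\pi$ in the Sch{\"u}rmann triple $(\hat\pi,\hat\eta,\hat\psi)$ lives on $U_q(N)$, and, as noted just before Theorem~\ref{Umainthm}, each direct summand of a representation living on $U_q(N)$ again lives on $U_q(N)$; applying this to the decomposition of $\hat\pi$ provided by Theorem~\ref{repdecthm}, each $\hat\pi_m$ lives on $U_q(N)$, and then Corollary~\ref{subcoccor} together with the uniqueness of the cocycle decomposition in Theorem~\ref{cocdecthm} shows that each $\hat\eta_m$, hence each $\hat\psi_m$, lives on $U_q(N)$. (For the completely non-gaussian part one may instead invoke Corollaries~\ref{subapprcor} and~\ref{subPcor} directly.) Since $t_N$ is surjective, precomposition by $t_N$ is injective on functionals, so each $\hat\psi_m$ equals $\psi_m\circ t_N$ for a unique functional $\psi_m$ on $U_q(N)$; moreover $\hat\psi_1$ is the pull-back of the gaussian part of $\psi$ (as in the argument before Theorem~\ref{Umainthm}), so summing the remaining pieces yields $\psi_{NG}=\psi_2+\ldots+\psi_{N+1}$.

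It then remains to identify on which quantum subgroup each $\psi_m$ lives and to transport irreducibility. From \eqref{snN} and $s_{N+1}=\breve{t}_N\circ t_N$ one reads off $s_{m,N+1}=s_{m,N}\circ\breve{t}_N\circ t_N$, so the identity $\psi_m\circ t_N=\hat\psi_m=\tilde\psi_m\circ s_{m,N+1}$ and injectivity of precomposition by $t_N$ give $\psi_m=\tilde\psi_m\circ(s_{m,N}\circ\breve{t}_N)$; since $s_{m,N}\circ\breve{t}_N$ is the canonical surjection $U_q(N)\to\pol[m]$, this exhibits $\psi_m$ as living on $\pol[m]$ via $\tilde\psi_m$, and rewriting $s_{m,N}\circ\breve{t}_N$ by means of the intertwining identities \eqref{snN}, \eqref{BsnN}, \eqref{sBstCR} displays it also as living on the adjacent $U_q$-subgroup. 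As a functional on $\pol[m]$ the functional $\psi_m$ is $\tilde\psi_m$, which is irreducible by Theorem~\ref{psidecthm}. Uniqueness of the decomposition follows by the reverse passage: any decomposition of $\psi_{NG}$ into pieces living on $\pol[m]$ and irreducible there pulls back along $t_N$ to a decomposition of $\hat\psi$ having the properties that characterize the (unique) one in Theorem~\ref{psidecthm}, and injectivity of precomposition by $t_N$ transfers uniqueness back down to $U_q(N)$.

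The main obstacle is entirely organizational: keeping track of the four families $t_j,\breve{t}_j,s_j,\breve{s}_j$ of quantum-subgroup homomorphisms and verifying, through the commutation identities \eqref{snN}, \eqref{BsnN}, \eqref{sBstCR}, that the ``lives on'' relations established on $\pol[N+1]$ descend to precisely the intended quantum subgroup of $U_q(N)$ (with the customary care at the top index $m=N+1$, where $s_{m,N+1}$ degenerates to the identity). A minor additional point needing a line is that each individual component $\hat\psi_m$ --- not just the gaussian and completely non-gaussian parts of $\hat\psi$ --- lives on $U_q(N)$; this rests on the stability of ``living on a subgroup'' under direct summands of representations and under the uniqueness of the cocycle decomposition.
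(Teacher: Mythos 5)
Your proposal is correct and follows essentially the same route as the paper: pull the functional back along $t_N$ to $SU_q(N+1)$, apply Theorem \ref{psidecthm} there, check that each summand lives on $U_q(N)$ so that it descends along the surjection $t_N$, and identify the target subgroups through the identities \eqref{snN}, \eqref{BsnN}, and \eqref{sBstCR} (your factorization $s_{m,N+1}=s_{m,N}\circ\breve{t}_N\circ t_N$ is just the paper's $s_{m,N+1}=\breve{t}_m\circ\breve{s}_{m,N}\circ t_N$ rewritten via \eqref{sBstCR}). The paper's own ``proof'' is precisely the discussion preceding the proposition; your additional care about why each individual summand --- not merely the completely non-gaussian part as a whole --- lives on $U_q(N)$, via stability of `living on' under direct summands together with Corollaries \ref{subapprcor} and \ref{subPcor}, only makes explicit what the paper leaves implicit.
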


\newpage

%%%%%%%%%%%%%%%%%%%%%%%%%%%%%%%%%%%%%%%%%%%%%%%%%%%%%%%%%%%%%%%%%
\addtocontents{toc}{\vspace{1.5ex}}
\section{Final remarks and open problems} 
\label{sec_final}
%%%%%%%%%%%%%%%%%%%%%%%%%%%%%%%%%%%%%%%%%%%%%%%%%%%%%%%%%%%%%%%%%
We close by pointing out that several interesting open problems related to our decomposition result. 

\begin{enumerate}
\item
Theorem \ref{mainthm} parametrizes all possible generating functionals on $SU_q(N)$, but it is possible that two sets of parameters lead to the same generating functional. When can this happen? What is the fundamental domain of this equivalence relation? That is, how to choose and characterize one representative for any equivalence class? The answer to this question will establish the one-to-one correspondence between L\'evy processes and the parameters.  

\item \label{two}
We assume in the paper that $q\in (0,1)$, but the same holds for $|q|<1$, $q\neq 0$.
The (classical) limit case $q=1$ is known. It would be of great interest to see what happens for $q=-1$. The treatment of $SU_{-1}(2)$, that is, the anti-classical limit, was done in \cite{Ske99b}. 

\item
It would be interesting to generalize our results to q-deformations of other simple compact Lie groups. Descriptions of the CQG-algebras of the compact quantum groups $O_q(N)$, $Sp_q(N)$, and $SO_q(N)$ can be found in Chapter 9 of \cite{KlSchm97}.

\item
As shown in \cite{cfk14}, generating functionals on a given compact quantum group which satisfy additional symmetry properties (KMS-symmetry) can be used to define Dirichlet forms, Laplace operators and Dirac operators, and so they can carry geometric information about the quantum group. A description of all generating functionals satisfying additional properties (KMS-symmetric, central, etc.) will allow to construct ``nice'' Dirichlet forms, Laplace operators and Dirac operators that reflect well the structure of the underlying quantum group.

\item
We have decomposed  generating functionals  on $SU_q(N)$ into the sum of a gaussian part and of a completely non-gaussian part (L{\'e}vy-Khintchine decomposition), and we have decomposed the completely non-gaussian part further into into a sum of generating functionals that live on the quantum subgroups $SU_q(n)$ $(2\le n\le N)$ and, there, are irreductible. 
As soon as we actually have examples of L{\'e}vy processes on $SU_q(N)$ that have gaussian or irreductible generating functionals, we may ask how to ``compose'' them to get a L{\`e}vy processs for the sum of the generators.

Of course, this question makes sense independently of the special nature of the generating functionals: Given two (or more) L{\'e}vy processes on a quantum semigroup with generating functionals $\psi_i$, how to construct out of them a L{\'e}vy process that has the sum over the $\psi_i$ as generating functional? Answer: Plugging in the direct sum of the Sch{\"u}rmann triples of the $\psi_i$ into the construction in Sch{\"u}rmann, Skeide, and Volkwardt \cite{ScSV10}. (See \cite[Example 3.13]{ScSV10} for details.) The result is that a certain {\it convolution Trotter product} of the individual L{\'e}vy processes gives a L{\'e}vy process that has as generating functional the sum of the individual generating functionals.

\end{enumerate}

%%%%%%%%%%%%%%%%%%%%%%%%%%%%%%%%%%%%%%%%%%%%%%%%%%%%%%%%%%%%%%
% \section*{References}
%%%%%%%%%%%%%%%%%%%%%%%%%%%%%%%%%%%%%%%%%%%%%%%%%%%%%%%%%%%%%%

\newpage

\addtocontents{toc}{\vspace{1.5ex}}

\noindent
{\bf Acknowledgments:} We gratefully acknowledge the MFO in Oberwolfach for fantastic two weeks during a Research in Paris in February 2014, where this work started.

{U.F.\ was supported by the French `Investissements d'Avenir' program, project ISITE-BFC (contract ANR-15-IDEX-03) and by an ANR project (No. ANR-19-CE40-0002).

AK was supported by the Polish National Science Center grant SONATA 2016/21/D/ST1 /03010 and by the Polish National Agency for Academic Exchange in frame of POLONIUM program PPN/BIL/2018/1/00197/U/00021. }

\lf\noindent
Uwe Franz: {\it Laboratoire de math\'ematiques de Besan\c{c}on, University of Bourgogne Franche-Comt\'e, France}, \\
E-mail: \href{mailto:uwe.franz@univ-fcomte.fr}{\tt{uwe.franz@univ-fcomte.fr}},\\
Homepage: \url{http://lmb.univ-fcomte.fr/uwe-franz}

\lf\noindent
Anna Kula: {\it Institute of Mathematics, University of Wroclaw, Poland}, \\
E-mail: \href{mailto:anna.kula@math.uni.wroc.pl}{\tt{anna.kula@math.uni.wroc.pl}},\\
Homepage: \url{http://www.math.uni.wroc.pl/~akula/}

\lf\noindent
J.\ Martin Lindsay: {\it Department of Mathematics and Statistics, Lancaster University, UK}, \\
E-mail: \href{mailto:j.m.lindsay@lancaster.ac.uk}{\tt{j.m.lindsay@lancaster.ac.uk}}

\lf\noindent
Michael Skeide: {\it Dipartimento di Economia, Universit\`{a} degli Studi del Molise, Via de Sanctis, 86100 Campobasso, Italy}, \\
E-mail: \href{mailto:skeide@unimol.it}{\tt{skeide@unimol.it}},\\
Homepage: \url{http://web.unimol.it/skeide/}


\begin{thebibliography}{99}
\bibitem{Abe80}
E.~Abe.
\newblock {\em {Hopf algebras}}.
\newblock Cambridge University Press, 1980.

\bibitem{AFS02}
L.~Accardi, U.~Franz, and M.~Skeide.
\newblock {Renormalized squares of white noise and other non-gau{\ss}ian noises
  as L\'evy processes on real Lie algebras}.
\newblock {\em Commun.\ Math.\ Phys.}, 228:123--150, 2002.
\newblock (Rome, Volterra-Pre\-print 2000/0423).
 
\bibitem{app04}
D. Applebaum, {\it L\'evy Processes and Stochastic Calculus},
Cambridge University Press, Cambridge 2004, pp.384.

\bibitem{Arv89}
W.~Arveson.
\newblock {\em {Continuous analogues of Fock space}}.
\newblock Number 409 in Mem.\ Amer.\ Math.\ Soc. American Mathematical Society,
  1989.

\bibitem{BhSk00}
B.V.R. Bhat and M.~Skeide.
\newblock {Tensor product systems of Hilbert modules and dilations of
  completely positive semigroups}.
\newblock {\em Infin.\ Dimens.\ Anal.\ Quantum Probab.\ Relat.\ Top.},
  3:519--575, 2000.
\newblock (Rome, Volterra-Pre\-print 1999/0370).

\bibitem{cfk14}
F. Cipriani, U. Franz, A. Kula;  Symmetries of L\'evy processes on compact quantum groups, their Markov semigroups and potential theory.
\newblock {\it Journal of Functional Analysis} 266 (2014), 2789-2844.



\bibitem{dfks15}
B. Das, U. Franz, A. Kula, A. Skalski; One-to-one correspondence between 
generating functionals and cocycles on quantum groups in presence of symmetry.
Mathematische Zeitschrift 21, Issue 3 (2015), pp. 949-965.

\bibitem{dfks18}
B. Das, U. Franz, A. Kula, A. Skalski; L{\'e}vyÂ–Khintchine decompositions for generating
functionals on algebras associated to universal
compact quantum groups.
 Infin. Dimens. Anal. Quantum Probab. Relat. Top. 281, No.\ 3, Article Id 1850017 (2018).

\bibitem{dascalescu+al}
S.\ Dascalescu, C.\ Nastasescu, S.\ Raianu,
Hopf algebras. An introduction.
Pure and Applied Mathematics, Marcel Dekker. 235. New York, NY (2001). 

\bibitem{DiKo94}
Dijkhuizen M.S. and T.H. Koornwinder.
\newblock {CQG algebras: A direct algebraic approach to compact quantum
  groups}.
\newblock {\em Lett.\ Math.\ Phys.}, 32:315--330, 1994.

\bibitem{franz06}
U. Franz, 
\newblock L\'evy processes on quantum groups and dual groups. In: {\it Quantum 
independent increment processes. II}, 161-257,
Lecture Notes in Math., 1866, Springer, Berlin, 2006.

\bibitem{franz16}
U. Franz, 
\newblock Independence and L\'evy processes in quantum probability.
\newblock In: U. Franz, A. Skalski, Noncommutative Mathematics for Quantum Systems, Cambridge University Press, 2016.

\bibitem{franz_gerhold_thom}
U. Franz, M. Gerhold, A. Thom, 
\newblock On the L\'evy-Khinchin decomposition of generating functionals. 
\newblock {\it Communications on Stochastic Analysis}, Vol 9, nr 4, December 
2015, pp. 529-544.

\bibitem{hewitt_ross70}
E. Hewitt, K. Ross, Kenneth, {\it Abstract harmonic analysis. Vol. II: Structure 
and analysis for compact groups. Analysis on locally compact Abelian groups.} 
Die Grundlehren der mathematischen Wissenschaften, Band 152 Springer-Verlag, New 
York-Berlin 1970, ix+771 pp.

\bibitem{Hun56}
G.A. Hunt.
\newblock {Semigroups of measures on Lie groups}.
\newblock {\em Trans.\ Amer.\ Math.\ Soc.}, 81:264-293, 1956.

 \bibitem{koelink91}
H. T. Koelink, On $*$-representations of the Hopf $*$-algebra associated with
the quantum group $U_q(n)$, {\it Compositio Mathematicae} {\bf 77(2)}\,(1991),
199-231.

\bibitem{KlSchm97}
A.~Klimyk and K.~Schm{\"u}dgen.
\newblock {\em {Quantum groups and their representations}}.
\newblock Springer, 1997.

\bibitem{LeSha11p}
E.~Levy and O.M. Shalit.
\newblock {Dilation theory in finite dimensions: the possible, the impossible
  and the unknown}.
\newblock Pre\-print, ar\-Xiv: 1012.4514v2, 2011.

\bibitem{meyer93}
P.-A.~Meyer, 
\newblock {\it Quantum probability for probabilists.}
\newblock Lecture Notes in Mathematics, 1538. Springer-Verlag, Berlin, 1993. 
x+287 pp.

\bibitem{rosso90}
M.~Rosso, Alg\`ebres enveloppantes quantifi\'ees, groupes quantiques compacts de 
matrices et calcul diff\'erentiel non commutatif. {\it Duke Math. J.} 61 (1990), 
no. 1, 11--40. 



\bibitem{sato99}
K.~Sato, {\it L\'evy Processes and Infinitely Divisible Distributions}. 
Cambridge
University Press, Cambridge 1999.

\bibitem{MSchue90b}
M.~Sch\"urmann.
\newblock {Gau{\ss}ian states on bialgebras}.
\newblock In L.~Accardi and {W.\ von} Waldenfels, editors, {\em Quantum
  Probability and Applications V}, number 1442 in Lect.\ Notes Math., pages
  347--367. Springer, 1990.


\bibitem{schurmann93}
M.~Sch\"{u}rmann, {\it White noise on bialgebras}, Lect. Notes Math. {\bf 1544},
Springer 1993.


\bibitem{schurmann+skeide98}
M.~Sch\"urmann, M.~Skeide,
\newblock {Infinitesimal generators on the quantum group $SU_q(2)$}.
\newblock {\em Infin.\ Dimens.\ Anal.\ Quantum Probab.\ Relat.\ Top.},
  1:573--598, 1998.
\newblock (Preprint, Heidelberg 1992).

\bibitem{ScSV10}
M.~Sch\"urmann, M.~Skeide, and S.~Volkwardt,
\newblock {Transformations of quantum L\'evy processes on Hopf algebras}.
\newblock {\em Commun.\ Stoch.\ Anal.}, 4:553--577, 2010.
\newblock (Greifswald-Pre\-print no.13/2007, ar\-Xiv: 0712.3504v2).

\bibitem{Sha17}
O.M. Shalit.
\newblock {\em {A first course in functional analysis}}.
\newblock CRC Press, 2017.

\bibitem{Ske94}
M.~Skeide,
\newblock {\em {The L\'evy-Khintchine formula for the quantum group
  $SU_q(2)$}}.
\newblock PhD thesis, Heidelberg, 1994.
\newblock Available at {\tt\footnotesize http://web.unimol.it/skeide/}.

 \bibitem{skeide99}
M.~Skeide,
\newblock {Hunt's formula for $SU_q(2)$; a unified view}.
\newblock {\em Information Dynamics \& Open Systems}, 6:1--27, 1999.
\newblock (Rome, Volterra-Pre\-print 1995/0232).

\bibitem{Ske99b}
M.~Skeide,
\newblock {Infinitesimal generators on $SU_q(2)$ in the classical and
  anti-classical limit}.
\newblock {\em Information Dynamics \& Open Systems}, 6:375--414, 1999.
\newblock (Cott\-bus, Rei\-he Mathe\-ma\-tik 1997/M-19).

\bibitem{Ske05b}
M.~Skeide.
\newblock {L{\'e}vy processes and tensor product systems of Hilbert modules}.
\newblock In M.~Sch\"urmann and U.~Franz, editors, {\em Quantum Probability and
  Infinite Dimensional Analysis --- From Foundations to Applications}, number
  XVIII in Quantum Probability and White Noise Analysis, pages 492--503. World
  Scientific, 2005.

\bibitem{timmermann08}
T. Timmermann, 
\newblock {\it An invitation to quantum groups and duality. From Hopf algebras 
to multiplicative unitaries and beyond.}
\newblock EMS Textbooks in Mathematics. European Mathematical Society (EMS), 
Z\"urich, 2008. xx+407 pp.

\bibitem{Wor87a}
S.L. Woronowicz.
\newblock {Compact matrix pseudogroups}.
\newblock {\em Commun.\ Math.\ Phys.}, 111:613--665, 1987.

\bibitem{Wor87b}
S.L. Woronowicz.
\newblock {Twisted $SU(2)$ group. An example of a non-commutative differential
  calculus}.
\newblock {\em Publ.\ Res.\ Inst.\ Math.\ Sci.}, 23:117--181, 1987.

\bibitem{Wor88}
S.L. Woronowicz, 
Tannaka-Krein duality for compact matrix pseudogroups. Twisted SU(N) groups.
Invent. Math. 93, No. 1, 35-76 (1988). 

\bibitem{woronowicz98}
S.L. Woronowicz, 
\newblock {\it Compact quantum groups}. 
Sym\'etries quantiques (Les Houches, 1995), 845-884, North-Holland, Amsterdam, 
1998.

\end{thebibliography}
\end{document}